\documentclass[12pt]{amsart}

\usepackage[T1]{fontenc}
\usepackage[utf8]{inputenc}
\usepackage[english]{babel}
\usepackage{amsmath,amssymb,amsthm,amsfonts}
\usepackage{mathtools}
\mathtoolsset{showonlyrefs}
\usepackage[margin=0.80in]{geometry}
\usepackage{booktabs,array}
\usepackage{enumitem}
\usepackage{xcolor}
\usepackage[hidelinks]{hyperref}
\newcommand{\C}{\mathbb{C}}
\newcommand{\Z}{\mathbb{Z}}
\newcommand{\Q}{\mathbb{Q}}
\newcommand{\F}{\mathbb{F}}
\newcommand{\Uone}{\mathrm{U}(1)}
\newcommand{\B}{\mathcal{B}}
\newcommand{\A}{\mathcal{A}}
\newcommand{\T}{\mathcal{T}}
\newcommand{\Htorus}{\mathcal{H}}
\newcommand{\End}{\mathrm{End}}
\newcommand{\Hom}{\mathrm{Hom}}
\newcommand{\Aut}{\mathrm{Aut}}
\newcommand{\GL}{\mathrm{GL}}
\newcommand{\Sp}{\mathrm{Sp}}
\newcommand{\Tr}{\mathrm{Tr}}
\newcommand{\sign}{\mathrm{sign}}
\newcommand{\id}{\mathrm{id}}
\newcommand{\ii}{\mathrm{i}}
\newcommand{\ot}{\otimes}

\newcommand{\pair}[2]{\left\langle#1,#2\right\rangle}
\DeclareMathOperator{\rank}{rank}

\theoremstyle{plain}
\newtheorem{theorem}{Theorem}[section]
\newtheorem{prop}[theorem]{Proposition}
\newtheorem{lemma}[theorem]{Lemma}
\newtheorem{cor}[theorem]{Corollary}
\newtheorem{conj}[theorem]{Conjecture}

\theoremstyle{definition}
\newtheorem{definition}[theorem]{Definition}
\newtheorem{example}[theorem]{Example}
\newtheorem{computation}[theorem]{Computation}

\theoremstyle{remark}
\newtheorem{remark}[theorem]{Remark}

\numberwithin{equation}{section}

\title{Unitary Yang--Baxter Operators: Towards a Classification}
\author{C\'esar Galindo}
\address{Departamento de Matem\'aticas, Universidad de los Andes, Bogot\'a, Colombia}
\email{cn.galindo1116@uniandes.edu.co}
\author{Eric C. Rowell}
\address{Mathematics Department, Texas A\&M University and School of Mathematics, University of Leeds}
\email{rowell@tamu.edu}
\subjclass[2020]{Primary 16T25; Secondary 20F36, 57K14}
\keywords{Yang--Baxter operators, braid group representations, finite quadratic forms, Gaussian representations, Clifford groups, link invariants}
\date{}

\begin{document}

\begin{abstract}
There is a well-known circle of conjectures relating unitary solutions of
the Yang--Baxter equation, unitary braided fusion categories, topological
quantum computation, and link invariants.  Progress is limited by the lack
of a classification of unitary Yang--Baxter operators.

We propose a conjectural classification with three generating sources: monomial
solutions, group-type solutions arising from Yetter--Drinfeld modules, and
solutions from twisted group-algebra towers.  We conjecture
that, up to unit scalars and local unitary basis changes, all solutions are
generated by these  sources.

Our main constructions come from twisted group-algebra towers, first developed by Gustafson-Kimball-Rowell-Zhang.  We apply this method to 
metric groups which yields three unitary families, including Gaussian/metaplectic solutions and a family with nontrivial twisting.  We
provide tensor-power realizations, factorization results, braid
images computations, and exact polynomial-time evaluation of the associated link invariants
for fixed metric data.  We also obtain one-parameter families in dimension \(8\) that are generically
non-Gaussian and non-Clifford.
These yield infinite projective images, but we prove these braid
images are virtually abelian, leaving a conjecture of Rowell-Wang intact.

We provide evidence for our classification conjecture by means of computational searches for unitary Yang--Baxter operators among Clifford groups and a polynomial-in-Pauli ansatz.  This search yields only a few solutions that do not appear to be equivalent to 
monomial or group type solutions.  Direct calculations identify their link invariants
with specializations of the HOMFLYPT and BMW/Kauffman invariants; the latter
is a squared Jones specialization at a primitive sixth root, and the
corresponding qutrit operator strictly localizes the semisimple 
quotient of the BMW algebra specializations $\mathcal{C}_n(q^3,e^{\pi\ii/6})$ associated with the braided fusion category $SO(4)_2$.  The quaternionic Family III
specialization realizes a two-eigenvalue class and gives
tensor-power realization of the semisimple Hecke algebra tower associated
with \(SU(3)_3\). Significantly, this answers a recent existence question of Lechner, and verifies an open case of the Rowell-Wang localization conjecture from 2010.
\end{abstract}

\maketitle

\section{Introduction}
\label{sec:introduction}

Matrix solutions to the Yang--Baxter equation play a unifying role in several areas of study including braid group representations, knot theory, quantum computation and category theory. 
 Although the constraints are simply cubic polynomials, the space of solutions is not well understood--a general classification is only known in dimension $2$ \cite{Hietarinta92}.  In spite of some recent progress on restricted families of solutions (e.g., \cite{MartinRowell,LechnerTwoEigenvalues}), there is no general approach to the problem beyond a computationally infeasible brute-force search. On the other hand, there is some (admittedly, scant) evidence that \emph{unitary} YBOs are less abundant, and may admit a conceptual classification.

A Yang--Baxter operator $R$ on $V\otimes V$ produces a tower of unitary braid group
representations on $V^{\otimes n}$.  When the operator admits an
enhancement, these representations also give invariants of oriented links \cite{Turaev1988,KasselTuraev}.  An important problem is to determine the computational complexity of these invariants.

 Another motivation for our work comes from quantum computation. In the topological model for quantum computation \cite{RowellWangBAMS}, anyon exchange statistics in 2-dimensional media yield global braiding gates on Hilbert spaces modeling anyon configurations. Realizing these braiding gates directly on the circuit model, where gates act locally on a few adjacent copies of a Hilbert space $V$, motivates the \emph{localization} problem in \cite{RW}.  Such anyon configurations are modeled on morphism spaces in unitary braided fusion categories, yielding a mathematical conjecture relating localizability with a basic fusion category statistic known as FP-dimension. All of these provide ample motivation for classifying unitary Yang--Baxter operators.

In this paper we work towards a conceptual classification in terms of a three sources of unitary YBOs and some basic operations on them.  The classification must be interpreted up to an appropriate notion of equivalence, which we will come to presently.

Two familiar sources of unitary YBOs have their own extensive theories.  Monomial solutions
reduce to set-theoretic Yang--Baxter solutions together with multiplicative
cocycle data
\cite{EtingofSchedlerSoloviev,CarterElhamdadiSaito,GalindoRowellBVS,DS},
while group-type solutions arise from Yetter--Drinfeld structures over
discrete groups \cite{GalindoRowellBVS}.  We recall these sources only to fix
the terminology and the equivalence relation used in the paper.  Our main focus
is a third source consisting of solutions arising from twisted group-algebra
towers \cite{GustafsonKimballRowellZhang}.
This source contains the well-studied Gaussian and metaplectic examples
\cite{JonesCMP,GJ,RowellWenzl,GalindoRowellBVS}, but it also produces more
general families.

A twisted group-algebra tower does not by itself produce a matrix solution
of the Yang--Baxter equation; it should instead be viewed as an intermediate
algebraic construction.  The basic data are a finite abelian group, a
normalized \(2\)-cocycle, a bicharacter, and a coefficient function.  They
define an element at every level in a tower of twisted group algebras.
The braid relation becomes a finite identity for the coefficient function,
and unitarity becomes a finite coefficient condition.  A matrix
operator appears only after one chooses a compatible representation of the
tower on tensor powers.  This is the abelian part of the twisted
tensor-product construction of \cite{GustafsonKimballRowellZhang}.  It is
flexible enough to include nontrivial cocycle data while retaining exact
coefficient calculations.

\iffalse
  We denote by
\(\boxtimes\) the external product, which combines independent tower data
and becomes a tensor product of local operators after a shuffle of tensor
factors.  The internal operation \(\boxplus_t\) is the parameterized sum
\(a+t b\) of two elements in the same local algebra. Related operations \(\boxtimes\) and \(\boxplus\) for involutive
Yang--Baxter operators were introduced in \cite{LechnerPennigWood}.  In the
categorical formulation of \cite[Definition~5.2]{MartinRowellTorzewska},
\(\boxtimes\) is called the lashing product. When the mixed braid
identities and orthogonality hypotheses of
Proposition~\ref{prop:boxplus-unitarity} hold, the resulting unitary family
is called an admissible internal orthogonal sum.  An arbitrary sum need not
satisfy the braid relation.  The recursive closure process used below is
made precise in Definition~\ref{def:generated-tower-presentations}.
\fi

We make the following conjecture, the precise sense of the terms will be explained later:
\begin{conj}
\label{conj:unitary-YBO-classification}
Up to equivalence, every unitary Yang--Baxter operator is generated by monomial, group-type and twisted group-algebra solutions.

\end{conj}

The localization conjecture of Rowell--Wang
\cite[Conjecture~4.1]{RW} provides a criterion for localizing braid group representations associated with an anyon model, i.e., a braided fusion category:

\begin{conj}\label{conj:localization}
Let \(X\) be a simple object in a unitary braided fusion category.  The
associated sequence of braid group representations $(\rho_n^X,\End(X^{\otimes n}))$ is localizable by a
unitary Yang--Baxter operator if and only if
\[
  \operatorname{FPdim}(X)^2\in\Z.
\]
\end{conj}

The following conjecture was formulated in
\cite[Conjecture~1.1]{GalindoRowellBVS} as a refinement of one found in \cite{RW}; we restate it in the present
terminology.

\begin{conj}
\label{conj:YBO-virtual-abelian}
For every unitary Yang--Baxter operator \(R\) and every \(n\geq2\), the image
of the associated braid group representation
\[
  \rho_{R,n}:\B_n\longrightarrow\mathrm U(V^{\ot n})
\]
is virtually abelian.
\end{conj}

The results below provide evidence for these three
conjectures.  For the first, the uniform metric families come directly from
twisted-tower presentations, while the dimension-\(8\) cyclic families are
produced by admissible internal orthogonal sums.  Within the declared
low-dimensional search spaces, the qutrit representatives are identified
with signed Family I and Family II operators; after reduction to
\(\F_{1201}\), the remaining ququint orbits are matched by signed Family I
and Family \(\mathrm{II}_E\) data.  For the second, the qutrit Family II
operator strictly localizes the semisimple BMW trace-quotient tower, while
the quaternionic Family III operator strictly localizes the semisimple
Hecke braid-image tower associated with \(SU(3)_3\).  For the third, all
metric-family braid images are finite, and the cyclic-family braid images
are virtually abelian even at parameter values for which they are infinite.

The main uniform constructions start with a finite metric abelian group
\((A,q)\).  It gives three families built from its quadratic phases and
associated cocycle data.  Family I is the basic Gaussian construction and
has two polarization signs.  Family II uses two
coupled quadratic phases; in its tensor realization it shifts two labels in
opposite directions and preserves their sum.  A self-adjoint automorphism
of the metric group gives the reindexed Family \(\mathrm{II}_E\) of
Proposition~\ref{prop:family-II-coordinate-change}.  Family III is
different because its coefficients are constant, while the multiplication
in each group-algebra factor is nontrivially twisted.  Theorem~\ref{thm:metric-families-YBE}
and Corollary~\ref{cor:extended-metric-families-YBE} give unitary
tensor-power realizations for all these constructions.

The metric-group description also reveals structure that is less clear in
matrix form.  Orthogonal decompositions of \((A,q)\) induce factorizations
of the towers, local operators, braid representations, spectra, and
coefficient traces.  The primary factorization is canonical; a subsequent
factorization into indecomposable metric blocks requires a choice, and the
blocks must be stable under \(E\) for the reindexed Family
\(\mathrm{II}_E\); see
Corollary~\ref{cor:metric-primary-factorization} and
Remark~\ref{rem:metric-block-factorization}.  The local
operators also normalize generalized Pauli groups whose projective label
groups are finite.  Their projective images therefore lie in finite
Clifford extensions, and a determinant argument controls the scalar kernel.
Hence the ordinary braid images of the three
metric families, including the signed and reindexed extensions, are
finite by Theorem~\ref{thm:metric-full-finite} and
Corollary~\ref{cor:extended-metric-full-finite}.

The admissible internal orthogonal-sum construction produces a different
phenomenon.  Applied to the two-torsion sectors of the dimension-\(8\)
cyclic tower, it gives eight parametrized families arising from two
templates; see Proposition~\ref{prop:d8-families}.  For a unitary
parameter that is not a root of unity, a local generator has infinite
projective order; hence the corresponding braid image is infinite.
Nevertheless, all these braid images are virtually abelian.  The proof compares every
parameter value with a Gaussian base point and places the generator ratios
in a common commutative two-torsion algebra; see
Theorem~\ref{thm:cyclic-family-virtually-abelian}.  Thus the metric and
cyclic families support two different parts of the proposed picture.  The
metric families have finite image, while virtual abelianness survives in
the cyclic families, which are generically non-Gaussian in the fixed
dimension-\(8\) Pauli realization, even when their braid images are infinite.

Link invariants provide another test of these constructions.  The
identity-coefficient functional on each tower is a trace, and in the Pauli
realizations it agrees with the normalized ordinary matrix trace.
Theorem~\ref{thm:metric-family-enhancement} and
Corollary~\ref{cor:cyclic-family-enhancement} give scalar Turaev
enhancements for the metric and cyclic families.  Orthogonal decompositions of the metric
data become product formulas for the metric-family invariants.  Fourier
spectra give local skein relations, including a universal six-twist
relation for Family III\@.  For the three metric families, the invariants can
also be evaluated without expanding matrices of exponentially growing
size.  For fixed metric data, the trace of a braid word reduces to a finite
quadratic Gauss sum over the kernel of a
homomorphism determined by the word.  Theorem~\ref{thm:metric-efficient-evaluation}
therefore gives an exact deterministic algorithm whose running time is
polynomial in the number of strands and the word length.  General spectral
criteria identify two-eigenvalue Hecke and Temperley--Lieb quotients and
rank-one BMW quotients, while a separate faithful-tower criterion determines
when a tensor-power realization preserves the full braid-image algebra; see
Propositions~\ref{prop:two-eigenvalue-Hecke-criterion},~\ref{prop:rank-one-BMW-criterion},
and~\ref{prop:faithful-tower-localization-criterion}.

The low-dimensional calculations provide evidence within specified search
spaces rather than a classification of all unitary Yang--Baxter operators.
For \(d\in\{2,3,5,6,7,9,10,11\}\),
Computation~\ref{comp:low-dimensional-Gaussian-search} reports only Gaussian
points in the compact unitary locus of the specified full-support ansatz.
Separately, Computation~\ref{comp:d8-family-completeness} reports the
positive-dimensional unitary components in dimension \(8\) as the cyclic
families constructed in Section~\ref{sec:cyclic-families}.  The
dimension-\(4\) polynomial-in-Pauli solutions are omitted because a Zak-basis
change makes them monomial.
Likewise, we do not perform Clifford searches in local dimensions \(4\) or
\(9\): in these square dimensions, the Zak basis makes the relevant
Clifford representations monomial, so every Clifford Yang--Baxter operator
is locally equivalent to a monomial solution
\cite{ApplebyBengtssonBrierleyGrasslGrossLarsson}.
Under the declared reduction pipeline, the two-qutrit search in
Computation~\ref{comp:qutrit-reduction} singles out two explicit
representatives that are not identified as monomial or group type by the
declared tests.  One is a signed Family I operator that factors through
Temperley--Lieb and gives a HOMFLYPT specialization.  The other is a Family
II operator that satisfies the rank-one BMW relations; its normalized
BMW/Kauffman invariant is the square of a Jones specialization at a
primitive sixth root of unity.  These
identifications and invariant calculations are proved directly in
Theorem~\ref{thm:R1-HOMFLYPT}, Corollary~\ref{cor:R1-Temperley-Lieb},
Proposition~\ref{prop:R3-BMW},
Corollary~\ref{cor:R3-BMW-localization}, and
Theorem~\ref{thm:R3-symmetric-square}.

The smallest Family III specialization has a quaternionic twisted-group
description.  Its explicit \(16\times16\) operator has Hecke parameter
\(e^{\pi\ii/3}\), spectral density \(1/2\), and local dimension \(4\).  It
gives a faithful tensor-power realization of the semisimple Hecke braid-image
tower associated with \(SU(3)_3\); see
Theorems~\ref{thm:strict-ybo}
and~\ref{thm:quaternionic-Lechner-localization} and
Corollary~\ref{cor:Lechner-minimal-dimension}.  These results prove that
Lechner's exceptional family \([e^{\pi\ii/3},1/2,2m]\), \(m\geq2\), is
nonempty in its smallest possible local dimension, namely \(4\); see
\cite[Section~3]{LechnerTwoEigenvalues}.  In the two-ququint
calculation, exact characteristic-zero enumeration is followed by
finite-field group-type and gauge tests.  After reduction to
\(\F_{1201}\), the remaining output is matched by the two signed
cyclic Gaussian branches and by Family \(\mathrm{II}_E\) product solutions.
This comparison is a finite-field computational observation and is not a
characteristic-zero classification.

An outline of the manuscript's structure is as follows:  Sections~\ref{sec:background}
and~\ref{sec:twisted-towers} introduce the basic classes and the tower
construction.
Sections~\ref{sec:gaussian} and~\ref{sec:cyclic-families} construct the
metric families and cyclic families that are generically non-Gaussian in
their fixed Pauli realizations.  Section~\ref{sec:conjectures}
makes precise the recursive closure process in
Conjecture~\ref{conj:unitary-YBO-classification}.
Sections~\ref{sec:trace-invariants},
\ref{sec:finite-image}, and~\ref{sec:cyclic-virtual-abelianness} study link
invariants, finite image, and virtual abelianness.  The low-dimensional
Pauli and Clifford calculations are followed in
Section~\ref{sec:spectral-quotients} by general criteria for recognizing
Temperley--Lieb and BMW quotients and strict localizations.  The final
sections apply these criteria to the qutrit outputs and the quaternionic
specialization and give the two-ququint comparison.
Section~\ref{sec:conclusion} summarizes the proved results and the remaining
classification problems.

\section{Basic classes of Yang--Baxter operators}
\label{sec:background}

Let \(V\) be a finite-dimensional complex Hilbert space, and write \(I\)
for the identity operator on the space determined by context.  A
\emph{unitary Yang--Baxter operator} on \(V\) is a unitary map
\(R\in\End(V\otimes V)\) such that
\begin{equation}\label{eq:background-YBE}
  (R\otimes I)(I\otimes R)(R\otimes I)
  =
  (I\otimes R)(R\otimes I)(I\otimes R).
\end{equation}
For \(n\geq1\), let \(\B_n\) denote the braid group on \(n\) strands, with
\(\B_1=\{1\}\); for \(n\geq2\), its Artin generators are
\(\sigma_1,\ldots,\sigma_{n-1}\).  The operator \(R\) defines the trivial
representation at \(n=1\) and the unitary representations
\[
  \rho_{R,n}:\B_n\longrightarrow\mathrm U(V^{\ot n}),
  \qquad
  \rho_{R,n}(\sigma_i)
  =I^{\ot(i-1)}\ot R\ot I^{\ot(n-i-1)}
  \qquad(n\geq2,\ 1\leq i<n).
\]
Throughout the paper, unless explicitly stated otherwise, we regard two
operators as equivalent if they differ by a unit scalar and a local unitary
change of basis as follows.
\begin{equation}\label{eq:unitary-local-equivalence}
  R\longmapsto
  \lambda(S\otimes S)^{-1}R(S\otimes S),
  \qquad \lambda\in\Uone,
  \quad S\in\mathrm U(V).
\end{equation}
We next recall the terminology for monomial and group-type solutions.

\subsection{Monomial solutions}
\label{subsec:background-monomial}

Let
\[
  V=\bigoplus_{i\in I}V_i
\]
be an orthogonal decomposition into one-dimensional subspaces.  We say that
\(R\) is \emph{monomial with respect to this decomposition} if there is a
bijection
\[
  r:I\times I\longrightarrow I\times I,
  \qquad
  r(i,j)=\bigl(r_1(i,j),r_2(i,j)\bigr),
\]
such that
\[
  R(V_i\otimes V_j)
  =V_{r_1(i,j)}\otimes V_{r_2(i,j)}
  \qquad(i,j\in I).
\]
The Yang--Baxter equation for \(R\) implies
\[
  r_{12}r_{23}r_{12}=r_{23}r_{12}r_{23}
\]
on \(I^3\).  Hence \(r\) is a set-theoretic Yang--Baxter solution.

Choose a unit vector \(e_i\in V_i\) for every \(i\in I\).  There is then a
unique function \(c:I\times I\to\Uone\) satisfying
\[
  R(e_i\otimes e_j)
  =c(i,j)e_{r_1(i,j)}\otimes e_{r_2(i,j)}.
\]
The scalar part of the braid relation requires \(c\) to be a multiplicative
Yang--Baxter \(2\)-cocycle.  Rescaling the chosen basis changes \(c\) by a
coboundary; hence the scalar data are naturally cohomological.

With respect to the product basis
\(\{e_i\otimes e_j\mid i,j\in I\}\), let \(P_r\) be the permutation matrix
determined by \(r\).  The operator can then be written
\[
  R=DP_r,
\]
where \(D\) is diagonal and its entry on the output line indexed by
\(r(i,j)\) is \(c(i,j)\).  Thus a monomial Yang--Baxter operator consists of
a set-theoretic solution together with a multiplicative cocycle class
\cite{EtingofSchedlerSoloviev,CarterElhamdadiSaito,GalindoRowellBVS,DS}.

Monomiality depends on the chosen orthogonal decomposition, or equivalently
on the associated product basis.  We therefore say that a local equivalence
class is \emph{of monomial type} if it contains a monomial representative.

\subsection{Group-type solutions}
\label{subsec:background-group-type}

Group-type solutions arise from Yetter--Drinfeld data over discrete groups.
At the matrix level, a solution is of \emph{group type} if there is
a basis \(x_1,\ldots,x_d\) of \(V\) and invertible maps
\(g_1,\ldots,g_d\in\GL(V)\) such that
\begin{equation}\label{eq:background-group-type}
  R(x_j\otimes z)=g_j(z)\otimes x_j
  \qquad(1\leq j\leq d,\ z\in V).
\end{equation}
If the basis is orthonormal and every \(g_j\) is unitary, then \(R\) is
unitary.  As in the monomial case, we say that a local class is of group type
if it has a representative of the form \eqref{eq:background-group-type}.

More explicitly, let \(G=\langle g_1,\ldots,g_d\rangle\) and put
\[
  V_g=\operatorname{span}\{x_j:g_j=g\}
  \qquad(g\in G).
\]
The braid relation is equivalent to
\[
  g_i(V_{g_j})\subseteq V_{g_i g_j g_i^{-1}}
  \qquad(1\leq i,j\leq d).
\]
This is the Yetter--Drinfeld compatibility condition; see
\cite[Definition~4.1 and Proposition~4.2]{GalindoRowellBVS}.

The monomial and group-type classes are not disjoint.  A group-type
operator is monomial whenever the maps \(g_j\) are simultaneously monomial
in the chosen basis.

\section{Twisted group-algebra towers and two operations}
\label{sec:twisted-towers}

The third source in our conjectural classification comes from the iterated
twisted tensor-product construction of
\cite{GustafsonKimballRowellZhang}.  We develop its abelian specialization
in the language of twisted group-algebra towers.  This formulation converts
braid compatibility into finite algebraic data and provides natural
operations for combining solutions.

\subsection{Braid-compatible elements}
\label{subsec:local-braid-data}

Let
\[
  \C=\A_1\subset \A_2\subset\A_3\subset\cdots
\]
be a tower of unital complex algebras.  The braid groups are themselves a
tower under the inclusions
\[
  \B_n\hookrightarrow\B_{n+1},
  \qquad
  \sigma_i\longmapsto\sigma_i,
\]
obtained by adding one strand at the right.  We use the same compatibility
on the algebra side.

A \emph{braid-compatible family} in \((\A_n)\) is a sequence of invertible
elements
\[
  r_i\in \A_{i+1},\qquad i\geq1,
\]
such that, for every \(n\ge2\), their images in \(\A_n\) satisfy
\begin{align}
  r_ir_{i+1}r_i&=r_{i+1}r_ir_{i+1}
  \qquad(1\leq i<n-1), \label{eq:local-braid}\\
  r_ir_j&=r_jr_i
  \qquad(1\leq i,j<n,\ |i-j|>1). \label{eq:local-distant}
\end{align}
The same symbol \(r_i\) denotes its image under the inclusion
\(\A_{i+1}\subset\A_n\) whenever \(n\ge i+1\).  Then
\(\sigma_i\mapsto r_i\) defines homomorphisms
\[
  \rho_n^{\A}:\B_n\longrightarrow \A_n^\times,
  \qquad n\ge2,
\]
and these homomorphisms are compatible with the inclusions
\(\B_n\subset\B_{n+1}\) and \(\A_n\subset\A_{n+1}\).

Suppose now that \(V\) is a finite-dimensional vector space and that the
tower admits representations
\[
  \Phi_n:\A_n\longrightarrow \End(V^{\ot n})
  \qquad(n\geq1)
\]
compatible with the inclusions.  We say that the family is realized by
\(R\in\End(V\ot V)\) if
\[
  \Phi_n(r_i)
  =
  I_V^{\ot(i-1)}\ot R\ot I_V^{\ot(n-i-1)}
  \qquad(n\geq2,\ 1\leq i<n).
\]
In this case \(R\) satisfies the Yang--Baxter equation
\[
  (R\ot I)(I\ot R)(R\ot I)
  =
  (I\ot R)(R\ot I)(I\ot R).
\]
Thus the algebraic problem of producing braid-compatible families precedes the
matrix problem of producing Yang--Baxter operators.

\subsection{Twisted group-algebra towers}

Let \(G\) be a finite abelian group, written additively.  Fix a normalized
unitary \(2\)-cocycle
\[
  \nu:G\times G\longrightarrow\Uone,
\]
that is,
\begin{equation}\label{eq:cocycle-identity}
  \nu(g,h)\nu(g+h,k)=\nu(h,k)\nu(g,h+k),
  \qquad
  \nu(0,g)=\nu(g,0)=1,
\end{equation}
and let
\[
  \tau:G\times G\longrightarrow\Uone
\]
be a bicharacter.  No compatibility between \(\nu\) and \(\tau\) is
required beyond these hypotheses.

For \(n\geq1\), set \(E_n(G)=G^{n-1}\), with \(E_1(G)=\{0\}\).  For
\(\mathbf g=(g_1,\ldots,g_{n-1})\) and
\(\mathbf h=(h_1,\ldots,h_{n-1})\), define
\[
  \kappa_n^\tau(\mathbf g,\mathbf h)
  =
  \prod_{i=1}^{n-2}\tau(h_i,g_{i+1})^{-1}
\]
and
\begin{equation}\label{eq:tower-cocycle}
  \Omega_n^{\nu,\tau}(\mathbf g,\mathbf h)
  =
  \prod_{i=1}^{n-1}\nu(g_i,h_i)
  \kappa_n^\tau(\mathbf g,\mathbf h).
\end{equation}
An empty product is understood to be \(1\).
The product of the coordinate cocycles \(\nu\) is a normalized
\(2\)-cocycle on \(E_n(G)\).  Moreover, \(\kappa_n^\tau\) is
multiplicative in each variable, hence is itself a normalized
\(2\)-cocycle.  Consequently \(\Omega_n^{\nu,\tau}\) is a normalized
unitary \(2\)-cocycle.  We define
\[
  \T_n(G;\nu,\tau)
  =
  \C_{\Omega_n^{\nu,\tau}}[E_n(G)],
\]
the twisted group algebra with basis
\(\{U^{\mathbf g}:\mathbf g\in E_n(G)\}\) and product
\[
  U^{\mathbf g}U^{\mathbf h}
  =
  \Omega_n^{\nu,\tau}(\mathbf g,\mathbf h)
  U^{\mathbf g+\mathbf h}.
\]
For \(n=1\), this gives \(\T_1(G;\nu,\tau)=\C\).  The map
\(\mathbf g\mapsto(\mathbf g,0)\) identifies \(E_n(G)\) with the subgroup
\(E_n(G)\times\{0\}\subset E_{n+1}(G)\).  The cocycle
\(\Omega_{n+1}^{\nu,\tau}\) restricts to \(\Omega_n^{\nu,\tau}\) on this
subgroup, and hence the corresponding basis inclusion gives compatible
unital algebra embeddings
\[
  \T_n(G;\nu,\tau)\subset\T_{n+1}(G;\nu,\tau).
\]
Thus these algebras form a tower.

For \(1\leq i<n\), let \(\epsilon_i(g)\in E_n(G)\) be the element with
\(g\) in the \(i\)-th position and \(0\) elsewhere, and set
\[
  U_i^g=U^{\epsilon_i(g)}.
\]
Then the twisted group algebra product is equivalently described by the
straightening relations
\begin{align}
  U_i^gU_i^h&=\nu(g,h)U_i^{g+h},\label{eq:tower-same}\\
  U_i^gU_j^h&=U_j^hU_i^g\qquad(|i-j|>1),\label{eq:tower-far}\\
  U_i^gU_{i+1}^h&=\tau(g,h)U_{i+1}^hU_i^g. \label{eq:tower-adjacent}
\end{align}
Thus
\[
  U^{\mathbf g}=U_1^{g_1}\cdots U_{n-1}^{g_{n-1}}
\]
is the normal ordered basis.  When \(n=2\), we write \(U^g=U_1^g\).

For an arbitrary unitary cocycle \(\nu\), the canonical involution is
obtained by extending conjugate-linearly the formula
\[
  (U^{\mathbf g})^*
  =
  \Omega_n^{\nu,\tau}(\mathbf g,-\mathbf g)^{-1}U^{-\mathbf g}.
\]
The tower embeddings preserve these involutions.

\begin{remark}\label{rem:cocycle-gauge}
Let \(\lambda:G\to\Uone\) satisfy \(\lambda(0)=1\), and write
\[
  (\delta\lambda)(g,h)
  =
  \frac{\lambda(g)\lambda(h)}{\lambda(g+h)}.
\]
If \(\nu'=\nu\,\delta\lambda\), then the assignments
\begin{equation}\label{eq:cocycle-gauge}
  \Phi_{\lambda,n}:\T_n(G;\nu',\tau)\longrightarrow
  \T_n(G;\nu,\tau),
  \qquad
  U_i^g\longmapsto \lambda(g)U_i^g,
\end{equation}
are compatible algebra isomorphisms and preserve the canonical involutions.
Indeed, the same-position relation is preserved by the identity
\[
  \nu'(g,h)\lambda(g+h)
  =
  \nu(g,h)\lambda(g)\lambda(h),
\]
while the adjacent and distant relations are unchanged.  The inverse is
obtained from \(\lambda^{-1}\), and the involution formula shows
that these maps are \(*\)-preserving.  For any function \(f:G\to\C\), the
same map sends
\[
  \sum_{g\in G}f(g)U_i^g
  \quad\text{to}\quad
  \sum_{g\in G}\lambda(g)f(g)U_i^g.
\]
Thus the tower and its coefficient problem depend only on the cohomology
class of \(\nu\), provided the coefficients are transformed at the same
time.

For a finite abelian group, every \(\Uone\)-valued \(2\)-cocycle is
cohomologous to a bicharacter.  Let \(\operatorname{Bil}(G)\),
\(\operatorname{Sym}(G)\), and \(\operatorname{Alt}(G)\) denote the groups
of bicharacters, symmetric bicharacters, and alternating bicharacters on
\(G\), respectively.  Antisymmetrization induces
the isomorphisms
\[
  \frac{\operatorname{Bil}(G)}{\operatorname{Sym}(G)}
  \cong H^2(G,\Uone)
  \cong \operatorname{Alt}(G),
  \qquad
  [\nu]\longmapsto
  \mathcal A_\nu(g,h)=\frac{\nu(g,h)}{\nu(h,g)};
\]
see \cite[Section~2.2]{GalindoClifford}.  The bicharacter representative is
not canonical, but its commutator bicharacter \(\mathcal A_\nu\) is
independent of the gauge.
\end{remark}

By Remark~\ref{rem:cocycle-gauge}, we may take \(\nu\) to be a unitary
bicharacter without loss of generality.  In this gauge
\(\Omega_n^{\nu,\tau}\) is itself a unitary bicharacter on \(E_n(G)\), and
the canonical involution takes the simple form
\begin{equation}\label{eq:twisted-star}
  (U^{\mathbf g})^*=(U^{\mathbf g})^{-1}
  =
  \Omega_n^{\nu,\tau}(\mathbf g,\mathbf g)U^{-\mathbf g}.
\end{equation}
In particular,
\[
  (U_i^g)^*=\nu(g,g)U_i^{-g}.
\]

We use the abbreviation
\begin{equation}\label{eq:nu-trivial-convention}
  \T_n(G,\tau):=\T_n(G;1,\tau)
\end{equation}
when \(\nu=1\).

\begin{remark}
The tower \(\T_\bullet(G;\nu,\tau)\) is the abelian specialization of the
iterated twisted tensor-product construction in
\cite{GustafsonKimballRowellZhang}.  Its local algebra is
\(\C_\nu[G]\), and the twisting between adjacent factors is
\[
  U^h\ot U^g\longmapsto \tau(g,h)U^g\ot U^h.
\]
\end{remark}

\subsection{The coefficient equation}
\label{subsec:coefficient-functions}

For a function \(f:G\to\C\), define
\[
  r_f=\sum_{g\in G}f(g)U^g\in\T_2(G;\nu,\tau),
  \qquad
  r_{f,i}=\sum_{g\in G}f(g)U_i^g\in\T_n(G;\nu,\tau)
  \quad(1\leq i<n).
\]
The problem is to decide when the elements \(r_{f,i}\) satisfy the braid
relations.  The distant braid relations are automatic from
\eqref{eq:tower-far}; the adjacent braid relation is the condition
\[
  r_{f,1}r_{f,2}r_{f,1}=r_{f,2}r_{f,1}r_{f,2}
  \quad\text{in }\T_3(G;\nu,\tau).
\]
When this holds, we say that \(f\) satisfies the
\((G;\nu,\tau)\) coefficient Yang--Baxter equation.

\begin{lemma}
\label{lem:coefficient-ybe}
The function \(f:G\to\C\) satisfies the \((G;\nu,\tau)\) coefficient
Yang--Baxter equation if and only if, for every \(s,t\in G\),
\begin{equation}\label{eq:coefficient-ybe}
  f(t)\sum_{c\in G}
  f(s-c)f(c)\nu(s-c,c)\tau(c,t)^{-1}
  =
  f(s)\sum_{a\in G}
  f(a)f(t-a)\nu(a,t-a)\tau(s,a)^{-1}.
\end{equation}
\end{lemma}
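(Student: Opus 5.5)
The plan is to expand both sides of the adjacent braid relation in the normal-ordered basis \(\{U_1^{s}U_2^{t}: s,t\in G\}\) of \(\T_3(G;\nu,\tau)\) and compare coefficients. Since \(\T_3(G;\nu,\tau)\) is a twisted group algebra with basis \(\{U^{\mathbf g}\}\), the elements \(U^{(s,t)}=U_1^sU_2^t\) are linearly independent. This holds because \(\kappa_3^\tau((s,0),(0,t))=\tau(0,0)^{-1}=1\) and \(\nu\) is normalized, so \(U_1^sU_2^t\) is exactly the basis vector \(U^{(s,t)}\) with no extra scalar. Therefore \(r_{f,1}r_{f,2}r_{f,1}=r_{f,2}r_{f,1}r_{f,2}\) holds if and only if the coefficients of each \(U_1^sU_2^t\) agree. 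The distant relations need no check, as already noted via \eqref{eq:tower-far}.

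For the left-hand side, write
\[
r_{f,1}r_{f,2}r_{f,1}=\sum_{a,b,c}f(a)f(b)f(c)\,U_1^aU_2^bU_1^c .
\]
Apply \eqref{eq:tower-adjacent} in the form \(U_2^bU_1^c=\tau(c,b)^{-1}U_1^cU_2^b\), and then \eqref{eq:tower-same} to get \(U_1^aU_1^c=\nu(a,c)U_1^{a+c}\). A term contributes to \(U_1^sU_2^t\) exactly when \(b=t\) and \(a=s-c\). Its coefficient is therefore
\[
f(t)\sum_{c\in G}f(s-c)f(c)\,\nu(s-c,c)\,\tau(c,t)^{-1},
\]
which is the left side of \eqref{eq:coefficient-ybe}.

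For the right-hand side, write
\[
r_{f,2}r_{f,1}r_{f,2}=\sum_{a,b,c}f(a)f(b)f(c)\,U_2^aU_1^bU_2^c .
\]
Use \(U_2^aU_1^b=\tau(b,a)^{-1}U_1^bU_2^a\) and then \(U_2^aU_2^c=\nu(a,c)U_2^{a+c}\); the same-position relation \eqref{eq:tower-same} applies in position \(2\) as well. A term lands on \(U_1^sU_2^t\) when \(b=s\) and \(c=t-a\). This gives
\[
f(s)\sum_{a\in G}f(a)f(t-a)\,\nu(a,t-a)\,\tau(s,a)^{-1},
\]
which is the right side of \eqref{eq:coefficient-ybe}.

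Equating the two coefficients for all \(s,t\) proves both directions of the equivalence simultaneously. The only step requiring care is bookkeeping: getting the direction of the \(\tau\)-commutation correct, since the relation \eqref{eq:tower-adjacent} moves \(U_1\) to the left of \(U_2\) with factor \(\tau(\cdot,\cdot)^{-1}\) evaluated with the \(U_1\)-label first, and confirming that the normal-ordered products carry no hidden \(\Omega_3\) scalar. Neither step is a genuine obstacle. No gauge choice on \(\nu\) is needed, since only the straightening relations are used.
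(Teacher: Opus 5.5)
Your proposal is correct and follows the paper's proof: you expand both triple products, straighten with the adjacent and same-position relations, and compare coefficients of the basis monomials \(U_1^sU_2^t\) in \(\T_3(G;\nu,\tau)\). Your one addition is the explicit check that \(U_1^sU_2^t=U^{(s,t)}\) carries no extra \(\Omega_3\) scalar, which the paper takes for granted.
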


\begin{proof}
The left-hand side of the braid relation is
\[
  r_{f,1}r_{f,2}r_{f,1}
  =
  \sum_{a,b,c\in G}f(a)f(b)f(c)U_1^aU_2^bU_1^c.
\]
Using \(U_2^bU_1^c=\tau(c,b)^{-1}U_1^cU_2^b\), the coefficient of
\(U_1^sU_2^t\) is
\[
  f(t)\sum_{c\in G}
  f(s-c)f(c)\nu(s-c,c)\tau(c,t)^{-1}.
\]
Similarly,
\[
  r_{f,2}r_{f,1}r_{f,2}
  =
  \sum_{a,b,c\in G}f(a)f(b)f(c)U_2^aU_1^bU_2^c,
\]
and reordering gives
\[
  U_2^aU_1^bU_2^c
  =
  \tau(b,a)^{-1}\nu(a,c)U_1^bU_2^{a+c}.
\]
Thus the coefficient of \(U_1^sU_2^t\) is
\[
  f(s)\sum_{a\in G}
  f(a)f(t-a)\nu(a,t-a)\tau(s,a)^{-1}.
\]
The monomials \(U_1^sU_2^t\) form a basis of
\(\T_3(G;\nu,\tau)\).  Therefore equality of the two products is equivalent to
\eqref{eq:coefficient-ybe}.
\end{proof}

\noindent
To express the unitarity of \(r_f\), for \(f:G\to\C\) set
\begin{equation}\label{eq:coefficient-unitarity-sums}
  C_f^{\nu}(t)
  =
  \sum_{g\in G}\overline{f(g)}f(g+t)\nu(g,t)^{-1}
  \qquad(t\in G).
\end{equation}
We call \(r_f\) \emph{projectively unitary} if
\(r_f^*r_f=c\,1\) for some real scalar \(c>0\).

\begin{lemma}
\label{lem:coefficient-unitarity}
Let \(\nu\) be a unitary bicharacter on the finite abelian group \(G\).
Then the coefficient element \(r_f\in\C_\nu[G]\) satisfies
\[
  r_f^*r_f
  =
  \sum_{t\in G}C_f^{\nu}(t)U^t.
\]
In particular, \(r_f\) is unitary if and only if
\(C_f^{\nu}(t)=\delta_{t,0}\).  If \(f\neq0\), then \(r_f\) is
projectively unitary if and only if
\[
  C_f^{\nu}(t)=c_f\delta_{t,0},
  \qquad
  c_f=\sum_{g\in G}|f(g)|^2>0.
\]
In that case \(c_f^{-1/2}r_f\) is unitary.
\end{lemma}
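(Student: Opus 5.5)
The argument is a direct computation in \(\C_\nu[G]\), using the involution \eqref{eq:twisted-star} available once \(\nu\) is a unitary bicharacter. First I would write
\[
  r_f^*=\sum_{g\in G}\overline{f(g)}\,(U^g)^*=\sum_{g\in G}\overline{f(g)}\,\nu(g,g)\,U^{-g}.
\]
Multiplying,
\[
  r_f^*r_f=\sum_{g,h}\overline{f(g)}f(h)\,\nu(g,g)\,U^{-g}U^{h}
  =\sum_{g,h}\overline{f(g)}f(h)\,\nu(g,g)\,\nu(-g,h)\,U^{h-g}.
\]
Next I would reindex by \(h=g+t\). Bimultiplicativity gives
\[
  \nu(g,g)\,\nu(-g,g+t)=\nu(g,g)\,\nu(g,g)^{-1}\nu(g,t)^{-1}=\nu(g,t)^{-1}.
\]
So the coefficient of \(U^t\) is exactly \(C_f^{\nu}(t)\) from \eqref{eq:coefficient-unitarity-sums}, which proves the displayed formula. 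The only real care needed here is the sign conventions in the bicharacter manipulation, namely \(\nu(-g,x)=\nu(g,x)^{-1}\). I expect this to be the main, though minor, obstacle.

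For the consequences, the \(U^t\) form a basis and \(1=U^0\) (since \(\nu\) is normalized). Hence \(r_f^*r_f=c\,1\) if and only if \(C_f^\nu(t)=c\,\delta_{t,0}\). For the unitarity claim I take \(c=1\), and observe that in the finite-dimensional algebra \(\C_\nu[G]\) a one-sided inverse is two-sided. So \(r_f^*r_f=1\) already gives \(r_fr_f^*=1\), that is, \(r_f\) is unitary.

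For the projective statement, I would evaluate at \(t=0\). Since \(\nu(g,0)=1\), we get \(C_f^\nu(0)=\sum_g|f(g)|^2=c_f\), which is positive when \(f\neq0\). Thus if \(C_f^\nu(t)=c\,\delta_{t,0}\), necessarily \(c=c_f\). Conversely, that condition gives \(r_f^*r_f=c_f1\) with \(c_f>0\). Finally, \((c_f^{-1/2}r_f)^*(c_f^{-1/2}r_f)=1\), so \(c_f^{-1/2}r_f\) is unitary by the same one-sided-inverse remark.
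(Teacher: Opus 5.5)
Your proposal is correct and follows essentially the same route as the paper: both use bimultiplicativity to get $(U^g)^*U^{g+t}=\nu(g,t)^{-1}U^t$ and read off the coefficients $C_f^{\nu}(t)$ in the basis $\{U^t\}$ with $U^0=1$. Both then evaluate at $t=0$ to identify the constant as $c_f$, and use that a one-sided inverse in a finite-dimensional algebra is two-sided to pass from $r_f^*r_f=c_f1$ to $r_fr_f^*=c_f1$.
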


\begin{proof}
By bilinearity,
\[
  (U^g)^*U^{g+t}
  =
  \nu(g,g)\nu(-g,g+t)U^t
  =
  \nu(g,t)^{-1}U^t.
\]
Consequently,
\[
  r_f^*r_f
  =
  \sum_{g,t\in G}
  \overline{f(g)}f(g+t)\nu(g,t)^{-1}U^t
  =
  \sum_{t\in G}C_f^\nu(t)U^t.
\]
Since \(\{U^t:t\in G\}\) is a basis and \(U^0=1\), (projective) unitarity
is equivalent to the corresponding coefficient conditions.  Moreover,
\[
  C_f^\nu(0)=\sum_{g\in G}|f(g)|^2.
\]
Finally, in a finite-dimensional unital algebra a left inverse is a
two-sided inverse, and therefore \(r_f^*r_f=c_f\,1\) also gives
\(r_fr_f^*=c_f\,1\).
\end{proof}

\begin{cor}\label{cor:coefficient-unitary-braid-criterion}
The elements \(\{r_{f,i}\}\) form a unitary braid-compatible family if and
only if \(f\) satisfies the coefficient equation
\eqref{eq:coefficient-ybe} and
\[
  C_f^{\nu}(t)=\delta_{t,0}
  \qquad(t\in G).
\]
They form a projectively unitary braid-compatible family if and only if
\(f\neq0\), equation \eqref{eq:coefficient-ybe} holds, and
\(C_f^{\nu}(t)=c_f\delta_{t,0}\), where
\(c_f=\sum_{g\in G}|f(g)|^2\).
\end{cor}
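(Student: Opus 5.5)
The corollary follows by combining Lemma~\ref{lem:coefficient-ybe} with Lemma~\ref{lem:coefficient-unitarity}, once we check that unitarity of the single local element \(r_f=r_{f,1}\in\T_2\) propagates to every \(r_{f,i}\in\T_n\). We work in the bicharacter gauge fixed after Remark~\ref{rem:cocycle-gauge}. The phrase "unitary braid-compatible family" is read as two requirements. First, \(\{r_{f,i}\}\) satisfies \eqref{eq:local-braid} and \eqref{eq:local-distant}. Second, each \(r_{f,i}\) is unitary for the canonical involution of the tower.

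\emph{Braid part.} The distant relations \eqref{eq:local-distant} hold automatically by \eqref{eq:tower-far}. So the braid conditions reduce to the adjacent relation \eqref{eq:local-braid} for every pair \(i,i+1\) in every \(\T_n\). The plan is to reduce this to the case \(i=1\) in \(\T_3\), and then invoke Lemma~\ref{lem:coefficient-ybe}. For the reduction, consider the map \(U_1^g\mapsto U_i^g\), \(U_2^g\mapsto U_{i+1}^g\) from \(\T_3\) to \(\T_n\). It respects the straightening relations \eqref{eq:tower-same} and \eqref{eq:tower-adjacent}, since these relations involve only \(\nu\) and \(\tau\) and are the same at positions \(i,i+1\). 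Hence it is a well-defined algebra homomorphism; the normal-ordered basis \(\{U_1^sU_2^t\}\) is sent to linearly independent monomials, so it is injective. Thus the relation \(r_{f,1}r_{f,2}r_{f,1}=r_{f,2}r_{f,1}r_{f,2}\) holds in \(\T_3\) exactly when the \(i\)-th relation holds in \(\T_n\). Lemma~\ref{lem:coefficient-ybe} identifies this condition with \eqref{eq:coefficient-ybe}.

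\emph{Unitarity part.} The map \(U^g\mapsto U_i^g\) from \(\T_2=\C_\nu[G]\) into \(\T_n\) is a unital algebra embedding. It also preserves the involution: by \eqref{eq:twisted-star}, \((U_i^g)^*=\nu(g,g)U_i^{-g}\), since \(\Omega_n^{\nu,\tau}(\epsilon_i(g),\epsilon_i(g))=\nu(g,g)\) (the \(\kappa\) factor is trivial on a single coordinate). Therefore \(r_{f,i}^*r_{f,i}\) is the image of \(r_f^*r_f\). So \(r_{f,i}\) is unitary, respectively projectively unitary with constant \(c\), if and only if \(r_f\) is. Lemma~\ref{lem:coefficient-unitarity} then gives exactly the conditions \(C_f^\nu(t)=\delta_{t,0}\), respectively \(C_f^\nu(t)=c_f\delta_{t,0}\) with \(f\neq0\). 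For invertibility, which the definition of a braid-compatible family requires: in the unitary case it follows from \(r_f^*r_f=1\) together with the two-sided inverse remark in the proof of Lemma~\ref{lem:coefficient-unitarity}. In the projective case it follows from \(c_f>0\). Conversely, a nonzero \(f\) is needed for projective unitarity, since \(c>0\).

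\emph{Main obstacle.} The only delicate point is the embedding check: that shifting indices \(1,2\mapsto i,i+1\) defines injective \(*\)-homomorphisms, so that a single computation in \(\T_2\) and \(\T_3\) suffices. This is where the translation invariance of \(\Omega_n^{\nu,\tau}\) along the coordinates must be verified. The argument is to compare the straightening relations term by term.
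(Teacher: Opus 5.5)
Your proposal is correct and follows the paper's route. The paper's proof simply cites Lemma~\ref{lem:coefficient-ybe} for the adjacent relation, the automatic distant relations from \eqref{eq:tower-far}, and Lemma~\ref{lem:coefficient-unitarity} for (projective) unitarity. Your extra check that the index shifts \(\T_3\to\T_n\) and \(\T_2\to\T_n\) are injective \(*\)-homomorphisms holds, because \(\Omega_n^{\nu,\tau}\) restricted to positions \(i,i+1\) agrees with \(\Omega_3^{\nu,\tau}\) and the \(\kappa\)-factor is trivial on a single coordinate; it makes explicit a step the paper leaves implicit.
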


\begin{proof}
The adjacent braid relation is Lemma~\ref{lem:coefficient-ybe}, the
distant relations are automatic, and (projective) unitarity is
Lemma~\ref{lem:coefficient-unitarity}.
\end{proof}

When \(\nu=1\), we call the
\((G;1,\tau)\) coefficient Yang--Baxter equation the \((G,\tau)\)
coefficient equation and write \(C_f=C_f^1\).

\subsection{Operations on tower solutions}
\label{subsec:tower-operations}

\subsubsection{External products and monoidality}

If \(\A_\bullet\) and \(\A'_\bullet\) are towers, write
\[
  (\A_\bullet\otimes_{\mathrm{lev}}\A'_\bullet)_n
  =\A_n\otimes\A'_n
\]
for their levelwise tensor product.  For a twisted group-algebra tower, let
\[
  x=\sum_{\mathbf g\in E_n(G)}a_{\mathbf g}U^{\mathbf g}
  \in\T_n(G;\nu,\tau).
\]
Define
\[
  \varepsilon_n(x)=a_{\mathbf 0}.
\]

Let \(G_1,G_2\) be finite abelian groups, let
\(\nu_j:G_j\times G_j\to\Uone\) be normalized unitary \(2\)-cocycles, and
let \(\tau_j:G_j\times G_j\to\Uone\) be bicharacters.  On
\(G=G_1\oplus G_2\), set
\begin{align*}
  \nu((g_1,g_2),(h_1,h_2))
  &=\nu_1(g_1,h_1)\nu_2(g_2,h_2),\\
  \tau((g_1,g_2),(h_1,h_2))
  &=\tau_1(g_1,h_1)\tau_2(g_2,h_2).
\end{align*}
For \(j=1,2\), let \(f_j:G_j\to\C\), and define
\[
  (f_1\boxtimes f_2)(g_1,g_2)=f_1(g_1)f_2(g_2).
\]

\begin{prop}
\label{prop:tower-external-product}
The assignments
\begin{equation}\label{eq:tower-external-product-map}
  \Phi_\bullet:\T_\bullet(G;\nu,\tau)
  \longrightarrow
  \T_\bullet(G_1;\nu_1,\tau_1)
  \otimes_{\mathrm{lev}}
  \T_\bullet(G_2;\nu_2,\tau_2),
  \qquad
  U_i^{(g_1,g_2)}\longmapsto U_i^{g_1}\otimes U_i^{g_2}.
\end{equation}
give a natural isomorphism of towers of unital \(*\)-algebras.  Under this
isomorphism,
\begin{equation}\label{eq:tower-external-product-braid-element}
  \Phi_n(r_{f_1\boxtimes f_2,i})
  =r_{f_1,i}\otimes r_{f_2,i}
  \qquad(n\geq2,\ 1\leq i<n).
\end{equation}
If \(\varepsilon_n^{(j)}\) is the map defined above for the \(j\)-th factor,
then
\begin{equation}\label{eq:tower-external-product-trace}
  \varepsilon_n
  =(\varepsilon_n^{(1)}\otimes\varepsilon_n^{(2)})\circ\Phi_n
  \qquad(n\geq1).
\end{equation}
\end{prop}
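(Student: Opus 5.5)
The plan is to work at the level of basis elements, since $\T_n(G;\nu,\tau)$ is a twisted group algebra of $E_n(G)=G^{n-1}$. I first identify $E_n(G)=E_n(G_1)\times E_n(G_2)$ by splitting each coordinate, $\mathbf g=((g_{1,1},g_{2,1}),\ldots,(g_{1,n-1},g_{2,n-1}))\mapsto(\mathbf g^{(1)},\mathbf g^{(2)})$. I then define $\Phi_n$ on the full basis by
\[
  \Phi_n(U^{\mathbf g})=U^{\mathbf g^{(1)}}\otimes U^{\mathbf g^{(2)}} .
\]
This map sends a basis to a basis, since $\T_n(G_1)\otimes\T_n(G_2)$ has basis $\{U^{\mathbf a}\otimes U^{\mathbf b}\}$ indexed by $E_n(G_1)\times E_n(G_2)$. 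It is therefore a linear isomorphism. The normal-ordered basis identity $U^{\mathbf g}=U_1^{g_1}\cdots U_{n-1}^{g_{n-1}}$, applied in each factor, shows that it agrees with the prescription $U_i^{(g_1,g_2)}\mapsto U_i^{g_1}\otimes U_i^{g_2}$ on generators.

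The key step is multiplicativity, which reduces to the factorization of cocycles
\[
  \Omega_n^{\nu,\tau}(\mathbf g,\mathbf h)=\Omega_n^{\nu_1,\tau_1}(\mathbf g^{(1)},\mathbf h^{(1)})\,\Omega_n^{\nu_2,\tau_2}(\mathbf g^{(2)},\mathbf h^{(2)}).
\]
This follows directly from \eqref{eq:tower-cocycle}. Each factor $\nu(g_i,h_i)$ splits as $\nu_1\nu_2$, and each factor $\tau(h_i,g_{i+1})^{-1}$ splits as $\tau_1^{-1}\tau_2^{-1}$, by the definitions of $\nu$ and $\tau$ on $G_1\oplus G_2$. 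Because the product in the tensor product algebra is componentwise, $\Phi_n(U^{\mathbf g}U^{\mathbf h})=\Phi_n(U^{\mathbf g})\Phi_n(U^{\mathbf h})$. Unitality holds because $U^{\mathbf 0}\mapsto1\otimes1$.

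For the $*$-structure, I use the general involution formula $(U^{\mathbf g})^*=\Omega_n(\mathbf g,-\mathbf g)^{-1}U^{-\mathbf g}$. Together with the factorization of $\Omega_n$, this gives $\Phi_n((U^{\mathbf g})^*)=(U^{\mathbf g^{(1)}})^*\otimes(U^{\mathbf g^{(2)}})^*$, and conjugate-linear extension finishes the argument. Compatibility with the tower inclusions $\mathbf g\mapsto(\mathbf g,0)$ is immediate, because appending a zero coordinate commutes with splitting into components. Naturality amounts to this compatibility, together with the fact that the construction is functorial in the data. I expect nothing subtle here; the only point needing care is the bookkeeping of the $\tau$ factors with shifted indices $h_i,g_{i+1}$, and the factorization handles it termwise.

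For \eqref{eq:tower-external-product-braid-element}, I expand
\[
  \Phi_n(r_{f_1\boxtimes f_2,i})=\sum_{g_1,g_2}f_1(g_1)f_2(g_2)\,U_i^{g_1}\otimes U_i^{g_2},
\]
which factors as $\bigl(\sum f_1(g_1)U_i^{g_1}\bigr)\otimes\bigl(\sum f_2(g_2)U_i^{g_2}\bigr)=r_{f_1,i}\otimes r_{f_2,i}$.

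For \eqref{eq:tower-external-product-trace}, I take $x=\sum a_{\mathbf g}U^{\mathbf g}$. Then $\Phi_n(x)=\sum a_{\mathbf g}U^{\mathbf g^{(1)}}\otimes U^{\mathbf g^{(2)}}$. The functional $\varepsilon^{(1)}_n\otimes\varepsilon^{(2)}_n$ picks out the coefficient of the basis element $U^{\mathbf 0}\otimes U^{\mathbf 0}$, which is $a_{\mathbf 0}=\varepsilon_n(x)$.
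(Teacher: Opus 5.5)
Your proposal is correct and follows essentially the same route as the paper's proof. Both arguments use the coordinatewise identification $E_n(G_1\oplus G_2)\cong E_n(G_1)\oplus E_n(G_2)$ and the factorization of $\Omega_n^{\nu,\tau}$ read off from \eqref{eq:tower-cocycle}, so that the basis map preserves multiplication, the canonical involution and the embeddings $\mathbf g\mapsto(\mathbf g,0)$; the braid-element and $\varepsilon_n$ identities then follow directly from the coefficients.
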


\begin{proof}
The identification
\[
  E_n(G_1\oplus G_2)
  \cong E_n(G_1)\oplus E_n(G_2)
\]
sends \(\mathbf g\) to
\((\mathbf g^{(1)},\mathbf g^{(2)})\).  Formula
\eqref{eq:tower-cocycle} then gives
\[
  \Omega_n^{\nu,\tau}(\mathbf g,\mathbf h)
  =
  \Omega_n^{\nu_1,\tau_1}
    (\mathbf g^{(1)},\mathbf h^{(1)})
  \Omega_n^{\nu_2,\tau_2}
    (\mathbf g^{(2)},\mathbf h^{(2)}).
\]
Hence the basis map
\(U^{\mathbf g}\mapsto
U^{\mathbf g^{(1)}}\otimes U^{\mathbf g^{(2)}}\)
preserves multiplication and the canonical involution, and it is compatible
with the embeddings \(\mathbf g\mapsto(\mathbf g,0)\).  This proves the tower
isomorphism.
Summing the local basis map with the product coefficients gives
\eqref{eq:tower-external-product-braid-element}.  The only basis element
mapped to the tensor product of two identity elements is the identity itself,
which proves \eqref{eq:tower-external-product-trace}.
\end{proof}

If the two towers have compatible tensor-power realizations on
\(V_1^{\ot n}\) and \(V_2^{\ot n}\), equation
\eqref{eq:tower-external-product-braid-element} and the canonical shuffle
\[
  V_1^{\ot n}\otimes V_2^{\ot n}
  \cong (V_1\otimes V_2)^{\ot n}
\]
give a realization of \(r_{f_1\boxtimes f_2}\).  If the two local operators
are \(R_1\) and \(R_2\), we denote the resulting operator by
\(R_1\boxtimes R_2\).

\begin{cor}\label{cor:coefficient-products}
If each \(f_j\) satisfies its coefficient Yang--Baxter equation, then
\(f_1\boxtimes f_2\) satisfies the coefficient equation for the direct-sum
data.  If the two local coefficient elements are unitary, their external
product is also unitary.
\end{cor}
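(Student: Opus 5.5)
The plan is to deduce Corollary~\ref{cor:coefficient-products} from Proposition~\ref{prop:tower-external-product}. The isomorphism \(\Phi_\bullet\) transports all algebraic relations, so the two claims reduce to identities in a tensor product of algebras. No coefficient computation is needed, although a direct check of \eqref{eq:coefficient-ybe} is available as a cross-check.

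\emph{The braid relation.} Assume each \(f_j\) satisfies its coefficient equation. By Lemma~\ref{lem:coefficient-ybe} this means
\[
  r_{f_j,1}r_{f_j,2}r_{f_j,1}=r_{f_j,2}r_{f_j,1}r_{f_j,2}
  \quad\text{in }\T_3(G_j;\nu_j,\tau_j).
\]
Apply \(\Phi_3\) and use \eqref{eq:tower-external-product-braid-element}. Multiplication in \(\T_3(G_1)\otimes\T_3(G_2)\) is componentwise, so
\[
  \Phi_3(r_{f,1}r_{f,2}r_{f,1})
  =(r_{f_1,1}r_{f_1,2}r_{f_1,1})\otimes(r_{f_2,1}r_{f_2,2}r_{f_2,1}),
\]
where \(f=f_1\boxtimes f_2\). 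The same formula holds for the other product \(r_{f,2}r_{f,1}r_{f,2}\). The two right-hand sides agree factor by factor. Since \(\Phi_3\) is injective, the braid relation holds in \(\T_3(G;\nu,\tau)\). By Lemma~\ref{lem:coefficient-ybe} again, \(f\) satisfies the coefficient equation for the direct-sum data.

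\emph{Unitarity.} Proposition~\ref{prop:tower-external-product} says \(\Phi_2\) is a \(*\)-isomorphism. The involution on the tensor product is \((x\otimes y)^*=x^*\otimes y^*\), so
\[
  \Phi_2(r_f^*r_f)=r_{f_1}^*r_{f_1}\otimes r_{f_2}^*r_{f_2}=1\otimes1=\Phi_2(1).
\]
Hence \(r_f^*r_f=1\), and Lemma~\ref{lem:coefficient-unitarity} supplies the two-sided inverse. The proposition allows \(\nu_j\) to be a general cocycle, but Lemma~\ref{lem:coefficient-unitarity} is stated for bicharacters. This is harmless, because here I use the \(*\)-isomorphism directly rather than the coefficient formula. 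Alternatively, one can gauge each \(\nu_j\) to a bicharacter by Remark~\ref{rem:cocycle-gauge}; the product gauge \(\lambda_1\lambda_2\) is compatible with \(\boxtimes\).

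\emph{Main obstacle.} The only step needing care is confirming that the involution on the levelwise tensor product is the tensor of involutions. This also requires that the canonical involution of the direct-sum tower matches, which uses the factorization of \(\Omega_n^{\nu,\tau}\) from the proof of the proposition. Should the \(*\)-compatibility be in doubt, I would instead verify directly from \eqref{eq:coefficient-unitarity-sums} that
\[
  C^{\nu}_{f}(t_1,t_2)=C^{\nu_1}_{f_1}(t_1)\,C^{\nu_2}_{f_2}(t_2).
\]
This follows because both the sum over \(G\) and \(\nu\) factor over the two summands. The product is then \(\delta_{t_1,0}\delta_{t_2,0}=\delta_{t,0}\).
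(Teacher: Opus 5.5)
Your proposal is correct and is essentially the paper's argument: you transport the adjacent braid relation through the tower isomorphism of Proposition~\ref{prop:tower-external-product} using \eqref{eq:tower-external-product-braid-element} and Lemma~\ref{lem:coefficient-ybe}, and you get unitarity because $\Phi_2$ is a $*$-isomorphism and a tensor product of unitaries is unitary. Your extra care about injectivity of $\Phi_3$, the involution on the levelwise tensor product, and the factorization $C^{\nu}_{f}(t_1,t_2)=C^{\nu_1}_{f_1}(t_1)\,C^{\nu_2}_{f_2}(t_2)$ only makes explicit what the paper's three-line proof leaves implicit.
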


\begin{proof}
Under \(\Phi_\bullet\), every adjacent braid relation becomes the tensor
product of the two corresponding braid relations; distant relations behave
the same way.  Equation \eqref{eq:tower-external-product-braid-element} and
Lemma~\ref{lem:coefficient-ybe} give the coefficient equation.  A tensor
product of unitary elements is unitary.
\end{proof}

\subsubsection{Direct sums}

Let \(R\in\End(V\otimes V)\) and \(S\in\End(W\otimes W)\) be
Yang--Baxter operators.  With respect to the decomposition
\[
 (V\oplus W)^{\otimes2}
 = (V\otimes V)\oplus(V\otimes W)
   \oplus(W\otimes V)\oplus(W\otimes W),
\]
define their \emph{direct sum} \(R\boxplus S\) by
\begin{equation}\label{eq:YBO-direct-sum}
 (R\boxplus S)(x\otimes y)
 =
 \begin{cases}
   R(x\otimes y),&x,y\in V,\\
   S(x\otimes y),&x,y\in W,\\
   y\otimes x,&x\in V,\ y\in W\text{ or }x\in W,\ y\in V.
 \end{cases}
\end{equation}
Thus \(R\boxplus S\) acts by the given operators on the two pure summands
and by the ordinary flip on the mixed summands.  For involutive operators,
this is the \(\boxplus\)-construction of \cite{LechnerPennigWood}.

\begin{prop}\label{prop:YBO-direct-sum}
The operator \(R\boxplus S\) is a Yang--Baxter operator on \(V\oplus W\).
If \(R\) and \(S\) are unitary, then \(R\boxplus S\) is unitary.
\end{prop}

\begin{proof}
Decompose \((V\oplus W)^{\otimes3}\) into the eight tensor-product
summands indexed by words of length three in \(V\) and \(W\).  On
\(V^{\otimes3}\) and \(W^{\otimes3}\), the braid relation is that of
\(R\) and \(S\), respectively.  On each mixed summand, both sides of the
braid relation move the unique factor of one type past the other two by
flips and apply the appropriate one of \(R\) or \(S\) once to the two
factors of the same type.  Hence the two sides agree on every summand.

The four summands in \eqref{eq:YBO-direct-sum} are mutually orthogonal.
The restrictions to the pure summands are \(R\) and \(S\), while the mixed
summands are exchanged by a unitary flip.  Thus \(R\boxplus S\) is unitary
whenever \(R\) and \(S\) are unitary.
\end{proof}

\subsubsection{The internal sum operation}

For this subsection, a \emph{local tower} is a tower
\(\A_1\subset\A_2\subset\cdots\) equipped, for every \(i\geq1\), with a
unital algebra homomorphism
\[
  \lambda_i:\A_2\longrightarrow\A_{i+1},
  \qquad x\longmapsto x_i,
  \qquad \lambda_1=\id_{\A_2}.
\]
We view \(x_i\) in every \(\A_n\) with \(n\geq i+1\) by the tower inclusions and require
that copies with disjoint supports commute.
\begin{equation}\label{eq:local-tower-distant-commutation}
  x_i y_j=y_jx_i
  \qquad (x,y\in\A_2,\ |i-j|>1).
\end{equation}
If the tower has an involution, the tower inclusions and every \(\lambda_i\)
are also required to be \(*\)-homomorphisms.
Twisted group-algebra towers and tensor-power towers have this local
structure.  For \(a,b\in\A_2\), write \(a_i,b_i\) for their local copies.

\begin{definition}\label{def:boxplus}
For \(a,b\in\A_2\) and \(t\in\C\), define their parameterized internal sum by
\begin{equation}\label{eq:boxplus-definition}
  a\boxplus_t b:=a+t b.
\end{equation}
\end{definition}

\begin{lemma}\label{lem:boxplus-braid-relations}
For \(a,b\in\A_2\), the local copies
\[
  r_i(t)=a_i\boxplus_t b_i
\]
satisfy all braid relations for every \(t\in\C\) if and only if, for every
\(i\geq1\),
\begin{align*}
 a_i a_{i+1}a_i&=a_{i+1}a_i a_{i+1},\\
 b_i b_{i+1}b_i&=b_{i+1}b_i b_{i+1},
\end{align*}
and
\begin{align*}
 b_i a_{i+1}a_i+a_i b_{i+1}a_i+a_i a_{i+1}b_i
 &=b_{i+1}a_i a_{i+1}+a_{i+1}b_i a_{i+1}
   +a_{i+1}a_i b_{i+1},\\
 a_i b_{i+1}b_i+b_i a_{i+1}b_i+b_i b_{i+1}a_i
 &=a_{i+1}b_i b_{i+1}+b_{i+1}a_i b_{i+1}
   +b_{i+1}b_i a_{i+1}
\end{align*}
hold in \(\A_{i+2}\).
\end{lemma}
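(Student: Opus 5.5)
The plan is to expand the braid relation as a polynomial identity in the parameter \(t\) and compare coefficients. First, the distant relations take care of themselves: since \(r_i(t)\) and \(r_j(t)\) are local copies of the single element \(a+tb\in\A_2\), they commute for \(|i-j|>1\) by \eqref{eq:local-tower-distant-commutation}, for every \(t\). It therefore remains to treat, for each \(i\geq1\), the adjacent relation
\[
  r_i(t)r_{i+1}(t)r_i(t)=r_{i+1}(t)r_i(t)r_{i+1}(t)
\]
in \(\A_{i+2}\). Here \(\lambda_i\) is a homomorphism and the tower inclusions are unital homomorphisms, so \(r_i(t)=a_i+tb_i\) with the local copies \(a_i,b_i\).

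Next, expand both sides of the adjacent relation by multilinearity. Each side is a product of three factors of the form \(a_\bullet+tb_\bullet\), so it is a polynomial in \(t\) of degree at most \(3\) with coefficients in \(\A_{i+2}\). The coefficients of the difference are as follows:
\begin{itemize}
  \item at \(t^0\), the braid relation for \(a\);
  \item at \(t^3\), the braid relation for \(b\);
  \item at \(t^1\), the three words with exactly one \(b\): \(b_ia_{i+1}a_i+a_ib_{i+1}a_i+a_ia_{i+1}b_i\) on the left and \(b_{i+1}a_ia_{i+1}+a_{i+1}b_ia_{i+1}+a_{i+1}a_ib_{i+1}\) on the right, which is exactly the first mixed identity;
  \item at \(t^2\), the words with exactly two \(b\)'s, which give the second mixed identity.
\end{itemize}
Track the position of the \(b\) factors carefully here, since the algebra is noncommutative; this bookkeeping is the only real work.

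Finally, prove the equivalence. If the four identities hold, then every coefficient of the difference vanishes and the relation holds for all \(t\). Conversely, suppose the relation holds for every \(t\in\C\). Then the \(\A_{i+2}\)-valued polynomial \(P(t)=c_0+c_1t+c_2t^2+c_3t^3\) vanishes identically. Evaluating at four distinct values of \(t\) and inverting the Vandermonde matrix, a linear operation valid in any complex vector space, shows that \(c_0=c_1=c_2=c_3=0\). Alternatively, one can apply linear functionals to reduce to scalar polynomials, using that \(\A_{i+2}\) is a complex vector space.

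The main obstacle is only the careful expansion of the degree \(1\) and \(2\) terms. No step seems conceptually difficult. One detail needs checking: the statement quantifies over all \(i\), so the relations are used in \(\A_{i+2}\) with the copies \(a_i,a_{i+1}\) defined via \(\lambda_i,\lambda_{i+1}\) and the inclusions. No identification between different \(i\) is needed, since the argument is carried out separately for each \(i\).
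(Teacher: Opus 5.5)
Your proposal is correct and follows the paper's proof: expand the braid difference as a polynomial of degree at most three in \(t\), read off the degree-\(0\) and degree-\(3\) coefficients as the braid relations for \(a\) and \(b\) and the degree-\(1\) and degree-\(2\) coefficients as the two mixed identities, and note that the distant relations follow from the local-tower commutation axiom. Your Vandermonde argument only spells out the step the paper states briefly, namely that the polynomial vanishes identically exactly when its four coefficients vanish.
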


\begin{proof}
For each \(i\), expand
\[
 r_i(t)r_{i+1}(t)r_i(t)
 -r_{i+1}(t)r_i(t)r_{i+1}(t)
\]
as a polynomial of degree at most three in \(t\).  Its coefficients of
degrees \(0\) and \(3\) are the braid relations for \(a\) and \(b\), and
the coefficients of degrees \(1\) and \(2\) are precisely the two mixed
identities in the statement.  The polynomial
vanishes identically exactly when these four coefficients vanish.  The
distant relations hold for every \(t\) by
\eqref{eq:local-tower-distant-commutation}.
\end{proof}

The internal sum \(\boxplus_t\) is not automatically braid-compatible.  The
necessary and sufficient identities are those of
Lemma~\ref{lem:boxplus-braid-relations}.  Orthogonality also gives unitarity.

\begin{prop}\label{prop:boxplus-unitarity}
Let \(a,b\) satisfy the identities in
Lemma~\ref{lem:boxplus-braid-relations}.  Suppose the local tower consists of
finite-dimensional unital \(*\)-algebras, and assume that there are
complementary projections
\(e_0,e_1\in\A_2\) and a scalar \(c>0\) such that
\begin{align*}
 a^*a=aa^*&=c e_0,&
 b^*b=bb^*&=c e_1,\\
 a^*b=b^*a&=0,&
 ab^*=ba^*&=0,
\end{align*}
with \(e_0+e_1=1\).  Then
\[
  r(t)=c^{-1/2}(a\boxplus_t b)
\]
has unitary local copies, and these copies form a braid-compatible family,
for every \(|t|=1\).  We call the resulting family an
\emph{admissible internal orthogonal sum} of \(a\) and \(b\).
\end{prop}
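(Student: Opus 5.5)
The proof splits into two independent parts: the braid relations, which hold for all \(t\), and unitarity of the local copies, which uses \(|t|=1\).

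\emph{Braid relations.} By hypothesis, \(a\) and \(b\) satisfy the four identities of Lemma~\ref{lem:boxplus-braid-relations}. That lemma then says the copies \(r_i(t)=a_i+tb_i\) satisfy all braid relations for every \(t\in\C\). The relations are homogeneous of degree three, and distant commutation is homogeneous of degree two. Hence they are unaffected by the positive rescaling \(c^{-1/2}\). So the family \(c^{-1/2}(a_i+tb_i)\) is braid-compatible for every \(t\), and in particular for \(|t|=1\). The one remaining point is invertibility, which will follow from unitarity.

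\emph{Unitarity in \(\A_2\).} Since \(|t|^2=1\), expand \((a+tb)^*(a+tb)\):
\[
(a+tb)^*(a+tb)=a^*a+t\,a^*b+\bar t\,b^*a+|t|^2b^*b=ce_0+0+0+ce_1=c\,1 .
\]
In the same way, using \(ab^*=ba^*=0\), we get \((a+tb)(a+tb)^*=aa^*+\bar t\,ab^*+t\,ba^*+bb^*=c\,1\). Hence \(r(t)=c^{-1/2}(a+tb)\) is unitary in \(\A_2\). Alternatively, one of the two identities suffices by the finite-dimensional argument used in Lemma~\ref{lem:coefficient-unitarity}.

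\emph{Transport to the local copies.} Each \(\lambda_i\) and each tower inclusion is a unital \(*\)-homomorphism. Therefore \(r_i(t)=\lambda_i(r(t))\), viewed in any \(\A_n\) with \(n\ge i+1\), satisfies \(r_i(t)^*r_i(t)=\lambda_i(r(t)^*r(t))=1\), and likewise \(r_i(t)r_i(t)^*=1\). So every local copy is unitary, hence invertible, and the \(r_i(t)\) form a unitary braid-compatible family.

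The main subtlety is the cross terms in the unitarity expansion. They vanish only because both mixed products \(a^*b\) and \(b^*a\) are zero, together with \(ab^*\) and \(ba^*\) for the other side. They do not cancel through any relation between \(t\) and \(\bar t\). The condition \(|t|=1\) is used only to make the \(b^*b\) coefficient equal to \(1\), so that \(ce_0+ce_1=c\,1\).
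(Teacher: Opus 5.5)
Your proposal is correct and follows essentially the same route as the paper: expand \((a+tb)^*(a+tb)\) and \((a+tb)(a+tb)^*\) using the orthogonality relations and \(|t|=1\) to obtain \(c\,1\), transfer unitarity to every local copy through the unital \(*\)-homomorphisms \(\lambda_i\) and the tower inclusions, and get the braid relations from Lemma~\ref{lem:boxplus-braid-relations}. Your remark that the braid and distant relations are homogeneous, so the scalar \(c^{-1/2}\) does not affect them, makes explicit a step the paper leaves implicit.
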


\begin{proof}
The orthogonality assumptions give
\[
 (a+t b)^*(a+t b)
 =a^*a+|t|^2b^*b
 =c(e_0+e_1)=c\,1
\]
when \(|t|=1\).  The analogous calculation on the other side gives
\((a+t b)(a+t b)^*=c\,1\).  Applying each unital
\(*\)-homomorphism \(\lambda_i\) shows that every local copy of \(r(t)\) is
unitary, hence invertible.  Lemma~\ref{lem:boxplus-braid-relations} gives the
braid relations; hence the family is braid-compatible.
\end{proof}

\section{Three families from finite metric groups}
\label{sec:gaussian}

We construct three families from finite metric data.  Families I and II use
\(\nu=1\) and are Gaussian, whereas Family III uses a nontrivial \(\nu\).

\subsection{Metric data and Gauss sums}
\label{subsec:metric-gauss-data}

Throughout this section finite abelian groups are written additively.  Fix a
\emph{finite metric abelian group} \((A,q)\), where \(A\) is a finite
abelian group and \(q:A\to\Uone\) satisfies
\[
  q(-a)=q(a)
\]
and
\begin{equation}\label{eq:chi-from-Q}
  \pair{a}{b}_q=\frac{q(a+b)}{q(a)q(b)}
\end{equation}
is a nondegenerate symmetric bicharacter on \(A\).  We suppress the subscript
\(q\) when no confusion can arise.  In particular, evaluating the
bicharacter at \((0,0)\) gives \(q(0)=1\).  The nondegeneracy condition gives
\begin{equation}\label{eq:Q-character-orthogonality}
  \sum_{a\in A}\pair{t}{a}
  =
  \begin{cases}
    |A|,&t=0,\\
    0,&t\neq0.
  \end{cases}
\end{equation}
For every \(a\in A\) and \(m\in\Z\), the quadratic identity gives
\begin{equation}\label{eq:quadratic-homogeneity}
  q(ma)=q(a)^{m^2}.
\end{equation}
Indeed, \eqref{eq:quadratic-homogeneity} follows by induction from
\[
  q(a+b)=q(a)q(b)\pair{a}{b}
\]
and \(1=q(a-a)=q(a)^2\pair{a}{a}^{-1}\).  Since \(A\) is finite,
\eqref{eq:quadratic-homogeneity} implies that all values of \(q\) are roots
of unity.

Let
\[
  \mathcal Q(q)=\{q\chi:\chi\in\Hom(A,\Uone)\}.
\]
We call the elements of \(\mathcal Q(q)\) \emph{quadratic phases with
polarization} \(\pair{\cdot}{\cdot}\).  Thus every \(Q\in\mathcal Q(q)\)
satisfies
\[
  Q(a+b)=Q(a)Q(b)\pair{a}{b},
\]
but it need not satisfy \(Q(-a)=Q(a)\).  The values of \(q\) are roots of
unity by \eqref{eq:quadratic-homogeneity}, and every character of the finite
group \(A\) has finite image.  Hence all values of every
\(Q\in\mathcal Q(q)\) are roots of unity.

There is a second, opposite-polarization torsor
\[
  \mathcal Q^{-}(q)
  =
  \{q^{-1}\chi:\chi\in\Hom(A,\Uone)\}.
\]
Thus, if \(\epsilon\in\{+1,-1\}\), we write
\[
  \mathcal Q^{\epsilon}(q)=
  \begin{cases}
    \mathcal Q(q),&\epsilon=+1,\\
    \mathcal Q^{-}(q),&\epsilon=-1,
  \end{cases}
\]
and every \(Q\in\mathcal Q^{\epsilon}(q)\) satisfies
\begin{equation}\label{eq:signed-quadratic-phase}
  Q(a+b)=Q(a)Q(b)\pair{a}{b}^{\epsilon}.
\end{equation}

Let \(B:A\times A\to\Uone\) be a nondegenerate symmetric bicharacter, and let
\(F:A\to\Uone\) satisfy
\[
  F(x+y)=F(x)F(y)B(x,y).
\]
With the normalization of \cite{Wall,Nikulin1980,TaylorGaussSums}, set
\[
  \gamma(F)=|A|^{-1/2}\sum_{a\in A}F(a).
\]

\begin{lemma}\label{lem:general-shifted-Gauss-sum}
For every \(u\in A\),
\begin{equation}\label{eq:general-shifted-Gauss-sum}
  |A|^{-1/2}\sum_{a\in A}F(a)B(u,a)
  =\gamma(F)F(u)^{-1}.
\end{equation}
In particular, \(|\gamma(F)|=1\).
\end{lemma}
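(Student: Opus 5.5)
The plan is to prove the shifted identity by completing the square. The first part is a reindexing argument; the modulus statement then comes from a second-moment computation using \eqref{eq:Q-character-orthogonality} applied to \(B\).

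\textbf{Step 1: the shifted identity.} The functional equation \(F(x+y)=F(x)F(y)B(x,y)\) with \(x=a\), \(y=u\) gives
\[
  F(a+u)=F(a)F(u)B(u,a),
\]
using symmetry of \(B\). Hence
\[
  F(a)B(u,a)=F(u)^{-1}F(a+u).
\]
Summing over \(a\in A\) and reindexing \(a\mapsto a-u\), which is a bijection of \(A\), gives
\[
  \sum_a F(a)B(u,a)=F(u)^{-1}\sum_a F(a)=F(u)^{-1}|A|^{1/2}\gamma(F).
\]
This is \eqref{eq:general-shifted-Gauss-sum}. No nondegeneracy is needed for this part.

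\textbf{Step 2: the modulus.} Compute
\[
  |\gamma(F)|^2=|A|^{-1}\sum_{a,b}F(a)\overline{F(b)}.
\]
Write \(a=b+t\) and use \(F(b+t)=F(b)F(t)B(b,t)\). Since \(|F|=1\), this becomes
\[
  |\gamma(F)|^2=|A|^{-1}\sum_t F(t)\sum_b B(t,b).
\]
The inner sum is \(|A|\delta_{t,0}\) by nondegeneracy of \(B\): the argument of \eqref{eq:Q-character-orthogonality} applies verbatim, since \(b\mapsto B(t,b)\) is a nontrivial character when \(t\neq0\).

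Only the \(t=0\) term survives. Setting \(x=y=0\) in the functional equation gives \(F(0)=F(0)^2B(0,0)=F(0)^2\), so \(F(0)=1\). Therefore \(|\gamma(F)|^2=1\).

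The only point needing care is this orthogonality step for a general nondegenerate \(B\) rather than \(\pair{\cdot}{\cdot}_q\). It is immediate because a nontrivial character of a finite group sums to zero. There is no real obstacle.
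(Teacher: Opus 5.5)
Your proof is correct and is essentially the paper's. Step 1 is exactly the paper's translation argument. For the modulus, the paper cites Parseval: by Step 1 each normalized coefficient \(|A|^{-1/2}\sum_{a}F(a)B(u,a)\) has modulus \(|\gamma(F)|\), and their squared moduli sum to \(\sum_a|F(a)|^2=|A|\). That is the same character-orthogonality computation for the nondegenerate \(B\) that you carry out directly by expanding \(|\gamma(F)|^2\).
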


\begin{proof}
Translation by \(u\) preserves the sum, and
\(F(a+u)=F(a)F(u)B(u,a)\).  This gives
\eqref{eq:general-shifted-Gauss-sum}.  Parseval's identity and character
orthogonality give \(|\gamma(F)|=1\).
\end{proof}

\begin{lemma}\label{lem:quadratic-phase-roots-of-unity}
Under the hypotheses of Lemma~\ref{lem:general-shifted-Gauss-sum}, every
value \(F(a)\), the normalized Gauss sum \(\gamma(F)\), and every shifted
value \(\gamma(F)F(u)^{-1}\) are roots of unity.
\end{lemma}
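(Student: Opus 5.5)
The plan has three steps: show that each value \(F(a)\) is a root of unity, show that \(\gamma(F)\) is a root of unity, and then conclude for the shifted values.

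\emph{Step 1: the values \(F(a)\).} We follow the argument given for \(q\) after \eqref{eq:quadratic-homogeneity}. The identity \(F(x+y)=F(x)F(y)B(x,y)\) at \(x=y=0\) gives \(F(0)=B(0,0)^{-1}F(0)\). Since \(B(0,0)=1\), this is no constraint. Instead, take \(x=y=0\) directly in the defining relation: \(F(0)=F(0)^2\), so \(F(0)=1\) because \(F\) is \(\Uone\)-valued. Induction on \(m\geq0\) then gives
\[
F(ma)=F(a)^m B(a,a)^{m(m-1)/2}.
\]
Let \(N\) be the exponent of \(A\), so \(Na=0\). Then
\[
1=F(0)=F(Na)=F(a)^N B(a,a)^{N(N-1)/2}.
\]
The value \(B(a,a)\) is a root of unity, since \(B(a,a)^N=B(Na,a)=1\). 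Hence \(F(a)^N\) is a root of unity, and so is \(F(a)\).

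\emph{Step 2: the Gauss sum \(\gamma(F)\).} This is the main obstacle, since a sum of roots of unity of modulus one need not itself be a root of unity. I would apply Lemma~\ref{lem:general-shifted-Gauss-sum} twice and evaluate \(\gamma(F)^2\) in closed form. Write
\[
\gamma(F)^2=|A|^{-1}\sum_{a,b}F(a)F(b).
\]
Substitute \(b=c-a\) and use \(F(c-a)=F(c)F(-a)B(c,-a)\). Also \(F(a)F(-a)=B(a,-a)^{-1}=B(a,a)\), from \(1=F(0)=F(a)F(-a)B(a,-a)\). This gives
\[
\gamma(F)^2=|A|^{-1}\sum_c F(c)\sum_a B(a,a)\,B(c,a)^{-1}.
\]
The inner sum is a Gauss sum for the function \(a\mapsto B(a,a)\). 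That function is quadratic with polarization \(B^2\), which may be degenerate, so this route becomes messy.

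A cleaner route is to work algebraically. All values of \(F\) lie in \(\Q(\zeta_M)\) for some \(M\). Hence \(\gamma(F)\sqrt{|A|}\) is an algebraic integer in \(\Q(\zeta_M)\), and \(|\gamma(F)|=1\) by Lemma~\ref{lem:general-shifted-Gauss-sum}. Galois conjugates of \(\gamma(F)\) need not have modulus one a priori. To control them, let \(\sigma_k\in\mathrm{Gal}(\Q(\zeta_{M'})/\Q)\) with \(M'\) large enough to contain \(\sqrt{|A|}\), with \(\sigma_k(\zeta)=\zeta^k\) and \(k\) prime to \(M'\). Then \(\sigma_k(F)=F^k\) is a quadratic function for the bilinear form \(B^k\). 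Since \(k\) is prime to the exponent, \(B^k\) is still nondegenerate, so Lemma~\ref{lem:general-shifted-Gauss-sum} gives
\[
|\sigma_k(\gamma(F)\sqrt{|A|})|=\sqrt{|A|}.
\]
Therefore \(\beta=\gamma(F)^2\), after accounting for \(\sigma_k(\sqrt{|A|})=\pm\sqrt{|A|}\), is an element of a cyclotomic field all of whose conjugates have absolute value \(1\). The element \(\beta\sqrt{|A|}^{\,2}/|A|\) is then an algebraic number of this kind, and its denominator is controlled: \(|A|\beta\) is an algebraic integer. To upgrade to "\(\beta\) is an algebraic integer", I would argue prime by prime via the primary factorization, or appeal to Kronecker's theorem applied to a suitable power. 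The fallback, if this technical point resists, is the classical explicit evaluation of quadratic Gauss sums on indecomposable metric blocks \(\Z/p^k\) and the \(2\)-primary forms, which are all eighth roots of unity, combined with multiplicativity over an orthogonal decomposition.

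\emph{Step 3: the shifted values.} The value \(\gamma(F)F(u)^{-1}\) is a product of roots of unity, so it is a root of unity.
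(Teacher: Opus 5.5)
\textbf{Verdict: there is a genuine gap in Step~2.} Steps~1 and~3 are correct. Step~1 is in fact more elementary than the paper's version: the recursion $F(ma)=F(a)^mB(a,a)^{m(m-1)/2}$ gives $F(a)^{N^2}=1$ without any homogeneous refinement.

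Step~2 is the substance of the lemma, and you leave it unfinished. Knowing that every Galois conjugate of $\gamma(F)$ or $\gamma(F)^2$ has absolute value $1$ does not make it a root of unity. For example, $(3+4\ii)/5\in\Q(\ii)$ has both conjugates on the unit circle but is not a root of unity. Kronecker's theorem needs algebraic integrality. You name this requirement but do not prove it. ``Kronecker applied to a suitable power'' cannot supply it, because a power of a non-integral algebraic number is never integral. Your fallback also needs repair: blockwise Gauss sums are eighth roots of unity only for homogeneous forms. For general $F$ this fails. For instance, $A=\F_3$ and $F(t)=\omega^{t^2+t}$ give $\gamma(F)=e^{-\pi\ii/6}$; this is the phase $Q_3$ of Section~\ref{sec:qutrit}. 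So you must first reduce to the homogeneous case.

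That reduction is exactly the paper's proof:
\begin{itemize}
\item choose a homogeneous refinement $q_B$ of $B$ and write $F=q_B\,B(r,-)$;
\item Lemma~\ref{lem:general-shifted-Gauss-sum} gives $\gamma(F)=\gamma(q_B)q_B(r)^{-1}$;
\item Gauss--Milgram gives $\gamma(q_B)^8=1$.
\end{itemize}

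If you want to keep your arithmetic route instead, the missing idea is the following.
\begin{itemize}
\item Split $A$ into its orthogonal primary components. Since $F(x+y)=F(x)F(y)$ for $x\perp y$, both $\sum_aF(a)$ and $\gamma(F)$ factor over these components, so you may assume $A$ is a $p$-group.
\item Then $S=\sum_aF(a)$ lies in $\Z[\zeta_{p^m}]$ by your Step~1. Also $S\bar S=|A|$ is a power of $p$, by Lemma~\ref{lem:general-shifted-Gauss-sum}.
\item The only prime of $\Q(\zeta_{p^m})$ above $p$ is $(1-\zeta_{p^m})$, and complex conjugation fixes it. Hence $(S)=(\bar S)$, and $S/\bar S$ is a unit.
\item The Galois group is abelian, so the conjugates of $S/\bar S$ are $\sigma(S)/\overline{\sigma(S)}$, all of modulus one.
\item Kronecker's theorem then shows that $\gamma(F)^2=S^2/|A|=S/\bar S$ is a root of unity.
\end{itemize}
This argument also makes your $B^k$ and $\sqrt{|A|}$ bookkeeping unnecessary.
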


\begin{proof}
The symmetric bicharacter \(B\) admits a homogeneous quadratic refinement
\(q_B:A\to\Uone\) satisfying
\[
  q_B(x+y)=q_B(x)q_B(y)B(x,y),
  \qquad q_B(nx)=q_B(x)^{n^2}\quad(n\in\Z)
\]
\cite{Wall}\cite[Theorem~1.9]{TaylorGaussSums}.  Since \(F/q_B\) is a
character and \(B\) is nondegenerate, there is a unique \(r\in A\) such that
\[
  F(a)=q_B(a)B(r,a)\qquad(a\in A).
\]
If \(Na=0\), then \(q_B(a)^{N^2}=q_B(Na)=1\); the character values
\(B(r,a)\) also have finite order.  Thus every \(F(a)\) is a root of unity.
Applying
Lemma~\ref{lem:general-shifted-Gauss-sum} to \(q_B\) gives
\[
  \gamma(F)
  =|A|^{-1/2}\sum_{a\in A}q_B(a)B(r,a)
  =\gamma(q_B)q_B(r)^{-1}.
\]
Gauss--Milgram gives \(\gamma(q_B)^8=1\)
\cite{Nikulin1980}\cite[discussion following
Remark~1.14]{TaylorGaussSums}, while \(q_B(r)\) has finite order.
Hence \(\gamma(F)\), and therefore
\(\gamma(F)F(u)^{-1}\), are roots of unity.
\end{proof}

Apply Lemma~\ref{lem:general-shifted-Gauss-sum} to
\(Q\in\mathcal Q^\epsilon(q)\), with
\(B(a,b)=\pair{a}{b}^\epsilon\) and translation variable \(\epsilon u\).
This gives the signed shifted Gauss sum identity
\begin{equation}\label{eq:shifted-Gauss-sum}
  |A|^{-1/2}\sum_{a\in A}Q(a)\pair{u}{a}
  =
  \gamma(Q)Q(\epsilon u)^{-1}.
\end{equation}
When \(Q(-a)=Q(a)\) for all \(a\),
\(\gamma(Q)\) is the finite Weil index, also called the multiplicative
central charge.  We write \(\sign(A,Q)\in\Z/8\Z\) for the finite signature
(Brown invariant) of the nonsingular finite quadratic form; for a
discriminant form of an even lattice, this agrees with the lattice signature
modulo \(8\).  With these conventions, Gauss--Milgram gives
\[
  \gamma(Q)=\exp\left(\frac{2\pi\ii}{8}\sign(A,Q)\right)
\]
\cite{Nikulin1980}\cite[Section~2]{TaylorGaussSums}.

\subsection{Family I and the two signs of the Gaussian polarization}

Put
\[
  G_{\mathrm{I}}=A,
  \qquad
  \tau_{\mathrm{I}}(a,b)=\pair{a}{b}.
\]
For \(\epsilon\in\{+1,-1\}\) and
\(Q\in\mathcal Q^\epsilon(q)\), define
\[
  f^{\mathrm{I},\epsilon}_Q(a)=|A|^{-1/2}Q(a),
  \qquad
  r^{\mathrm{I},\epsilon}_{Q,i}
  =
  |A|^{-1/2}\sum_{a\in A}Q(a)U_i^a.
\]
For \(\epsilon=+1\), we use the shorter notation
\(r^{\mathrm I}_{Q,i}=r^{\mathrm{I},+}_{Q,i}\).

\begin{remark}\label{rem:cyclic-quantum-tori}
For odd \(N\), let \(A=\Z/N\Z\) and \(Q(a)=\zeta^{a^2}\), where
\(\zeta=e^{2\pi\ii/N}\).  Its polarization is
\(\pair{a}{b}=\zeta^{2ab}\).  Hence the Family I tower is the finite
quantum torus generated by \(U_i=U_i^1\) with
\[
  U_iU_{i+1}=\zeta^2\,U_{i+1}U_i.
\]
In the odd \(SO(N)_2\) case of Rowell--Wenzl \cite{RowellWenzl}, their
Gaussian element is
\[
  \frac{c}{\sqrt N}\sum_{j=0}^{N-1}\zeta^{j^2}u_i^{2j}
\]
for a unit scalar \(c\in\Uone\).  This is the Family I element after
identifying \(U_i=u_i^2\), where \(u_i\) denotes their quantum-torus
generator, up to the global phase \(c\).  Thus the cyclic
Rowell--Wenzl representation is a
categorical source for this branch of the construction.
\end{remark}

\begin{prop}\label{prop:gaussian-coefficients}
For \(\epsilon\in\{+1,-1\}\) and
\(Q\in\mathcal Q^\epsilon(q)\), the function
\(f^{\mathrm{I},\epsilon}_Q\) satisfies the
\((G_{\mathrm I},\tau_{\mathrm I})\) coefficient Yang--Baxter equation,
and the elements
\(r^{\mathrm{I},\epsilon}_{Q,i}\) are unitary and braid-compatible.  We call
the two branches \(\mathrm{Family\ I}^{+}\) and
\(\mathrm{Family\ I}^{-}\).
\end{prop}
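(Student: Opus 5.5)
The plan is to apply Corollary~\ref{cor:coefficient-unitary-braid-criterion} with \(G=A\), \(\nu=1\) and \(\tau=\pair{\cdot}{\cdot}\), so that \(f=|A|^{-1/2}Q\) must satisfy two conditions: the coefficient equation \eqref{eq:coefficient-ybe} and the unitarity condition \(C_f(t)=\delta_{t,0}\). Both should reduce to the signed polarization identity \eqref{eq:signed-quadratic-phase} together with character orthogonality \eqref{eq:Q-character-orthogonality}. I do not expect to need the Gauss sum evaluation.

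\emph{Unitarity.} First note \(Q(0)=1\), which follows from \eqref{eq:signed-quadratic-phase} at \(a=b=0\). Then \(\overline{Q(g)}Q(g+t)=Q(t)\pair{g}{t}^{\epsilon}\), so
\[
  C_f(t)=|A|^{-1}Q(t)\sum_{g\in A}\pair{g}{t}^{\epsilon}=\delta_{t,0}.
\]
Here the last equality uses \eqref{eq:Q-character-orthogonality}. For \(\epsilon=-1\) one first replaces \(g\) by \(-g\).

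\emph{Braid relation.} The key preliminary identity is \(Q(c)Q(-c)=\pair{c}{c}^{\epsilon}\). It follows from \(1=Q(0)=Q(c)Q(-c)\pair{c}{-c}^{\epsilon}\). Combining it with \(Q(s-c)=Q(s)Q(-c)\pair{s}{c}^{-\epsilon}\) gives
\[
  Q(s-c)Q(c)=Q(s)\pair{c}{c}^{\epsilon}\pair{c}{s}^{-\epsilon}.
\]
Substituting this into both sides of \eqref{eq:coefficient-ybe} pulls out the common factor \(|A|^{-3/2}Q(s)Q(t)\). What remains is to compare
\[
  \Sigma_L=\sum_{c}\pair{c}{c}^{\epsilon}\pair{c}{s}^{-\epsilon}\pair{c}{t}^{-1},
  \qquad
  \Sigma_R=\sum_{a}\pair{a}{a}^{\epsilon}\pair{a}{t}^{-\epsilon}\pair{a}{s}^{-1}.
\]
For \(\epsilon=+1\), both sums equal \(\sum_c\pair{c}{c}\pair{c}{s+t}^{-1}\), so they agree termwise. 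For \(\epsilon=-1\), the substitution \(a=-c\) in \(\Sigma_R\) turns it term-by-term into \(\Sigma_L\), because \(\pair{-c}{-c}=\pair{c}{c}\) and each linear factor inverts.

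Taken together, the coefficient equation and the unitarity condition give unitarity and braid compatibility of \(r^{\mathrm{I},\epsilon}_{Q,i}\) through Corollary~\ref{cor:coefficient-unitary-braid-criterion}.

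The only delicate step is the bookkeeping of the sign \(\epsilon\) against the fixed sign of \(\tau\). In particular, the \(\epsilon=-1\) branch matches only after the reflection \(c\mapsto -c\), not termwise. Everything else is routine, since the quadratic factor \(\pair{c}{c}^{\epsilon}\) never has to be summed explicitly.
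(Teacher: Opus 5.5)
Your proposal is correct and follows the paper's proof essentially step for step. It uses the same orthogonality computation \(C_f(t)=|A|^{-1}Q(t)\sum_{g}\pair{g}{t}^{\epsilon}\) and the same identity \(Q(s-c)Q(c)=Q(s)\pair{c}{c}^{\epsilon}\pair{s}{c}^{-\epsilon}\), which reduces the coefficient equation to two character sums; these are matched by renaming when \(\epsilon=+1\) and by the reflection \(a=-c\) when \(\epsilon=-1\).
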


\begin{proof}
For \(u\in A\), the sum \(C_{f^{\mathrm{I},\epsilon}_Q}(u)\) is
\[
  |A|^{-1}Q(u)\sum_{a\in A}\pair{u}{a}^{\epsilon}.
\]
Character orthogonality gives \(1\) at \(u=0\) and \(0\) otherwise.

For the coefficient equation, the signed quadratic identity gives
\[
  Q(s-c)Q(c)
  =Q(s)\pair{c}{c}^{\epsilon}\pair{s}{c}^{-\epsilon}.
\]
After the common scalar is canceled, the left- and right-hand sides of
\eqref{eq:coefficient-ybe} are
\[
  \sum_{c\in A}
  \pair{c}{c}^{\epsilon}
  \pair{s}{c}^{-\epsilon}
  \pair{t}{c}^{-1},
  \qquad
  \sum_{a\in A}
  \pair{a}{a}^{\epsilon}
  \pair{t}{a}^{-\epsilon}
  \pair{s}{a}^{-1}.
\]
For \(\epsilon=+1\), these sums are identical after renaming the variable.
For \(\epsilon=-1\), they agree after \(a=-c\).  The coefficient equation
and Lemma~\ref{lem:coefficient-unitarity} give the result.
\end{proof}

\subsection{Family II with two coupled quadratic phases}

For the second family and \(Q,P\in\mathcal Q(q)\), define
\[
  G_{\mathrm{II}}=A\oplus A,
  \qquad
  \tau_{\mathrm{II}}\bigl((t,c),(t',c')\bigr)
  =
  \pair{c}{t'}\pair{c'}{t}.
\]
The coefficient function is
\begin{equation}\label{eq:family-II-coefficient-function}
  f^{\mathrm{II}}_{Q,P}(t,c)
  =
  \overline{\gamma(P)}\,|A|^{-1}Q(t)P(c),
\end{equation}
Consequently, the braid element in
\(\T_n(G_{\mathrm{II}},\tau_{\mathrm{II}})\) is
\[
  r^{\mathrm{II}}_{Q,P,i}
  =
  \overline{\gamma(P)}\,|A|^{-1}
  \sum_{t,c\in A}Q(t)P(c)U_i^{(t,c)}.
\]

\begin{prop}\label{prop:family-II-reindexing}
For \(Q,P\in\mathcal Q(q)\), the coefficient function
\(f^{\mathrm{II}}_{Q,P}:A\oplus A\to\C\) satisfies the
\((G_{\mathrm{II}},\tau_{\mathrm{II}})\) coefficient Yang--Baxter equation,
and the elements \(r^{\mathrm{II}}_{Q,P,i}\) are
unitary and braid-compatible in the tower
\(\T_n(G_{\mathrm{II}},\tau_{\mathrm{II}})\).
\end{prop}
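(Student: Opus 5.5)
The plan is to apply Corollary~\ref{cor:coefficient-unitary-braid-criterion} with \(\nu=1\) on \(G_{\mathrm{II}}=A\oplus A\). That reduces the statement to two finite identities for \(f=f^{\mathrm{II}}_{Q,P}\): the unitarity condition \(C_f(u,v)=\delta_{(u,v),0}\), and the coefficient equation \eqref{eq:coefficient-ybe} with \(\nu=1\) and \(\tau=\tau_{\mathrm{II}}\). Both follow from the quadratic identity \(Q(a+b)=Q(a)Q(b)\pair{a}{b}\) (and likewise for \(P\)). The point is that each sum over \(A\oplus A\) splits into a product of two sums over \(A\).

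\emph{Unitarity.} Since \(|\gamma(P)|=1\) by Lemma~\ref{lem:general-shifted-Gauss-sum}, we have
\[
  C_f(u,v)=|A|^{-2}\sum_{t,c}\overline{Q(t)P(c)}\,Q(t+u)P(c+v).
\]
The quadratic identity rewrites this as
\[
  C_f(u,v)=|A|^{-2}Q(u)P(v)\Bigl(\sum_t\pair{t}{u}\Bigr)\Bigl(\sum_c\pair{c}{v}\Bigr).
\]
Character orthogonality \eqref{eq:Q-character-orthogonality} then gives \(\delta_{u,0}\delta_{v,0}\), using \(Q(0)=P(0)=1\). The prefactor \(\overline{\gamma(P)}\) is a unit scalar, so it is irrelevant for both conditions; it is only a normalization chosen for later use.

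\emph{Coefficient equation.} Write \(s=(s_1,s_2)\), \(t=(t_1,t_2)\) and \(c=(x,y)\). On the left-hand side of \eqref{eq:coefficient-ybe}, the quadratic identity gives
\[
  Q(s_1-x)Q(x)=Q(s_1)\pair{x}{x}\pair{s_1}{x}^{-1},
\]
and similarly for \(P\). Moreover \(\tau_{\mathrm{II}}(c,t)^{-1}=\pair{y}{t_1}^{-1}\pair{t_2}{x}^{-1}\). The inner sum therefore factors as
\[
  Q(s_1)P(s_2)\,S(s_1+t_2)\,S(s_2+t_1),
  \qquad
  S(w):=\sum_{x\in A}\pair{x}{x}\pair{x}{w}^{-1}.
\]
On the right-hand side, with \(a=(x,y)\), the same manipulation applies with
\[
  \tau_{\mathrm{II}}(s,a)^{-1}=\pair{s_2}{x}^{-1}\pair{y}{s_1}^{-1}.
\]
This yields \(Q(t_1)P(t_2)\,S(t_1+s_2)\,S(t_2+s_1)\). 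Multiplying by the outer factors \(f(t)\) and \(f(s)\), both sides become
\[
  \overline{\gamma(P)}^{\,3}|A|^{-6}\,Q(s_1)Q(t_1)P(s_2)P(t_2)\,S(s_1+t_2)\,S(s_2+t_1).
\]
This is the same expression on both sides, so the coefficient equation holds.

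The only step needing care is the index bookkeeping in \(\tau_{\mathrm{II}}\): one must check that the cross-coupling sends \(s_1\) together with \(t_2\), and \(s_2\) together with \(t_1\), on both sides. Once that matching is confirmed, no evaluation of \(S\) is needed. This is useful, because \(S\) is a Gauss sum for \(\pair{x}{x}=q(x)^2\), which can be degenerate on \(2\)-groups. With both conditions verified, Corollary~\ref{cor:coefficient-unitary-braid-criterion} yields that the \(r^{\mathrm{II}}_{Q,P,i}\) are unitary and braid-compatible.
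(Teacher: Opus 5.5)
Your proof is correct and is essentially the paper's argument: unitarity comes from the factorized shift sums and character orthogonality, and the coefficient equation follows by splitting each sum over \(A\oplus A\) into a product of two one-variable sums \(S\), which match on both sides because \(A\) is abelian, with no need to evaluate \(S\). There is one harmless slip in the constants: the inner sums already carry \(\overline{\gamma(P)}^{\,2}|A|^{-2}\), so the common prefactor is \(\overline{\gamma(P)}^{\,3}|A|^{-3}\), not \(|A|^{-6}\); since it appears identically on both sides, the conclusion is unaffected.
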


\begin{proof}
For \((u,v)\in A\oplus A\), the sum
\(C_{f^{\mathrm{II}}_{Q,P}}(u,v)\) is given by
\[
  \sum_{t,c\in A}
  \overline{f^{\mathrm{II}}_{Q,P}(t,c)}
  f^{\mathrm{II}}_{Q,P}(t+u,c+v)
\]
and factors as
\[
  |\gamma(P)|^2|A|^{-2}
  \left(\sum_{t\in A}\overline{Q(t)}Q(t+u)\right)
  \left(\sum_{c\in A}\overline{P(c)}P(c+v)\right),
\]
which is \(1\) if \((u,v)=0\) and \(0\) otherwise.

It remains to verify the coefficient braid equation
\eqref{eq:coefficient-ybe}.  Write
\[
  c_P=\overline{\gamma(P)}\,|A|^{-1},
  \qquad
  f=f^{\mathrm{II}}_{Q,P}.
\]
For \(s=(u,v)\), \(t=(r,w)\), and a summation variable
\((a,b)\in A\oplus A\),
the left-hand side of \eqref{eq:coefficient-ybe} is
\[
  c_P^3Q(r)P(w)
  \sum_{a,b\in A}
  Q(u-a)P(v-b)Q(a)P(b)\pair{b}{r}^{-1}\pair{w}{a}^{-1}.
\]
This separates as the product of two one-variable sums.  Using
\[
  Q(x-y)Q(y)=Q(x)\pair{y}{y}\pair{x}{y}^{-1},
  \qquad
  P(x-y)P(y)=P(x)\pair{y}{y}\pair{x}{y}^{-1},
\]
it becomes
\[
  c_P^3Q(u)Q(r)P(v)P(w)\,S(u+w)S(v+r),
\]
where
\[
  S(z)=\sum_{a\in A}\pair{a}{a}\pair{z}{a}^{-1}.
\]
The right-hand side of \eqref{eq:coefficient-ybe} gives the same expression
with the two factors interchanged.
\[
  c_P^3Q(u)Q(r)P(v)P(w)\,S(r+v)S(w+u).
\]
Since \(A\) is abelian, \(S(u+w)S(v+r)=S(w+u)S(r+v)\).  Thus the
coefficient equation holds for all \(s,t\in A\oplus A\).
Lemmas~\ref{lem:coefficient-ybe} and~\ref{lem:coefficient-unitarity} translate
these two identities into braid compatibility and unitarity of the tower
elements.
\end{proof}

A self-adjoint automorphism of \(A\) gives a reparametrization of Family II.

\begin{prop}
\label{prop:family-II-coordinate-change}
Let \(E\in\Aut(A)\) be self-adjoint with respect to
\(\pair{\cdot}{\cdot}\), meaning that
\[
  \pair{Ex}{y}=\pair{x}{Ey}\qquad(x,y\in A).
\]
Suppose \(Q_E,P_E:A\to\Uone\) satisfy
\begin{align*}
  Q_E(x+y)&=Q_E(x)Q_E(y)\pair{Ex}{y},\\
  P_E(x+y)&=P_E(x)P_E(y)\pair{E^{-1}x}{y}.
\end{align*}
Define
\[
  f^{\mathrm{II},E}_{Q_E,P_E}(t,c)
  =\overline{\gamma(P_E)}\,|A|^{-1}Q_E(t)P_E(c)
\]
and
\[
  r^{\mathrm{II},E}_{Q_E,P_E,i}
  =\sum_{t,c\in A}f^{\mathrm{II},E}_{Q_E,P_E}(t,c)U_i^{(t,c)}.
\]
These elements form a unitary braid-compatible family in the original
Family II tower.  The family, denoted \(\mathrm{Family\ II}_E\), is obtained
from ordinary Family II by the coordinate change \(c\mapsto Ec\).
\end{prop}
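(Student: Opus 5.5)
The plan is to verify the two conditions of Corollary~\ref{cor:coefficient-unitary-braid-criterion} directly for \(f=f^{\mathrm{II},E}_{Q_E,P_E}\) with \(\nu=1\) and \(\tau=\tau_{\mathrm{II}}\), following the proof of Proposition~\ref{prop:family-II-reindexing} step by step. The only new ingredient is an \(E\)-twisted version of the auxiliary sum \(S\). First I will record the preliminary facts. Self-adjointness of \(E\) implies self-adjointness of \(E^{-1}\). Hence \(B_E(x,y)=\pair{Ex}{y}\) and \(B_{E^{-1}}(x,y)=\pair{E^{-1}x}{y}\) are symmetric bicharacters. They are nondegenerate because \(E\) is an automorphism and \(\pair{\cdot}{\cdot}\) is nondegenerate. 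Lemma~\ref{lem:general-shifted-Gauss-sum}, applied to \(F=P_E\) and \(B=B_{E^{-1}}\), then gives \(|\gamma(P_E)|=1\), so the normalizing constant \(c_E=\overline{\gamma(P_E)}|A|^{-1}\) satisfies \(|c_E|=|A|^{-1}\).

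For unitarity I will factor \(C_f(u,v)\) exactly as in Family II. Using \(\overline{Q_E(t)}Q_E(t+u)=Q_E(u)\pair{Et}{u}=Q_E(u)\pair{t}{Eu}\), character orthogonality \eqref{eq:Q-character-orthogonality} shows that the \(t\)-sum is \(|A|\delta_{Eu,0}=|A|\delta_{u,0}\). The \(c\)-sum is handled the same way using \(E^{-1}\). Combined with \(|c_E|^2=|A|^{-2}\), this gives \(C_f(u,v)=\delta_{(u,v),0}\), and Lemma~\ref{lem:coefficient-unitarity} yields unitarity.

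For the coefficient equation \eqref{eq:coefficient-ybe} I will write \(s=(u,v)\) and \(t=(r,w)\). I will then split both sides into an \(A\)-sum in the first coordinate and an \(A\)-sum in the second, using \(\tau_{\mathrm{II}}\). Next, the relations \(Q_E(x-y)Q_E(y)=Q_E(x)\pair{Ey}{y}\pair{Ex}{y}^{-1}\) and their \(P_E\) analogue reduce everything to the sums
\[
  S_E(z)=\sum_{a\in A}\pair{Ea}{a}\pair{z}{a}^{-1},
  \qquad
  S_{E^{-1}}(z)=\sum_{a\in A}\pair{E^{-1}a}{a}\pair{z}{a}^{-1}.
\]
The left-hand side should become \(c_E^3Q_E(u)Q_E(r)P_E(v)P_E(w)\,S_E(Eu+w)\,S_{E^{-1}}(E^{-1}v+r)\). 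The right-hand side should have the same prefactor times \(S_E(Er+v)\,S_{E^{-1}}(E^{-1}w+u)\).

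I expect the main obstacle to be reconciling these two products, since, unlike Family II, the factors are no longer literally the same sums permuted. The fix I intend is the substitution \(a=E^{-1}b\) together with self-adjointness, which gives
\[
  S_E(z)=\sum_{b\in A}\pair{b}{E^{-1}b}\pair{E^{-1}z}{b}^{-1}=S_{E^{-1}}(E^{-1}z).
\]
Hence \(S_E(Eu+w)=S_{E^{-1}}(u+E^{-1}w)\) and \(S_E(Er+v)=S_{E^{-1}}(r+E^{-1}v)\), so both sides coincide by commutativity of \(A\). Corollary~\ref{cor:coefficient-unitary-braid-criterion} then finishes the proof. Finally, I will justify the phrase ``coordinate change \(c\mapsto Ec\)'' by substituting \(c=Ec'\) in the defining formulas. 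Under this substitution the polarization of \(P_E\) becomes \(\pair{c}{c'}\)-type in the new coordinate and \(\tau_{\mathrm{II}}\) acquires \(E\), which I will present as a remark rather than as part of the proof.
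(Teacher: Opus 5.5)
Your proof is correct, and it takes a different route from the paper. You verify the coefficient equation and the unitarity sums for \(f^{\mathrm{II},E}_{Q_E,P_E}\) directly in the \(\tau_{\mathrm{II}}\)-tower. The only new input is the identity \(S_E(z)=S_{E^{-1}}(E^{-1}z)\), and you place the arguments \(Eu+w\), \(E^{-1}v+r\), \(Er+v\), \(E^{-1}w+u\) correctly on the two sides.

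The paper avoids recomputing these sums. It sets \(B_E(x,y)=\pair{Ex}{y}\) and \(\widetilde P=P_E\circ E\), and observes that \(Q_E\) and \(\widetilde P\) both have polarization \(B_E\). It then applies Proposition~\ref{prop:family-II-reindexing} to the metric bicharacter \(B_E\). The resulting element is transported along the tower isomorphism \(U_i^{(t,c)}\mapsto U_i^{(t,Ec)}\), which is valid because \(\tau_{\mathrm{II}}(\Psi(t,c),\Psi(t',c'))=B_E(c,t')B_E(c',t)\). Finally, \(\gamma(\widetilde P)=\gamma(P_E)\) because \(E\) permutes \(A\).

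Your substitution \(a=E^{-1}b\) inside \(S_E\) is exactly this coordinate change, seen at the level of the Gauss-type sums. The paper's route is shorter, and it makes the last sentence of the statement the actual content of the proof. Your route is self-contained. It also does not need to read Proposition~\ref{prop:family-II-reindexing} as applying to a bicharacter other than the fixed polarization of \(q\). That reading tacitly uses only symmetry, nondegeneracy, and the quadratic identity, or else the existence of an even refinement of \(B_E\).

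Your write-up is loose only at the final sentence, which is part of the statement and not merely a remark. After the substitution \(c=Ec'\), the phase \(c'\mapsto P_E(Ec')\) has polarization \(\pair{Ex}{y}=B_E(x,y)\), the same as \(Q_E\), and not the original pairing. So ``ordinary Family II'' must be read as Family II for \((A,B_E)\). The precise claim is that \(\Psi(t,c)=(t,Ec)\) gives an isomorphism from the \(\tau_E\)-tower onto the \(\tau_{\mathrm{II}}\)-tower carrying the ordinary element to \(r^{\mathrm{II},E}_{Q_E,P_E,i}\), together with \(\gamma(P_E\circ E)=\gamma(P_E)\). That is a two-line check, but you should state it explicitly rather than describe the polarization only loosely.
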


\begin{proof}
Set
\[
  B_E(x,y)=\pair{Ex}{y},
  \qquad
  \widetilde P=P_E\circ E.
\]
Self-adjointness makes \(B_E\) symmetric and nondegenerate.  Both \(Q_E\)
and \(\widetilde P\) have polarization \(B_E\), because
\[
  \widetilde P(x+y)
  =\widetilde P(x)\widetilde P(y)\pair{Ex}{y}.
\]
Hence Proposition~\ref{prop:family-II-reindexing}, applied to the metric
bicharacter \(B_E\), gives an ordinary Family II solution with coefficient
\[
  \overline{\gamma(\widetilde P)}\,|A|^{-1}
  Q_E(t)\widetilde P(c).
\]
Let
\[
  \Psi:A\oplus A\longrightarrow A\oplus A,
  \qquad \Psi(t,c)=(t,Ec).
\]
If \(\tau_E\) denotes the Family II adjacent bicharacter formed from
\(B_E\), then
\[
  \tau_{\mathrm{II}}(\Psi(t,c),\Psi(t',c'))
  =B_E(c,t')B_E(c',t)
  =\tau_E((t,c),(t',c')).
\]
Thus \(\Psi\) induces a tower isomorphism
\(U_i^{(t,c)}\mapsto U_i^{(t,Ec)}\).  Since \(E\) permutes \(A\),
\(\gamma(\widetilde P)=\gamma(P_E)\); moreover,
\[
  \sum_{t,c\in A}Q_E(t)\widetilde P(c)U_i^{(t,Ec)}
  =\sum_{t,c'\in A}Q_E(t)P_E(c')U_i^{(t,c')},
  \qquad c'=Ec.
\]
Hence the ordinary Family II element for \((A,B_E)\) is carried exactly to
\(r^{\mathrm{II},E}_{Q_E,P_E,i}\).  Unitarity and braid compatibility are
therefore inherited through this tower isomorphism from
Proposition~\ref{prop:family-II-reindexing}.
\end{proof}

\subsection{Family III}
\label{subsec:family-III}

The third family uses the same metric bicharacter, but places part of the
twisting inside each local algebra.  Set
\[
  G_{\mathrm{III}}=A\oplus A
\]
and define
\begin{equation}\label{eq:family-III-cocycle}
  \nu_{\mathrm{III}}\bigl((a,b),(c,d)\bigr)
  =
  \pair{a}{c}\pair{b}{c}\pair{b}{d}.
\end{equation}
This is a unitary bicharacter and hence a normalized \(2\)-cocycle.  Its
commutator bicharacter is
\begin{equation}\label{eq:family-III-adjacent-bicharacter}
  \tau_{\mathrm{III}}\bigl((a,b),(c,d)\bigr)
  =
  \frac{\nu_{\mathrm{III}}((a,b),(c,d))}
       {\nu_{\mathrm{III}}((c,d),(a,b))}
  =
  \pair{b}{c}\pair{a}{d}^{-1}.
\end{equation}
For nontrivial \(A\), this commutator is nondegenerate; hence
\(\nu_{\mathrm{III}}\) is not cohomologous to \(1\).  Define the constant
coefficient function
\begin{equation}\label{eq:family-III-coefficient-function}
  f^{\mathrm{III}}(a,b)=|A|^{-1}.
\end{equation}
The corresponding braid element in
\(\T_n(G_{\mathrm{III}};\nu_{\mathrm{III}},
\tau_{\mathrm{III}})\) is
\[
  r^{\mathrm{III}}_i
  =
  |A|^{-1}\sum_{a,b\in A}U_i^{(a,b)}.
\]
At level two, we write \(r^{\mathrm{III}}=r^{\mathrm{III}}_1\).

\begin{prop}\label{prop:family-III-coefficients}
The function \(f^{\mathrm{III}}\) satisfies the
\((G_{\mathrm{III}};\nu_{\mathrm{III}},\tau_{\mathrm{III}})\)
coefficient Yang--Baxter equation, and the elements \(r^{\mathrm{III}}_i\) are unitary
and braid-compatible in
\(\T_n(G_{\mathrm{III}};\nu_{\mathrm{III}},
\tau_{\mathrm{III}})\).
\end{prop}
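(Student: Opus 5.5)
The plan is to verify the two conditions of Corollary~\ref{cor:coefficient-unitary-braid-criterion} directly. The first is unitarity, via Lemma~\ref{lem:coefficient-unitarity}; this applies because \(\nu_{\mathrm{III}}\) is already a unitary bicharacter. The second is the coefficient equation \eqref{eq:coefficient-ybe}. Because \(f^{\mathrm{III}}\) is constant, both conditions reduce to character sums over \(A\oplus A\). The only real work is matching two quadratic Gauss-type sums by a change of variables.

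\emph{Unitarity.} For \(t=(u,v)\), Lemma~\ref{lem:coefficient-unitarity} gives
\[
  C_{f^{\mathrm{III}}}^{\nu_{\mathrm{III}}}(u,v)
  =|A|^{-2}\sum_{a,b\in A}\pair{a}{u}^{-1}\pair{b}{u}^{-1}\pair{b}{v}^{-1}
  =|A|^{-2}\Bigl(\sum_{a}\pair{a}{u}^{-1}\Bigr)\Bigl(\sum_b\pair{b}{u+v}^{-1}\Bigr).
\]
By \eqref{eq:Q-character-orthogonality} this equals \(\delta_{u,0}\delta_{u+v,0}=\delta_{(u,v),0}\). Hence \(r^{\mathrm{III}}\) is unitary, and so are its local copies.

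\emph{Coefficient equation.} Write \(s=(u,v)\), \(t=(x,y)\), and use the summation variable \((p,q)\) on both sides. After cancelling the common factor \(|A|^{-3}\), I will expand both sides using \eqref{eq:family-III-cocycle} and \eqref{eq:family-III-adjacent-bicharacter}.

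On the left, \(\nu_{\mathrm{III}}(s-c,c)\tau_{\mathrm{III}}(c,t)^{-1}\) with \(c=(p,q)\) equals \(F(p,q)\pair{p}{\alpha}\pair{q}{\beta}\). Here
\[
  F(p,q)=\pair{p}{p}^{-1}\pair{p}{q}^{-1}\pair{q}{q}^{-1},\qquad \alpha=u+v+y,\quad \beta=v-x.
\]
On the right, \(\nu_{\mathrm{III}}(a,t-a)\tau_{\mathrm{III}}(s,a)^{-1}\) with \(a=(p,q)\) has the same quadratic part \(F(p,q)\). Its linear part is \(\pair{p}{x-v}\pair{q}{x+y+u}=\pair{p}{-\beta}\pair{q}{\alpha-\beta}\). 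I will check both expansions carefully, since sign conventions in \(\tau_{\mathrm{III}}^{-1}\) are the likely source of error.

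The key step is a substitution. The map \(L(p,q)=(q,-p-q)\) is an automorphism of \(A\oplus A\). It preserves \(F\): \(F\) is symmetric under \(p\leftrightarrow q\), and a direct expansion shows \(F(p,-p-q)=F(p,q)\). Moreover
\[
  \pair{q}{\alpha}\pair{-p-q}{\beta}=\pair{p}{-\beta}\pair{q}{\alpha-\beta},
\]
so \(L\) carries the left-hand summand exactly to the right-hand summand. Reindexing the left sum by \(L\) therefore gives the right sum, which proves \eqref{eq:coefficient-ybe} for all \(s,t\). Braid compatibility then follows from Lemma~\ref{lem:coefficient-ybe}, with the distant relations automatic as noted there.

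I expect the main obstacle to be the bookkeeping in the two expansions, that is, making sure both quadratic parts really coincide and that the linear terms are as stated. I deliberately avoid evaluating the sums by Lemma~\ref{lem:general-shifted-Gauss-sum}: the polarization of \(F\) can be degenerate, for instance on \(2\)-torsion, and the automorphism argument sidesteps this issue entirely.
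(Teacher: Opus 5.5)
Your proposal is correct and takes essentially the same route as the paper. Both arguments use the same factorized character sum for unitarity, the same quadratic kernel (your \(F\) is the paper's \(K^{-1}\)) with the same linear parts, and the same order-three change of variables; your \(L(p,q)=(q,-p-q)\) is the inverse of the paper's \((x,y)\mapsto(-x-y,x)\), which the paper packages as the symmetry \(\mathcal F(\xi,\eta)=\mathcal F(-\eta,\xi-\eta)\).
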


\begin{proof}
For \((c,d)\in A\oplus A\), the relevant coefficient sum is
\begin{align*}
 C_{f^{\mathrm{III}}}^{\nu_{\mathrm{III}}}(c,d)
 &=
 |A|^{-2}\sum_{a,b\in A}
 \nu_{\mathrm{III}}((a,b),(c,d))^{-1}\\
 &=
 |A|^{-2}
 \left(\sum_{a\in A}\pair{a}{c}^{-1}\right)
 \left(\sum_{b\in A}\pair{b}{c+d}^{-1}\right).
\end{align*}
Character orthogonality shows that this is \(1\) at \((c,d)=0\) and
vanishes otherwise.

For the coefficient equation, all values of \(f^{\mathrm{III}}\) are equal;
hence the common coefficient factor cancels.  Put
\[
  K(x,y)=\pair{x}{x}\pair{x}{y}\pair{y}{y}
\]
and, for \(\xi,\eta\in A\), define
\[
  \mathcal F(\xi,\eta)
  =
  \sum_{x,y\in A}
  K(x,y)^{-1}\pair{\xi}{x}\pair{\eta}{y}.
\]
For \(s=(u,v)\), \(t=(r,w)\), and a summation variable \((x,y)\), direct
substitution of
\eqref{eq:family-III-cocycle} and
\eqref{eq:family-III-adjacent-bicharacter} gives
\begin{align*}
 \sum_{x,y\in A}
 \nu_{\mathrm{III}}(s-(x,y),(x,y))
 \tau_{\mathrm{III}}((x,y),t)^{-1}
 &=\mathcal F(u+v+w,v-r),\\
 \sum_{x,y\in A}
 \nu_{\mathrm{III}}((x,y),t-(x,y))
 \tau_{\mathrm{III}}(s,(x,y))^{-1}
 &=\mathcal F(r-v,r+w+u).
\end{align*}
The automorphism
\[
  (x,y)\longmapsto(-x-y,x)
\]
preserves \(K\), since
\[
  K(-x-y,x)
  =
  \pair{x+y}{x+y}\pair{x+y}{x}^{-1}\pair{x}{x}
  =
  K(x,y).
\]
Applying this change of variables gives the identity
\begin{equation}\label{eq:family-III-order-three-Fourier-symmetry}
  \mathcal F(\xi,\eta)
  =
  \mathcal F(-\eta,\xi-\eta).
\end{equation}
Since
\[
  r-v=-(v-r),
  \qquad
  r+w+u=(u+v+w)-(v-r),
\]
the two coefficient sums agree.  The result now follows from
Corollary~\ref{cor:coefficient-unitary-braid-criterion}.
\end{proof}

\subsection{Tensor-power realizations}
\label{subsec:tensor-power-representations}

Let \(V=\C[A]\) have basis \(\{e_x:x\in A\}\), and put
\(W=V\otimes V\).  Families I and II act on \(V^{\ot n}\), while Family III
acts on \(W^{\ot n}\).  The tower maps are defined for \(n\geq1\); their
local formulas use \(n\geq2\) and \(1\leq i<n\).  Let
\(X_a,Z_a\in\End(V)\) be defined by
\[
  X_ae_x=e_{x+a},
  \qquad
  Z_ae_x=\pair{a}{x}e_x.
\]
They satisfy
\[
  X_aX_b=X_{a+b},
  \qquad
  Z_aZ_b=Z_{a+b},
  \qquad
  Z_bX_a=\pair{b}{a}X_aZ_b.
\]
Subscripts indicate the tensor factor on which an operator acts.

For Family I, define
\[
  \Phi_n^{\mathrm{I}}(U_i^a)=Y_i^a,
  \qquad
  Y_i^a=X_{a,i}Z_{a,i+1}
  \qquad(a\in A,\ n\geq2,\ 1\leq i<n).
\]
The Pauli commutation relation gives
\[
  Y_i^aY_{i+1}^b
  =
  \tau_{\mathrm{I}}(a,b)Y_{i+1}^bY_i^a,
\]
and the distant tower relations follow from disjoint tensor support.  Thus
\(\Phi_n^{\mathrm{I}}\) defines a compatible representation of
\(\T_n(G_{\mathrm{I}},\tau_{\mathrm{I}})\).  Since
\((Y_i^a)^*=Y_i^{-a}\), it is a \(*\)-representation.  The image of
\(r^{\mathrm{I}}_{Q,1}\) is
\begin{equation}\label{eq:Gaussian-RQ}
  R^{\mathrm{I}}_Q
  =
  |A|^{-1/2}\sum_{a\in A}Q(a)X_a\otimes Z_a.
\end{equation}
Equivalently,
\begin{equation}\label{eq:RQ-basis-action}
  R^{\mathrm{I}}_Q(e_x\otimes e_y)
  =
  |A|^{-1/2}\sum_{t\in A}
  Q(t)\pair{t}{y}\,e_{x+t}\otimes e_y.
\end{equation}
The same compatible representation applies to the opposite-polarization
branch.  For \(Q\in\mathcal Q^{-}(q)\), define
\begin{equation}\label{eq:opposite-Gaussian-RQ}
  R^{\mathrm{I},-}_Q
  =|A|^{-1/2}\sum_{a\in A}Q(a)X_a\otimes Z_a.
\end{equation}
Proposition~\ref{prop:gaussian-coefficients} implies that
\(R^{\mathrm{I},-}_Q\) is a unitary Yang--Baxter operator.

For Family II, define
\[
  \Phi_n^{\mathrm{II}}(U_i^{(t,c)})=Y_i^{(t,c)},
  \qquad
  Y_i^{(t,c)}
  =
  X_{t,i}X_{-t,i+1}Z_{c,i}Z_{c,i+1}
  \qquad(t,c\in A,\ n\geq2,\ 1\leq i<n).
\]
For products in the same position, the Pauli phase from the \(i\)-th tensor
factor is canceled by the phase from the \((i+1)\)-st tensor factor.
Consequently,
\[
  Y_i^{(t,c)}Y_i^{(t',c')}=Y_i^{(t+t',c+c')}.
\]
For adjacent positions, the only common tensor factor is the middle one, and
the Pauli commutation relation gives
\[
  Y_i^{(t,c)}Y_{i+1}^{(t',c')}
  =
  \tau_{\mathrm{II}}\bigl((t,c),(t',c')\bigr)
  Y_{i+1}^{(t',c')}Y_i^{(t,c)}.
\]
Hence \(\Phi_n^{\mathrm{II}}\) defines a compatible representation of
\(\T_n(G_{\mathrm{II}},\tau_{\mathrm{II}})\).  The adjoint relation
\((Y_i^{(t,c)})^*=Y_i^{(-t,-c)}\) again follows by cancellation of the two
Pauli phases; hence this representation preserves the \(*\)-structure.  The
image of
\(r^{\mathrm{II}}_{Q,P,1}\) is
\begin{equation}\label{eq:family-II-Pauli-expansion}
  R^{\mathrm{II}}_{Q,P}
  =
  \overline{\gamma(P)}\,|A|^{-1}
  \sum_{t,c\in A}Q(t)P(c)\,
  (X_tZ_c)\otimes(X_{-t}Z_c).
\end{equation}
For Family \(\mathrm{II}_E\) of
Proposition~\ref{prop:family-II-coordinate-change}, the same Pauli
realization gives
\begin{equation}\label{eq:family-II-E-Pauli-expansion}
  R^{\mathrm{II},E}_{Q_E,P_E}
  =
  \overline{\gamma(P_E)}\,|A|^{-1}
  \sum_{t,c\in A}Q_E(t)P_E(c)
  (X_tZ_c)\otimes(X_{-t}Z_c).
\end{equation}
This is a unitary Yang--Baxter operator.
For the polarization \(B_E'(x,y)=\pair{E^{-1}x}{y}\), self-adjointness gives
\(B_E'(c,Es)=\pair{c}{s}\).
Lemma~\ref{lem:general-shifted-Gauss-sum}, with translation variable \(Es\), gives
the corresponding basis formula
\begin{equation}\label{eq:family-II-E-basis-action}
  R^{\mathrm{II},E}_{Q_E,P_E}(e_x\otimes e_y)
  =
  |A|^{-1/2}\sum_{t\in A}
  Q_E(t)P_E\bigl(E(x+y)\bigr)^{-1}
  e_{x+t}\otimes e_{y-t}.
\end{equation}
For a function \(h:A\to\C^\times\), let \(M_h\) denote the multiplication
operator on \(V\) defined by \(M_he_s=h(s)e_s\).  Let
\(\Xi(e_x\otimes e_y)=e_{x+y}\otimes e_x\) and put
\(\widetilde P=P_E\circ E\).  In the coordinates \((s,d)=(x+y,x)\),
\eqref{eq:family-II-E-basis-action} becomes
\begin{equation}\label{eq:family-II-E-factorization}
  \Xi R^{\mathrm{II},E}_{Q_E,P_E}\Xi^{-1}
  =M_{\widetilde P^{-1}}\otimes G_{Q_E},
  \qquad
  G_{Q_E}e_d=|A|^{-1/2}\sum_{t\in A}Q_E(t)e_{d+t}.
\end{equation}
Both \(Q_E\) and \(\widetilde P\) have polarization
\(B_E(x,y)=\pair{Ex}{y}\).  Thus Family \(\mathrm{II}_E\) preserves the
total label, and its diagonal phase is pulled back by \(E\).
Returning to ordinary Family II, fix \(Q,P\in\mathcal Q(q)\).  
By the shifted Gauss sum identity \eqref{eq:shifted-Gauss-sum}, the diagonal
operator
\[
  D_P(e_x\otimes e_y)=P(x+y)^{-1}e_x\otimes e_y
\]
satisfies
\[
  D_P
  =
  \overline{\gamma(P)}\,|A|^{-1/2}
  \sum_{c\in A}P(c)\,Z_c\otimes Z_c.
\]
Therefore
\[
  R^{\mathrm{II}}_{Q,P}
  =
  |A|^{-1/2}\sum_{t\in A}Q(t)(X_t\otimes X_{-t})D_P,
\]
or, on basis vectors,
\begin{equation}\label{eq:family-II-Gaussian-R}
  R^{\mathrm{II}}_{Q,P}(e_x\otimes e_y)
  =
  |A|^{-1/2}\sum_{a\in A}
  Q(a)P(x+y)^{-1}\,e_{x+a}\otimes e_{y-a}.
\end{equation}
Equation~\eqref{eq:family-II-Gaussian-R} preserves the total label, since
\((x+a)+(y-a)=x+y\).

After the change of variables
\[
  s=x+y,\qquad d=x,
\]
the operator becomes
\[
  M_{P^{-1}}\otimes G_Q,
  \qquad
  G_Qe_d=|A|^{-1/2}\sum_{a\in A}Q(a)e_{d+a}.
\]

For Family III, on
\[
  W\otimes W=(V\otimes V)\otimes(V\otimes V)
\]
define
\begin{align}
  \mathsf P_a
  &=(I\otimes Z_a)\otimes(Z_a\otimes I),
  \label{eq:family-III-Pauli-P}\\
  \mathsf Q_b
  &=(X_b\otimes I)\otimes(X_b\otimes X_b).
  \label{eq:family-III-Pauli-Q}
\end{align}
For \((a,b)\in A\oplus A\), set
\begin{equation}\label{eq:family-III-local-Pauli-word}
  Y^{(a,b)}
  =
  q(a)^{-1}q(b)^{-1}\mathsf P_a\mathsf Q_{-b}.
\end{equation}
For \((a,b),(c,d)\in A\oplus A\), the Pauli commutation relation and the
quadratic identity give
\begin{equation}\label{eq:family-III-local-multiplication}
  Y^{(a,b)}Y^{(c,d)}
  =
  \pair{a}{c}\pair{b}{c}\pair{b}{d}
  Y^{(a+c,b+d)}
  =
  \nu_{\mathrm{III}}((a,b),(c,d))Y^{(a+c,b+d)}.
\end{equation}
Indeed, commuting \(\mathsf Q_{-b}\) past \(\mathsf P_c\) contributes
\(\pair{b}{c}\), while the scalar quotient is
\[
  \frac{q(a+c)q(b+d)}{q(a)q(b)q(c)q(d)}
  =
  \pair{a}{c}\pair{b}{d}.
\]

Let \(Y_i^{(a,b)}\) denote the operator
\eqref{eq:family-III-local-Pauli-word} on the \(i\)-th and \((i+1)\)-st
\(W\)-factors, where \((a,b)\in A\oplus A\), \(n\geq2\), and
\(1\leq i<n\).  For
adjacent positions, the two Pauli overlaps give
\begin{equation}\label{eq:family-III-adjacent-Pauli-commutation}
  Y_i^{(a,b)}Y_{i+1}^{(c,d)}
  =
  \pair{b}{c}\pair{a}{d}^{-1}
  Y_{i+1}^{(c,d)}Y_i^{(a,b)}.
\end{equation}
Operators at distance greater than one have disjoint tensor supports and
commute.  Each \(Y_i^{(a,b)}\) is unitary.  Equation
\eqref{eq:family-III-local-multiplication} also gives
\[
  (Y_i^{(a,b)})^*
  =\nu_{\mathrm{III}}((a,b),(-a,-b))^{-1}Y_i^{(-a,-b)},
\]
which is the canonical involution in the twisted group algebra.  Thus
\[
  \Phi_n^{\mathrm{III}}(U_i^{(a,b)})=Y_i^{(a,b)}
\]
defines a compatible \(*\)-representation of
\(\T_n(G_{\mathrm{III}};\nu_{\mathrm{III}},
\tau_{\mathrm{III}})\) on \(W^{\ot n}\).  Generalized Pauli words are
orthogonal for the Hilbert--Schmidt inner product.  For a normal-ordered
tower basis monomial
\(\prod_{i=1}^{n-1}U_i^{(\alpha_i,\beta_i)}\), the label \(\alpha_i\) is
read from the \(Z\)-label on the second \(V\)-factor of the \(i\)-th
\(W\)-factor, while \(\beta_i\) is read from the \(X\)-label on the second
\(V\)-factor of the \((i+1)\)-st \(W\)-factor.  Distinct tower monomials therefore give
distinct Pauli words.  Therefore \(\Phi_n^{\mathrm{III}}\) is faithful.

The image of \(r^{\mathrm{III}}_1\) is
\begin{equation}\label{eq:family-III-YBO}
  R^{\mathrm{III}}=R^{\mathrm{III}}_{(A,q)}
  =
  |A|^{-1}\sum_{a,b\in A}
  q(a)^{-1}q(b)^{-1}\mathsf P_a\mathsf Q_{-b}
  \in\End(W\otimes W).
\end{equation}

For any local operator \(R\) defined in this subsection, a subscript \(i\)
denotes the copy
\[
  R_i=I^{\ot(i-1)}\ot R\ot I^{\ot(n-i-1)}
  \qquad(n\geq2,\ 1\leq i<n).
\]

\begin{theorem}\label{thm:metric-families-YBE}
For every \(Q,P\in\mathcal Q(q)\), the three operators
\[
  R^{\mathrm{I}}_Q,
  \qquad
  R^{\mathrm{II}}_{Q,P},
  \qquad
  R^{\mathrm{III}}
\]
are unitary Yang--Baxter operators.
\end{theorem}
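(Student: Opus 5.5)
The plan is to deduce all three statements from the tower-level results via the realization principle of Section~\ref{subsec:local-braid-data}. If a braid-compatible family \(r_i\in\A_{i+1}\) is sent by compatible representations \(\Phi_n:\A_n\to\End(V^{\ot n})\) to \(I^{\ot(i-1)}\ot R\ot I^{\ot(n-i-1)}\), then \(R\) satisfies the Yang--Baxter equation. If moreover the \(\Phi_n\) are \(*\)-representations and the \(r_i\) are unitary, then \(R=\Phi_2(r_1)\) is unitary. So for each family three things must be checked: the algebraic braid compatibility and unitarity, the fact that \(\Phi_n\) is a compatible \(*\)-representation of the tower, and the fact that the image of \(r_i\) is the local copy \(R_i\).

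For Family I, Proposition~\ref{prop:gaussian-coefficients} supplies unitarity and braid compatibility of \(r^{\mathrm I}_{Q,i}\). The maps \(\Phi^{\mathrm I}_n(U_i^a)=X_{a,i}Z_{a,i+1}\) satisfy the three straightening relations \eqref{eq:tower-same}--\eqref{eq:tower-adjacent} with \(\nu=1\), \(\tau=\tau_{\mathrm I}\), as recorded above. Since the defining relations present the twisted group algebra, they extend to algebra homomorphisms, and these are \(*\)-preserving because \((Y_i^a)^*=Y_i^{-a}\). Compatibility with the inclusions is clear, because \(U^{(\mathbf g,0)}\) maps to the same operator tensored with \(I\). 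The image of \(r^{\mathrm I}_{Q,i}\) is \(|A|^{-1/2}\sum_a Q(a)X_{a,i}Z_{a,i+1}\), which is exactly \((R^{\mathrm I}_Q)_i\).

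Family II is handled the same way, using Proposition~\ref{prop:family-II-reindexing} together with the Pauli representation \(Y_i^{(t,c)}=X_{t,i}X_{-t,i+1}Z_{c,i}Z_{c,i+1}\). Two relations need checking. The first is same-position additivity, where the two phases \(\pair{c}{t'}\) and \(\pair{c}{-t'}\) cancel. The second is the adjacent relation with \(\tau_{\mathrm{II}}\). For the adjacent relation, I would compute the middle-factor commutator of \(X_{-t}Z_c\) with \(X_{t'}Z_{c'}\) using \(Z_bX_a=\pair{b}{a}X_aZ_b\). This gives \(\pair{c}{t'}\pair{c'}{-t}^{-1}=\pair{c}{t'}\pair{c'}{t}\), which is \(\tau_{\mathrm{II}}\). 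The image of \(r^{\mathrm{II}}_{Q,P,i}\) is then \((R^{\mathrm{II}}_{Q,P})_i\) by \eqref{eq:family-II-Pauli-expansion}.

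For Family III, Proposition~\ref{prop:family-III-coefficients} gives the tower statement. For the representation I would verify the same-position relation \eqref{eq:family-III-local-multiplication}: commuting \(\mathsf Q_{-b}\) past \(\mathsf P_c\) produces \(\pair{b}{c}\) from the overlapping factor \(X_b\) versus \(Z_c\), and the \(q\)-normalization contributes \(\pair{a}{c}\pair{b}{d}\). I would also verify the adjacent relation \eqref{eq:family-III-adjacent-Pauli-commutation}. Here \(Y_i\) and \(Y_{i+1}\) overlap on the \((i+1)\)-st \(W\)-factor. On that factor, \(Y_i\) acts by \(Z_a\otimes I\) and \(X_{-b}\otimes X_{-b}\), while \(Y_{i+1}\) acts by \(I\otimes Z_c\) and \(X_{-d}\otimes I\). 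The two cross-overlaps, \(X_{-b}\) in the second \(V\)-slot against \(Z_c\), and \(Z_a\) in the first \(V\)-slot against \(X_{-d}\), produce \(\pair{b}{c}\pair{a}{d}^{-1}\). With the involution formula this gives a \(*\)-representation, and the image of \(r^{\mathrm{III}}_i\) is \(R^{\mathrm{III}}_i\).

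The main obstacle is the sign and orientation bookkeeping in this Family III adjacent commutation. I would do that check first, carefully, by writing out the four \(V\)-slots of the shared \(W\)-factor. Once it is done, the theorem follows uniformly from Corollary~\ref{cor:coefficient-unitary-braid-criterion} and the realization principle.
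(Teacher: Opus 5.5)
Your proposal is correct and follows the paper's own proof. The tower-level Propositions~\ref{prop:gaussian-coefficients}, \ref{prop:family-II-reindexing} and~\ref{prop:family-III-coefficients} give unitarity and braid compatibility, and the compatible Pauli \(*\)-representations \(\Phi_n^{\mathrm I},\Phi_n^{\mathrm{II}},\Phi_n^{\mathrm{III}}\) of Subsection~\ref{subsec:tensor-power-representations} send each \(r_i\) to the local copy \(R_i\), so the Yang--Baxter equation and unitarity transfer; your explicit same-position and adjacent phase checks, including the Family III bookkeeping that yields \(\pair{b}{c}\pair{a}{d}^{-1}\), agree with the relations recorded there.
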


\begin{proof}
Propositions~\ref{prop:gaussian-coefficients}
and~\ref{prop:family-II-reindexing} show that the tower elements
\(r^{\mathrm{I}}_{Q,i}\) and \(r^{\mathrm{II}}_{Q,P,i}\) are unitary and
braid-compatible.  Proposition~\ref{prop:family-III-coefficients} gives the
same conclusion for \(r^{\mathrm{III}}_i\).

Each map \(\Phi_n^{\mathrm{I}}\), \(\Phi_n^{\mathrm{II}}\), and
\(\Phi_n^{\mathrm{III}}\) defined in
Subsection~\ref{subsec:tensor-power-representations} is a compatible
\(*\)-representation of the corresponding tower.  Moreover,
\[
  \Phi_n^{\mathrm{I}}(r^{\mathrm{I}}_{Q,i})
  =
  R^{\mathrm{I}}_{Q,i},
  \qquad
  \Phi_n^{\mathrm{II}}(r^{\mathrm{II}}_{Q,P,i})
  =
  R^{\mathrm{II}}_{Q,P,i},
  \qquad
  \Phi_n^{\mathrm{III}}(r^{\mathrm{III}}_i)
  =
  R^{\mathrm{III}}_i.
\]
Applying these representations to the braid relations gives the
Yang--Baxter equation for all three operators.  Since the tower elements are
unitary and the representations preserve the \(*\)-structure, the resulting
operators are unitary.
\end{proof}

\begin{cor}\label{cor:extended-metric-families-YBE}
The conclusion of Theorem~\ref{thm:metric-families-YBE} also holds for
every Family I operator \(R^{\mathrm{I},-}_Q\) with
\(Q\in\mathcal Q^-(q)\) and every Family \(\mathrm{II}_E\) operator
\(R^{\mathrm{II},E}_{Q_E,P_E}\) associated with data satisfying
Proposition~\ref{prop:family-II-coordinate-change}.
\end{cor}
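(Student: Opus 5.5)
The argument follows the proof of Theorem~\ref{thm:metric-families-YBE}: unitarity and braid compatibility are established in the tower, then pushed forward through a compatible \(*\)-representation on tensor powers. The two cases can be treated separately, because each already has its tower-level statement.

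For \(R^{\mathrm{I},-}_Q\) with \(Q\in\mathcal Q^-(q)\), Proposition~\ref{prop:gaussian-coefficients} covers both signs \(\epsilon=\pm1\). It shows that the elements \(r^{\mathrm{I},-}_{Q,i}\) are unitary and braid-compatible in \(\T_n(G_{\mathrm I},\tau_{\mathrm I})\). The important point is that the tower itself does not depend on \(\epsilon\): the adjacent twisting is always \(\tau_{\mathrm I}=\pair{\cdot}{\cdot}\), and only the coefficient function changes. So the same compatible \(*\)-representation \(\Phi^{\mathrm I}_n(U_i^a)=X_{a,i}Z_{a,i+1}\) applies. By linearity it sends \(r^{\mathrm{I},-}_{Q,i}\) to the local copy \(I^{\ot(i-1)}\ot R^{\mathrm{I},-}_Q\ot I^{\ot(n-i-1)}\), using \eqref{eq:opposite-Gaussian-RQ}. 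Applying \(\Phi^{\mathrm I}_3\) to the braid relation in \(\T_3\) gives the Yang--Baxter equation. Applying \(\Phi^{\mathrm I}_2\), which preserves \(*\) and the unit, to \(r^*r=rr^*=1\) gives unitarity.

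For Family \(\mathrm{II}_E\), Proposition~\ref{prop:family-II-coordinate-change} shows that the elements \(r^{\mathrm{II},E}_{Q_E,P_E,i}\) form a unitary braid-compatible family in the \emph{original} Family II tower \(\T_n(G_{\mathrm{II}},\tau_{\mathrm{II}})\), with \(\tau_{\mathrm{II}}\) built from \(\pair{\cdot}{\cdot}\) rather than from \(B_E\). This matters, because it lets us reuse the representation \(\Phi^{\mathrm{II}}_n\) of Subsection~\ref{subsec:tensor-power-representations}, which is known to be a compatible \(*\)-representation of exactly that tower. By linearity, \(\Phi^{\mathrm{II}}_n\) carries \(r^{\mathrm{II},E}_{Q_E,P_E,i}\) to the \(i\)-th local copy of \eqref{eq:family-II-E-Pauli-expansion}. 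The same two pushforward steps then give the Yang--Baxter equation and unitarity of \(R^{\mathrm{II},E}_{Q_E,P_E}\).

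There is no real obstacle here, only bookkeeping. One point needs care: the tower in which Proposition~\ref{prop:family-II-coordinate-change} places the family must be the original \(\tau_{\mathrm{II}}\) tower and not the \(B_E\) tower. If it were the \(B_E\) tower, the Pauli representation would first have to be transported through the isomorphism \(\Psi\). The proposition's proof already performs this transport via \(U_i^{(t,c)}\mapsto U_i^{(t,Ec)}\), so the identification is immediate. The explicit basis formulas \eqref{eq:family-II-E-basis-action} and \eqref{eq:family-II-E-factorization} are not needed for this corollary.
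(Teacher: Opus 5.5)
Your proposal is correct and is essentially the paper's argument: the paper's proof applies the compatible Pauli \(*\)-representations \(\Phi^{\mathrm I}_n\) and \(\Phi^{\mathrm{II}}_n\) to the tower-level statements of Propositions~\ref{prop:gaussian-coefficients} and~\ref{prop:family-II-coordinate-change}, exactly as you do. Your two remarks supply the bookkeeping the paper's one-line proof leaves implicit: the Family I tower does not depend on \(\epsilon\), and Family \(\mathrm{II}_E\) already lies in the original \(\tau_{\mathrm{II}}\) tower, so no transport through \(\Psi\) is needed.
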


\begin{proof}
Apply the compatible Pauli representations to
Propositions~\ref{prop:gaussian-coefficients}
and~\ref{prop:family-II-coordinate-change}.
\end{proof}

\subsection{Orthogonal and primary factorization}
\label{subsec:metric-factorization}

Orthogonal decompositions induce factorizations of the towers and their
representations.

For the data already defined, write
\begin{align*}
  \T_\bullet^{\mathrm I}(A,q)
  &=\T_\bullet(G_{\mathrm I},\tau_{\mathrm I}),\qquad
  \T_\bullet^{\mathrm{II}}(A,q)
  =\T_\bullet(G_{\mathrm{II}},\tau_{\mathrm{II}}),\\
  \T_\bullet^{\mathrm{III}}(A,q)
  &=\T_\bullet(G_{\mathrm{III}};\nu_{\mathrm{III}},
    \tau_{\mathrm{III}}).
\end{align*}

Suppose \(m\geq1\) and
\[
  (A,q)=\bigoplus_{r=1}^m(A^{(r)},q^{(r)})
\]
is an orthogonal decomposition.  Let \(Q,P\in\mathcal Q(q)\), put
\(Q^{(r)}=Q|_{A^{(r)}}\) and \(P^{(r)}=P|_{A^{(r)}}\).

\begin{prop}\label{prop:metric-factorization}
For each \(\mathrm F\in\{\mathrm I,\mathrm{II},\mathrm{III}\}\), there is a
natural isomorphism
\[
  \T_\bullet^{\mathrm F}(A,q)
  \cong
  \T_\bullet^{\mathrm F}(A^{(1)},q^{(1)})
  \otimes_{\mathrm{lev}}\cdots\otimes_{\mathrm{lev}}
  \T_\bullet^{\mathrm F}(A^{(m)},q^{(m)}),
\]
under which the local braid element is the tensor product of the local braid
elements of the summands.
\end{prop}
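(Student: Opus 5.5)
The proof reduces to Proposition~\ref{prop:tower-external-product}, applied inductively over the summands. First we check that each family's data $(G_{\mathrm F};\nu_{\mathrm F},\tau_{\mathrm F})$ built from $(A,q)$ is the direct-sum data, in the sense of Proposition~\ref{prop:tower-external-product}, of the data built from the summands $(A^{(r)},q^{(r)})$. Orthogonality means $\pair{a}{b}=\prod_r\pair{a^{(r)}}{b^{(r)}}_{q^{(r)}}$ for $a=\sum_r a^{(r)}$, and $q(a)=\prod_r q^{(r)}(a^{(r)})$; the cross terms vanish because $\pair{A^{(r)}}{A^{(s)}}=1$ for $r\neq s$.

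Next we identify the groups and verify multiplicativity of the twisting data, family by family. For Family~I, $G_{\mathrm I}=A=\bigoplus_r A^{(r)}$, and $\tau_{\mathrm I}=\pair{\cdot}{\cdot}$ is the product of the $\tau_{\mathrm I}^{(r)}$. For Families~II and~III we use the shuffle isomorphism $A\oplus A\cong\bigoplus_r (A^{(r)}\oplus A^{(r)})$, $(t,c)\mapsto\bigl((t^{(r)},c^{(r)})\bigr)_r$. Under this isomorphism, $\tau_{\mathrm{II}}((t,c),(t',c'))=\pair{c}{t'}\pair{c'}{t}$ factors as $\prod_r\tau_{\mathrm{II}}^{(r)}$, and both $\nu_{\mathrm{III}}$ in \eqref{eq:family-III-cocycle} and $\tau_{\mathrm{III}}$ in \eqref{eq:family-III-adjacent-bicharacter} factor in the same way, since each is a product of pairings of coordinates. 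Because Proposition~\ref{prop:tower-external-product} treats two summands, we first state its $m$-fold version by induction on $m$, using associativity of $\otimes_{\mathrm{lev}}$. A group isomorphism carrying $(\nu,\tau)$ to $(\nu',\tau')$ induces a tower isomorphism $U_i^{g}\mapsto U_i^{\phi(g)}$, by the same argument as in Proposition~\ref{prop:family-II-coordinate-change}. This gives the isomorphism of towers.

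Then we verify that the coefficient functions factor, so that \eqref{eq:tower-external-product-braid-element} yields the statement about braid elements.
\begin{itemize}
\item For Family~I, $|A|^{-1/2}Q(a)=\prod_r |A^{(r)}|^{-1/2}Q^{(r)}(a^{(r)})$. This needs $Q=\prod_r Q^{(r)}$, which holds because $Q=q\chi$ and both $q$ and the character $\chi$ factor over an orthogonal (hence direct) decomposition. Here $Q^{(r)}\in\mathcal Q(q^{(r)})$.
\item For Family~III, the constant function $|A|^{-1}$ is the product of the constants $|A^{(r)}|^{-1}$.
\item For Family~II, we additionally need $\gamma(P)=\prod_r\gamma(P^{(r)})$. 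This follows because the Gauss sum over $A=\bigoplus_r A^{(r)}$ of the multiplicative function $P$ factors as a product of sums, and $|A|^{-1/2}=\prod_r|A^{(r)}|^{-1/2}$. Then $f^{\mathrm{II}}_{Q,P}=\boxtimes_r f^{\mathrm{II}}_{Q^{(r)},P^{(r)}}$ under the shuffle.
\end{itemize}

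Naturality follows because every map is defined on the basis by the canonical coordinate decomposition. The main obstacle is bookkeeping rather than mathematics: we must confirm that the shuffle identification for $A\oplus A$ is compatible with the order in which the cocycles are written, especially for $\nu_{\mathrm{III}}$, which is not symmetric. We handle this by checking the factorization termwise for each of $\pair{a}{c}$, $\pair{b}{c}$, and $\pair{b}{d}$.
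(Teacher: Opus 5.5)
Your proposal is correct and follows essentially the same route as the paper: check that $q$, the polarization, $Q$, $P$, $\gamma(P)$, $\nu_{\mathrm{III}}$, $\tau_{\mathrm{III}}$, and the normalizing constants all factor over the orthogonal summands, then invoke Proposition~\ref{prop:tower-external-product}. Your explicit handling of the shuffle $A\oplus A\cong\bigoplus_r(A^{(r)}\oplus A^{(r)})$ and of the induction on $m$ adds bookkeeping that the paper leaves implicit.
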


\begin{proof}
Write \(a=\sum_ra^{(r)}\) and \(b=\sum_rb^{(r)}\), and let
\(\pair{\cdot}{\cdot}_r\) denote the polarization of \(q^{(r)}\).
Orthogonality gives
\[
  q(a)=\prod_rq^{(r)}(a^{(r)}),
  \qquad
  \pair{a}{b}=\prod_r\pair{a^{(r)}}{b^{(r)}}_r,
\]
and the same product formulas hold for \(Q\) and \(P\).  Thus the adjacent
bicharacters and coefficient functions of Families I and II are external
products.  Moreover,
\(\gamma(P)=\prod_r\gamma(P^{(r)})\); hence the normalization in Family II also
factors.  For Family III, both \(\nu_{\mathrm{III}}\) and
\(\tau_{\mathrm{III}}\) are products of their restrictions, while
\(|A|^{-1}=\prod_r|A^{(r)}|^{-1}\).
Proposition~\ref{prop:tower-external-product} now gives the tower and
braid-element factorizations.
\end{proof}

After the canonical shuffle of tensor factors, the corresponding operators
satisfy
\begin{align*}
  R^{\mathrm I}_Q
    &\cong\bigotimes_rR^{\mathrm I}_{Q^{(r)}},\\
  R^{\mathrm{II}}_{Q,P}
    &\cong\bigotimes_rR^{\mathrm{II}}_{Q^{(r)},P^{(r)}},\\
  R^{\mathrm{III}}_{(A,q)}
    &\cong\bigotimes_rR^{\mathrm{III}}_{(A^{(r)},q^{(r)})}.
\end{align*}
The associated braid representations and the maps \(\varepsilon_n\) factor
in the same way.

Suppose now that data \((E,Q_E,P_E)\) defining Family \(\mathrm{II}_E\) satisfy
Proposition~\ref{prop:family-II-coordinate-change} and that every
summand is \(E\)-stable.  Set
\[
  E^{(r)}=E|_{A^{(r)}},
  \qquad Q_E^{(r)}=Q_E|_{A^{(r)}},
  \qquad P_E^{(r)}=P_E|_{A^{(r)}}.
\]
The restrictions have the required polarizations, and the coordinate-change map
\((t,c)\mapsto(t,Ec)\) respects the decomposition.  Consequently,
\[
  R^{\mathrm{II},E}_{Q_E,P_E}
  \cong
  \bigotimes_r
  R^{\mathrm{II},E^{(r)}}_{Q_E^{(r)},P_E^{(r)}}.
\]
The associated braid representations factor accordingly.

\begin{cor}\label{cor:metric-primary-factorization}
The primary decomposition
\[
  (A,q)=\bigoplus_{p\mid |A|}(A_p,q|_{A_p})
\]
induces canonical factorizations of every tower and braid representation
arising from the metric families.  This includes every admissible Family
\(\mathrm{II}_E\).
\end{cor}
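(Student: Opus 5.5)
The plan is to reduce the corollary to Proposition~\ref{prop:metric-factorization} and to the Family~\(\mathrm{II}_E\) factorization displayed just before it. The work is to check that the primary decomposition is an orthogonal decomposition of metric groups, that it is canonical, and that it is automatically \(E\)-stable.

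First, write \(A=\bigoplus_{p\mid|A|}A_p\), where \(A_p\) is the \(p\)-primary component. This is a direct sum of abelian groups, and it is canonical because \(A_p\) is the set of elements of \(p\)-power order. For orthogonality, take \(a\in A_p\) and \(b\in A_{p'}\) with \(p\neq p'\). The value \(\pair{a}{b}\) is a root of unity whose order divides both the order of \(a\), a power of \(p\), and the order of \(b\), a power of \(p'\). Hence \(\pair{a}{b}=1\). By \eqref{eq:chi-from-Q} this gives \(q(a+b)=q(a)q(b)\), so \(q\) is the product of its restrictions. Each restriction \(q|_{A_p}\) is again a metric form. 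Indeed, its polarization is the restriction of \(\pair{\cdot}{\cdot}\), and this restriction is nondegenerate: if \(a\in A_p\) paired trivially with all of \(A_p\), it would pair trivially with every other component as well, hence with all of \(A\), forcing \(a=0\). Also \(q(-a)=q(a)\) holds trivially on each component.

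With this in place, Proposition~\ref{prop:metric-factorization} applies with \(m\) equal to the number of primes dividing \(|A|\). It gives the tower isomorphisms and the braid-element factorizations for Families I, II and III. After the canonical shuffle it also gives the factorizations of the operators and of the braid representations. For the signed branch \(\mathcal Q^-(q)\), the same external-product argument goes through, since \(Q\) still splits as a product over the components.

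Next, the Family \(\mathrm{II}_E\) case. Any group automorphism \(E\) maps elements of \(p\)-power order to elements of \(p\)-power order, so \(E(A_p)=A_p\). Thus every primary summand is \(E\)-stable, and the factorization of \(R^{\mathrm{II},E}_{Q_E,P_E}\) stated before the corollary applies. To see that the restrictions still have the right polarizations:
\begin{itemize}
\item \(E|_{A_p}\) is self-adjoint for the restricted pairing.
\item \(Q_E|_{A_p}\) is polarized by \(\pair{Ex}{y}\).
\item \(P_E|_{A_p}\) is polarized by \(\pair{E^{-1}x}{y}\).
\end{itemize}
All three follow immediately by restricting the defining identities. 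The normalization also factors, since \(\gamma(P_E)=\prod_p\gamma(P_E|_{A_p})\) by the orthogonality above.

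The main point needing care, and it is mild, is canonicity. No choices are made anywhere: the summands are determined by \(A\) alone, and the isomorphisms of Proposition~\ref{prop:tower-external-product} are given on basis elements by \(U_i^{(g_p)_p}\mapsto\bigotimes_pU_i^{g_p}\). The resulting factorization is therefore natural. This contrasts with the further splitting into indecomposable blocks, which does require a choice.
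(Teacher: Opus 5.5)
Your proposal is correct and follows the paper's proof. Both arguments get orthogonality of distinct primary components from the orders of the pairing values, restrict nondegeneracy to each $A_p$, obtain $E$-stability because each $A_p$ is characteristic, and then invoke Proposition~\ref{prop:metric-factorization} and the Family~$\mathrm{II}_E$ tensor factorization stated before the corollary. Your additional checks (the opposite-polarization branch, the restricted polarizations of $Q_E$ and $P_E$, and the factorization of $\gamma(P_E)$) only make explicit what the paper's short proof leaves implicit.
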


\begin{proof}
If \(a\in A_p\) and \(b\in A_\ell\) with \(p\neq\ell\), then
\(\pair{a}{b}\) has order dividing both a power of \(p\) and a power of
\(\ell\), hence is trivial.  Nondegeneracy restricts to each primary
component.  Each \(A_p\) is characteristic in \(A\) and is therefore
\(E\)-stable for every \(E\in\Aut(A)\).  Proposition~\ref{prop:metric-factorization}
and the tensor-power consequences above give the stated factorizations,
including the Family \(\mathrm{II}_E\) case.
\end{proof}

\begin{remark}\label{rem:metric-block-factorization}
Each \(p\)-primary metric group admits an orthogonal decomposition into
indecomposable blocks \cite{Wall,Durfee,MirandaMorrison}.  After choosing
such decompositions, Proposition~\ref{prop:metric-factorization} gives a
further factorization into the corresponding block representations.  For
Family \(\mathrm{II}_E\), the chosen blocks must be \(E\)-stable.  Under any
such factorization, the local spectrum consists of products of block
eigenvalues, with multiplicities multiplied, and the order of the local
operator is the least common multiple of the orders of those products.
\end{remark}

\section{Generically non-Gaussian cyclic families}
\label{sec:cyclic-families}

We apply the admissible internal orthogonal-sum construction to the
dimension-\(8\) cyclic tower and construct one-parameter families that are
generically non-Gaussian in the fixed cyclic Pauli realization.  Each family
nevertheless contains signed-Gaussian specializations.

\subsection{Two-torsion sectors in cyclic towers}

Let \(d\geq2\) be an integer, let \(G=\Z/d\Z\), set
\(w=e^{2\pi\ii/d}\), and put
\[
  \tau(a,b)=w^{ab}.
\]
For \(n\geq2\) and \(1\leq i<n\), write
\(U_i=U_i^1\) in \(\T_n(G,\tau)\).  Then
\[
 U_i^d=1\qquad(1\leq i<n),
 \qquad
 U_iU_{i+1}=wU_{i+1}U_i\qquad(1\leq i<n-1).
\]

\begin{lemma}\label{lem:cyclic-two-torsion-sectors}
Fix \(n\geq2\), assume \(4\mid d\), and, for \(1\leq i<n\), set
\[
  J_i=U_i^{d/2},
  \qquad
  e_{i,\pm}=\frac{1\pm J_i}{2}.
\]
Then the \(J_i\) are commuting involutions and
\begin{equation}\label{eq:sector-switching}
  J_iU_{i+1}^k=(-1)^kU_{i+1}^kJ_i,
  \qquad
  U_i^kJ_{i+1}=(-1)^kJ_{i+1}U_i^k
  \qquad(k\in\Z,\ 1\leq i<n-1).
\end{equation}
Consequently, even powers preserve the neighboring \(e_\pm\)-sectors and
odd powers interchange them.
\end{lemma}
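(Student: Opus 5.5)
The plan is to work entirely inside $\T_n(G,\tau)$ with $G=\Z/d\Z$ and $\nu=1$, using only the straightening relations \eqref{eq:tower-same}--\eqref{eq:tower-adjacent}. Since $\nu=1$, relation \eqref{eq:tower-same} gives $U_i^gU_i^h=U_i^{g+h}$. Hence $J_i^2=U_i^{d}=U_i^0=1$, so each $J_i$ is an involution. In particular $e_{i,\pm}$ are complementary idempotents with $J_ie_{i,\pm}=\pm e_{i,\pm}$.

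Next I would establish commutativity of the $J_i$ by cases on $|i-j|$.
\begin{itemize}
\item For $i=j$ there is nothing to prove.
\item For $|i-j|>1$, commutativity is \eqref{eq:tower-far}.
\item For $j=i+1$, relation \eqref{eq:tower-adjacent} gives $U_i^{g}U_{i+1}^{h}=w^{gh}U_{i+1}^hU_i^g$. With $g=h=d/2$ the phase is $w^{d^2/4}=e^{2\pi\ii\, d/4}$. This equals $1$ precisely because $4\mid d$.
\end{itemize}
This last case is the only place the hypothesis $4\mid d$ enters, and it is the one point to check carefully. If only $2\mid d$ held, one would get $J_iJ_{i+1}=(-1)^{d/2}J_{i+1}J_i$, which can be $-1$.

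For \eqref{eq:sector-switching}, I would apply \eqref{eq:tower-adjacent} again.
\begin{itemize}
\item Taking $g=d/2$ and $h=k$ gives $J_iU_{i+1}^k=w^{kd/2}U_{i+1}^kJ_i=(-1)^kU_{i+1}^kJ_i$. Here $U_{i+1}^k$ means the basis element $U_{i+1}^{k \bmod d}$, which agrees with the $k$-th power of $U_{i+1}$ by \eqref{eq:tower-same}. The sign is well defined for $k\in\Z$ because $d$ is even.
\item Taking $g=k$ and $h=d/2$ gives $U_i^kJ_{i+1}=(-1)^kJ_{i+1}U_i^k$ in the same way.
\end{itemize}

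For the consequence about sectors, substitute these into $e_{i,\pm}=(1\pm J_i)/2$. This gives $e_{i,\pm}U_{i+1}^k=U_{i+1}^k e_{i,\pm(-1)^k}$, and similarly $U_i^k e_{i+1,\pm}=e_{i+1,\pm(-1)^k}U_i^k$. Thus even powers commute with the neighboring projections and so preserve each $e_\pm$-sector. Odd powers intertwine $e_{+}$ with $e_{-}$, so they interchange the sectors.
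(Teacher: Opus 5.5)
Your proof is correct and follows the paper's own argument essentially step for step. You get \(J_i^2=U_i^d=1\) from the same-position relation, the adjacent phase \(w^{(d/2)^2}=1\) from \(4\mid d\), and \(w^{kd/2}=(-1)^k\) for the switching identities, and you then multiply by \((1\pm J_i)/2\) to obtain the sector statement. Your extra remarks, namely the well-definedness of \((-1)^k\) for \(k\in\Z\) and the explicit intertwining \(e_{i,\pm}U_{i+1}^k=U_{i+1}^k e_{i,\pm(-1)^k}\), simply spell out details the paper leaves implicit.
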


\begin{proof}
The equality \(J_i^2=U_i^d=1\) follows from the tower relation.  Since
\(4\mid d\), the adjacent
commutation scalar is
\[
 w^{(d/2)^2}=w^{d(d/4)}=1,
\]
Therefore adjacent \(J_i\)'s commute; distant ones commute by
the tower relations.  Moreover,
\[
 w^{(d/2)k}=(-1)^k,
\]
which gives \eqref{eq:sector-switching}.  The statement about the
projections follows by multiplying these identities by
\((1\pm J_i)/2\).
\end{proof}

If \(d\equiv2\pmod4\), then
\(w^{(d/2)^2}=-1\); hence the neighboring involutions anticommute.  Thus the
commuting eigenspace decomposition in
Lemma~\ref{lem:cyclic-two-torsion-sectors} requires \(4\mid d\).  This
observation alone gives no classification when \(4\nmid d\).

\subsection{Dimensions four and eight}

For a coefficient vector
\(\mathbf a=[a_0,\ldots,a_{d-1}]\in\C^d\), write
\[
  U=U_1^1,
  \qquad
  r(\mathbf a)=\sum_{k=0}^{d-1}a_kU^k\in\T_2(\Z/d\Z,\tau).
\]
On \(\C^d\), with basis indexed by \(\Z/d\Z\), let
\(Xe_j=e_{j+1}\) and \(Ze_j=w^je_j\).  Thus \(ZX=wXZ\).  Under the Pauli
realization \(U\mapsto X\otimes Z\), denote the image of
\(r(\mathbf a)\) by \(R(\mathbf a)\), and let \(R_i(\mathbf a)\) denote its
copy on tensor positions \(i\) and \(i+1\).

In dimension \(4\), direct conjugation to the Zak basis shows that every
polynomial-in-Pauli solution produced by the fixed-realization calculation is
monomial, including the positive-dimensional coefficient family.  This is
consistent with the monomial realization of the Clifford group in square
dimensions described in \cite{ApplebyBengtssonBrierleyGrasslGrossLarsson}.
Since these solutions introduce no new local-equivalence classes, we omit
them and turn to dimension \(8\).

For \(d=8\), one has \(w^2=\ii\).  Put
\[
  e_\pm=\frac{1\pm U^4}{2}.
\]
The two decompositions differ by the sector containing the terms multiplied
by \(t\).  For \(t,\alpha,\eta,\delta,\beta\in\C\) satisfying
\(\alpha^2=-1\), \(\eta^2=1\), \(\delta^2=1\), and \(\beta^2=-1\), set
\begin{align}
 I_{\alpha,\eta}(t)
 &=[1,\alpha t,\eta,t,-1,\alpha t,-\eta,t],
 \label{eq:d8-type-I-vector}\\
 II_{\delta,\beta}(t)
 &=[1,\delta t,\beta,-t,1,-\delta t,\beta,t].
 \label{eq:d8-type-II-vector}
\end{align}

We denote the eight resulting families by
\begin{align*}
 C_1&=I_{-\ii,-1},& C_2&=I_{-\ii,1},&
 C_3&=I_{\ii,-1},& C_4&=I_{\ii,1},\\
 C_5&=II_{-1,-\ii},& C_6&=II_{-1,\ii},&
 C_7&=II_{1,-\ii},& C_8&=II_{1,\ii}.
\end{align*}

\begin{prop}\label{prop:d8-families}
For every \(t\in\C\), each vector \(I_{\alpha,\eta}(t)\) with
\(\alpha^2=-1\), \(\eta^2=1\), and each vector
\(II_{\delta,\beta}(t)\) with \(\delta^2=1\), \(\beta^2=-1\), satisfies
the cyclic coefficient Yang--Baxter equation.  Their normalized tower
elements decompose as
\begin{align}
 \frac{r(I_{\alpha,\eta}(t))}{\sqrt8}
 &=\frac{1}{\sqrt2}(1+\eta U^2)e_-
   \boxplus_t
   \frac{1}{\sqrt2}U(\alpha+U^2)e_+,
 \label{eq:d8-type-I-boxplus}\\
 \frac{r(II_{\delta,\beta}(t))}{\sqrt8}
 &=\frac{1}{\sqrt2}(1+\beta U^2)e_+
   \boxplus_t
   \frac{1}{\sqrt2}U(\delta-U^2)e_-.
 \label{eq:d8-type-II-boxplus}
\end{align}
In the Pauli realization, the corresponding normalized operators are
unitary exactly when \(|t|=1\).
\end{prop}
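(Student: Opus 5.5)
The plan has three parts: check the decompositions \eqref{eq:d8-type-I-boxplus}--\eqref{eq:d8-type-II-boxplus} by hand, reduce the braid relation to Lemma~\ref{lem:boxplus-braid-relations}, and read off unitarity from Proposition~\ref{prop:boxplus-unitarity}.

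\emph{Decompositions.} Expand $e_\pm=(1\pm U^4)/2$ and collect coefficients of $U^k$, $0\le k<8$. For Type I,
\[
  \tfrac12(1+\eta U^2)(1-U^4)=\tfrac12[1,0,\eta,0,-1,0,-\eta,0],
  \qquad
  \tfrac12U(\alpha+U^2)(1+U^4)=\tfrac12[0,\alpha,0,1,0,\alpha,0,1].
\]
Multiplying by $\sqrt8/\sqrt2=2$ recovers $I_{\alpha,\eta}(t)=a+tb$, with $a$ the even-power part and $b$ the odd-power part. Type II is identical with the roles of $e_+$ and $e_-$ exchanged.

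\emph{Unitarity.} Write $a,b$ for the two summands and apply Proposition~\ref{prop:boxplus-unitarity} with $c=1$, $e_0=e_-$, $e_1=e_+$ (Type I). All these elements lie in the commutative algebra $\C[U]/(U^8-1)$. On $e_-$ we have $U^{-2}=-U^2$, so
\[
  a^*a=\tfrac12\bigl(2+\eta(U^2+U^{-2})\bigr)e_-=e_-.
\]
On $e_+$ we have $U^{-2}=U^2$ and $\alpha+\bar\alpha=0$, so $b^*b=e_+$. The cross terms vanish because $e_+e_-=0$. For general $t$ this gives
\[
  r^*r=e_-+|t|^2e_+ .
\]
This equals $1$ iff $|t|=1$, since $e_+$ and $e_-$ are nonzero complementary projections. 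The Pauli realization $U^k\mapsto X^k\otimes Z^k$ is faithful at level two, because distinct Pauli words are linearly independent. Hence unitarity of $R$ is equivalent to unitarity of $r$, which gives the ``exactly when'' clause.

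\emph{Braid relation.} The braid relation must hold for every $t\in\C$. By Lemma~\ref{lem:boxplus-braid-relations}, it amounts to four identities in $\T_3$: the braid relations for $a$ and for $b$ separately, and the two mixed identities. Equivalently, by Lemma~\ref{lem:coefficient-ybe}, the difference of the two sides of \eqref{eq:coefficient-ybe} is a polynomial of degree at most $3$ in $t$, and its coefficients must vanish for all $s,t'\in\Z/8$, with $\nu=1$ and $\tau(c,t')=w^{ct'}$.

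I would organize the check using Lemma~\ref{lem:cyclic-two-torsion-sectors}. The element $a$ involves only even powers $U_i^{2k}$, times $e_{i,-}$; these preserve the neighboring sectors. The element $b$ involves odd powers, which interchange them. The pure relation for $a$ reduces to a computation in the subalgebra generated by $U_i^2,U_{i+1}^2$, which have commutation scalar $w^4=-1$. The pure relation for $b$ reduces similarly after factoring out $U_i$.

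The main obstacle is the two mixed identities. Sector bookkeeping kills most terms, for instance $b_ia_{i+1}a_i$ against $a_{i+1}b_ia_{i+1}$. What remains I expect to have to verify by direct coefficient comparison: this is a finite check over the $64$ pairs $(s,t')$ and the four sign choices per type. I would first reduce it using the symmetry $c\mapsto -c$ together with the parity constraints, since only terms with matching parity of $s$ and $t'$ survive. As a fallback, I would verify it directly with the explicit $8$-term coefficient vectors.
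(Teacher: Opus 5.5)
Your derivation of the decompositions \eqref{eq:d8-type-I-boxplus}--\eqref{eq:d8-type-II-boxplus} is correct and agrees with the paper. So is your unitarity argument: you compute $r^*r=e_-+|t|^2e_+$ in $\C[U]/(U^8-1)$ and transfer ``exactly when $|t|=1$'' through the faithful realization $U\mapsto X\otimes Z$.

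The gap is the first and main assertion, the coefficient Yang--Baxter equation. You reduce it to the four identities of Lemma~\ref{lem:boxplus-braid-relations}, but then you only announce a finite check. The proposal never shows that the braid difference vanishes, nor where $\alpha^2=-1$, $\eta^2=1$ (resp.\ $\delta^2=1$, $\beta^2=-1$) are used. The paper also argues by direct computation, but it records the outcome. In type~I the only surviving coefficients are multiples of $\eta(1-\eta^2)$ (degree $0$ in $t$) and of $\alpha^2+1$ (degree $2$). In type~II they are multiples of $\beta(\beta^2+1)$ and $\delta^2-1$. No odd powers of $t$ occur.

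Your sector bookkeeping, carried one step further, supplies this by hand. In type~I, $a_i=a_ie_{i,-}$ commutes with $J_{i\pm1}$, while $b_i=b_ie_{i,+}$ anticommutes with $J_{i\pm1}$. Inserting these projections into $x_iy_{i+1}z_i$ shows that the word vanishes unless either $x=z$ and $y=a$, or $x\neq z$ and $y=b$; the same holds for $x_{i+1}y_iz_{i+1}$. Hence every word with an odd number of $b$'s is zero. The degree-one identity is therefore $0=0$, and so is the pure $b$ relation, since $b_ib_{i+1}b_i=0=b_{i+1}b_ib_{i+1}$. This is sharper than your claim that it ``reduces after factoring out $U_i$''.

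On $e_{i,-}e_{i+1,-}$ the elements $U_i^2$ and $U_{i+1}^2$ are anticommuting square roots of $-1$. So the degree-zero identity is a quaternion computation:
\[
  a_ia_{i+1}a_i-a_{i+1}a_ia_{i+1}
  =\tfrac{1}{2\sqrt2}\,\eta(1-\eta^2)\,(U_i^2-U_{i+1}^2)\,e_{i,-}e_{i+1,-}.
\]
The degree-two identity splits into three sector blocks: $b_ia_{i+1}b_i=b_{i+1}a_ib_{i+1}$, $a_ib_{i+1}b_i=b_{i+1}b_ia_{i+1}$, and $b_ib_{i+1}a_i=a_{i+1}b_ib_{i+1}$. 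Use the model $U_i\mapsto Z$, $U_{i+1}\mapsto X$ on $\C^8$, which is faithful because the $64$ words $Z^jX^k$ are linearly independent. In it, each block difference is a multiple of $\alpha^2+1$.

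Type~II is the same with $e_\pm$ exchanged. Now $U_i^2$ and $U_{i+1}^2$ are anticommuting involutions on $e_{i,+}e_{i+1,+}$, which produces $\beta(1+\beta^2)$, and the degree-two blocks produce $\delta^2-1$. Until some such computation is written out, the braid relation is asserted rather than proved.
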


\begin{proof}
The coefficient polynomials of the two templates factor as
\begin{align*}
 r(I_{\alpha,\eta}(t))
 &=(1-U^4)(1+\eta U^2)
   +tU(\alpha+U^2)(1+U^4),\\
 r(II_{\delta,\beta}(t))
 &=(1+U^4)(1+\beta U^2)
   +tU(\delta-U^2)(1-U^4),
\end{align*}
which gives \eqref{eq:d8-type-I-boxplus} and
\eqref{eq:d8-type-II-boxplus}.  On the \(e_-\)-sector one has
\(U^{-2}=-U^2\), while on the \(e_+\)-sector one has
\(U^{-2}=U^2\).  Denote the two pieces in
\eqref{eq:d8-type-I-boxplus} by \(a_I,b_I\), and those in
\eqref{eq:d8-type-II-boxplus} by \(a_{II},b_{II}\), in the order displayed.
Using \(\overline\alpha=-\alpha\), \(\overline\beta=-\beta\), and
\(\eta,\delta\in\{\pm1\}\), direct multiplication gives
\begin{align*}
 a_I^*a_I=a_Ia_I^*&=e_-,&
 b_I^*b_I=b_Ib_I^*&=e_+,\\
 a_{II}^*a_{II}=a_{II}a_{II}^*&=e_+,&
 b_{II}^*b_{II}=b_{II}b_{II}^*&=e_-.
\end{align*}
Moreover, for \(\star\in\{I,II\}\),
\[
 a_\star^*b_\star=b_\star^*a_\star
 =a_\star b_\star^*=b_\star a_\star^*=0,
\]
because \(U\) commutes with \(e_\pm\) and \(e_+e_-=0\).  Thus the two
pieces in each line are orthogonal partial unitaries with complementary
initial and final projections.

It remains to check braid compatibility.  Let \(\Delta_I(t)\) and
\(\Delta_{II}(t)\) denote the two braid differences formed from the
unnormalized coefficient vectors.  After straightening in the basis
\(U_1^jU_2^k\) and reducing by \(w^4=-1\), the only possibly nonzero
coefficient values of \(\Delta_I(t)\), up to sign, are
\begin{align*}
 &2\bigl((\alpha^2+1)t^2+\eta(1-\eta^2)\bigr),
 &&2\bigl((\alpha^2+1)t^2+\eta(\eta^2-1)\bigr),\\
 &2\eta w^2t^2(\alpha^2+1)(w+1),
 &&2\eta w^2t^2(\alpha^2+1)(w-1).
\end{align*}
They vanish when \(\alpha^2=-1\) and \(\eta^2=1\).  For
\(\Delta_{II}(t)\), the corresponding coefficient values are
\begin{align*}
 &2\bigl(\beta(\beta^2+1)+(\delta^2-1)t^2\bigr),
 &&2\bigl(-\beta(\beta^2+1)+(\delta^2-1)t^2\bigr),\\
 &2\beta w^2t^2(\delta^2-1)(w+1),
 &&2\beta w^2t^2(\delta^2-1)(w-1),
\end{align*}
which vanish when \(\beta^2=-1\) and \(\delta^2=1\).  Hence both braid
differences vanish identically in \(t\), proving the four identities in
Lemma~\ref{lem:boxplus-braid-relations}.  The calculation is independent of
the adjacent position.  Writing \(x_I(t)=a_I+tb_I\) and
\(x_{II}(t)=a_{II}+tb_{II}\), orthogonality gives
\[
 x_I(t)^*x_I(t)=x_I(t)x_I(t)^*=e_-+|t|^2e_+,
\]
and
\[
 x_{II}(t)^*x_{II}(t)=x_{II}(t)x_{II}(t)^*=e_++|t|^2e_-.
\]
Either product is the identity exactly when \(|t|=1\), proving both
unitarity and the necessity of the stated condition.
\end{proof}

\begin{cor}\label{cor:d8-positive-Gaussian}
Within the family \(C_8(t)\), the positive cyclic Gaussian specializations
are exactly those satisfying \(t^2=w\).  In particular, if
\(\xi=e^{\pi\ii/8}\), then \(C_8(\xi)\) is the standard positive quadratic
phase \(Q(k)=\xi^{k^2}\).
\end{cor}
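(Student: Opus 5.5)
The plan is to make the positive Gaussian locus inside the cyclic tower explicit, and then compare it entry by entry with the vector \(II_{1,\ii}(t)=[1,t,\ii,-t,1,-t,\ii,t]\) defining \(C_8(t)\).

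First I will pin down what a positive cyclic Gaussian coefficient is in \(\T_2(\Z/8\Z,\tau)\) with \(\tau(a,b)=w^{ab}\). In the Family I setup with \(A=\Z/8\Z\), it is a function proportional to some \(Q\in\mathcal Q(q)\), where the polarization is \(\pair{a}{b}=w^{ab}\). Equivalently, \(Q:\Z/8\Z\to\Uone\) satisfies \(Q(a+b)=Q(a)Q(b)w^{ab}\). Setting \(u=Q(1)\), induction gives
\[
  Q(k)=u^k w^{k(k-1)/2}\qquad(0\le k\le 7).
\]
Well-definedness modulo \(8\) forces \(Q(8)=1\), that is \(u^8w^{28}=-u^8=1\), so \(u^8=-1\). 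Conversely, every such \(u\) gives a genuine element of \(\mathcal Q(q)\). Since \(Q(0)=1\) and the first entry of \(C_8(t)\) is \(1\), any proportionality constant between \(C_8(t)\) and a Gaussian vector must be \(1\). The same holds after the common normalization \(1/\sqrt8\) in Proposition~\ref{prop:d8-families}, which matches the Family I normalization \(|A|^{-1/2}\). So the question reduces to: for which \(t\) is \([1,t,\ii,-t,1,-t,\ii,t]=[Q(0),\dots,Q(7)]\) for some admissible \(u\)?

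For necessity, compare entries \(1\) and \(2\): they give \(u=t\) and \(u^2w=\ii=w^2\), so \(t^2=w\). For sufficiency, assume \(t^2=w\), so that \(u=t\) satisfies \(u^8=w^4=-1\) and \(|t|=1\). Then check the remaining entries using \(w^8=1\) and \(w^4=-1\):
\[
  u^3w^3=tw^4=-t,\quad u^4w^6=1,\quad u^5w^{10}=tw^4=-t,\quad u^6w^{15}=w^2=\ii,\quad u^7w^{21}=tw^{24}=t.
\]
These agree with \(C_8(t)\). Finally, for \(\xi=e^{\pi\ii/8}\) we have \(\xi^2=w\) and \(Q(k)=\xi^k w^{k(k-1)/2}=\xi^{k+k^2-k}=\xi^{k^2}\), which is the stated standard phase.

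The main obstacle is interpretive rather than computational. I need to fix precisely that "positive cyclic Gaussian specialization" means the coefficient vector is a Family I\(^{+}\) vector \(|A|^{-1/2}Q\) with \(Q\in\mathcal Q(q)\) for the cyclic polarization, taken at the level of coefficient vectors in the fixed tower. This excludes arbitrary local equivalences; otherwise the unit-scalar freedom and other gauges would have to be handled separately. Once that reading is in place, the scalar is rigidified by the \(k=0\) entry and the verification is the finite check above.
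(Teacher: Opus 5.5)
Your proposal is correct and follows the paper's own argument. Both write a positive cyclic phase as \(Q(k)=Q(1)^k w^{k(k-1)/2}\), read off \(Q(1)=t\) and \(Q(2)=t^2w=\ii=w^2\) to obtain \(t^2=w\), and then verify the converse and the \(\xi\) specialization entry by entry. Your additional points, namely well-definedness modulo \(8\) via \(u^8=-1\) and fixing the scalar by the \(k=0\) entry, only make explicit what the paper leaves implicit.
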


\begin{proof}
A cyclic quadratic phase with positive
polarization satisfies
\[
  Q(k)=Q(1)^k w^{k(k-1)/2}.
\]
Here \(Q(1)=t\), while comparison of the second coefficient gives
\(Q(2)=t^2w=w^2\), hence \(t^2=w\).  Conversely, this relation makes all
eight coefficients agree.  At \(t=\xi\), one has
\(Q(k)=\xi^k(\xi^2)^{k(k-1)/2}=\xi^{k^2}\).
\end{proof}

The corollary singles out the standard positive Gaussian phase; it
does not mean that \(C_8\) is the only family meeting the signed Gaussian
locus.  Every \(C_m\) contains a signed Gaussian specialization, with
explicit base points recorded in \eqref{eq:d8-Gaussian-basepoints}.

The \emph{projective order} of an invertible operator is the order of its
class modulo nonzero scalar operators; equivalently, it is the least
\(k\geq1\) for which its \(k\)-th power is scalar, if such a \(k\) exists.

\begin{cor}\label{cor:cyclic-family-infinite-order}
For each \(1\leq m\leq8\) and \(|t|=1\), there are eighth roots of unity
\(\zeta_m^+,\zeta_m^-,\xi_m\) such that the
distinct generic eigenvalues of \(R(C_m(t))/\sqrt8\) are
\begin{equation}\label{eq:d8-family-spectrum}
  \zeta_m^+,\qquad \zeta_m^-,\qquad
  \xi_m t,\qquad -\xi_m t.
\end{equation}
Each has generic multiplicity \(16\).  Hence, if \(t\) is not a root of
unity, every braid representation containing one of these local generators
has infinite image.
\end{cor}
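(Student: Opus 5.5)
Since \(e_\pm\) are complementary central projections in \(\T_2\) commuting with \(U\), I will compute the spectrum of \(R(C_m(t))/\sqrt8\) sector by sector, using the decomposition of Proposition~\ref{prop:d8-families}. I treat a Type I template first; Type II is the same with the sectors exchanged. In the Pauli realization \(U\mapsto X\otimes Z\) on \(\C^8\otimes\C^8\), the operator \(U\) is unitary of order \(8\). Its eigenvalues are \(w^k\), each with multiplicity \(8\), because conjugation by \(I\otimes X\) sends \(X\otimes Z\) to \(w^{-1}\) times itself and so permutes the eigenspaces cyclically. Since \(U^4\) acts on the \(w^k\)-eigenspace as \((-1)^k\), the \(e_+\)-sector consists of the even \(k\) and the \(e_-\)-sector of the odd \(k\); each sector has dimension \(32\).

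On the \(e_-\)-sector the operator is \(a_I=\frac1{\sqrt2}(1+\eta U^2)\), with eigenvalues \(\frac1{\sqrt2}(1+\eta w^{2k})\) for odd \(k\). Here \(w^{2k}=\ii^k\in\{\ii,-\ii\}\), so the values are \((1\pm\ii)/\sqrt2\). These are two distinct eighth roots of unity, which I call \(\zeta_m^\pm\), each with multiplicity \(16\) (two odd \(k\) give \(\ii\), two give \(-\ii\), and each \(k\) has multiplicity \(8\)). On the \(e_+\)-sector the operator is \(t\,b_I=\frac{t}{\sqrt2}U(\alpha+U^2)\), with eigenvalues \(\frac{t}{\sqrt2}w^k(\alpha+(-1)^{k/2})\) for even \(k\). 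Since \(\alpha=\pm\ii\), the factor \((\alpha\pm1)/\sqrt2\) is an eighth root of unity, and \(w^k\) for even \(k\) is a fourth root of unity. Pairing \(k\) with \(k+4\) flips the sign of \(w^k\) and keeps \(U^2=w^{2k}\), so these eigenvalues have the form \(\pm\xi t\). The work is to check that the two choices \(k\in\{0,4\}\) and \(k\in\{2,6\}\) collapse to the same pair. For \(k=0\) the value is \(t(\alpha+1)/\sqrt2\). For \(k=2\) it is \(t\,\ii(\alpha-1)/\sqrt2\), and \(\ii(\alpha-1)=\ii\alpha-\ii\) equals \(\alpha+1\) or \(-(\alpha+1)\) when \(\alpha=\ii\) or \(\alpha=-\ii\) respectively. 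In both cases we get the same set \(\{\pm\xi_m t\}\), each with multiplicity \(16\).

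For Type II the roles are swapped: \(\frac{1}{\sqrt2}(1+\beta U^2)\) acts on the even sector with \(U^2=\pm1\), and \(\beta=\pm\ii\) again yields two eighth roots \(\zeta^\pm\). The piece \(\frac{t}{\sqrt2}U(\delta-U^2)\) acts on the odd sector, where \(w^k\) is a primitive eighth root and \(\delta-U^2=\delta\mp\ii\). The same pairing \(k\leftrightarrow k+4\) together with the \(k\) versus \(k+2\) check gives \(\pm\xi t\). The four values are distinct for generic \(t\), which is why the statement says \emph{generic} multiplicity.

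For infiniteness, I argue through projective order. If \(R^k\) were scalar, all eigenvalues would have equal \(k\)-th powers, so \((\xi_m t)^k=(\zeta_m^+)^k\) would be a root of unity, and hence \(t\) would be a root of unity. So when \(t\) is not a root of unity, the image of the local generator in \(\mathrm{PU}\) has infinite order. Then any braid representation \(\rho_{R,n}\) with \(n\ge2\) contains \(\rho(\sigma_1)=R\otimes I\), whose spectrum is that of \(R\), so its image is infinite. The main obstacle is only the bookkeeping in the \(k\) versus \(k+2\) collapse for each of the eight sign choices; I would verify it by the uniform identity \(\ii(\alpha-1)=\pm(\alpha+1)\) rather than case by case.
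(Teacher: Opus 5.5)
Your proposal is correct and follows essentially the paper's argument. You evaluate the coefficient polynomial on the eigenspaces of $U=X\otimes Z$ (each $w^k$ having multiplicity $8$), sorted by the sectors $e_\pm$, collapse the $t$-dependent values via $\ii(\alpha-1)=\pm(1+\alpha)$ (and its Type II analogue $\ii(\delta+\ii)=\pm(\delta-\ii)$), and conclude with the same eigenvalue-ratio argument on projective order. One harmless slip: $\ii(\alpha-1)=-(\alpha+1)$ when $\alpha=\ii$ and $\ii(\alpha-1)=+(\alpha+1)$ when $\alpha=-\ii$, which is the reverse of what you wrote, but this does not change the resulting set $\{\pm\xi_m t\}$.
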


\begin{proof}
Let \(p_I(z)\) and \(p_{II}(z)\) denote the
normalized evaluations of the coefficient polynomials in
\eqref{eq:d8-type-I-vector} and \eqref{eq:d8-type-II-vector}, respectively.
Using the factorizations in the proof of Proposition~\ref{prop:d8-families}
and \(w^2=\ii\), their values at the eight Fourier points are
\[
\begin{array}{@{}c|c|c@{}}
\toprule
k & p_I(w^k) & p_{II}(w^k)\\
\midrule
0 & t(1+\alpha)/\sqrt2 & (1+\beta)/\sqrt2\\
1 & (1+\eta\ii)/\sqrt2 & tw(\delta-\ii)/\sqrt2\\
2 & \ii t(\alpha-1)/\sqrt2 & (1-\beta)/\sqrt2\\
3 & (1-\eta\ii)/\sqrt2 & tw^3(\delta+\ii)/\sqrt2\\
4 & -t(1+\alpha)/\sqrt2 & (1+\beta)/\sqrt2\\
5 & (1+\eta\ii)/\sqrt2 & -tw(\delta-\ii)/\sqrt2\\
6 & -\ii t(\alpha-1)/\sqrt2 & (1-\beta)/\sqrt2\\
7 & (1-\eta\ii)/\sqrt2 & -tw^3(\delta+\ii)/\sqrt2\\
\bottomrule
\end{array}
\]
For type I, the relation \(\alpha^2=-1\) implies
\[
  \ii(\alpha-1)=\pm(1+\alpha),
\]
so the four even Fourier points give a pair of opposite eighth roots
multiplied by \(t\), each occurring twice.  The four odd points give the
two fixed eighth roots
\((1+\eta\ii)/\sqrt2\) and \((1-\eta\ii)/\sqrt2\), again twice each.
For type II, the relation \(\delta^2=1\) similarly gives
\[
  w^3(\delta+\ii)=\pm w(\delta-\ii),
\]
so the odd points give the opposite \(t\)-dependent pair twice each, while
the even points give the two fixed roots
\((1+\beta)/\sqrt2\) and \((1-\beta)/\sqrt2\) twice each.  This proves the
form of the four generic eigenvalues in
\eqref{eq:d8-family-spectrum}.

In the two-qudit Pauli realization \(U=X\otimes Z\), every eigenvalue
\(w^k\) of \(U\) has multiplicity \(8\): among the \(64\) tensor products
of eigenvectors of \(X\) and \(Z\), exactly eight have eigenvalue \(w^k\).
Since each generic value in the table occurs at two Fourier points, its
multiplicity is \(2\cdot8=16\).  At exceptional parameter values where two
displayed eigenvalues coincide, their multiplicities are added.

If the projective order were finite, every ratio of eigenvalues would be a
root of unity; comparing a fixed-sector eigenvalue with a \(t\)-sector
eigenvalue forces \(t\) to be a root of unity.
\end{proof}

\section{Conjectures on classification and braid images}
\label{sec:conjectures}

We first make precise the closure process used in the classification
conjecture.  A \emph{realized local-tower presentation} is a quadruple
\[
  \mathfrak d=(\A_\bullet,\Phi_\bullet,V,r),
\]
where \(\A_\bullet\) is a local tower of finite-dimensional unital
\(*\)-algebras in the sense of Subsection~\ref{subsec:tower-operations},
the maps
\[
  \Phi_n:\A_n\longrightarrow\End(V^{\ot n})
  \qquad(n\geq1)
\]
are compatible unital \(*\)-representations, and \(r\in\A_2\) is unitary
with braid-compatible local copies.  Its associated local operator is
\[
  R_{\mathfrak d}=\Phi_2(r)\in\End(V\otimes V).
\]

For a unitary Yang--Baxter operator \(R\) on \(V\), its
\emph{braid-image presentation} is the realized local-tower presentation
whose \(n\)-th algebra is
\[
  \mathcal D_n(R)
  =\operatorname{alg}^*\langle I,R_1,\ldots,R_{n-1}\rangle
  \subseteq\End(V^{\ot n}),
  \qquad \mathcal D_1(R)=\C,
\]
with the evident inclusions, local maps, and defining representations.

\begin{definition}\label{def:generated-tower-presentations}
Let \(\mathfrak G\) be the smallest class of realized local-tower
presentations satisfying the following rules.
\begin{enumerate}[label=(\roman*)]
\item The class contains the braid-image presentation of every monomial or
group-type unitary Yang--Baxter operator, and every realized presentation
\[
  (\T_\bullet(G;\nu,\tau),\Phi_\bullet,V,r_f)
\]
arising from a unitary braid-compatible coefficient element in a twisted
group-algebra tower.
\item It is closed under external products: if
\[
 \mathfrak d_j=(\A_\bullet^{(j)},\Phi_\bullet^{(j)},V_j,r^{(j)})
 \in\mathfrak G\qquad(j=1,2),
\]
then the levelwise tensor product of their towers and representations, with
distinguished element
\(r^{(1)}\otimes r^{(2)}\) and the canonical shuffle of tensor factors,
belongs to \(\mathfrak G\).
\item It is closed under direct sums: if
\(\mathfrak d_1,\mathfrak d_2\in\mathfrak G\), then the braid-image
presentation of
\[
  R_{\mathfrak d_1}\boxplus R_{\mathfrak d_2}
  \quad\text{on}\quad V_1\oplus V_2
\]
belongs to \(\mathfrak G\).
\item It is closed under admissible internal orthogonal sums within an
already generated tower.  More precisely, if
\(\mathfrak d=(\A_\bullet,\Phi_\bullet,V,r)\in\mathfrak G\) and
\(a,b\in\A_2\) satisfy the braid identities and orthogonality hypotheses of
Proposition~\ref{prop:boxplus-unitarity}, with common scalar \(c>0\), then
\[
  (\A_\bullet,\Phi_\bullet,V,
    c^{-1/2}(a\boxplus_t b))\in\mathfrak G
  \qquad(|t|=1).
\]
\end{enumerate}
We call the operators \(R_{\mathfrak d}\), for
\(\mathfrak d\in\mathfrak G\), \emph{generated operators}.
\end{definition}

External products, Proposition~\ref{prop:YBO-direct-sum}, and
Proposition~\ref{prop:boxplus-unitarity} show inductively that every
generated operator is a unitary Yang--Baxter operator.  The last rule is a
replacement operation inside the local
algebra of an already generated presentation.  In particular,
the internal operation \(\boxplus_t\) is distinct from the direct-sum
operation \(\boxplus\): its inputs \(a\) and \(b\) are allowed to be
noninvertible partial braid elements.

Thus the precise form of
Conjecture~\ref{conj:unitary-YBO-classification} is that every unitary
Yang--Baxter operator is equivalent to \(R_{\mathfrak d}\) for some
\(\mathfrak d\in\mathfrak G\).
Conjecture~\ref{conj:YBO-virtual-abelian}, stated and attributed in the
Introduction, concerns every unitary Yang--Baxter operator and is independent
of the choice of a tower presentation.

\section{Traces, spectra, and link invariants}
\label{sec:trace-invariants}

The coefficient calculus yields Markov traces and, through finite Fourier
transform, local spectra and skein relations.

\subsection{Scalar enhancements and coefficient traces}

Let \(\C=\A_1\subset\A_2\subset\cdots\) be a tower with a
braid-compatible family \(r_i\).  Write
\[
  \rho_n:\B_n\longrightarrow\A_n^\times,
  \qquad n\geq1,
\]
where \(\rho_1\) is trivial and
\(\rho_n(\sigma_i)=r_i\) for \(n\geq2\) and \(1\leq i<n\),
and, for \(\xi\in\B_n\), let \(\widehat\xi\) denote its braid closure.  A
\emph{scalar enhancement} consists of
cyclic linear forms \(\operatorname{tr}_n:\A_n\to\C\), \(n\geq1\), and
nonzero scalars \(\alpha,\beta\) satisfying, for every \(n\geq1\) and
\(x\in\A_n\subset\A_{n+1}\),
\begin{align}
  \operatorname{tr}_{n+1}(xr_n)
    &=\alpha\beta\operatorname{tr}_n(x),
    \label{eq:tower-enhancement-positive}\\
  \operatorname{tr}_{n+1}(xr_n^{-1})
    &=\alpha^{-1}\beta\operatorname{tr}_n(x).
    \label{eq:tower-enhancement-negative}
\end{align}
Here cyclicity means
\(\operatorname{tr}_n(xy)=\operatorname{tr}_n(yx)\) for all
\(x,y\in\A_n\).
For a braid \(\xi\in\B_n\), let \(\operatorname{wr}(\xi)\) be its writhe
and define
\begin{align}
  F(\widehat\xi)
    &=\beta^{-n}\operatorname{tr}_n(\rho_n(\xi)),
    \label{eq:tower-framed-invariant}\\
  T(\widehat\xi)
    &=\alpha^{-\operatorname{wr}(\xi)}\beta^{-n}
      \operatorname{tr}_n(\rho_n(\xi)).
    \label{eq:tower-oriented-invariant}
\end{align}
Cyclicity gives invariance under braid conjugacy, while
\eqref{eq:tower-enhancement-positive}--\eqref{eq:tower-enhancement-negative}
show that \(F\) changes by \(\alpha^{\pm1}\) under stabilization and that
the writhe correction makes \(T\) an oriented link invariant.  This is the
scalar part of Turaev's enhanced Yang--Baxter construction
\cite{Turaev1988,KasselTuraev}.

\begin{prop}\label{prop:local-annihilators}
Fix \(n\geq2\), \(1\leq i<n\), and \(x,y\in\B_n\), and set
\(L_j=\widehat{x\sigma_i^jy}\).  If
\(p(t)=\sum_{j\in\Z}c_jt^j\) is a Laurent polynomial with finite support
and \(p(r_i)=0\), then
\[
  \sum_jc_jF(L_j)=0,
  \qquad
  \sum_jc_j\alpha^jT(L_j)=0.
\]
\end{prop}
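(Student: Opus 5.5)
The plan is to use linearity of \(\operatorname{tr}_n\) together with the fact that \(\rho_n\) is a homomorphism into \(\A_n^\times\). Since \(p(r_i)=0\) in \(\A_n\), multiplying on the left by \(\rho_n(x)\) and on the right by \(\rho_n(y)\) gives
\[
  \sum_j c_j\,\rho_n(x)\,r_i^j\,\rho_n(y)=0 .
\]
Negative powers make sense because \(r_i\) is invertible. For every \(j\in\Z\) we have \(\rho_n(x\sigma_i^jy)=\rho_n(x)r_i^j\rho_n(y)\). The braids \(x\sigma_i^jy\) all lie in \(\B_n\) with the same \(n\), so applying the linear form \(\operatorname{tr}_n\) gives
\[
  \sum_j c_j\operatorname{tr}_n\bigl(\rho_n(x\sigma_i^jy)\bigr)=0 .
\]
Multiplying by \(\beta^{-n}\) and using \eqref{eq:tower-framed-invariant} yields \(\sum_jc_jF(L_j)=0\).

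For the oriented invariant, I would compare writhes. The writhe is additive on braid words and \(\operatorname{wr}(\sigma_i^j)=j\), so
\[
  \operatorname{wr}(x\sigma_i^jy)=\operatorname{wr}(x)+\operatorname{wr}(y)+j .
\]
By \eqref{eq:tower-oriented-invariant} this gives \(T(L_j)=\alpha^{-w_0-j}F(L_j)\), where \(w_0=\operatorname{wr}(x)+\operatorname{wr}(y)\). Hence
\[
  \alpha^jT(L_j)=\alpha^{-w_0}F(L_j),
\]
and summing with the coefficients \(c_j\) reduces the second identity to the first, multiplied by the nonzero constant \(\alpha^{-w_0}\).

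The only point needing care is well-definedness. \(F\) and \(T\) are defined on closures through a chosen braid representative, so \(F(L_j)\) must be read as the value computed from the representative \(x\sigma_i^jy\in\B_n\). By the invariance discussion after \eqref{eq:tower-oriented-invariant}, \(T\) does not depend on that choice. \(F\) is only a framed quantity, but the identity holds at the level of these fixed representatives, which is all that is claimed. I do not expect a genuine obstacle: the argument needs only linearity of \(\operatorname{tr}_n\) and the homomorphism property, not cyclicity or the enhancement relations.
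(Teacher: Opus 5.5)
Your proposal is correct and follows essentially the same route as the paper: linearity of \(\operatorname{tr}_n\) applied to \(\rho_n(x)p(r_i)\rho_n(y)=0\) gives the framed identity, and writhe additivity, \(\operatorname{wr}(x\sigma_i^jy)=\operatorname{wr}(x)+\operatorname{wr}(y)+j\), supplies the factor \(\alpha^j\) in the oriented identity. You also note that \(F(L_j)\) must be computed from the fixed representative \(x\sigma_i^jy\in\B_n\); this is a correct clarification that the paper leaves implicit.
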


\begin{proof}
The first identity follows from
\[
  \sum_jc_j\operatorname{tr}_n\bigl(\rho_n(x)r_i^j\rho_n(y)\bigr)
  =\operatorname{tr}_n\bigl(\rho_n(x)p(r_i)\rho_n(y)\bigr)=0.
\]
The writhe of \(x\sigma_i^jy\) differs from the fixed outside writhe by
\(j\), which gives the factor \(\alpha^j\) in the oriented relation.
\end{proof}

Fix a twisted tower \(\T_\bullet(G;\nu,\tau)\), and let \(\varepsilon_n\)
be its identity-coefficient functional.  For \(D\in\C^\times\), put
\[
  \operatorname{tr}^{(D)}_n=D^n\varepsilon_n.
\]

\begin{prop}\label{prop:coefficient-trace-enhancement}
Suppose \(f:G\to\C\) satisfies the coefficient Yang--Baxter equation,
\(r_f\) is unitary, and \(f(0)\neq0\).  Then
\(\operatorname{tr}^{(D)}_n\) is a scalar enhancement with
\begin{equation}\label{eq:coefficient-enhancement-parameters}
  \alpha_f=\frac{f(0)}{|f(0)|},
  \qquad
  \beta_f=D|f(0)|.
\end{equation}
\end{prop}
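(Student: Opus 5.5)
We need three things: cyclicity of \(\varepsilon_n\), and the two Markov identities \eqref{eq:tower-enhancement-positive}--\eqref{eq:tower-enhancement-negative} with \(\alpha\beta=D f(0)\) and \(\alpha^{-1}\beta=D\,\overline{f(0)}\). These are consistent with \eqref{eq:coefficient-enhancement-parameters}, since \(\alpha_f\beta_f=Df(0)\) and \(\alpha_f^{-1}\beta_f=D|f(0)|^2/f(0)=D\overline{f(0)}\). After rescaling by \(D^n\), it therefore suffices to prove that for \(x\in\T_n\subset\T_{n+1}\),
\[
  \varepsilon_{n+1}(x r_{f,n})=f(0)\varepsilon_n(x),
  \qquad
  \varepsilon_{n+1}(x r_{f,n}^{-1})=\overline{f(0)}\varepsilon_n(x).
\]

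\emph{Cyclicity.} Work in the bicharacter gauge; by Remark~\ref{rem:cocycle-gauge} this is harmless, since the gauge map preserves the \(U^{\mathbf 0}\)-coefficient and multiplies \(f(0)\) by \(\lambda(0)=1\). Then \(U^{\mathbf g}U^{\mathbf h}\) is a multiple of \(U^{\mathbf g+\mathbf h}\), so by linearity it suffices to check basis pairs. The quantity \(\varepsilon_n(U^{\mathbf g}U^{\mathbf h})\) vanishes unless \(\mathbf h=-\mathbf g\). In that case it equals \(\Omega(\mathbf g,-\mathbf g)\), while \(\varepsilon_n(U^{-\mathbf g}U^{\mathbf g})=\Omega(-\mathbf g,\mathbf g)\), and these agree because \(\Omega\) is a bicharacter. (For a general cocycle this is the standard identity \(\Omega(g,-g)=\Omega(-g,g)\) for normalized cocycles, which follows from the cocycle identity with \((g,-g,g)\).)

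\emph{Positive Markov identity.} Write \(x=\sum_{\mathbf g\in E_n}a_{\mathbf g}U^{\mathbf g}\), embedded as \((\mathbf g,0)\in E_{n+1}\). Then \(r_{f,n}=\sum_h f(h)U_n^h\), and \(U^{(\mathbf g,0)}U_n^h=U^{(\mathbf g,h)}\) times a cocycle factor; indeed it is exactly the normal-ordered basis element, since \(U_n^h\) sits at the right end. The term \(U^{(\mathbf g,h)}\) equals the identity only when \(\mathbf g=0\) and \(h=0\), so \(\varepsilon_{n+1}(xr_{f,n})=a_{\mathbf 0}f(0)\).

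\emph{Negative Markov identity.} By unitarity (Lemma~\ref{lem:coefficient-unitarity}), \(r_f^{-1}=r_f^*=\sum_h\overline{f(h)}\,(U^h)^*\). In the bicharacter gauge, \((U^h)^*\) is a unit scalar times \(U^{-h}\), and that scalar equals \(1\) at \(h=0\). Hence \(r_{f,n}^{-1}\) also lies in \(\operatorname{span}\{U_n^h\}\), with \(U_n^0\)-coefficient \(\overline{f(0)}\). The same computation as in the positive case then gives \(\varepsilon_{n+1}(xr_{f,n}^{-1})=a_{\mathbf 0}\overline{f(0)}\). The \(*\)-structure is compatible with the local copies, since the tower inclusions and local maps preserve involutions; and \(f(0)\neq0\) ensures \(\alpha_f,\beta_f\) are nonzero.

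The main obstacle is the bookkeeping in the negative case, namely ensuring that the inverse is supported only on position \(n\) with the correct constant coefficient. This is handled by unitarity together with the explicit involution formula \eqref{eq:twisted-star}.
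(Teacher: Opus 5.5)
Your proposal is correct and follows essentially the same route as the paper. Both arguments get cyclicity from \(\Omega_n(\mathbf g,-\mathbf g)=\Omega_n(-\mathbf g,\mathbf g)\), and both get the Markov identities by noting that only the identity coefficient of \(x\) and the \(g=0\) term of \(r_{f,n}\) (respectively of \(r_{f,n}^{-1}=r_{f,n}^*\), with constant coefficient \(\overline{f(0)}\)) contribute. Your detour through the bicharacter gauge is harmless but optional, since the cocycle identity at \((\mathbf g,-\mathbf g,\mathbf g)\), which you also note, already handles a general normalized \(\nu\).
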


\begin{proof}
Write \(\Omega_n=\Omega_n^{\nu,\tau}\) and
\(\T_n=\T_n(G;\nu,\tau)\).
For basis elements \(U^{\mathbf g},U^{\mathbf h}\), both
\(\varepsilon_n(U^{\mathbf g}U^{\mathbf h})\) and
\(\varepsilon_n(U^{\mathbf h}U^{\mathbf g})\) vanish unless
\(\mathbf h=-\mathbf g\).  In the remaining case, the cocycle identity at
\((\mathbf g,-\mathbf g,\mathbf g)\) gives
\[
  \Omega_n(\mathbf g,-\mathbf g)
  =\Omega_n(-\mathbf g,\mathbf g),
\]
and therefore \(\varepsilon_n\) is cyclic.  If \(x\in\T_n\subset\T_{n+1}\), the
identity coefficient of \(xr_{f,n}\) can only use the identity coefficient
of \(x\) and the term \(g=0\) of \(r_{f,n}\).  Hence
\[
  \varepsilon_{n+1}(xr_{f,n})=f(0)\varepsilon_n(x).
\]
Since \(r_f^{-1}=r_f^*\), the corresponding coefficient for the inverse is
\(\overline{f(0)}\).  Multiplication by \(D^{n+1}\) gives
\eqref{eq:tower-enhancement-positive}--\eqref{eq:tower-enhancement-negative}
with the parameters in \eqref{eq:coefficient-enhancement-parameters}.
\end{proof}

\begin{theorem}\label{thm:metric-family-enhancement}
Let \((A,q)\) be a finite metric group.  For Family I, choose
\(\epsilon\in\{\pm1\}\) and \(Q\in\mathcal Q^\epsilon(q)\).  For
Family \(\mathrm{II}_E\), choose datum \((E,Q_E,P_E)\) satisfying
Proposition~\ref{prop:family-II-coordinate-change}.  Consider also
Family III associated with \((A,q)\).  For each of these three metric
families, the coefficient trace agrees with the ordinary matrix trace in
the corresponding Pauli realization, and the scalar-enhancement parameters
are displayed in the following table.
\[
\begin{array}{@{}ccccc@{}}
\toprule
\textup{family} & D & f(0) & \alpha & \beta\\
\midrule
\mathrm{I}^{\epsilon}
  & |A| & |A|^{-1/2} & 1 & \sqrt{|A|}\\
\mathrm{II}_E
  & |A| & \overline{\gamma(P_E)}|A|^{-1}
  & \overline{\gamma(P_E)} & 1\\
\mathrm{III}
  & |A|^2 & |A|^{-1} & 1 & |A|\\
\bottomrule
\end{array}
\]
Ordinary Family II is recovered by taking \(E=\id_A\).
\end{theorem}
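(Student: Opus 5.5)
The plan has three steps: identify the coefficient trace with the normalized matrix trace, read off \(f(0)\) for each family, and apply Proposition~\ref{prop:coefficient-trace-enhancement}.

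\textbf{Step 1: trace identification.} In each Pauli realization, a normal-ordered tower monomial \(U^{\mathbf g}\) maps to a scalar multiple of a generalized Pauli word. The matrix trace of a generalized Pauli word \(\bigotimes_j X_{x_j}Z_{z_j}\) on \(V^{\ot n}\) is \(|A|^n\) if every label vanishes and \(0\) otherwise. The latter holds because \(\Tr(X_xZ_z)=\delta_{x,0}\sum_y\pair{z}{y}\), which vanishes by nondegeneracy unless \(z=0\).

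It therefore suffices to show that \(\mathbf g\neq0\) gives a nonidentity Pauli word. Three things to check:
\begin{itemize}
\item For Family I, \(Y_i^{a}=X_{a,i}Z_{a,i+1}\). The \(X\)-labels of \(\prod_iY_i^{g_i}\) are \((g_1,\dots,g_{n-1},0)\), so the word is trivial only for \(\mathbf g=0\).
\item For Family II, the \(X\)-labels are \(t_1,\;t_2-t_1,\dots,-t_{n-1}\), which forces all \(t_i=0\). The \(Z\)-labels are then \(c_1,c_1+c_2,\dots,c_{n-1}\), which forces all \(c_i=0\).
\item For Family III, faithfulness was shown by reading \(\alpha_i,\beta_i\) off distinct tensor slots, and the same reading shows the word is trivial only at \(\mathbf g=0\). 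Normalization is \(U^0\mapsto I\) on \(W^{\ot n}\), of dimension \(|A|^{2n}\).
\end{itemize}
Hence \(\Tr\circ\Phi_n=D^n\varepsilon_n\) with \(D=|A|\) for Families I and II\(_E\) (the same Pauli realization), and \(D=|A|^2\) for Family III. This is \(\operatorname{tr}^{(D)}_n\).

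\textbf{Step 2: read off \(f(0)\).} For Family I\(^\epsilon\), \(f(0)=|A|^{-1/2}Q(0)=|A|^{-1/2}\), since \(Q(0)=1\) from the signed quadratic identity at \((0,0)\). For Family II\(_E\), \(f(0)=\overline{\gamma(P_E)}|A|^{-1}Q_E(0)P_E(0)=\overline{\gamma(P_E)}|A|^{-1}\). For Family III, \(f(0)=|A|^{-1}\). Each is nonzero; for Family II\(_E\) this uses \(|\gamma(P_E)|=1\) from Lemma~\ref{lem:general-shifted-Gauss-sum}, applied with \(B=B_E'\) nondegenerate.

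\textbf{Step 3: apply Proposition~\ref{prop:coefficient-trace-enhancement}.} Its hypotheses — the coefficient equation and unitarity — hold by Propositions~\ref{prop:gaussian-coefficients}, \ref{prop:family-II-coordinate-change} and~\ref{prop:family-III-coefficients}. For Family II\(_E\) the proposition is applied in the original Family II tower, where the trace identification of Step 1 also holds.

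The proposition gives \(\alpha=f(0)/|f(0)|\) and \(\beta=D|f(0)|\):
\[
\begin{array}{@{}lcc@{}}
\textup{Family I}^\epsilon: & \alpha=1, & \beta=|A|\cdot|A|^{-1/2}=\sqrt{|A|};\\
\textup{Family II}_E: & \alpha=\overline{\gamma(P_E)}, & \beta=1;\\
\textup{Family III}: & \alpha=1, & \beta=|A|^2\cdot|A|^{-1}=|A|.
\end{array}
\]
Taking \(E=\id_A\) gives \(Q_E=Q\) and \(P_E=P\), which recovers ordinary Family II.

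The main obstacle is the Family III trace identification, since the words \(Y^{(a,b)}\) carry phases \(q(a)^{-1}q(b)^{-1}\) and act across two \(V\)-slots. Nonzero scalars do not affect vanishing of the trace, however, and the faithfulness argument already isolates each label. So it only remains to note that each identity-coefficient term maps to \(I\), since \(U^0\mapsto I\).
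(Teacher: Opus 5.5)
Your proposal is correct and follows essentially the same route as the paper. Both arguments show that every nonidentity tower monomial maps to a scalar multiple of a nontrivial, hence traceless, generalized Pauli word, which gives $\Tr\circ\Phi_n=D^n\varepsilon_n$, and then substitute $f(0)$ into Proposition~\ref{prop:coefficient-trace-enhancement}. The only difference is cosmetic: you check just that $\mathbf g\neq0$ yields a nontrivial label, which is all the trace identity needs, whereas the paper recovers the labels inductively and phrases the step as faithfulness of $\Phi_n$.
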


\begin{proof}
Substituting the values of \(f(0)\) into
Proposition~\ref{prop:coefficient-trace-enhancement} gives the parameters in
the table.  We verify explicitly that the tower basis is separated by Pauli
labels.  For Family I, if
\(\mathbf a=(a_1,\ldots,a_{n-1})\), then
\[
  \Phi_n^{\mathrm I}(U^{\mathbf a})
  =\prod_{i=1}^{n-1}X_{a_i,i}Z_{a_i,i+1}.
\]
For every \(i<n\), the \(X\)-label on the \(i\)-th physical tensor factor is
\(a_i\).  Hence the Pauli label of the image determines \(\mathbf a\).

For Family II, write
\(\mathbf t=(t_1,\ldots,t_{n-1})\) and
\(\mathbf c=(c_1,\ldots,c_{n-1})\).  The first physical tensor factor of
\[
  \Phi_n^{\mathrm{II}}
  \left(\prod_{i=1}^{n-1}U_i^{(t_i,c_i)}\right)
  =\prod_{i=1}^{n-1}
    (X_{t_i}Z_{c_i})_i(X_{-t_i}Z_{c_i})_{i+1}
\]
has Pauli label \((t_1,c_1)\).  For \(2\leq j<n\), the label on the
\(j\)-th factor is
\[
  (t_j-t_{j-1},\,c_j+c_{j-1}).
\]
Once \((t_{j-1},c_{j-1})\) is known, this label determines
\((t_j,c_j)\).  Induction therefore recovers all tower labels.  The same
argument applies to Family \(\mathrm{II}_E\), whose Pauli basis words are the
same as in \eqref{eq:family-II-E-Pauli-expansion}.  For Family III, the
construction of \(\Phi_n^{\mathrm{III}}\) recovers the two labels at each
tower position from, respectively, the indicated \(Z\)- and \(X\)-labels on
the physical \(W\)-factors.

The nondegeneracy of the metric bicharacter makes the generalized Pauli
labels faithful, and Pauli words with distinct labels are linearly
independent.  Thus each \(\Phi_n\) is faithful and only the identity tower
monomial maps to a scalar identity word.  Every nonidentity Pauli word has
trace zero, whereas the identity has trace \(D^n\).  Consequently,
\[
  \Tr\circ\Phi_n=D^n\varepsilon_n.
\]
\end{proof}

\begin{cor}\label{cor:cyclic-family-enhancement}
For \(|t|=1\) and \(1\leq m\leq8\), the normalized dimension-eight family
\(R(C_m(t))/\sqrt8\) has coefficient trace equal to the ordinary matrix
trace and scalar-enhancement parameters
\[
  (D,\alpha,\beta)=(8,1,\sqrt8).
\]
\end{cor}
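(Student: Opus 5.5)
The plan is to apply Proposition~\ref{prop:coefficient-trace-enhancement} to the cyclic tower \(\T_\bullet(\Z/8\Z,\tau)\) with \(\tau(a,b)=w^{ab}\), and to repeat the trace-comparison argument from the Family I part of Theorem~\ref{thm:metric-family-enhancement}. The dimension-eight Pauli realization \(U_i\mapsto X_iZ_{i+1}\) is formally the Family I realization for \(A=\Z/8\Z\) with the bicharacter \(w^{ab}\). The only gap is that \(w^{ab}\) need not come from a metric form as stated there. This does not matter, because only the Pauli-label separation is used.

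First I check the hypotheses of Proposition~\ref{prop:coefficient-trace-enhancement}. For \(|t|=1\), Proposition~\ref{prop:d8-families} says that the normalized coefficient function \(f=C_m(t)/\sqrt8\) satisfies the coefficient Yang--Baxter equation and that \(r_f\) is unitary. I would note that the proposition asserts unitarity of the normalized operators in the Pauli realization. On the tower side it proves \(x(t)^*x(t)=1\), so unitarity of \(r_f\) itself holds. In all eight vectors \eqref{eq:d8-type-I-vector}--\eqref{eq:d8-type-II-vector}, the \(0\)-th entry is \(1\), so \(f(0)=1/\sqrt8\neq0\). With \(D=8\) the proposition then gives \(\alpha=f(0)/|f(0)|=1\) and \(\beta=8\cdot8^{-1/2}=\sqrt8\).

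Second, I show that \(\Tr\circ\Phi_n=8^n\varepsilon_n\). The image of a normal-ordered monomial \(U_1^{a_1}\cdots U_{n-1}^{a_{n-1}}\) is, up to a phase, the Pauli word whose \(X\)-label on physical factor \(i\) is \(a_i\) for \(i<n\). Any nonzero \(a_i\) therefore produces a nonscalar factor \(X^{a_i}Z^{\ast}\) on factor \(i\), and this factor has trace zero. More precisely, \(\Tr(X^aZ^b)=0\) unless \(a\equiv b\equiv0\), because \(\Tr(X^a)=0\) for \(a\neq0\) and \(\sum_j w^{bj}=0\) for \(b\neq0\). So every nonidentity monomial has trace \(0\). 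The identity monomial has trace \(8^n\). By linearity, \(\Tr\Phi_n(x)=8^n\varepsilon_n(x)\). Combined with the first step, this gives the corollary.

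The only point needing care is the one noted above: the trace identity should be justified by direct label inspection rather than by citing metric nondegeneracy, since \(w^{ab}\) on \(\Z/8\Z\) is used as a bicharacter without a metric form. I expect no real obstacle here.
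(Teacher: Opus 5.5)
Your proposal is correct and is essentially the paper's proof. The paper also takes unitarity from Proposition~\ref{prop:d8-families}, feeds $f(0)=8^{-1/2}$ and $D=8$ into Proposition~\ref{prop:coefficient-trace-enhancement}, and reuses the Family~I Pauli-label argument to get $\Tr\circ\Phi_n=8^n\varepsilon_n$.

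Two small points. First, $\Tr(X^aZ^b)=0$ for $a\neq0$ because $X^aZ^b$ has zero diagonal, not merely because $\Tr(X^a)=0$. Second, your metric caveat is unneeded: $w^{ab}$ is the polarization of the metric form $q(a)=\xi^{a^2}$, $\xi=e^{\pi\ii/8}$, on $\Z/8\Z$ (cf.\ Corollary~\ref{cor:d8-positive-Gaussian}).
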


\begin{proof}
Proposition~\ref{prop:d8-families} gives unitarity on the stated parameter
locus, and the coefficient function has \(f(0)=8^{-1/2}\).
Proposition~\ref{prop:coefficient-trace-enhancement} therefore gives
\((D,\alpha,\beta)=(8,1,\sqrt8)\).  The same
one-generator Pauli-label argument used for Family I gives
\(\Tr\circ\Phi_n=8^n\varepsilon_n\).
\end{proof}

Fix \(\epsilon\in\{\pm1\}\), \(Q\in\mathcal Q^\epsilon(q)\), and a
datum \((E,Q_E,P_E)\) defining Family \(\mathrm{II}_E\) and satisfying
Proposition~\ref{prop:family-II-coordinate-change}.  For \(n\geq1\),
let
\(\rho^{\T,\mathrm I,\epsilon}_{Q,n}\),
\(\rho^{\T,\mathrm{II},E}_{Q_E,P_E,n}\), and
\(\rho^{\T,\mathrm{III}}_n\) denote the tower representations defined by
\[
  \sigma_i\longmapsto r^{\mathrm I,\epsilon}_{Q,i},
  \qquad
  \sigma_i\longmapsto r^{\mathrm{II},E}_{Q_E,P_E,i},
  \qquad
  \sigma_i\longmapsto r^{\mathrm{III}}_i
  \qquad(n\geq2,\ 1\leq i<n),
\]
respectively, with the representations at \(n=1\) understood to be trivial.
For \(\xi\in\B_n\), the resulting oriented invariants are
\begin{align}
  \mathcal I^{\mathrm I,\epsilon}_Q(\widehat\xi)
  &=|A|^{n/2}\,
    \varepsilon_n(\rho^{\T,\mathrm I,\epsilon}_{Q,n}(\xi)),
  \label{eq:family-I-trace-invariant}\\
  \mathcal I^{\mathrm{II},E}_{Q_E,P_E}(\widehat\xi)
  &=\gamma(P_E)^{\operatorname{wr}(\xi)}|A|^n\,
    \varepsilon_n(\rho^{\T,\mathrm{II},E}_{Q_E,P_E,n}(\xi)),
  \label{eq:family-II-trace-invariant}\\
  \mathcal I^{\mathrm{III}}(\widehat\xi)
  &=|A|^n\,
    \varepsilon_n(\rho^{\T,\mathrm{III}}_n(\xi)).
  \label{eq:family-III-trace-invariant}
\end{align}
When the metric data must be displayed, we write
\(\mathcal I^{\mathrm{III}}_{(A,q)}=\mathcal I^{\mathrm{III}}\).  For
ordinary Family II, we write
\(\mathcal I^{\mathrm{II}}_{Q_0,P_0}
:=\mathcal I^{\mathrm{II},\id_A}_{Q_0,P_0}\) for
\(Q_0,P_0\in\mathcal Q(q)\).
Their values on the unknot are \(\sqrt{|A|}\), \(|A|\), and \(|A|\),
respectively.

\begin{cor}\label{cor:metric-invariant-factorization}
Let \(L\) be an oriented link, \(\epsilon\in\{\pm1\}\),
\(Q_\epsilon\in\mathcal Q^\epsilon(q)\), and
\(Q_+,P\in\mathcal Q(q)\).  Put
\(Q_\epsilon^{(r)}=Q_\epsilon|_{A^{(r)}}\),
\(Q_+^{(r)}=Q_+|_{A^{(r)}}\), and
\(P^{(r)}=P|_{A^{(r)}}\).  Under the orthogonal decomposition in
Proposition~\ref{prop:metric-factorization}, the three invariants satisfy
\begin{align*}
  \mathcal I^{\mathrm I,\epsilon}_{Q_\epsilon}(L)
    &=\prod_r\mathcal I^{\mathrm I,\epsilon}_{Q_\epsilon^{(r)}}(L),\\
  \mathcal I^{\mathrm{II}}_{Q_+,P}(L)
    &=\prod_r\mathcal I^{\mathrm{II}}_{Q_+^{(r)},P^{(r)}}(L),\\
  \mathcal I^{\mathrm{III}}_{(A,q)}(L)
    &=\prod_r\mathcal I^{\mathrm{III}}_{(A^{(r)},q^{(r)})}(L).
\end{align*}
The same formula holds for Family \(\mathrm{II}_E\) when the summands are
\(E\)-stable and the phases are restricted to them.
\end{cor}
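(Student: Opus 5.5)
The plan is to reduce the statement to the tower factorization of Proposition~\ref{prop:metric-factorization} and the trace factorization \eqref{eq:tower-external-product-trace} of Proposition~\ref{prop:tower-external-product}, and then check that the normalizing scalars in \eqref{eq:family-I-trace-invariant}--\eqref{eq:family-III-trace-invariant} are multiplicative.

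Fix a braid \(\xi\in\B_n\) with \(\widehat\xi=L\); the invariants are well defined, so any representative will do. For each family \(\mathrm F\), Proposition~\ref{prop:metric-factorization} gives a tower isomorphism \(\Phi_\bullet\) that sends the local braid element \(r_i\) to \(\bigotimes_r r_i^{(r)}\). Here \(r_i^{(r)}\) is the braid element built from the restricted data \(Q^{(r)}\), \(P^{(r)}\) or \((A^{(r)},q^{(r)})\). Since \(\Phi_n\) is an algebra homomorphism and the inverse \(r_i^{-1}=r_i^*\) is sent to the tensor product of the inverses, I get
\[
  \Phi_n(\rho_n(\xi))=\bigotimes_r\rho_n^{(r)}(\xi),
\]
checking it on generators and extending multiplicatively. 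Iterating \eqref{eq:tower-external-product-trace} over the \(m\) summands gives
\[
  \varepsilon_n(\rho_n(\xi))=\prod_r\varepsilon_n^{(r)}(\rho^{(r)}_n(\xi)).
\]

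Next, the prefactors must factor. Since \(|A|=\prod_r|A^{(r)}|\), the factors \(|A|^{n/2}\) for Family I and \(|A|^n\) for Family III split as products. For Family II, the proof of Proposition~\ref{prop:metric-factorization} gives \(\gamma(P)=\prod_r\gamma(P^{(r)})\), so \(\gamma(P)^{\operatorname{wr}(\xi)}\) also splits, the writhe being the same for every factor. For the signed branch \(\epsilon=-1\) of Family I, the orthogonal product formulas for \(q\) and for the polarization give the analogous external-product structure directly. This branch is not literally covered by Proposition~\ref{prop:metric-factorization}, so I would remark that its proof goes through unchanged. Multiplying the trace identity by these factors yields each displayed formula.

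For Family \(\mathrm{II}_E\) with \(E\)-stable summands, I would use the factorization stated after Proposition~\ref{prop:metric-factorization}. The coordinate change \((t,c)\mapsto(t,Ec)\) respects the decomposition, so the tower factors compatibly and \(\gamma(P_E)=\prod_r\gamma(P_E^{(r)})\). I expect the main obstacle to be this Gauss-sum multiplicativity, which needs \(P_E(x+y)=P_E(x)P_E(y)\) for \(x,y\) in distinct summands. I would prove it from the polarization identity \(P_E(x+y)=P_E(x)P_E(y)\pair{E^{-1}x}{y}\): since the summands are \(E\)-stable, \(E^{-1}x\) lies in the same summand as \(x\), so orthogonality forces \(\pair{E^{-1}x}{y}=1\). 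The sum defining \(\gamma(P_E)\) then factors over \(A=\bigoplus_r A^{(r)}\), and the same argument applies to the normalization in each factor.
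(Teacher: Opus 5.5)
Your proposal is correct and follows the same route as the paper, which likewise combines the trace factorization \eqref{eq:tower-external-product-trace} with the multiplicativity of the group orders, normalized Gauss sums, and writhe factors in \eqref{eq:family-I-trace-invariant}--\eqref{eq:family-III-trace-invariant}. Your additional checks fill in details the paper's two-line proof leaves implicit: that the \(\epsilon=-1\) branch of Family I goes through by the same argument as Proposition~\ref{prop:metric-factorization}, and that \(\gamma(P_E)=\prod_r\gamma(P_E^{(r)})\) follows from \(E\)-stability together with orthogonality of the summands.
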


\begin{proof}
Equation~\eqref{eq:tower-external-product-trace} factors the coefficient
trace.  The group orders, normalized Gauss sums, and writhe factors in
\eqref{eq:family-I-trace-invariant}--\eqref{eq:family-III-trace-invariant}
factor over the orthogonal summands.
\end{proof}

\subsection{Fourier spectra and skein relations}

The local algebras of Families I and II are ordinary group algebras.
Fourier diagonalization therefore controls spectra, local periods, and skein
relations simultaneously.

For a finite abelian group \(G\), let
\(\widehat G=\Hom(G,\Uone)\).  The Fourier isomorphism
\[
  \C[G]\longrightarrow\C^{\widehat G},
  \qquad
  \sum_{g\in G}f(g)U^g
  \longmapsto
  \left(\chi\longmapsto\sum_{g\in G}f(g)\chi(g)\right)
\]
identifies the spectrum of \(r_f\) with the values of the function on the
right.  The polynomial whose distinct roots are these values annihilates
\(r_f\).  Proposition~\ref{prop:local-annihilators} converts its
coefficients into the corresponding local skein relation for any scalar
enhancement.

For \(\epsilon\in\{\pm1\}\) and \(Q\in\mathcal Q^\epsilon(q)\), the
signed shifted Gauss sum gives
\begin{equation}\label{eq:family-I-Gaussian-spectrum-set}
  \Lambda_{\mathrm I,\epsilon}(Q)
  =\{\gamma(Q)Q(\epsilon c)^{-1}:c\in A\}.
\end{equation}
For Family \(\mathrm{II}_E\) data \((E,Q_E,P_E)\) satisfying
Proposition~\ref{prop:family-II-coordinate-change} and \(b,s\in A\),
evaluation at the character
\(\chi_{b,s}(t,c)=\pair{b}{t}\pair{s}{c}\) gives
\begin{equation}\label{eq:family-II-Gaussian-spectrum-set}
  \Lambda_{\mathrm{II},E}(Q_E,P_E)
  =\{\gamma(Q_E)Q_E(E^{-1}b)^{-1}P_E(Es)^{-1}:b,s\in A\}.
\end{equation}
Repeated values are ignored when forming the annihilating polynomials.  Write
\[
  \Pi_{\mathrm I,\epsilon}(t)
  :=\prod_{\lambda\in\Lambda_{\mathrm I,\epsilon}(Q)}(t-\lambda)
  =\sum_j k_j^{\mathrm I,\epsilon}t^j,
  \qquad
  \Pi_{\mathrm{II},E}(t)
  :=\prod_{\lambda\in\Lambda_{\mathrm{II},E}(Q_E,P_E)}(t-\lambda)
  =\sum_j k_j^{\mathrm{II},E}t^j.
\]
Then every braid-local twist family satisfies
\begin{align}
  \sum_jk_j^{\mathrm I,\epsilon}
    \mathcal I^{\mathrm I,\epsilon}_Q(L_j)&=0,
  \label{eq:family-I-skein}\\
  \sum_jk_j^{\mathrm{II},E}\overline{\gamma(P_E)}^{\,j}
    \mathcal I^{\mathrm{II},E}_{Q_E,P_E}(L_j)&=0.
  \label{eq:family-II-skein}
\end{align}
Equations~\eqref{eq:family-I-Gaussian-spectrum-set} and
\eqref{eq:family-II-Gaussian-spectrum-set} follow by applying
Lemma~\ref{lem:general-shifted-Gauss-sum} to the relevant polarizations.
The preceding Fourier identification shows that \(\Pi_{\mathrm I,\epsilon}\)
and \(\Pi_{\mathrm{II},E}\) annihilate the corresponding local elements.
Proposition~\ref{prop:local-annihilators} and the parameters in
Theorem~\ref{thm:metric-family-enhancement} then gives the skein relations.

Family III has a noncommutative local algebra, but it admits an equally
explicit Fourier model.

Let \(V=\C[A]\) and define
\[
  \pi_0(U^{(a,b)})=q(a)^{-1}q(b)^{-1}Z_aX_{-b}
  \qquad(a,b\in A).
\]
Also define
\[
  \mathcal F_Ae_x=|A|^{-1/2}\sum_{y\in A}\pair{x}{y}e_y,
  \qquad
  M_{q^{-1}}e_x=q(x)^{-1}e_x,
\]
and set \(S_{\mathrm{III}}=\pi_0(r^{\mathrm{III}})\).

\begin{prop}\label{prop:family-III-local-Fourier-model}
The map \(\pi_0\) is a faithful \(*\)-representation of
\(\C_{\nu_{\mathrm{III}}}[A\oplus A]\), and
\begin{equation}\label{eq:family-III-Fourier-model}
  S_{\mathrm{III}}
  =\overline{\gamma(q)}\,\mathcal F_A M_{q^{-1}}.
\end{equation}
\end{prop}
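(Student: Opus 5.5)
The plan is to check the three assertions in turn, namely multiplicativity, the $*$-property and faithfulness, and then compute $S_{\mathrm{III}}$ on basis vectors.

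\emph{Multiplicativity.} I compute $\pi_0(U^{(a,b)})\pi_0(U^{(c,d)})$ directly from the Pauli relations. The relation $Z_bX_a=\pair{b}{a}X_aZ_b$ gives
\[
  X_{-b}Z_c=\pair{b}{c}Z_cX_{-b}.
\]
Hence
\[
  Z_aX_{-b}Z_cX_{-d}=\pair{b}{c}Z_{a+c}X_{-b-d}.
\]
Rewriting the scalar prefactors through the quadratic identity gives
\[
  q(a)^{-1}q(b)^{-1}q(c)^{-1}q(d)^{-1}
  =\pair{a}{c}\pair{b}{d}\,q(a+c)^{-1}q(b+d)^{-1}.
\]
The total phase is therefore $\pair{a}{c}\pair{b}{c}\pair{b}{d}=\nu_{\mathrm{III}}((a,b),(c,d))$, as in \eqref{eq:family-III-cocycle}. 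This is the same bookkeeping as in \eqref{eq:family-III-local-multiplication}, but on a single copy of $V$.

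\emph{The $*$-property.} Each $\pi_0(U^{(a,b)})$ is unitary, and multiplicativity gives $\pi_0(U^{(a,b)})\pi_0(U^{(-a,-b)})=\nu_{\mathrm{III}}((a,b),(-a,-b))\,I$. So the adjoint agrees with the canonical involution \eqref{eq:twisted-star}, and $\pi_0$ is a $*$-homomorphism.

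\emph{Faithfulness.} Since $\pair{\cdot}{\cdot}$ is nondegenerate, the $|A|^2$ words $Z_aX_{-b}$ are mutually Hilbert--Schmidt orthogonal and nonzero. They are therefore linearly independent. Because $\dim\End(V)=|A|^2$, they form a basis, so $\pi_0$ is in fact a $*$-isomorphism onto $\End(V)$.

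\emph{The formula \eqref{eq:family-III-Fourier-model}.} I apply
\[
  S_{\mathrm{III}}=|A|^{-1}\sum_{a,b}q(a)^{-1}q(b)^{-1}Z_aX_{-b}
\]
to $e_x$. This produces
\[
  |A|^{-1}\sum_b q(b)^{-1}\Bigl(\sum_a q(a)^{-1}\pair{a}{x-b}\Bigr)e_{x-b}.
\]
The inner sum is a signed shifted Gauss sum \eqref{eq:shifted-Gauss-sum} for $Q=q^{-1}\in\mathcal Q^-(q)$ with $\epsilon=-1$. It equals $|A|^{1/2}\gamma(q^{-1})\,q(x-b)$, using $q(-y)=q(y)$, and $\gamma(q^{-1})=\overline{\gamma(q)}$.

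Now substitute $z=x-b$. The quadratic identity gives $q(z)\,q(x-z)^{-1}=q(x)^{-1}\pair{x}{z}$. Hence
\[
  S_{\mathrm{III}}e_x=\overline{\gamma(q)}\,|A|^{-1/2}\,q(x)^{-1}\sum_z\pair{x}{z}e_z=\overline{\gamma(q)}\,\mathcal F_AM_{q^{-1}}e_x .
\]

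The main obstacle is purely sign and phase bookkeeping. The points to watch are the direction of the Pauli commutation, the choice of polarization branch and shift $\epsilon u$ in \eqref{eq:shifted-Gauss-sum}, and the conjugation $\gamma(q^{-1})=\overline{\gamma(q)}$. I would double-check each of these in the cyclic example $A=\Z/N\Z$ before finalizing.
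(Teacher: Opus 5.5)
Your proposal is correct and follows the paper's proof essentially step for step. It gets multiplicativity from the Pauli commutation relation and the quadratic identity, faithfulness from linear independence of the words $Z_aX_{-b}$, and the $*$-property from unitarity matching the canonical involution; your explicit signed shifted Gauss sum with $Q=q^{-1}$, followed by the substitution $z=x-b$, just fills in the ``direct application'' step that the paper leaves compressed.
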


\begin{proof}
The Pauli commutation relation gives
\[
  \pi_0(U^{(a,b)})\pi_0(U^{(c,d)})
  =\nu_{\mathrm{III}}((a,b),(c,d))
    \pi_0(U^{(a+c,b+d)})
  \qquad(a,b,c,d\in A),
\]
and the operators \(Z_aX_{-b}\) are linearly independent, proving
faithfulness.  Each \(\pi_0(U^{(a,b)})\) is unitary.  The canonical
involution of the unitary twisted group algebra is characterized by
\((U^{(a,b)})^*=(U^{(a,b)})^{-1}\), and the multiplication formula therefore
gives
\[
  \pi_0\bigl((U^{(a,b)})^*\bigr)
  =\pi_0(U^{(a,b)})^*.
\]
Hence \(\pi_0\) is a \(*\)-representation.  For \(x\in A\), direct application of
\(S_{\mathrm{III}}\), followed by the change of variable \(y=x-b\), gives
\[
  S_{\mathrm{III}}e_x
  =\overline{\gamma(q)}|A|^{-1/2}
    \sum_{y\in A} q(x)^{-1}\pair{x}{y}e_y,
\]
which is \eqref{eq:family-III-Fourier-model}.
\end{proof}

Let \(Je_x=e_{-x}\).  We first compute the square of the local operator.
From \eqref{eq:family-III-Fourier-model}, bicharacter multiplicativity, and
the signed shifted Gauss-sum identity for \(q^{-1}\),
\begin{align*}
  S_{\mathrm{III}}^2e_x
  &=\overline{\gamma(q)}^2|A|^{-1}q(x)^{-1}
    \sum_{y,z\in A}
    q(y)^{-1}\pair{x}{y}\pair{y}{z}e_z\\
  &=\overline{\gamma(q)}^2|A|^{-1}q(x)^{-1}
    \sum_{z\in A}
    \left(\sum_{y\in A}q(y)^{-1}\pair{x+z}{y}\right)e_z\\
  &=\overline{\gamma(q)}^3|A|^{-1/2}q(x)^{-1}
    \sum_{z\in A}q(x+z)e_z\\
  &=\overline{\gamma(q)}^3|A|^{-1/2}
    \sum_{z\in A}q(z)\pair{x}{z}e_z.
\end{align*}
Applying \(S_{\mathrm{III}}\) once more and using character orthogonality
now gives
\begin{align*}
  S_{\mathrm{III}}^3e_x
  &=\overline{\gamma(q)}^4|A|^{-1}
    \sum_{z,w\in A}\pair{x}{z}\pair{z}{w}e_w\\
  &=\overline{\gamma(q)}^4|A|^{-1}
    \sum_{w\in A}\left(\sum_{z\in A}\pair{z}{x+w}\right)e_w\\
  &=\overline{\gamma(q)}^4e_{-x}.
\end{align*}
Consequently,
\begin{equation}\label{eq:family-III-cube-inversion}
  S_{\mathrm{III}}^3=\overline{\gamma(q)}^{4}J,
  \qquad
  S_{\mathrm{III}}^6=I.
\end{equation}
Since \(J^2=I\) and Gauss--Milgram gives \(\gamma(q)^8=1\), the sixth power
is the identity.  Faithfulness transfers this relation to the twisted local
algebra and every position in the tower.  Consequently,
\[
  (r_i^{\mathrm{III}})^6=1,
  \qquad
  (R^{\mathrm{III}})^6=I.
\]

\begin{cor}\label{cor:family-III-skein-period}
For \(m\in\Z\) and \(L_m=\widehat{x\sigma_i^m y}\) as in
Proposition~\ref{prop:local-annihilators},
\begin{equation}\label{eq:family-III-six-twist-skein}
  \mathcal I^{\mathrm{III}}(L_{m+6})=\mathcal I^{\mathrm{III}}(L_m).
\end{equation}
If \(2a=0\) for every \(a\in A\), equivalently if \(2A=0\), then
\begin{equation}\label{eq:family-III-three-twist-skein}
  \mathcal I^{\mathrm{III}}(L_{m+3})
  =\overline{\gamma(q)}^{4}\mathcal I^{\mathrm{III}}(L_m).
\end{equation}
In particular, for \(A=\Z/2\Z\) and \(q(1)=-\ii\),
\[
  \mathcal I^{\mathrm{III}}(L_{m+3})+\mathcal I^{\mathrm{III}}(L_m)=0.
\]
\end{cor}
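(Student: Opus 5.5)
The plan is to feed the local relations for \(r_i^{\mathrm{III}}\) into Proposition~\ref{prop:local-annihilators}, using the enhancement parameters of Theorem~\ref{thm:metric-family-enhancement}, where Family III has \(\alpha=1\).

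For the six-twist relation, apply Proposition~\ref{prop:local-annihilators} to the Laurent polynomial \(p(t)=t^{m+6}-t^m\). It annihilates \(r_i^{\mathrm{III}}\) because \((r_i^{\mathrm{III}})^6=1\), which was derived above from \eqref{eq:family-III-cube-inversion} together with faithfulness of \(\pi_0\) and the tower inclusions. Since \(\alpha=1\), the oriented relation reads \(T(L_{m+6})-T(L_m)=0\). By \eqref{eq:family-III-trace-invariant} the invariant \(\mathcal I^{\mathrm{III}}\) is exactly \(T\) with \(\beta=|A|\) and \(D=|A|^2\): indeed \(\beta^{-n}\operatorname{tr}_n=|A|^{-n}|A|^{2n}\varepsilon_n=|A|^n\varepsilon_n\). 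This gives \eqref{eq:family-III-six-twist-skein}.

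For the three-twist relation, first note that if \(2A=0\) then \(-x=x\) for all \(x\), so \(J=I\). Equation \eqref{eq:family-III-cube-inversion} then gives \(S_{\mathrm{III}}^3=\overline{\gamma(q)}^4 I\). Faithfulness of \(\pi_0\) (Proposition~\ref{prop:family-III-local-Fourier-model}) gives \((r^{\mathrm{III}})^3=\overline{\gamma(q)}^4\cdot1\) in the local twisted algebra. Applying the unital homomorphisms \(\lambda_i\), or equivalently the tower inclusions, this holds for every \(r_i^{\mathrm{III}}\). Now take \(p(t)=t^{m+3}-\overline{\gamma(q)}^4t^m\) in Proposition~\ref{prop:local-annihilators}; with \(\alpha=1\) this gives \eqref{eq:family-III-three-twist-skein}. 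The only step needing care is transporting the scalar identity from \(S_{\mathrm{III}}\) to every tower position. This works because \(r_i^{\mathrm{III}}\) is the image of \(r^{\mathrm{III}}\) under a unital algebra map and \(\pi_0\) is injective.

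For the special case \(A=\Z/2\Z\) with \(q(1)=-\ii\), we have \(\pair{1}{1}=q(2)/q(1)^2=1/(-1)=-1\), so the form is nondegenerate. Then \(\gamma(q)=2^{-1/2}(1-\ii)=e^{-\pi\ii/4}\), hence \(\gamma(q)^4=-1\) and \(\overline{\gamma(q)}^4=-1\). Substituting into \eqref{eq:family-III-three-twist-skein} yields \(\mathcal I^{\mathrm{III}}(L_{m+3})+\mathcal I^{\mathrm{III}}(L_m)=0\). I expect no real obstacle beyond checking the sign of this Gauss sum.
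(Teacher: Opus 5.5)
Your proposal is correct and follows the paper's proof. You apply Proposition~\ref{prop:local-annihilators} to the sixth-power relation and, when $2A=0$, to the cubic relation obtained from $J=I$; since $\alpha_{\mathrm{III}}=1$ no writhe factor appears, and $\overline{\gamma(q)}^{4}=-1$ for $A=\Z/2\Z$. One small imprecision: the tower inclusions alone do not carry $r^{\mathrm{III}}$ to $r_i^{\mathrm{III}}$; it is the local maps $U^g\mapsto U_i^g$ (your $\lambda_i$) that do.
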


\begin{proof}
Apply Proposition~\ref{prop:local-annihilators} to \(t^6-1\).  Since
\(\alpha_{\mathrm{III}}=1\), no writhe factor occurs.  If \(2A=0\), then
the inversion \(J\) in \eqref{eq:family-III-cube-inversion} is the identity,
which gives the cubic relation.  In the stated two-element case,
\(\overline{\gamma(q)}^4=-1\).
\end{proof}

\begin{cor}\label{cor:metric-local-finite-order}
For every \(\epsilon\in\{\pm1\}\),
\(Q\in\mathcal Q^\epsilon(q)\), and Family \(\mathrm{II}_E\) datum
\((E,Q_E,P_E)\) satisfying
Proposition~\ref{prop:family-II-coordinate-change}, the corresponding
local operators in Families \(\mathrm I^\epsilon\), \(\mathrm{II}_E\), and
III have finite order.  The orders in the first two families are the least
common multiples of the orders of the scalars in
\eqref{eq:family-I-Gaussian-spectrum-set} and
\eqref{eq:family-II-Gaussian-spectrum-set}; the Family III order divides
\(6\).
\end{cor}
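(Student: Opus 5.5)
The plan is to prove the statement at the level of the local algebras: first show that each local tower element has finite order, then transfer this to the Pauli realizations through the faithful $*$-representations established above.

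For Families $\mathrm I^\epsilon$ and $\mathrm{II}_E$ we have $\nu=1$, so the local algebra $\T_2=\C[G]$ is an ordinary commutative group algebra. The Fourier isomorphism $\C[G]\to\C^{\widehat G}$ identifies it with a product of copies of $\C$, and sends $r_f$ to its function of eigenvalues.

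Applying Lemma~\ref{lem:general-shifted-Gauss-sum} to the relevant polarizations gives the eigenvalue sets in \eqref{eq:family-I-Gaussian-spectrum-set} and \eqref{eq:family-II-Gaussian-spectrum-set}.

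\begin{itemize}
\item For Family I, take $B=\pair{\cdot}{\cdot}^\epsilon$ and the translation variable $\epsilon u$.
\item For Family $\mathrm{II}_E$, the character sum factors into a $t$-sum and a $c$-sum.
  \begin{itemize}
  \item The $t$-sum is $\sum_t Q_E(t)\pair{b}{t}$. Rewrite $\pair{b}{t}=\pair{E(E^{-1}b)}{t}=B_E(E^{-1}b,t)$, so Lemma~\ref{lem:general-shifted-Gauss-sum} gives $\gamma(Q_E)Q_E(E^{-1}b)^{-1}$ times $|A|^{1/2}$.
  \item The $c$-sum, using $B_E'$ and translation variable $Es$, gives $\gamma(P_E)P_E(Es)^{-1}$ times $|A|^{1/2}$.
  \end{itemize}
  The prefactor $\overline{\gamma(P_E)}|A|^{-1}$ then cancels the Gauss sum of $P_E$.
\end{itemize}

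By Lemma~\ref{lem:quadratic-phase-roots-of-unity}, applied to $Q$ (respectively $Q_E$ and $P_E$, with polarizations $B_E$ and $B_E'$), every one of these scalars is a root of unity. In the Fourier model, $r_f$ is a diagonal element whose entries are roots of unity, so $r_f^m=1$ exactly when $m$ is a multiple of the least common multiple $L$ of their orders. Hence the order of $r_f$ equals $L$.

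To pass to the local operator, use that the Pauli realizations $\Phi_2^{\mathrm I}$ and $\Phi_2^{\mathrm{II}}$ are faithful. This is the Pauli-label separation argument in the proof of Theorem~\ref{thm:metric-family-enhancement}, at $n=2$. Therefore $R^m=I$ if and only if $r_f^m=1$, and the order of $R$ is also exactly $L$. The same faithfulness passes the statement to every position $i$ in the tower, since each $\lambda_i$ is a unital homomorphism.

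For Family III, $S_{\mathrm{III}}^6=I$ by \eqref{eq:family-III-cube-inversion}, and $\pi_0$ is faithful by Proposition~\ref{prop:family-III-local-Fourier-model}. Hence $(r^{\mathrm{III}})^6=1$. Applying $\Phi_2^{\mathrm{III}}$, which is a homomorphism, gives $(R^{\mathrm{III}})^6=I$, so the order divides $6$.

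The main point requiring care is the exactness of ``order equals lcm'' in the first two families. This needs a precise reading of the statement:
\begin{itemize}
\item In general, the order is the lcm over the eigenvalues actually attained. The sets \eqref{eq:family-I-Gaussian-spectrum-set} and \eqref{eq:family-II-Gaussian-spectrum-set} are precisely the attained values, so nothing is lost there.
\item The realized operator has the same spectrum as the tower element, only with multiplicities. This follows from faithfulness together with the fact that $\Phi_2$ is a unital representation of a semisimple commutative algebra, so every character occurs in it.
\end{itemize}
Beyond this, the remaining work is bookkeeping of the self-adjointness identities for $E$.
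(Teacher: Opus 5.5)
Your proposal is correct and follows essentially the paper's argument: the spectral values are roots of unity by Lemma~\ref{lem:quadratic-phase-roots-of-unity}, the order is the lcm of the eigenvalue orders in the diagonalizable setting, and Family III follows from \eqref{eq:family-III-cube-inversion} via faithfulness. The only variation is where the diagonalization happens. The paper applies ``unitary, hence semisimple'' directly to the matrix operator. You instead compute the order in the commutative algebra \(\C[G]\) and transfer it through the faithful realization \(\Phi_2\), which has the small merit of making explicit that every value in \eqref{eq:family-I-Gaussian-spectrum-set} and \eqref{eq:family-II-Gaussian-spectrum-set} is actually attained as an eigenvalue of the matrix.
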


\begin{proof}
Lemma~\ref{lem:quadratic-phase-roots-of-unity} applies to
\(Q\in\mathcal Q^\epsilon(q)\) and to the nondegenerate polarizations of
\(Q_E\) and \(P_E\).  Hence every scalar in
\eqref{eq:family-I-Gaussian-spectrum-set} and
\eqref{eq:family-II-Gaussian-spectrum-set} is a root of unity.  The first
two operators are unitary and therefore semisimple; their orders are the
least common multiples of their eigenvalue orders.  The last statement is
\eqref{eq:family-III-cube-inversion}.
\end{proof}

Fix \(\epsilon\in\{\pm1\}\), \(Q\in\mathcal Q^\epsilon(q)\), and a
Family \(\mathrm{II}_E\) datum \((E,Q_E,P_E)\) satisfying
Proposition~\ref{prop:family-II-coordinate-change}.  For \(m\in\Z\),
let \(T(2,m)\) denote the closure of the two-strand braid \(\sigma_1^m\).
The corresponding values are
\begin{align}
  \mathcal I^{\mathrm I,\epsilon}_Q(T(2,m))
  &=\gamma(Q)^m\sum_{c\in A}Q(c)^{-m},
  \label{eq:family-I-torus-two}\\
  \mathcal I^{\mathrm{II},E}_{Q_E,P_E}(T(2,m))
  &=\gamma(P_E)^m\gamma(Q_E)^m
    \left(\sum_{b\in A}Q_E(b)^{-m}\right)
    \left(\sum_{s\in A}P_E(s)^{-m}\right),
  \label{eq:family-II-torus-two}\\
  \mathcal I^{\mathrm{III}}(T(2,m))
  &=|A|\,\Tr(S_{\mathrm{III}}^m).
  \label{eq:family-III-torus-two}
\end{align}
Equations~\eqref{eq:family-I-torus-two}
and~\eqref{eq:family-II-torus-two} follow by averaging the \(m\)-th powers of
the character values, while \eqref{eq:family-III-torus-two} follows from the
faithful local model of
Proposition~\ref{prop:family-III-local-Fourier-model}.

\begin{example}\label{ex:odd-prime-cyclic-Gaussian}
Let \(p\) be an odd prime, \(A=\F_p\),
\(\zeta_p=e^{2\pi\ii/p}\), and \(u\in\F_p^\times\).  For
\[
  q_u(a)=\zeta_p^{u a^2},
  \qquad
  Q_b(a)=\zeta_p^{u a^2+ba},
  \qquad b\in\F_p,
\]
\(q_u\) has polarization
\(\pair{a}{c}=\zeta_p^{2uac}\), and
\(Q_b\in\mathcal Q(q_u)\).
Let \(\left(\frac{u}{p}\right)\) denote the Legendre symbol, and compute
inverses in \(\F_p\).  Completion of the square gives
\begin{equation}\label{eq:odd-prime-Gauss-sum}
  \gamma(Q_b)
  =\varepsilon_p\left(\frac{u}{p}\right)
   \zeta_p^{-b^2(4u)^{-1}},
  \qquad
  \varepsilon_p=
  \begin{cases}
    1,&p\equiv1\pmod4,\\
    \ii,&p\equiv3\pmod4.
  \end{cases}
\end{equation}
This formula also shows that \(\gamma(Q_{-b})=\gamma(Q_b)\).  For the Family
II pair \((Q_b,Q_{-b})\), every element of \(\F_p\) is a sum of two squares.
For this pair, abbreviate
\[
  \Lambda_{\mathrm{II}}
  :=\Lambda_{\mathrm{II},\id}(Q_b,Q_{-b}),
  \qquad
  \Pi_{\mathrm{II}}:=\Pi_{\mathrm{II},\id}.
\]
Translating the two variables in the spectral formula gives
\[
  \Lambda_{\mathrm{II}}
  =\gamma(Q_b)\{\zeta_p^r:r\in\F_p\},
  \qquad
  \Pi_{\mathrm{II}}(t)=t^p-\gamma(Q_b)^p.
\]
Consequently, for every \(m\in\Z\), the braid-local twist family satisfies
\begin{equation}\label{eq:odd-prime-family-II-skein}
  \overline{\gamma(Q_b)}^{\,p}
  \mathcal I^{\mathrm{II}}_{Q_b,Q_{-b}}(L_{m+p})
  -\gamma(Q_b)^p
  \mathcal I^{\mathrm{II}}_{Q_b,Q_{-b}}(L_m)=0.
\end{equation}
\end{example}

\subsection{Exact evaluation}

\begin{theorem}\label{thm:metric-efficient-evaluation}
Fix the metric group and the phases defining Family I with either
polarization sign, any Family \(\mathrm{II}_E\), or Family III\@.
Given a braid word \(\xi\in\B_n\) of length \(\ell\), the
corresponding coefficient-trace link invariant can be computed exactly by a
deterministic classical algorithm in time polynomial in \(n+\ell\).
\end{theorem}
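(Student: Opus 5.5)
The plan is to compute the identity coefficient $\varepsilon_n(\rho_n(\xi))$ directly inside the twisted tower, never forming matrices. Each generator image is a sum $r_{f,i}=\sum_g f(g)U_i^g$, where $f$ is a quadratic phase times a constant: $Q$ for Family I, $Q_E(t)P_E(c)$ for $\mathrm{II}_E$, and a constant for Family III. For a word $\xi=\sigma_{i_1}^{\pm1}\cdots\sigma_{i_\ell}^{\pm1}$, the product expands as a sum over $\mathbf g=(g_1,\dots,g_\ell)\in G^\ell$. Inverses are handled by $r_f^{-1}=r_f^*=\sum_g\overline{f(g)}\,\overline{\nu(g,g)}\,U^{-g}$, which again has quadratic-phase coefficients. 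Each term is
\[
\prod_k f_k(g_k)\,U_{i_1}^{\pm g_1}\cdots U_{i_\ell}^{\pm g_\ell}.
\]
Straightening this product to normal order with relations \eqref{eq:tower-same}--\eqref{eq:tower-adjacent} produces a scalar $\prod_{k<k'}\beta_{kk'}(g_k,g_{k'})\prod_k\nu(g_k,g_k)^{\cdots}$. Here each $\beta_{kk'}$ is a bicharacter: it is $\nu$ when $i_k=i_{k'}$, $\tau^{\pm1}$ when the positions are adjacent and out of order, and trivial otherwise. The resulting normal-ordered label is the homomorphism $\phi_\xi:G^\ell\to E_n(G)$ that sends $\mathbf g$ to its position-wise signed sums.

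The identity coefficient is therefore a sum over $\ker\phi_\xi$ of a function $\Theta(\mathbf g)$. $\Theta$ is a product of the quadratic phases $f_k(g_k)$ and the bicharacter factors, so it is a \emph{quadratic function} on $G^\ell$: a quadratic phase times a character times a constant. The invariant thus equals the normalization constants from Theorem~\ref{thm:metric-family-enhancement} times the finite Gauss sum $\sum_{\mathbf g\in K}\Theta(\mathbf g)$, with $K=\ker\phi_\xi$.

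The steps, in order, are as follows.

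First, using the primary factorization (Corollary~\ref{cor:metric-primary-factorization}) and Corollary~\ref{cor:metric-invariant-factorization}, reduce to the case where $A$ is a $p$-group. Since $A$ is fixed, the generator $\ell$-tuple ranges over $(\Z/p^e)^{m\ell}$ for fixed $m$ and $e$.

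Second, record $\Theta$ by an $O(\ell)\times O(\ell)$ matrix of quadratic and bilinear coefficients over $\Z/N$, where $N$ is the fixed exponent. Also record $\phi_\xi$ as an integer matrix; both take $O(\ell^2)$ time to write down.

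Third, compute a parametrization $\Z^{r}\to K$ of the kernel, e.g.\ via Smith/Howell normal form over $\Z/N$. This runs in polynomial time and gives $K\cong\bigoplus\Z/p^{e_j}$ with explicit generators. Pull $\Theta$ back to a quadratic function on this abelian group, of size polynomial in $\ell$.

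Fourth, evaluate the Gauss sum of a general (possibly degenerate, possibly inhomogeneous) quadratic function on a finite abelian $p$-group exactly. The method is to diagonalize or block-diagonalize the associated bilinear form by a symplectic/orthogonal Gaussian elimination over $\Z/p^e$, using $2\times2$ blocks when $p=2$. This splits the sum into a product of one- or two-variable Gauss sums, each available in closed form. Degenerate directions contribute either $p^{e}$ or $0$ according to whether the linear term vanishes on the radical, and each nondegenerate block is evaluated by Lemma~\ref{lem:general-shifted-Gauss-sum}. The answer is a root of unity of bounded order times a power of $\sqrt{p}$ (or zero), so it is represented exactly. The whole procedure is polynomial in $\ell$, and $n\le\ell+1$ after discarding unused strands.

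I expect the main obstacle to be this last step at $p=2$. There the quadratic form cannot always be diagonalized, and completing the square requires dividing by $2$. To handle it, I would follow the standard treatment of Gauss sums over $\Z/2^e$: reduce to the blocks $x^2$, $xy$, and $x^2+xy+y^2$ with odd coefficients, treat the linear term by checking whether it lies in the image needed for a completed square, and otherwise use character orthogonality to conclude that the sum vanishes.

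A subsidiary care point is the inverse generators in Family III, where $\nu$ is nontrivial. There I would instead use $r^{-1}=(r^{\mathrm{III}})^5$ from \eqref{eq:family-III-cube-inversion}, which replaces each inverse by five positive generators and keeps the word length linear.
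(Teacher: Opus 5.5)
Your argument is correct in outline. It coincides with the paper's up to the point where $\varepsilon_n(\rho_n(\xi))$ becomes a sum of a quadratic function over $\ker L_\xi$; after that the two routes diverge.

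The paper never computes the kernel. It removes the constraint by character orthogonality, adding $s(n-1)$ dual variables, and lifts every cyclic coordinate to $\Z/M\Z$. It then treats the resulting unconstrained quadratic exponential sum modulo the fixed integer $M$ as a black box, citing Cai--Chen--Lipton--Lu. You instead pass to primary components, compute $K=\ker\phi_\xi$ explicitly by Smith/Howell normal form, and evaluate the Gauss sum yourself through a radical test and a Jordan-type splitting.

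The paper's route is short and uniform, and it delegates the entire $2$-adic case analysis to a proven theorem. Yours is self-contained and yields more structure: the kernel sum is either $0$ or $|\operatorname{rad}|\,|K/\operatorname{rad}|^{1/2}$ times a root of unity of bounded order (a Weil index). However, its correctness and polynomial running time rest on a polynomial-time Jordan decomposition of possibly degenerate, inhomogeneous quadratic functions on mixed-exponent $2$-groups. You only sketch this step, and it is precisely what the cited algorithm packages.

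A few details need repair.
\begin{enumerate}
\item By \eqref{eq:twisted-star}, $(U^g)^*=\nu(g,g)U^{-g}$, so the inverse coefficient is $\overline{f(g)}\,\nu(g,g)$ rather than $\overline{f(g)}\,\overline{\nu(g,g)}$. This is harmless, since $\nu=1$ for Families I and $\mathrm{II}_E$. Your substitute $r^{-1}=(r^{\mathrm{III}})^5$ for Family III is valid by \eqref{eq:family-III-cube-inversion}, though unnecessary.
\item The modulus used to encode $\Theta$ must be divisible by the orders of the phase values, not merely by the exponent of the group; already $q_0(1)=-\ii$ on $\Z/2\Z$ has order $4$. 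At $p=2$ the diagonal data must also be the values $\Theta(e_a)$ rather than entries of a symmetric matrix. The paper handles both points with the binomial formula \eqref{eq:evaluation-quadratic-coordinate-formula} and the doubled modulus $M=2L_0$.
\item Lemma~\ref{lem:general-shifted-Gauss-sum} only absorbs linear shifts, so the unshifted block values must come from the classical closed forms.
\item Your vanishing criterion should read ``$\Theta$ is nontrivial on the radical of its associated bicharacter.'' At $p=2$ this is not the same as the linear term being nonzero.
\end{enumerate}
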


\begin{proof}
The case \(n=1\) is immediate, so assume \(n\geq2\).
Fix one of these families and write
\((G_\star,\nu_\star,\tau_\star,f_\star)\) for its tower and coefficient
data, where \(G_\star=A\) for Family I and \(G_\star=A\oplus A\) for Families
II and III\@.  Put
\[
  r_{\star,i}=\sum_{g\in G_\star}f_\star(g)U_i^g,
  \qquad
  \rho_n^{\T,\star}(\sigma_i)=r_{\star,i}.
\]
These data are fixed; only the braid word varies.

Let
\[
  \xi=\sigma_{i_\ell}^{\delta_\ell}\cdots
       \sigma_{i_1}^{\delta_1},
  \qquad 1\leq i_k<n,
  \quad \delta_k\in\{\pm1\},
  \quad 1\leq k\leq\ell.
\]
Expand each local factor in the tower basis.  For \(g_k\in G_\star\), a
positive crossing contributes \(f_\star(g_k)U_{i_k}^{g_k}\).  Since the
braid elements are unitary, a
negative crossing contributes
\[
  \overline{f_\star(-g_k)}\nu_\star(g_k,g_k)U_{i_k}^{g_k}.
\]
After multiplying the \(\ell\) factors and straightening them into normal
order, one obtains
\begin{equation}\label{eq:evaluation-tower-expansion}
  \rho_n^{\T,\star}(\xi)
  =\sum_{\mathbf g\in G_\star^\ell}
    C_\xi(\mathbf g)U^{L_\xi(\mathbf g)}.
\end{equation}
The homomorphism
\[
  L_\xi:G_\star^\ell\longrightarrow G_\star^{n-1}
\]
is the homomorphism recording the total exponent at every tower position.
The factor \(C_\xi(\mathbf g)\) is obtained from the crossing coefficients
and the factors contributed by \(\nu_\star\) and \(\tau_\star\).  Thus it is
a fixed scalar times a quadratic phase in the variables \(\mathbf g\).  Both
\(L_\xi\) and the quadratic exponent defining \(C_\xi\) can be constructed
using a polynomial number of operations in fixed finite rings.

Consequently, applying the coefficient trace gives
\begin{equation}\label{eq:evaluation-kernel-Gauss-sum}
  \varepsilon_n\bigl(\rho_n^{\T,\star}(\xi)\bigr)
  =\sum_{\mathbf g\in\ker L_\xi}C_\xi(\mathbf g).
\end{equation}
It remains to evaluate this finite quadratic Gauss sum efficiently.

Fix a cyclic decomposition
\[
  G_\star\cong\bigoplus_{a=1}^s\Z/d_a\Z.
\]
The group and all phases are fixed.  Lemma~\ref{lem:quadratic-phase-roots-of-unity}
shows that the values of the coefficient phases and their normalized Gauss
sums are roots of unity; the values of the bicharacters
\(\nu_\star\) and \(\tau_\star\) are roots of unity as well.  Choose an
integer \(L_0\) divisible by the orders of all these values and by every
\(d_a\), and put
\[
  M=2L_0,
  \qquad \zeta_M=e^{2\pi\ii/M}.
\]
This modulus depends only on the fixed metric data.

We record explicitly how the phases become quadratic polynomials.  Let
\(F:G_\star\to\Uone\) be any of the fixed quadratic phases, with
polarization \(B_F\), and let \(e_a\) be the chosen cyclic generators.  For
integer representatives \(x_a\), repeated use of the quadratic identity
gives
\begin{equation}\label{eq:evaluation-quadratic-coordinate-formula}
  F\left(\sum_ax_ae_a\right)
  =\prod_aF(e_a)^{x_a}
   \prod_aB_F(e_a,e_a)^{\binom{x_a}{2}}
   \prod_{a<b}B_F(e_a,e_b)^{x_ax_b}.
\end{equation}
A bicharacter has the analogous bilinear coordinate formula.  When its
values are written as powers of \(\zeta_M\), every coefficient multiplying
a binomial term in
\eqref{eq:evaluation-quadratic-coordinate-formula} is even, because
\(M/m\) is even for every phase order \(m\mid L_0\).  Hence each binomial
term is an ordinary integral quadratic polynomial modulo \(M\).  Applying
these formulas to all crossing and straightening factors produces an
explicit scalar \(\kappa_\xi\), which is a product of fixed normalization
constants, and an integral quadratic polynomial \(q_\xi\) such that
\begin{equation}\label{eq:evaluation-quadratic-phase-polynomial}
  C_\xi(\mathbf g)
  =\kappa_\xi\zeta_M^{q_\xi(\mathbf g)}.
\end{equation}
The polynomial is understood modulo \(M\) in the chosen cyclic coordinates.

We now remove the kernel condition without computing generators for the
kernel.  For \(\mathbf y\in G_\star^{n-1}\), character orthogonality gives
\[
  \boldsymbol{1}_{\{\mathbf y=0\}}
  =\frac{1}{|G_\star|^{n-1}}
    \sum_{\boldsymbol\chi\in\widehat G_\star^{\,n-1}}
      \boldsymbol\chi(\mathbf y),
\]
where
\(\boldsymbol\chi(\mathbf y)=\prod_{j=1}^{n-1}\chi_j(y_j)\).
Substituting \(\mathbf y=L_\xi(\mathbf g)\) into
\eqref{eq:evaluation-kernel-Gauss-sum} yields
\begin{equation}\label{eq:evaluation-unconstrained-Gauss-sum}
  \varepsilon_n\bigl(\rho_n^{\T,\star}(\xi)\bigr)
  =\frac{\kappa_\xi}{|G_\star|^{n-1}}
   \sum_{\substack{\mathbf g\in G_\star^\ell\\
                    \boldsymbol\chi\in\widehat G_\star^{\,n-1}}}
   \zeta_M^{q_\xi(\mathbf g)}
   \boldsymbol\chi\bigl(L_\xi(\mathbf g)\bigr).
\end{equation}
The character factor is bilinear in the coordinates of \(\mathbf g\) and
\(\boldsymbol\chi\).  Thus the full summand is the exponential of an
integral quadratic polynomial modulo \(M\).

Using the cyclic coordinates of \(G_\star\) and the dual coordinates of
\(\widehat G_\star\), the domain in
\eqref{eq:evaluation-unconstrained-Gauss-sum} has the form
\[
  G_\star^\ell\oplus\widehat G_\star^{\,n-1}
  \cong\bigoplus_{j=1}^N\Z/d_j'\Z,
  \qquad N=s(\ell+n-1),
\]
where every \(d_j'\) is one of the fixed integers \(d_a\) and divides
\(M\).  The polynomial obtained above is periodic modulo \(d_j'\) in its
\(j\)-th variable, since it represents a well-defined phase on this finite
group.  For any such periodic function \(h\),
\[
  \sum_{x\in\Z/d_j'\Z}h(x)
  =\frac{d_j'}{M}\sum_{\widetilde x\in\Z/M\Z}h(\widetilde x).
\]
Applying this identity in every variable converts
\eqref{eq:evaluation-unconstrained-Gauss-sum} into
\begin{equation}\label{eq:evaluation-standard-quadratic-sum}
  \varepsilon_n\bigl(\rho_n^{\T,\star}(\xi)\bigr)
  =\lambda_\xi
    \sum_{\mathbf z\in(\Z/M\Z)^N}
       \zeta_M^{F_\xi(\mathbf z)},
  \qquad
  \lambda_\xi
  =\frac{\kappa_\xi}{|G_\star|^{n-1}}
    \prod_{j=1}^N\frac{d_j'}{M},
\end{equation}
where \(F_\xi\) is an explicitly constructed integral quadratic polynomial.

The coefficient matrix and linear term of \(F_\xi\) require
\(O((n+\ell)^2)\) entries in the fixed ring \(\Z/M\Z\).  The algorithm of
\cite[Theorem~1.1]{CaiChenLiptonLu} evaluates the quadratic exponential sum in
\eqref{eq:evaluation-standard-quadratic-sum} exactly in time polynomial in
the number of variables and the input bit length.  Here \(M\) is fixed and
\(N=O(n+\ell)\), so this running time is polynomial in \(n+\ell\).

Finally, fix a number field containing \(\zeta_M\), the normalization
constants, and the enhancement parameters.  Its degree is independent of
the input.  The cited algorithm returns the exponential sum in a canonical
representation in \(\Q(\zeta_M)\) of polynomial bit length.  All remaining
scalar powers are computed by repeated squaring, and arithmetic in the fixed
number field preserves polynomial bit complexity.  Finally,
\[
  \alpha^{-\operatorname{wr}(\xi)}\beta^{-n}D^n
  \varepsilon_n\bigl(\rho_n^{\T,\star}(\xi)\bigr)
\]
is the oriented coefficient-trace invariant by
\eqref{eq:tower-oriented-invariant}.  This completes the exact
polynomial-time evaluation.
\end{proof}

\section{Clifford normalization and finite image}
\label{sec:finite-image}

We begin by recalling the generalized Pauli and Clifford groups associated
with a finite abelian group.  We then show that the local operators of the
three metric families lie in these Clifford groups and use this to prove
finiteness of their braid images.

For a finite abelian group \(B\), put \(V_B=\C[B]\) and
\(\widehat B=\Hom(B,\Uone)\).  On the standard basis of \(V_B\), define
\[
  X_be_x=e_{x+b},
  \qquad Z_\chi e_x=\chi(x)e_x
  \qquad(b,x\in B,\ \chi\in\widehat B).
\]
For \(n\geq1\), the \(n\)-fold Pauli group \(\mathcal P_n(B)\) is generated
by the unit scalar operators \(\Uone I\) and the local copies
\(X_{b,j}\), \(Z_{\chi,j}\), where \(1\leq j\leq n\).  Its projective
Clifford group is
\[
  \mathcal C_n(B)=
  N_{\mathrm U(V_B^{\ot n})}(\mathcal P_n(B))\,/\,\Uone I.
\]
When \(B=A\) is the fixed metric group, we identify
\(a\in A\) with the character \(\chi_a(x)=\pair{a}{x}\) and write
\(Z_a=Z_{\chi_a}\), as in Subsection~\ref{subsec:tensor-power-representations}.
The Pauli label group is
\[
  \mathbb V_{B,n}
  =
  B^{\oplus n}\oplus\widehat{B^{\oplus n}},
\]
equipped with the nondegenerate commutator bicharacter
\[
  \omega_{B,n}\bigl((b,\chi),(b',\chi')\bigr)
  =\chi(b')\chi'(b)^{-1}
  \qquad
  \bigl(b,b'\in B^{\oplus n},\ 
    \chi,\chi'\in\widehat{B^{\oplus n}}\bigr).
\]
Conjugation defines
\[
  \vartheta_{B,n}:\mathcal C_n(B)
  \longrightarrow
  \Aut(\mathbb V_{B,n},\omega_{B,n}),
  \qquad
  \Sigma_{B,n}=\operatorname{im}(\vartheta_{B,n}).
\]
The Clifford exact sequence is
\begin{equation}\label{eq:Clifford-exact-sequence}
  1\longrightarrow \mathbb V_{B,n}
  \longrightarrow \mathcal C_n(B)
  \longrightarrow \Sigma_{B,n}
  \longrightarrow 1.
\end{equation}
The quotient map records the induced action on Pauli labels, and
the kernel consists of projective Pauli translations, identified with
\(\mathbb V_{B,n}\) by the symplectic bicharacter.  This is the finite
Heisenberg--Clifford exact sequence; see \cite{GalindoClifford}.  Both
\(\mathbb V_{B,n}\) and \(\Sigma_{B,n}\) are finite.

For the three families, set
\[
  B_{\mathrm{I}}=B_{\mathrm{II}}=A,
  \qquad
  B_{\mathrm{III}}=A\oplus A.
\]
Thus \(V_{B_{\mathrm{I}}}=V_{B_{\mathrm{II}}}=\C[A]\) and
\(V_{B_{\mathrm{III}}}=W=\C[A]\otimes\C[A]\).

For \(Q,P\in\mathcal Q(q)\) and \(n\geq2\), let
\(\rho_n^{\mathrm I}\), \(\rho_n^{\mathrm{II}}\), and
\(\rho_n^{\mathrm{III}}\) denote the braid representations determined by
\[
  \rho_n^{\mathrm I}(\sigma_i)=R^{\mathrm I}_{Q,i},
  \qquad
  \rho_n^{\mathrm{II}}(\sigma_i)=R^{\mathrm{II}}_{Q,P,i},
  \qquad
  \rho_n^{\mathrm{III}}(\sigma_i)=R^{\mathrm{III}}_i.
\]

\begin{prop}\label{prop:metric-normalizes-Pauli}
Fix \(n\geq2\), \(1\leq i<n\), and \(Q,P\in\mathcal Q(q)\).  For every
Family \(\mathrm{II}_E\) datum \((E,Q_E,P_E)\) satisfying
Proposition~\ref{prop:family-II-coordinate-change}, each local operator
\[
  R^{\mathrm{I}}_{Q,i},
  \qquad
  R^{\mathrm{II}}_{Q,P,i},
  \qquad
  R^{\mathrm{II},E}_{Q_E,P_E,i},
  \qquad
  R^{\mathrm{III}}_i
\]
normalizes the corresponding Pauli group.  More precisely, Family I normalizes
\(\mathcal P_n(B_{\mathrm{I}})\), Families II and \(\mathrm{II}_E\) normalize
\(\mathcal P_n(B_{\mathrm{II}})\), and Family III normalizes
\(\mathcal P_n(B_{\mathrm{III}})\).
\end{prop}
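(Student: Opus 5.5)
The plan is to reduce everything to a single algebraic mechanism inside the local twisted group algebra: a local operator of the form \(R=\sum_g f(g)Y^g\) is conjugated into the Pauli group as soon as multiplying \(f\) by a character can be undone by a translation of the summation variable. Since \(R_i\) acts only on tensor positions \(i,i+1\), and the generators \(X_{b,j},Z_{\chi,j}\) with \(j\notin\{i,i+1\}\) commute with \(R_i\), it suffices to check that \(R_iPR_i^{*}\) is a Pauli element for \(P\) supported on positions \(i,i+1\) (in fact for every \(P\in\mathcal P_n\)). For \(P\) a Pauli word and \(Y_i^g\) the images of the tower basis (\(Y_i^a=X_{a,i}Z_{a,i+1}\), the Family II words, and the Family III words \eqref{eq:family-III-local-Pauli-word}), the generalized Pauli commutation relation gives
\[
  PY_i^gP^{-1}=\chi_P(g)\,Y_i^g ,
\]
where \(\chi_P\) is a character of \(G_\star\) because \(g\mapsto Y_i^g\) is a projective representation with values in the Pauli group.

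First I handle Families I, II and \(\mathrm{II}_E\), where \(\nu=1\), so \(Y_i^gY_i^h=Y_i^{g+h}\). Then \(PR_iP^{-1}=\sum_g f(g)\chi_P(g)Y_i^g\). By nondegeneracy of the relevant polarization, \(\chi_P\) equals \(B(w,\cdot)\) for a unique \(w\). Here \(B=\pair{\cdot}{\cdot}^{\epsilon}\) on \(A\) for Family I, \(B=\pair{\cdot}{\cdot}\oplus\pair{\cdot}{\cdot}\) on \(A\oplus A\) for Family II, and \(B_E\oplus B_E'\) for \(\mathrm{II}_E\). The quadratic identity \(F(g+w)=F(g)F(w)B(w,g)\) then gives \(f(g)\chi_P(g)=F(w)^{-1}f(g+w)\) for the phase \(F\) underlying \(f\) (namely \(Q\), \(Q\otimes P\), or \(Q_E\otimes P_E\), up to the fixed normalization). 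Reindexing yields
\[
  PR_iP^{-1}=F(w)^{-1}\sum_g f(g+w)Y_i^{g}=F(w)^{-1}R_i\,Y_i^{-w},
\]
so \(R_i^{*}PR_i=F(w)^{-1}\,Y_i^{-w}P\in\mathcal P_n\). Applying the same argument with \(R_i^{*}\), whose coefficient function is again a quadratic phase by \eqref{eq:twisted-star}, shows that conjugation in the other direction also preserves \(\mathcal P_n\). Hence \(R_i\) lies in the normalizer.

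For Family III the coefficient function is constant, but \(\nu_{\mathrm{III}}\) is nontrivial. Here I use instead that the commutator bicharacter \(\tau_{\mathrm{III}}=\mathcal A_{\nu_{\mathrm{III}}}\) of \eqref{eq:family-III-adjacent-bicharacter} is nondegenerate on \(A\oplus A\). Thus \(\chi_P=\tau_{\mathrm{III}}(h,\cdot)\) for some \(h\), and \(Y_i^hY_i^gY_i^{-h}=\tau_{\mathrm{III}}(h,g)Y_i^g\) by \eqref{eq:family-III-local-multiplication}. It follows that
\[
  PR_i^{\mathrm{III}}P^{-1}=Y_i^hR_i^{\mathrm{III}}(Y_i^h)^{-1},
\]
so \(R_i^{*}(Y_i^h)^{-1}P\) commutes with nothing problematic, and more precisely \(R_i^{*}PR_i=R_i^{*}Y_i^h R_i\,(Y_i^h)^{-1}P\). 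To close this up, I would rather phrase it as: \((Y_i^{h})^{-1}P\) commutes with \(R_i\). Therefore \(R_iPR_i^{*}=R_iY_i^h\,((Y_i^h)^{-1}P)R_i^{*}=R_iY_i^hR_i^{*}\,(Y_i^h)^{-1}P\), and it remains to show \(R_iY_i^hR_i^{*}\) is Pauli. That is a statement inside the local algebra, which I would obtain from the Fourier model of Proposition~\ref{prop:family-III-local-Fourier-model}: \(S_{\mathrm{III}}\) is a Gauss-twisted Fourier transform, which is classically Clifford on \(\C[A]\), and faithfulness lets me transfer this to \(\C_{\nu_{\mathrm{III}}}[A\oplus A]\), mapping basis monomials to scalar multiples of basis monomials.

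The main obstacle is this last point. The I/II argument handles \(R PR^{-1}\) only for \(P\) whose commutator character is realized by the polarization, while the Family III argument needs the local automorphism \(\mathrm{Ad}(r^{\mathrm{III}})\) to permute the monomials \(U^{(a,b)}\) up to phases. I would verify the latter directly from \eqref{eq:family-III-Fourier-model}. Using \(\mathcal F_A Z_a\mathcal F_A^{-1}=X_{\pm a}\), \(\mathcal F_AX_b\mathcal F_A^{-1}=Z_{\mp b}\) and \(M_{q^{-1}}X_bM_q=q(b)^{-1}X_bZ_{-b}\) (up to sign conventions), the operator \(S_{\mathrm{III}}\) conjugates \(Z_aX_{-b}\) to a scalar multiple of some \(Z_{a'}X_{-b'}\), with \((a',b')\) depending linearly on \((a,b)\), an order-six symplectic map. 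The bookkeeping of the scalars \(q(\cdot)^{-1}\) is routine, since only membership in the Pauli group up to \(\Uone\) is needed.
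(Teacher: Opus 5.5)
Your proof is correct. It follows the paper's proof for Family I: you absorb the character via \(Q(t)\pair{a}{t}=Q(t+a)Q(a)^{-1}\) and reindex. It also uses the same local input for Family III: the Fourier model \(S_{\mathrm{III}}=\overline{\gamma(q)}\,\mathcal F_AM_{q^{-1}}\), pulled back through the faithful \(\pi_0\), gives \(r^{\mathrm{III}}U^{(a,b)}(r^{\mathrm{III}})^{-1}\doteq U^{(-b,a+b)}\).

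You take a genuinely different route for Families II and \(\mathrm{II}_E\). The paper leaves the tower and passes to the coordinates \((s,d)=(x+y,x)\). There it writes the operator as \(M_{P^{-1}}\otimes G_Q\) (resp.\ \(M_{\widetilde P^{-1}}\otimes G_{Q_E}\)), checks that each one-particle factor is Clifford, and uses that the basis permutation \(\Xi\) preserves the Pauli group. You instead run the Family I absorption argument unchanged on \(A\oplus A\). The only input is that the coefficient phase \(Q\otimes P\) (resp.\ \(Q_E\otimes P_E\)) has the nondegenerate polarization \(\pair{\cdot}{\cdot}\oplus\pair{\cdot}{\cdot}\) (resp.\ \(B_E\oplus B_E'\)). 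Your version handles every \(\nu=1\) family by one mechanism and needs no intertwiner. It also yields the explicit formula \(R_i^{*}PR_i=F(w)^{-1}Y_i^{-w}P\), which already records the induced action on Pauli labels. The paper's version gives a concrete one-particle picture that it reuses for spectra and basis formulas.

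Because those polarizations are nondegenerate on all of \(G_\star\), every \(\chi_P\) is realized as \(B(w,\cdot)\). So the limitation you flag in your last paragraph does not actually restrict anything.

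For Family III you also shorten the splitting step. The paper proves \(\mathbb V=L\oplus L^\perp\) by a counting argument that uses ambient nondegeneracy of \(\omega_{B_{\mathrm{III}},2}\). You only need \(\mathbb V=L+L^\perp\). That follows at once from nondegeneracy of \(\tau_{\mathrm{III}}\) on \(L\): choose \(h\) with \(\chi_P=\tau_{\mathrm{III}}(h,\cdot)\), and then \((Y_i^h)^{-1}P\) commutes with \(R_i^{\mathrm{III}}\).

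Two small corrections.
\begin{enumerate}
\item Since \(Y_i^{-h}=\nu_{\mathrm{III}}(h,h)^{-1}(Y_i^h)^{-1}\), your first Family III display is off by that scalar. Write it with \((Y_i^h)^{-1}\), as your next display already does.
\item You state only the inclusion \(R_i\mathcal P_nR_i^{*}\subseteq\mathcal P_n\) for Family III. The reverse inclusion follows because \(\mathsf T_{\mathrm{III}}\) is invertible, or directly from \((R^{\mathrm{III}})^6=I\).
\end{enumerate}
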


\begin{proof}
It is enough to check the two tensor factors on which the local operator is
supported.  For the Family I operator write
\[
  U_i^t=X_{t,i}Z_{t,i+1},
  \qquad
  R^{\mathrm{I}}_{Q,i}=|A|^{-1/2}\sum_{t\in A}Q(t)U_i^t.
\]
The elementary commutation rules are
\[
  X_{a,i}U_i^t=U_i^tX_{a,i},
  \qquad
  Z_{a,i}U_i^t=\pair{a}{t}U_i^tZ_{a,i},
\]
and
\[
  X_{a,i+1}U_i^t=\pair{a}{t}^{-1}U_i^tX_{a,i+1},
  \qquad
  Z_{a,i+1}U_i^t=U_i^tZ_{a,i+1}.
\]
Thus \(X_{a,i}\) and \(Z_{a,i+1}\) commute with the local Gaussian.  For
\(Z_{a,i}\), moving the character factor into the Gaussian sum gives
\[
  Q(t)\pair{a}{t}
  =
  Q(t+a)Q(a)^{-1},
\]
After reindexing the sum, one obtains, up to the scalar \(Q(a)^{-1}\), a
Pauli operator times \(R^{\mathrm{I}}_{Q,i}\).  Therefore
\[
  R^{\mathrm{I}}_{Q,i}Z_{a,i}(R^{\mathrm{I}}_{Q,i})^{-1}
\]
is again a Pauli operator up to scalar.  The calculation for
\(X_{a,i+1}\) is the same with \(a\) replaced by \(-a\).  Hence the
Family I operator normalizes \(\mathcal P_n(B_{\mathrm{I}})\).

For the Family II operator use the change of variables
\[
  s=x+y,\qquad d=x,
\]
under which
\[
  R^{\mathrm{II}}_{Q,P}\cong M_{P^{-1}}\otimes G_Q,
  \qquad
  G_Qe_d=|A|^{-1/2}\sum_{t\in A}Q(t)e_{d+t}.
\]
The diagonal multiplier \(M_{P^{-1}}\) satisfies
\[
  M_{P^{-1}}X_aM_{P^{-1}}^{-1}
  =
  P(a)^{-1}X_aZ_{-a},
  \qquad
  M_{P^{-1}}Z_bM_{P^{-1}}^{-1}=Z_b,
\]
and therefore it normalizes the one-particle Pauli group.  Similarly \(G_Q\) commutes
with all \(X_a\), and reindexing its Gaussian sum gives
\[
  G_QZ_bG_Q^{-1}
  =
  Q(b)X_bZ_b.
\]
Thus both tensor factors are Clifford operators.  The coordinate
permutation \((x,y)\mapsto(x+y,x)\) also sends Pauli operators to Pauli
operators.  Undoing it shows that \(R^{\mathrm{II}}_{Q,P,i}\) normalizes
\(\mathcal P_n(B_{\mathrm{II}})\).

For Family \(\mathrm{II}_E\), use
\eqref{eq:family-II-E-factorization}.  The map
\(a\mapsto B_E(a,-)\) identifies \(A\) with \(\widehat A\).  More
explicitly,
\[
  B_E(a,x)=\pair{Ea}{x},
  \qquad
  Z_{B_E(a,-)}=Z_{Ea}
  \qquad(a,x\in A),
\]
and the inverse character corresponds to \(Z_{-Ea}\).  Thus all character
multipliers occurring in the translation and reindexing calculations lie
in the original generalized Pauli group.  Those calculations show that both
\(M_{\widetilde P^{-1}}\) and \(G_{Q_E}\) normalize the generalized Pauli
group.  The coordinate permutation \(\Xi\) also preserves that group; hence
\(R^{\mathrm{II},E}_{Q_E,P_E,i}\) normalizes
\(\mathcal P_n(B_{\mathrm{II}})\).

It remains to treat Family III\@.  Write \(\doteq\) for equality up to a
scalar.  In the faithful local model of
Proposition~\ref{prop:family-III-local-Fourier-model}, one has
\[
  S_{\mathrm{III}}
  =\overline{\gamma(q)}\,\mathcal F_AM_{q^{-1}}.
\]
The elementary Fourier conjugation formulas give
\[
  S_{\mathrm{III}}Z_aS_{\mathrm{III}}^{-1}=X_{-a},
  \qquad
  S_{\mathrm{III}}X_aS_{\mathrm{III}}^{-1}
  \doteq Z_aX_a.
\]
Since
\[
  \pi_0(U^{(a,b)})
  =q(a)^{-1}q(b)^{-1}Z_aX_{-b},
\]
it follows that
\begin{equation}\label{eq:family-III-local-symplectic-action}
  r^{\mathrm{III}}U^{(a,b)}(r^{\mathrm{III}})^{-1}
  \doteq
  U^{\mathsf T_{\mathrm{III}}(a,b)},
  \qquad
  \mathsf T_{\mathrm{III}}(a,b)=(-b,a+b).
\end{equation}
Faithfulness of \(\pi_0\) transfers this conjugation identity back to the
local twisted group algebra.  Moreover,
\[
  \mathsf T_{\mathrm{III}}^3=-I,
  \qquad
  \mathsf T_{\mathrm{III}}^6=I.
\]

Inside the physical Pauli label group
\(\mathbb V_{B_{\mathrm{III}},2}\), let \(L\) be the subgroup represented
by the Pauli words \(Y^{(a,b)}\).  Its restricted commutator bicharacter is
\(\tau_{\mathrm{III}}\), which is nondegenerate.  Put
\(\mathbb V=\mathbb V_{B_{\mathrm{III}},2}\).  Nondegeneracy of the
restriction gives
\[
  L\cap L^\perp=0.
\]
The ambient commutator bicharacter on \(\mathbb V\) is also nondegenerate.
Consequently, the homomorphism
\[
  \mathbb V\longrightarrow\widehat L,
  \qquad
  v\longmapsto\omega_{B_{\mathrm{III}},2}(v,-)|_L,
\]
is surjective and has kernel \(L^\perp\).  Indeed, ambient nondegeneracy
identifies \(\mathbb V\) with \(\widehat{\mathbb V}\), and every character
of the subgroup \(L\) extends to \(\mathbb V\).  Therefore
\[
  |L^\perp|=\frac{|\mathbb V|}{|L|},
  \qquad
  |L||L^\perp|=|\mathbb V|.
\]
Together with the trivial intersection, this proves
\[
  \mathbb V=L\oplus L^\perp.
\]
Equation~\eqref{eq:family-III-local-symplectic-action} shows that
\(R^{\mathrm{III}}\) acts on \(L\) by \(\mathsf T_{\mathrm{III}}\).  Every Pauli operator with
label in \(L^\perp\) commutes with all \(Y^{(a,b)}\), and therefore with
\(R^{\mathrm{III}}\); thus the action on \(L^\perp\) is trivial.  It follows
that \(R^{\mathrm{III}}\) normalizes the full physical Pauli group.  The
same local argument applies at every position \(i\).
\end{proof}

\begin{prop}\label{prop:metric-symplectic-extension}
For \(n\geq2\), \(Q,P\in\mathcal Q(q)\), and
\(\star\in\{\mathrm I,\mathrm{II},\mathrm{III}\}\), let
\[
  H_n^\star=\rho_n^\star(\B_n)\cdot\Uone I\,/\,\Uone I
\]
be its projective braid image, and let \(\Gamma_n^\star\) be its image in
\(\Sigma_{B_\star,n}\).  There is a short exact sequence
\[
  1\longrightarrow K_n^\star
  \longrightarrow H_n^\star
  \longrightarrow \Gamma_n^\star
  \longrightarrow 1,
\]
where \(K_n^\star\) is a finite abelian group of projective Pauli
translations.  In particular \(H_n^\star\) is finite.
\end{prop}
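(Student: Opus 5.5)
By Proposition~\ref{prop:metric-normalizes-Pauli}, every generator \(\rho_n^\star(\sigma_i)=R^\star_i\) lies in the normalizer \(N_{\mathrm U(V_{B_\star}^{\ot n})}(\mathcal P_n(B_\star))\). I would first use this to place the projective braid image inside the projective Clifford group: \(H_n^\star\) is a subgroup of \(\mathcal C_n(B_\star)\). Here it suffices that the generators lie in the normalizer, because the normalizer is a group and therefore contains the inverses and products. Next I would restrict the Clifford exact sequence \eqref{eq:Clifford-exact-sequence} to \(H_n^\star\). The quotient map \(\mathcal C_n(B_\star)\to\Sigma_{B_\star,n}\) sends \(H_n^\star\) onto its image, which is by definition \(\Gamma_n^\star\). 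The kernel of this restricted map is
\[
  K_n^\star=H_n^\star\cap\mathbb V_{B_\star,n}.
\]
This gives the displayed short exact sequence \(1\to K_n^\star\to H_n^\star\to\Gamma_n^\star\to1\).

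For the properties of \(K_n^\star\), note that it is a subgroup of the finite abelian group \(\mathbb V_{B_\star,n}\) of projective Pauli translations, so it is finite and abelian. The group \(\Gamma_n^\star\) is a subgroup of the finite group \(\Sigma_{B_\star,n}\), and is therefore finite as well. Then \(|H_n^\star|=|K_n^\star|\,|\Gamma_n^\star|<\infty\), which proves finiteness of \(H_n^\star\).

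The only step that needs care is the exactness of \eqref{eq:Clifford-exact-sequence} and the finiteness of \(\Sigma_{B,n}\), which the paper cites from \cite{GalindoClifford}. If I wanted to check the kernel directly, I would argue as follows. A unitary that fixes every Pauli word up to a scalar defines a character of \(\mathbb V_{B,n}\). Since the commutator bicharacter is nondegenerate, that character is given by commutation with some Pauli operator \(W\). Then \(UW^{-1}\) commutes with all Pauli operators, and these span \(\End(V_B^{\ot n})\). Hence \(UW^{-1}\) is scalar, so \(U\) is a projective Pauli translation. Finiteness of \(\Sigma_{B,n}\) is immediate, since it sits inside the automorphism group of a finite group. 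I expect no real obstacle. The substantive input, that each generator normalizes the Pauli group, is already supplied by Proposition~\ref{prop:metric-normalizes-Pauli}.
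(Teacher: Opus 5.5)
Your proof is correct and follows the paper's argument essentially verbatim. You place \(H_n^\star\) in \(\mathcal C_n(B_\star)\) via Proposition~\ref{prop:metric-normalizes-Pauli}, restrict the Clifford exact sequence to obtain \(K_n^\star=H_n^\star\cap\mathbb V_{B_\star,n}\), and conclude finiteness from the finiteness of \(\mathbb V_{B_\star,n}\) and \(\Sigma_{B_\star,n}\). Your extra check that the kernel of the conjugation action consists exactly of projective Pauli translations, using nondegeneracy of \(\omega_{B,n}\) and the fact that Pauli words span the full matrix algebra, is sound and supplies what the paper simply cites from \cite{GalindoClifford}.
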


\begin{proof}
By Proposition~\ref{prop:metric-normalizes-Pauli}, \(H_n^\star\) is a
subgroup of \(\mathcal C_n(B_\star)\).  Restrict the quotient map in
\eqref{eq:Clifford-exact-sequence} to \(H_n^\star\).  Its image is
\(\Gamma_n^\star\) by definition, and its kernel is
\[
  K_n^\star=H_n^\star\cap\mathbb V_{B_\star,n}.
\]
Exactness gives the sequence in the proposition.  Since
\(\mathbb V_{B_\star,n}\) is finite abelian, its subgroup \(K_n^\star\) is
finite abelian as well.  The
group \(\Gamma_n^\star\) is a subgroup of the finite group
\(\Sigma_{B_\star,n}\).  Hence \(H_n^\star\) is finite.
\end{proof}

\begin{lemma}\label{lem:finite-projective-lift}
Fix \(n\geq2\).  Let \(R\in\mathrm U(V\otimes V)\) be a Yang--Baxter operator of finite
order.  If the projective image of the associated representation
\(\rho_{R,n}:\B_n\to\mathrm U(V^{\ot n})\) is finite, then its ordinary
unitary image is finite.
\end{lemma}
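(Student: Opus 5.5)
The plan is to control the scalar kernel of the projection \(\mathrm U(V^{\ot n})\to\mathrm U(V^{\ot n})/\Uone I\) restricted to the image \(\Gamma=\rho_{R,n}(\B_n)\) by a determinant argument.

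Let \(\pi\) denote this projection.  Then there is an exact sequence
\[
  1\longrightarrow \Gamma\cap\Uone I\longrightarrow\Gamma\longrightarrow\pi(\Gamma)\longrightarrow1,
\]
and \(\pi(\Gamma)\) is finite by hypothesis.  So it suffices to show that the scalar subgroup \(\Gamma\cap\Uone I\) is finite.

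First, since \(R\) has finite order \(m\), every generator \(\rho_{R,n}(\sigma_i)=R_i\) satisfies \(R_i^m=I\).  Hence \(\det R_i\) is an \(m\)-th root of unity.  In fact all \(R_i\) are conjugate in \(\mathrm U(V^{\ot n})\), since \(\sigma_i\) and \(\sigma_{i+1}\) are conjugate in \(\B_n\), so they share a single determinant \(\delta\) with \(\delta^m=1\).  Consequently \(\det(\rho_{R,n}(\xi))=\delta^{\operatorname{wr}(\xi)}\) for every braid \(\xi\), where \(\operatorname{wr}\) is the exponent sum.  In particular \(\det(\Gamma)\) lies in the finite group \(\mu_m\) of \(m\)-th roots of unity.

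Next, let \(N=\dim V^{\ot n}\).  If \(\lambda I\in\Gamma\), then \(\lambda^N=\det(\lambda I)\in\mu_m\), so \(\lambda^{Nm}=1\).  Thus \(\Gamma\cap\Uone I\) is contained in the finite group of \(Nm\)-th roots of unity times \(I\), and so \(|\Gamma|\leq Nm\,|\pi(\Gamma)|<\infty\).

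There is no real obstacle.  The only point needing care is the determinant bookkeeping, which is handled by the observation that \(\det\circ\rho_{R,n}\) factors through the abelianization of \(\B_n\) (for \(n\geq2\)).  This gives \(\det\circ\rho_{R,n}=\delta^{\operatorname{wr}}\) with \(\delta^m=1\).
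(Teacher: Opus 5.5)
Your proposal is correct and is essentially the paper's own proof. Both arguments bound the scalar subgroup of the image using \(\det\rho_{R,n}(\xi)=\delta^{\operatorname{wr}(\xi)}\) with \(\delta\) a root of unity, which forces every scalar \(\lambda I\) in the image to be a root of unity of bounded order. Your justification that all \(R_i\) share one determinant, via conjugacy of the \(\sigma_i\), is fine; it also follows directly from \(\det R_i=\det(R)^{(\dim V)^{n-2}}\).
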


\begin{proof}
Choose \(N\geq1\) with \(R^N=I\), and put \(D=(\dim V)^n\).  All local
copies \(R_i\) have the same determinant, say \(\delta\), and
\(\delta^N=1\).  Therefore, for every braid \(\xi\),
\[
  \det\rho_{R,n}(\xi)=\delta^{\operatorname{wr}(\xi)}.
\]
If \(\rho_{R,n}(\xi)=\lambda I_D\), then
\(\lambda^D=\delta^{\operatorname{wr}(\xi)}\), and hence
\(\lambda^{DN}=1\).  Thus the
scalar subgroup of the braid image is contained in the finite group
\(\mu_{DN}I_D\).  Since the quotient by this scalar subgroup is the finite
projective image, the ordinary image is finite as well.
\end{proof}

\begin{theorem}\label{thm:metric-full-finite}
For \(n\geq2\), \(Q,P\in\mathcal Q(q)\), and
\(\star\in\{\mathrm I,\mathrm{II},\mathrm{III}\}\), the ordinary
unitary braid image \(\rho_n^\star(\B_n)\) is finite for the
corresponding representation \(\rho_n^\star\).
\end{theorem}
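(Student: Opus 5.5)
The plan is to combine Proposition~\ref{prop:metric-symplectic-extension} with Lemma~\ref{lem:finite-projective-lift}. Fix \(n\geq2\), \(Q,P\in\mathcal Q(q)\), and \(\star\in\{\mathrm I,\mathrm{II},\mathrm{III}\}\). Let \(R^\star\) denote the corresponding local operator: \(R^{\mathrm I}_Q\), \(R^{\mathrm{II}}_{Q,P}\), or \(R^{\mathrm{III}}\). By Theorem~\ref{thm:metric-families-YBE}, \(R^\star\) is a unitary Yang--Baxter operator, and \(\rho_n^\star=\rho_{R^\star,n}\) is exactly the representation appearing in Lemma~\ref{lem:finite-projective-lift}. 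That lemma needs two inputs: finiteness of the projective image, and finite order of \(R^\star\).

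For the first input, we invoke Proposition~\ref{prop:metric-symplectic-extension}. It shows that \(H_n^\star=\rho_n^\star(\B_n)\cdot\Uone I/\Uone I\) is finite, being an extension of a subgroup of the finite group \(\Sigma_{B_\star,n}\) by a subgroup of the finite label group \(\mathbb V_{B_\star,n}\). The projective image of \(\rho_n^\star\) in the sense of the lemma is the image of \(\rho_n^\star(\B_n)\) modulo the scalars it contains, and this is isomorphic to \(H_n^\star\). It is therefore finite.

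For the second input, we cite Corollary~\ref{cor:metric-local-finite-order}. For Families I and II, the local operator is unitary, hence diagonalizable. Its eigenvalues are the Fourier values in \eqref{eq:family-I-Gaussian-spectrum-set} and \eqref{eq:family-II-Gaussian-spectrum-set}, where Family II is the case \(E=\id_A\). All of these are roots of unity by Lemma~\ref{lem:quadratic-phase-roots-of-unity}, so the order of \(R^\star\) is finite. For Family III, \eqref{eq:family-III-cube-inversion} gives \((R^{\mathrm{III}})^6=I\) directly.

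The only point needing care is the scalar kernel. Finiteness of \(H_n^\star\) alone does not bound the unit scalars \(\lambda I\) that lie in \(\rho_n^\star(\B_n)\). This is exactly where Lemma~\ref{lem:finite-projective-lift} does the work: the determinant forces any such \(\lambda\) into \(\mu_{DN}\), where \(N\) is the order of \(R^\star\) and \(D=(\dim V)^n\). So the scalar part of the image is finite, and since the quotient by it is the finite group \(H_n^\star\), the full unitary image \(\rho_n^\star(\B_n)\) is finite. The argument is uniform in \(\star\), and it would extend verbatim to the signed and \(\mathrm{II}_E\) variants once their Pauli normalization is supplied by Proposition~\ref{prop:metric-normalizes-Pauli}.
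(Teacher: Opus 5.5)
Your proposal is correct and matches the paper's proof: Proposition~\ref{prop:metric-symplectic-extension} gives the finite projective image, Corollary~\ref{cor:metric-local-finite-order} gives the finite local order, and Lemma~\ref{lem:finite-projective-lift} uses the determinant to bound the scalar kernel. Your identification of the lemma's projective image with \(H_n^\star\) and your unpacking of the local-order citation are accurate elaborations of that same argument.
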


\begin{proof}
The projective image is finite by
Proposition~\ref{prop:metric-symplectic-extension}, and the local operator
has finite order by Corollary~\ref{cor:metric-local-finite-order}.  Apply
Lemma~\ref{lem:finite-projective-lift}.
\end{proof}

\begin{cor}\label{cor:extended-metric-full-finite}
For every \(n\geq2\), each Family I representation determined by
\(Q\in\mathcal Q^-(q)\) and each Family \(\mathrm{II}_E\)
representation determined by a datum \((E,Q_E,P_E)\) satisfying
Proposition~\ref{prop:family-II-coordinate-change} have finite ordinary
braid image.
\end{cor}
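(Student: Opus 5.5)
The proof repeats the argument of Theorem~\ref{thm:metric-full-finite}, with the earlier results checked to cover the signed and reindexed data. There are three ingredients: the local operator normalizes the relevant Pauli group, the projective image is therefore finite via the Clifford exact sequence, and the local operator has finite order. Lemma~\ref{lem:finite-projective-lift} then lifts projective finiteness to finiteness of the ordinary image.

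\textbf{Family II\(_E\).} Proposition~\ref{prop:metric-normalizes-Pauli} already includes \(R^{\mathrm{II},E}_{Q_E,P_E,i}\) among the operators normalizing \(\mathcal P_n(B_{\mathrm{II}})\). The argument of Proposition~\ref{prop:metric-symplectic-extension} uses only this normalization, the exact sequence \eqref{eq:Clifford-exact-sequence}, and finiteness of \(\mathbb V_{B,n}\) and \(\Sigma_{B,n}\). It therefore applies verbatim and shows that the projective image is finite. Corollary~\ref{cor:metric-local-finite-order} gives finite order of the local operator for Family II\(_E\), and Lemma~\ref{lem:finite-projective-lift} completes this case.

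\textbf{Family I\(^-\).} Proposition~\ref{prop:metric-normalizes-Pauli} is stated for \(Q\in\mathcal Q(q)\), so the normalization must be rechecked for \(Q\in\mathcal Q^-(q)\); I expect this to be the only point requiring real work. The commutation relations of \(X_{a,i},Z_{a,i},X_{a,i+1},Z_{a,i+1}\) with \(U_i^t=X_{t,i}Z_{t,i+1}\) do not involve \(Q\). Hence \(X_{a,i}\) and \(Z_{a,i+1}\) still commute with \(R^{\mathrm{I},-}_Q\). For \(Z_{a,i}\), the identity \(Q(t)\pair{a}{t}=Q(t+a)Q(a)^{-1}\) is replaced by the signed identity \eqref{eq:signed-quadratic-phase}:
\[
Q(t)\pair{a}{t}=Q(t-a)\,Q(-a)^{-1}.
\]
Reindexing the sum by \(t\mapsto t+a\) then gives
\[
Z_{a,i}R^{\mathrm{I},-}_Q=Q(-a)^{-1}X_{a,i}Z_{a,i+1}R^{\mathrm{I},-}_Q
\]
up to the order of Pauli factors, which contributes only a scalar. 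This shows that \(R Z_{a,i} R^{-1}\) is a Pauli operator up to scalar. The case \(X_{a,i+1}\) is the same with \(a\) replaced by \(-a\).

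Normalization of \(\mathcal P_n(B_{\mathrm I})\) follows, and Proposition~\ref{prop:metric-symplectic-extension} again gives finite projective image. Corollary~\ref{cor:metric-local-finite-order} explicitly covers \(\epsilon=-1\), so the local operator has finite order. Lemma~\ref{lem:finite-projective-lift} then gives finiteness of the ordinary braid image.
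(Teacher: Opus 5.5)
Your proposal is correct and is essentially the paper's own argument (signed quadratic reindexing for Family~\(\mathrm{I}^-\), Proposition~\ref{prop:metric-normalizes-Pauli} for Family~\(\mathrm{II}_E\), then the Clifford exact sequence, Corollary~\ref{cor:metric-local-finite-order}, and Lemma~\ref{lem:finite-projective-lift}), written out in more detail. One small slip: your displayed reindexed identity is missing a trailing factor and should read \(Z_{a,i}R^{\mathrm{I},-}_{Q}=Q(-a)^{-1}X_{a,i}Z_{a,i+1}R^{\mathrm{I},-}_{Q}Z_{a,i}\). This gives \(R^{\mathrm{I},-}_{Q}Z_{a,i}(R^{\mathrm{I},-}_{Q})^{-1}=Q(-a)\,X_{-a,i}Z_{-a,i+1}Z_{a,i}\), which is exactly the Pauli-up-to-scalar conclusion you draw.
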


\begin{proof}
The signed Family I conjugation calculation uses the same quadratic
reindexing with the inverse character; hence its local operators normalize the
same finite Pauli group.  The Family \(\mathrm{II}_E\) operators normalize the
Pauli group by Proposition~\ref{prop:metric-normalizes-Pauli}.  Their local
orders are finite by Corollary~\ref{cor:metric-local-finite-order}.  The
Clifford exact sequence and Lemma~\ref{lem:finite-projective-lift} therefore
apply to both extensions.
\end{proof}

\section{Virtual abelianness of the cyclic families}
\label{sec:cyclic-virtual-abelianness}

We compare each parameter value with a Gaussian point on the same component.
The corresponding generator ratios lie in a commutative
two-torsion sector even when the phase parameter has infinite order.

\begin{lemma}
\label{lem:commutative-sector-deformation}
Fix \(n\geq2\) and two parameter values \(t,t_0\).  Let \(\A\) be a
finite-dimensional algebra, let \(H\subset\A\) be a commutative unital
subalgebra, and suppose that
\[
  x_i(t),x_i(t_0)\in\A^\times\qquad(1\leq i<n)
\]
normalize \(H^\times\).  Assume that
\[
  x_i(t)x_i(t_0)^{-1}\in H^\times
  \qquad(1\leq i<n).
\]
If \(\Gamma_{t_0}=\langle x_1(t_0),\ldots,x_{n-1}(t_0)\rangle\) is finite,
then \(\Gamma_t=\langle x_1(t),\ldots,x_{n-1}(t)\rangle\) is virtually
abelian.  More precisely,
\[
  \Gamma_t\cap H^\times
\]
is an abelian normal subgroup and
\[
  [\Gamma_t:\Gamma_t\cap H^\times]\leq |\Gamma_{t_0}|.
\]
\end{lemma}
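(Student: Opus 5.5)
The plan is to build a homomorphism from $\Gamma_t$ onto a subgroup of a finite group, namely the normalizer quotient $N/H^\times$, and to show that its kernel is $\Gamma_t\cap H^\times$. Put $N=N_{\A^\times}(H^\times)$. This is a subgroup of $\A^\times$. By hypothesis it contains every $x_i(t)$ and every $x_i(t_0)$, so $\Gamma_t,\Gamma_{t_0}\subseteq N$. Since $H$ is commutative, $H^\times$ is an abelian normal subgroup of $N$. Write
\[
  \pi:N\longrightarrow N/H^\times
\]
for the quotient map.

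First, the hypothesis $x_i(t)x_i(t_0)^{-1}\in H^\times$ says exactly that
\[
  \pi(x_i(t))=\pi(x_i(t_0))\qquad(1\leq i<n).
\]
The generators of $\Gamma_t$ and of $\Gamma_{t_0}$ therefore have the same images in $N/H^\times$. A group generated by elements has its image generated by their images, so
\[
  \pi(\Gamma_t)=\pi(\Gamma_{t_0}).
\]
The right-hand side is a quotient of the finite group $\Gamma_{t_0}$, so $|\pi(\Gamma_t)|\leq|\Gamma_{t_0}|$.

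Second, the kernel of $\pi|_{\Gamma_t}$ is $\Gamma_t\cap H^\times$. Being a kernel, it is normal in $\Gamma_t$. It is abelian because it sits inside the abelian group $H^\times$. The first isomorphism theorem then gives
\[
  [\Gamma_t:\Gamma_t\cap H^\times]=|\pi(\Gamma_t)|\leq|\Gamma_{t_0}|,
\]
and this finite index shows that $\Gamma_t$ is virtually abelian.

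There is no real obstacle here. The only point needing care is making sure that $\pi$ is defined on all of $\Gamma_t$. This is why the normalizing hypothesis must apply to the generators $x_i(t)$ themselves, and not only to their ratios with the $x_i(t_0)$. It also explains why $N$ is used in place of the full group $\A^\times$, in which $H^\times$ need not be normal. Finite-dimensionality of $\A$ is not essential to the argument. It only guarantees that $\A^\times$ is a group in which the $x_i$ are invertible, as assumed.
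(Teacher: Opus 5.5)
Your proposal is correct and follows the paper's proof essentially verbatim: pass to $N_{\A^\times}(H^\times)/H^\times$, observe that the generators of $\Gamma_t$ and $\Gamma_{t_0}$ have equal images there, so the image of $\Gamma_t$ has order at most $|\Gamma_{t_0}|$, and identify the kernel $\Gamma_t\cap H^\times$ as a normal subgroup that is abelian because $H$ is commutative. One small correction to your closing aside, which does not affect the argument: $\A^\times$ is a group in any unital algebra, and what finite-dimensionality actually guarantees is $H^\times=H\cap\A^\times$, i.e.\ an element of $H$ that is invertible in $\A$ is already invertible in $H$.
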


\begin{proof}
Let \(N=N_{\A^\times}(H^\times)\) be the normalizer of \(H^\times\).  The hypothesis
says that \(x_i(t)\) and \(x_i(t_0)\) have the same image in
\(N/H^\times\).  Hence the images of \(\Gamma_t\) and \(\Gamma_{t_0}\)
in this quotient coincide.  The image of \(\Gamma_t\) is therefore finite, of order at
most \(|\Gamma_{t_0}|\).  Its kernel is
\(\Gamma_t\cap H^\times\), which is normal in \(\Gamma_t\) and abelian
because \(H\) is commutative.
\end{proof}

We apply Lemma~\ref{lem:commutative-sector-deformation} in the cyclic tower.
Fix \(n\geq2\), let \(d=8\), and put
\[
  J_i=U_i^{d/2},
  \qquad
  \Htorus_{n-1}=\C[J_1,\ldots,J_{n-1}],
\]
where \(1\leq i<n\).  Lemma~\ref{lem:cyclic-two-torsion-sectors} shows that the \(J_i\)
are commuting involutions.  Thus
\begin{equation}\label{eq:cyclic-sector-algebra}
  \Htorus_{n-1}\cong\C[(\Z/2\Z)^{n-1}]
\end{equation}
is commutative.  Let
\[
  \pi_n^{\mathrm{cyc}}:\T_n(\Z/d\Z,\tau)
  \longrightarrow\End((\C^d)^{\ot n}),
  \qquad U_i\longmapsto X_iZ_{i+1},
\]
be the Pauli realization used in Proposition~\ref{prop:d8-families}, and put
\(\Htorus^{\mathrm{mat}}_{n-1}
=\pi_n^{\mathrm{cyc}}(\Htorus_{n-1})\).

\begin{lemma}\label{lem:cyclic-family-normalizer}
For \(1\leq m\leq8\), \(t\in\C^\times\), and \(1\leq i<n\), the local
generator \(R_i(C_m(t))\) normalizes
\((\Htorus^{\mathrm{mat}}_{n-1})^\times\).  Moreover, for
\(1\leq m\leq8\), \(t,t_0\in\C^\times\), and \(1\leq i<n\),
\begin{align}
  R_i(C_m(t))R_i(C_m(t_0))^{-1}
  &\in(\Htorus^{\mathrm{mat}}_{n-1})^\times.
  \label{eq:d8-sector-ratio}
\end{align}
The same statements hold for any nonzero scalar normalization of the local
generators.
\end{lemma}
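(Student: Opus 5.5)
The plan is to work entirely inside the tower \(\T_n(\Z/8\Z,\tau)\) and transfer the conclusions through the algebra homomorphism \(\pi_n^{\mathrm{cyc}}\).

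\textbf{Normalization.} By Lemma~\ref{lem:cyclic-two-torsion-sectors}, conjugation by \(U_i^k\) sends each \(J_j\) to \(\pm J_j\). Indeed, \(U_i\) commutes with \(J_i\) and with distant \(J_j\), while \eqref{eq:sector-switching} gives \(U_iJ_{i\pm1}U_i^{-1}=-J_{i\pm1}\). So each \(U_i\) normalizes the commutative algebra \(\Htorus_{n-1}\), and it induces an algebra automorphism \(J_{i\pm1}\mapsto -J_{i\pm1}\). By \eqref{eq:d8-type-I-boxplus}--\eqref{eq:d8-type-II-boxplus}, the generator has the form \(r_i(C_m(t))=a_i+t\,b_i\). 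In type I, \(a_i\) is a polynomial in \(U_i^2\) times \(e_{i,-}\), and \(b_i=U_i p(U_i^2)e_{i,+}\). The factor \(e_{i,\pm}\) commutes with every \(U_i^k\) and lies in \(\Htorus_{n-1}\).

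The key observation is that \(U_i^2\) commutes with all \(J_j\), because \(w^{2\cdot 4}=1\). Hence \(a_i\) centralizes \(\Htorus_{n-1}\). On the other hand, \(b_i\) acts on \(\Htorus_{n-1}\) by conjugation by \(U_i\) on the range of \(e_{i,+}\).

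For \(h\in\Htorus_{n-1}\), we compute
\[
r_i h r_i^{-1}=(a_i+tb_i)h(a_i^*+\bar t^{-1}\cdots).
\]
It is cleaner to write \(r_i=(e_{i,-}a'+t\,e_{i,+}U_ib')\), where \(a',b'\) are invertible on their sectors and commute with \(\Htorus\). Then
\[
r_ihr_i^{-1}=e_{i,-}h+e_{i,+}\,\theta_i(h),
\]
where \(\theta_i\) is conjugation by \(U_i\). This works because \(e_{i,\pm}\) are central in the algebra generated by \(\Htorus\) and \(U_i\), and \(\theta_i\) fixes \(e_{i,\pm}\). This formula holds for general invertible \(r_i\) on this locus (\(t\neq0\)), using \(r_i^{-1}=e_{i,-}a'^{-1}+t^{-1}e_{i,+}b'^{-1}U_i^{-1}\) with sectorwise inverses. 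The result lies in \(\Htorus_{n-1}\), so \(r_i\) normalizes \(\Htorus_{n-1}^\times\). I must check that \(a'\) and \(b'\) are invertible on their sectors. The unitarity computation in Proposition~\ref{prop:d8-families} gives \(a^*a=e_-\) and \(b^*b=e_+\), and this is independent of \(t\). Type II is identical with \(e_\pm\) swapped.

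\textbf{Ratio.} Using the same sectorwise inverse, we get
\[
r_i(t)r_i(t_0)^{-1}=e_{i,-}+(t/t_0)e_{i,+}
\]
in type I, or with the sectors swapped in type II. This element lies in \(\Htorus_{n-1}\) and is invertible since \(t/t_0\neq0\). The reason is that \(a\,a^{-1}_{\text{sector}}=e_-\), and \(b(t)\,b(t_0)^{-1}\) restricted to the sector equals \((t/t_0)e_+\); there are no cross terms because \(e_+e_-=0\).

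\textbf{Transfer.} Applying \(\pi_n^{\mathrm{cyc}}\), which is a unital homomorphism carrying \(r_i(C_m(t))\) to \(R_i(C_m(t))\), transports both statements to matrices. Note that \(\pi_n^{\mathrm{cyc}}(\Htorus^\times)\subseteq(\Htorus^{\mathrm{mat}})^\times\). For normalization, this suffices: conjugation preserves the image algebra, and invertibility in the commutative image algebra is automatic for elements invertible in the matrix algebra. A nonzero scalar \(c\) multiplying both generators changes the ratio only by a scalar, which lies in \(\Htorus^\times\), and does not affect conjugation.

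\textbf{Main obstacle.} I expect the main obstacle to be the sectorwise inverse and the no-cross-term bookkeeping. I plan to handle it by reducing everything to the identities \(e_\pm U_i=U_ie_\pm\), \(e_+e_-=0\), and the partial-unitarity relations already established in Proposition~\ref{prop:d8-families}.
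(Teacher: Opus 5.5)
Your proof is correct and is essentially the paper's argument: the same sectorwise inverse $a_i^*+t^{-1}b_i^*$ and the same ratio $e_{i,-}+(t/t_0)e_{i,+}$ appear in both, and for normalization your parity reading of the two $\boxplus$ pieces gives $r_ihr_i^{-1}=e_{i,-}h+e_{i,+}\theta_i(h)$, which is equivalent to the paper's sign identity $f_{m,t}(-y)=\mp y^4f_{m,t}(y)$ and its consequence $J_j\mapsto\epsilon J_iJ_j$. One thing to add: you only showed $r_i\Htorus_{n-1}r_i^{-1}\subseteq\Htorus_{n-1}$, whereas membership in the normalizer needs equality, which follows because conjugation is injective on a finite-dimensional algebra (as the paper notes), or because your map has the evident inverse $h\mapsto e_{i,-}h+e_{i,+}\theta_i^{-1}(h)$.
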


\begin{proof}
For \(1\leq m\leq8\), let \(f_{m,t}(y)\) be the coefficient polynomial of
\(C_m(t)\), and put \(f(y)=f_{m,t}(y)\).  Its
evaluation at \(U_i\) commutes with \(J_i\) and with every distant \(J_j\).
Since
\[
  U_i^kJ_{i+1}=(-1)^kJ_{i+1}U_i^k,
\]
one has
\[
  f(U_i)J_{i+1}=J_{i+1}f(-U_i),
\]
and the analogous formula holds with \(i-1\) in place of \(i+1\).
For the two templates, the factorizations in the proof of
Proposition~\ref{prop:d8-families} give
\begin{equation}\label{eq:d8-sign-change}
  f_{m,t}(-y)=-y^4f_{m,t}(y)
  \quad\text{for type I},
  \qquad
  f_{m,t}(-y)=y^4f_{m,t}(y)
  \quad\text{for type II}.
\end{equation}

We first justify invertibility for every nonzero parameter.  In the
orthogonal decompositions from Proposition~\ref{prop:d8-families}, write
\[
  x_i(z)=a_i+zb_i,
\]
where \(a_i,b_i\) are orthogonal partial unitaries with complementary
initial and final projections
\(e_{i,0},e_{i,1}\in\Htorus^{\mathrm{mat}}_{n-1}\).  Thus, for
\(z\neq0\),
\begin{equation}\label{eq:sector-parameter-inverse}
  x_i(z)^{-1}=a_i^*+z^{-1}b_i^*.
\end{equation}
Indeed, orthogonality and the complementary projection identities give
\[
  x_i(z)(a_i^*+z^{-1}b_i^*)
  =a_ia_i^*+b_ib_i^*=e_{i,0}+e_{i,1}=1,
\]
and the product in the opposite order is the identity as well.  Up to the
fixed nonzero normalization scalar, each \(f(U_i)\) is one of these
operators and is therefore invertible for every parameter in \(\C^\times\).

In \eqref{eq:d8-sign-change}, write
\[
  f(-U_i)=\epsilon J_if(U_i),
  \qquad \epsilon\in\{\pm1\}.
\]
For either neighboring index \(j=i\pm1\) that occurs in the tower,
\begin{equation}\label{eq:cyclic-sector-conjugation}
\begin{aligned}
  f(U_i)J_jf(U_i)^{-1}
  &=J_jf(-U_i)f(U_i)^{-1}\\
  &=\epsilon J_iJ_j.
\end{aligned}
\end{equation}
The local generator commutes with \(J_i\) and every distant \(J_j\).
Consequently, conjugation carries every generator of
\(\Htorus^{\mathrm{mat}}_{n-1}\) into that algebra.  Its restriction is an
injective endomorphism of a finite-dimensional algebra and hence an
automorphism.  Therefore every local generator normalizes
\((\Htorus^{\mathrm{mat}}_{n-1})^\times\).

It remains to compare two parameters.  For \(z,z_0\neq0\),
\begin{equation}\label{eq:sector-parameter-ratio}
  x_i(z)x_i(z_0)^{-1}
  =a_ia_i^*+\frac{z}{z_0}b_ib_i^*
  =e_{i,0}+\frac{z}{z_0}e_{i,1}
  \in(\Htorus^{\mathrm{mat}}_{n-1})^\times.
\end{equation}
The inverse of the last element is
\(e_{i,0}+(z_0/z)e_{i,1}\).
This proves the ratio statement.  Scalar normalizations cancel in the
same calculation up to a nonzero scalar, which also belongs to
\((\Htorus^{\mathrm{mat}}_{n-1})^\times\).
\end{proof}

The signed Gaussian family supplies the required finite-image base points.
Recall that \(\xi=e^{\pi\ii/8}\) and \(w=\xi^2\).  The relevant base points
are
\begin{equation}\label{eq:d8-Gaussian-basepoints}
\begin{array}{@{}ccccccccc@{}}
\toprule
 m&1&2&3&4&5&6&7&8\\
\midrule
 t_0&\xi&\xi^3&\xi^{-1}&\xi^{-3}&\xi^{-3}&\xi^3&\xi^{-1}&\xi\\
 \epsilon&-&+&+&-&+&-&-&+\\
\bottomrule
\end{array}
\end{equation}
For each \(m\), these choices make \(C_m(t_0)\) a signed quadratic phase.
Indeed, for \(\epsilon\in\{\pm1\}\), a cyclic quadratic phase is
uniquely determined by \(Q(1)\) and satisfies
\[
  Q(k)=Q(1)^k w^{\epsilon k(k-1)/2};
\]
for the type I template \(Q(1)=\alpha t_0\), while for the type II template
\(Q(1)=\delta t_0\).  Substitution gives the eight coefficient vectors of
Proposition~\ref{prop:d8-families}.  The braid images at all these base
points are finite by Theorem~\ref{thm:metric-full-finite} and
Corollary~\ref{cor:extended-metric-full-finite}.

\begin{theorem}\label{thm:cyclic-family-virtually-abelian}
For every \(n\geq2\), every \(t\in\C^\times\), and \(1\leq m\leq8\), the
braid representation generated by
\(\widehat R_{m,t}=R(C_m(t))/\sqrt8\) is virtually abelian.  Let
\(\Gamma_n\) denote its braid image.  Then
\[
  \Gamma_n\cap(\Htorus^{\mathrm{mat}}_{n-1})^\times
\]
is an abelian normal subgroup of finite index.  The same conclusion holds
for the unnormalized generator.  The normalized generator is unitary when
\(|t|=1\).
\end{theorem}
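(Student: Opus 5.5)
The plan is to apply Lemma~\ref{lem:commutative-sector-deformation} with \(\A=\End((\C^8)^{\ot n})\), \(H=\Htorus^{\mathrm{mat}}_{n-1}\), \(x_i(t)=\widehat R_{m,t,i}\), and base point \(t_0\) taken from the table \eqref{eq:d8-Gaussian-basepoints}. The commutativity of \(H\) comes from \eqref{eq:cyclic-sector-algebra}, transported through the algebra homomorphism \(\pi_n^{\mathrm{cyc}}\); its image is a commutative unital subalgebra. Lemma~\ref{lem:cyclic-family-normalizer} supplies the remaining hypotheses of Lemma~\ref{lem:commutative-sector-deformation}:
\begin{itemize}
\item invertibility of \(x_i(t)\) for every \(t\in\C^\times\), via \eqref{eq:sector-parameter-inverse};
\item normalization of \(H^\times\) by both \(x_i(t)\) and \(x_i(t_0)\);
\item the ratio condition \(x_i(t)x_i(t_0)^{-1}\in H^\times\), via \eqref{eq:d8-sector-ratio}.
\end{itemize}
All three hold for the normalized generators, since scalars only rescale the ratio inside \(H^\times\).

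The main input is finiteness of \(\Gamma_{t_0}\). For each \(m\), the table records \(t_0\) and a sign \(\epsilon\) such that \(C_m(t_0)\) is the coefficient vector of a signed cyclic quadratic phase \(Q\in\mathcal Q^\epsilon(q)\) on \(A=\Z/8\Z\). Here the metric is the one whose polarization is \(\tau(a,b)=w^{ab}\), and \(Q(1)=\alpha t_0\) or \(\delta t_0\). This identification is the step I expect to need the most care. One must check that \(q(k)=w^{k(k-1)/2}\cdot(\text{character})\) gives a genuine quadratic phase with \(q(-a)=q(a)\) after adjusting by a character; for instance \(q(k)=\xi^{k^2}\) works, since \(\xi^{(k+l)^2-k^2-l^2}=w^{kl}\). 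Then one verifies the eight vectors entrywise, as in Corollary~\ref{cor:d8-positive-Gaussian}.

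Once this is done, \(R(C_m(t_0))/\sqrt8\) equals the Family~I operator \(R^{\mathrm I,\epsilon}_Q\) of \eqref{eq:Gaussian-RQ} or \eqref{eq:opposite-Gaussian-RQ}. The match holds because the realization \(U_i\mapsto X_iZ_{i+1}\) coincides with \(\Phi^{\mathrm I}_n\) once \(Z_a\) is identified with \(Z^a\) through \(\pair{a}{x}=w^{ax}\). Theorem~\ref{thm:metric-full-finite} and Corollary~\ref{cor:extended-metric-full-finite} then give finiteness of \(\Gamma_{t_0}\).

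Lemma~\ref{lem:commutative-sector-deformation} now yields that \(\Gamma_n\cap H^\times\) is an abelian normal subgroup of index at most \(|\Gamma_{t_0}|\). For the unnormalized generator, the braid image \(\Gamma'_n\) is generated by \(\sqrt8\,\widehat R_{m,t,i}\). Again take \(x_i(t)=R_i(C_m(t))\) with base point \(x_i(t_0)=R_i(C_m(t_0))/\sqrt8\). This base point generates the finite group \(\Gamma_{t_0}\), and by the last sentence of Lemma~\ref{lem:cyclic-family-normalizer} the ratios still lie in \(H^\times\). Applying the same lemma gives the conclusion for the unnormalized generator. Finally, unitarity for \(|t|=1\) is Proposition~\ref{prop:d8-families}.
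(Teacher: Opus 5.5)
Your proposal is correct and follows the paper's proof: Lemma~\ref{lem:cyclic-family-normalizer} supplies the hypotheses of Lemma~\ref{lem:commutative-sector-deformation}, the base points come from \eqref{eq:d8-Gaussian-basepoints}, and their finiteness comes from Theorem~\ref{thm:metric-full-finite} and Corollary~\ref{cor:extended-metric-full-finite}. The table entries do check out with \(q(k)=\xi^{k^2}\); note that \(w^{k(k-1)/2}\) times a character of \(\Z/8\Z\) is not even well defined on \(\Z/8\Z\), so your concrete choice is the one that works. The only deviation is the unnormalized case: you reapply the lemma with a mixed-normalization base point, whereas the paper observes that the unnormalized image lies in the group generated by the normalized image and the central scalar \(\sqrt8\,I\); both arguments work, and yours gives the index bound \(|\Gamma_{t_0}|\) directly.
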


\begin{proof}
Apply Lemma~\ref{lem:cyclic-family-normalizer} to the fixed normalization.
It verifies the normalizer and sector-ratio hypotheses of
Lemma~\ref{lem:commutative-sector-deformation}.  Use the corresponding base
point in \eqref{eq:d8-Gaussian-basepoints}.  The normalized base-point
images are finite; hence the conclusion follows.  An unnormalized image is a
subgroup of the group generated by the normalized image and one central
cyclic scalar; virtual abelianness is therefore unchanged.
\end{proof}

Thus the unitary families may have infinite image, as occurs when a
unit-modulus parameter is not a root of unity by
Corollary~\ref{cor:cyclic-family-infinite-order}.  For any image
\(\Gamma_n\) in Theorem~\ref{thm:cyclic-family-virtually-abelian}, however,
\[
  \Gamma_n\big/
  \bigl(\Gamma_n\cap(\Htorus^{\mathrm{mat}}_{n-1})^{\times}\bigr)
\]
is finite, and the denominator is an abelian normal subgroup.

\section{Pauli-polynomial solutions in low dimensions}
\label{sec:pauli-polynomial}

We study full-support solutions for specified low-dimensional Pauli
directions.
The exact algebraic search and the subsequent imposition of unitarity are
treated as separate steps.  Each search specifies its Pauli direction and allowed
equivalences.  Statements labeled \emph{Computation} in this and
the subsequent low-dimensional sections record the output of exact
calculations in the stated arithmetic models and under the declared
identifications.  They are used as evidence for the proposed classification.
All assertions about explicitly displayed operators are verified directly
and do not depend on completeness of the computational searches.

\subsection{Equivalences used in the computations}
\label{subsec:computational-equivalences}

The identifications used in the searches are broader than the local unitary
equivalence \eqref{eq:unitary-local-equivalence}, and every computational
statement specifies which ones are allowed.  In these statements, a
\emph{strict Yang--Baxter operator} satisfies
\eqref{eq:background-YBE} exactly, rather than only up to a scalar.  In a
Clifford enumeration for a finite label group \(B\), a
\emph{symplectic solution} is an element of \(\Sigma_{B,2}\) whose local
copies satisfy the braid relation.  A \emph{projective lift} is an element of
\(\mathcal C_2(B)\) that induces such a symplectic solution and satisfies the
braid relation in the projective Clifford group.  At coefficient level we use
projective rescaling, multiplication by a character, and automorphisms of
\(G\) preserving \(\tau\).  Fix \(d\geq2\), choose a primitive \(d\)-th
root \(w\), put \(G=\Z/d\Z\), and set
\(\tau(a,b)=w^{ab}\) for \(a,b\in G\).
For a coefficient function
\(a:G\to\C\), write \(a_t=a(t)\).  When \(G=\Z/d\Z\), these operations
act by
\[
  a_t\longmapsto c\,x^ta_{\psi^{-1}(t)}.
\]
Here \(c\in\C^\times\), \(x\in\C^\times\) satisfies \(x^d=1\), and
\(\psi\in\Aut(G)\) preserves \(\tau\).
We omit Clifford enumerations in local dimensions \(4\) and \(9\).  For
these square dimensions, conjugation to the Zak basis gives a monomial
realization of the Clifford group
\cite{ApplebyBengtssonBrierleyGrasslGrossLarsson}.  Applying this basis
change on every local tensor factor makes each two-site Clifford candidate
monomial, so these dimensions cannot produce a new local-equivalence class
within the Clifford ansatz.
At matrix level, local gauge sends \(R\) to
\((S\otimes S)^{-1}R(S\otimes S)\).  We distinguish local Clifford gauge,
where \(S\) is a one-particle Clifford operator, from arbitrary local
\(\GL(V)\)-gauge, where \(S\in\GL(V)\), and from Galois conjugation.  Galois
conjugation preserves the
cyclotomic equations and projective unitarity at cyclotomic points, but it
is not a unitary equivalence in a fixed complex Hilbert space.

We also use the conditional Doikou--Smoktunowicz \(T\)-move.  For a
Yang--Baxter operator \(R\in\End(V\otimes V)\), if \(T\in\GL(V)\)
satisfies \((T\otimes T)R=R(T\otimes T)\), set
\[
  R^T=(T\otimes I)R(T^{-1}\otimes I)
     =(I\otimes T^{-1})R(I\otimes T).
\]
Then \(R^T\) is a Yang--Baxter operator and gives an equivalent braid
representation.  For every \(n\geq2\), on \(V^{\ot n}\),
\[
  \Theta_n=T^{n-1}\otimes T^{n-2}\otimes\cdots\otimes T\otimes I
\]
conjugates every local copy of \(R\) to the corresponding copy of \(R^T\).
See \cite{DS} for the involutive case.

\subsection{Cyclic Pauli-polynomial ansatz}

\subsubsection{The Pauli direction \(X\otimes Z\)}

Let \(X,Z\in\mathrm U(\C^d)\) be the shift and clock matrices
\[
  Xe_j=e_{j+1},
  \qquad
  Ze_j=w^j e_j,
  \qquad
  ZX=wXZ
  \qquad(j\in\Z/d\Z).
\]
A one-generator
two-qudit Pauli-polynomial ansatz has the form
\begin{equation}\label{eq:Pauli-polynomial-ansatz}
  R(a)=\sum_{t=0}^{d-1}a_tP^t,
  \qquad
  P=X\otimes Z
\end{equation}
or, before Galois normalization,
\[
  \sum_{t=0}^{d-1}a_t(X^t\otimes Z^{\alpha t}),
  \qquad
  \alpha\in(\Z/d\Z)^\times.
\]
The Yang--Baxter equation is homogeneous in \(a=(a_0,\ldots,a_{d-1})\).
We therefore work projectively.

\begin{definition}
A coefficient vector \(a=(a_0,\ldots,a_{d-1})\) has \emph{full support} if
all \(a_t\) are nonzero.  On the full-support locus we normalize
\(a_0=1\).  The \emph{compact coefficient phase torus} is
\begin{equation}\label{eq:compact-coefficient-phase-torus}
  \mathbb T_d^{\mathrm{coef}}
  =
  \{(a_1,\ldots,a_{d-1})\in(\C^\times)^{d-1}:
    |a_t|=1\text{ for }1\leq t<d\}.
\end{equation}
Coefficientwise unimodularity is an additional restriction, distinct from
projective unitarity of \(R(a)\).  The latter requires
\(C_a(u)=0\) for every nonzero shift \(u\), but neither condition by itself
implies the other.
\end{definition}

For \(i,j,k,\ell\in\Z/d\Z\), define the Pauli direction
\[
  D=X^iZ^j\otimes X^kZ^\ell,
  \qquad
  \Delta(D)=i\ell-jk.
\]
A common symplectic change of one-qudit Pauli coordinates sends the two
exponent columns to \((1,0)^{\mathsf T}\) and
\((0,\Delta(D))^{\mathsf T}\) whenever \(\Delta(D)\) is a unit.  For odd
\(d\), this change is implemented by a one-qudit Clifford gate; hence the
corresponding direction is locally Clifford equivalent, up to scalar Pauli
phase, to \(X\otimes Z^{\Delta(D)}\).  In even composite dimensions the
lift requires additional choices; the \(d=8\) completeness statement is
therefore made directly for the Pauli direction \(X\otimes Z\).

A cyclotomic Galois automorphism taking \(w\) to \(w^{\alpha^{-1}}\)
maps the coefficients for \(X\otimes Z^\alpha\) to those for
\(X\otimes Z\).  We use this
only as an arithmetic normalization, never as a local unitary equivalence.

\subsubsection{The coefficient torus and the elimination ideal}

After this normalization, Lemma~\ref{lem:coefficient-ybe} becomes
\begin{equation}\label{eq:cyclic-coefficient-ybe}
  a_t\sum_{c\in\Z/d\Z}a_{s-c}a_cw^{-ct}
  =
  a_s\sum_{c\in\Z/d\Z}a_ca_{t-c}w^{-sc}
  \qquad(s,t\in\Z/d\Z).
\end{equation}
For exact elimination, choose a cyclotomic coefficient field, introduce
variables \(b_0,\ldots,b_{d-1}\), set \(a_0=b_0=1\), and adjoin
\begin{align}
  a_tb_t-1&=0\qquad(1\leq t<d),
  \label{eq:formal-inverse-equations}\\
  \sum_{x\in\Z/d\Z}b_xa_{x+u}&=0
    \qquad(u\in\Z/d\Z\setminus\{0\}).
  \label{eq:formal-unitarity-equations}
\end{align}
The \(b_t\) are formal inverses in the auxiliary complex variety; they are
not conjugates until one intersects with the real form
\begin{equation}\label{eq:unitary-real-form}
  b_t=\overline{a_t}\qquad(1\leq t<d).
\end{equation}
On this real form, the first equations impose coefficientwise unimodularity
and the second impose the remaining coefficient conditions for projective
unitarity.  Thus primary
decomposition of the complex ideal, intersection with
\eqref{eq:unitary-real-form}, and quotienting by the declared symmetries are
three separate stages.  The parameters of an algebraic family range over
\(\C^\times\); its unitary locus is the corresponding circle.

\subsubsection{Gaussian solutions for the normalized cyclic ansatz}

For the normalized cyclic ansatz, a \emph{Gaussian coefficient vector} is a
vector obtained from a function \(Q\) satisfying
\[
  Q:\Z/d\Z\longrightarrow\C^\times,
  \qquad
  Q(s+t)=Q(s)Q(t)w^{st},
  \qquad Q(0)=1,
\]
by setting \(a_t=Q(t)\).  The values lie in a suitable cyclotomic extension.
Multiplication by a character gives
all quadratic phases with the same polarization.  Together with their
cyclotomic Galois conjugates, these character rescalings give the coefficient
normalizations of the cyclic Family I locus.

\begin{remark}\label{rem:signed-family-I-fixed-realization}
For \(a,b,t\in\Z/d\Z\), the quadratic coefficients
\(Q_{a,b}(t)=w^{at^2+bt}\) have the two polarization signs precisely when
\(2a\equiv+1\pmod d\) or \(2a\equiv-1\pmod d\), respectively.  Thus the
two branches are intrinsic to the tower and should not be identified merely
by Galois conjugation.
\end{remark}

\subsubsection{Low-dimensional computations}

\begin{computation}
\label{comp:low-dimensional-Gaussian-search}
For \(d\in\{2,3,5,6,7,9,10,11\}\), our exact primary-decomposition
calculation for the
full-support ideals defined by
\eqref{eq:cyclic-coefficient-ybe},
\eqref{eq:formal-inverse-equations} and
\eqref{eq:formal-unitarity-equations}, followed by intersection with
the unitary real form \eqref{eq:unitary-real-form}, returned only Gaussian
points in the compact coefficient phase torus for the Pauli direction
\(X\otimes Z\), up to character rescaling.  Cyclotomic Galois conjugation
gives the corresponding Galois-conjugate Gaussian vectors.  The computation
concerns full-support solutions for this Pauli direction; sparse solutions,
other Pauli directions, and general unitary Yang--Baxter operators are
outside its scope.
\end{computation}

\subsubsection{\texorpdfstring{Positive-dimensional output in \(d=8\)}
{Positive-dimensional output in d=8}}

\begin{computation}\label{comp:d8-family-completeness}
Within the same fixed-direction search space, and before quotienting by
character rescaling or Galois conjugation, the primary-decomposition
calculation returned the following dimension-\(8\) unitary real loci.
\begin{enumerate}[label=(\roman*)]
\item The positive-dimensional components meeting the compact
coefficient phase torus are the eight algebraic families \(C_m(t)\) of
Proposition~\ref{prop:d8-families}.  Their unitary real loci are
\(|t|=1\).
\item The unsliced \(d=8\) ideal also has sixteen isolated formal
components.  None meets the unitary real form
\eqref{eq:unitary-real-form}.
\end{enumerate}
\end{computation}

\begin{prop}\label{prop:d8-family-equivalence-orbits}
For the fixed \(d=8\) Pauli direction $X\otimes Z$, character rescaling and
reparametrization of
\(t\) divide the eight families into the two orbits
\[
  \{C_1,C_4,C_6,C_7\},
  \qquad
  \{C_2,C_3,C_5,C_8\}.
\]
The Galois automorphism \(w\mapsto w^3\) exchanges these two character
orbits.  Hence the eight components before quotienting form one orbit under
character rescaling, parameter reparametrization, and Galois conjugation.
\end{prop}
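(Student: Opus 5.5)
The plan is to compute directly how the two coefficient-level operations of Subsection~\ref{subsec:computational-equivalences} act on the explicit vectors \eqref{eq:d8-type-I-vector}--\eqref{eq:d8-type-II-vector}. Character rescaling is \(a_k\mapsto x^ka_k\) with \(x^8=1\), combined with a substitution \(t\mapsto\lambda t\). I will track the image of each template and then read off which \((\alpha,\eta)\) or \((\delta,\beta)\) results. Projective rescaling by \(c\) will be unnecessary, since \(a_0=1\) is preserved. Automorphisms of \(\Z/8\Z\) preserving \(\tau\) require \(\psi^2=1\), i.e.\ \(\psi=\pm1\). I will note that \(\psi=-1\) changes \(\tau\) to \(\tau^{-1}\) unless combined with Galois conjugation, so I will not need it.

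The first step is to apply \(x\) with \(x^4=1\), which keeps the value \(a_4=-1\) of type I (respectively \(a_4=1\) of type II). For \(x=\ii\) on type I one gets
\[
[1,\ii\alpha t,-\eta,-\ii t,-1,\ii\alpha t,\eta,-\ii t].
\]
With \(t'=-\ii t\) this is \(I_{-\alpha,-\eta}(t')\). Hence \(C_1\leftrightarrow C_4\) and \(C_2\leftrightarrow C_3\). For \(x=-1\) the pair \((\alpha,\eta)\) is unchanged after \(t'=-t\). The analogous computation for type II with \(x=\ii\) gives \((\delta,\beta)\mapsto(-\delta,-\beta)\), so \(C_5\leftrightarrow C_8\) and \(C_6\leftrightarrow C_7\).

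The second step is to take a primitive \(x=w\), for which \(x^4=-1\) switches templates. Type I goes to
\[
[1,w\alpha t,\ii\eta,w^3t,1,w^5\alpha t,\ii\eta,w^7t].
\]
Putting \(t'=-w^3t\), \(\beta=\ii\eta\), and \(\delta=\ii\alpha\), this is \(II_{\delta,\beta}(t')\); I will check \(\beta^2=-1\) and \(\delta^2=1\). This sends \(C_1\mapsto C_7\) and \(C_2\mapsto C_8\) (since \(\alpha=-\ii,\eta=1\) gives \(\delta=1\), \(\beta=\ii\)). Combined with the first step this yields the two orbits exactly as stated.

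To show the orbits are distinct, I will exhibit an invariant. Every character rescaling is generated by \(x=w\). Under the moves above, \(\alpha\eta\) is preserved on type I, and under \(x=w\) it becomes \(\ii\delta\cdot(\ii\beta)^{-1}\cdot\ldots\); more simply, I will define an invariant such as the product of the fixed Fourier eigenvalue pair from Corollary~\ref{cor:cyclic-family-infinite-order}, or just verify that the \(8\)-element group \(\langle x\rangle\) acting on the labels has orbits of size four. This follows from the explicit action, since \(x\mapsto\)(label permutation) is a homomorphism \(\Z/8\Z\to S_8\) whose generator acts as a computed permutation of order four.

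The final step is the Galois automorphism \(w\mapsto w^3\), which sends \(\ii=w^2\mapsto w^6=-\ii\). It maps the coefficient vectors entrywise, preserves \eqref{eq:cyclic-coefficient-ybe} with \(w\) replaced by \(w^3\), and on the normalized direction reduces to conjugating \(\alpha\) and \(\beta\). Thus \(C_1=I_{-\ii,-1}\mapsto I_{\ii,-1}=C_3\), which crosses orbits. The main obstacle is this step: making precise how the Galois move interacts with the fixed direction \(X\otimes Z\). I will handle it by noting that \(w\mapsto w^3\) takes the \(X\otimes Z\) equations to those for \(X\otimes Z^3\), and that the cyclotomic normalization of the earlier subsection returns the result to \(X\otimes Z\) with the stated relabelling.
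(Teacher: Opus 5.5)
Your character computation is the paper's own argument, and your displayed vectors and resulting labels are correct. Your $x=w$ and $x=\ii$ are its cases $r=1$ and $r=2$: your $t'=-w^3t$ and $t'=-\ii t$ are its $w^7t$ and $w^6t$. Your $x=-1$ is its remark that exponent $r+4$ differs from $r$ only by $t\mapsto-t$.

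You can drop the half-finished invariant, because the $x=w$ permutation on the type II families is forced by your $x=\ii$ data. For instance $w\cdot C_7=w^2\cdot C_1=C_4$, and the full permutation is $(C_1\,C_7\,C_4\,C_6)(C_2\,C_8\,C_3\,C_5)$. If you want an invariant anyway, take $\alpha\eta$ on type I and $-\delta\beta$ on type II. It equals $\ii$ on the first orbit and $-\ii$ on the second.

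Two points need correcting.

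\textbf{The Galois step.} Your mapping $C_1(t)\mapsto C_3(t)$ under $w\mapsto w^3$ is right, but the mechanism you propose for the ``main obstacle'' undoes itself. The cyclotomic normalization from $X\otimes Z^3$ back to $X\otimes Z$ is $w\mapsto w^{3^{-1}}=w^3$. Composing it with $w\mapsto w^3$ gives $w\mapsto w^9=w$. So it sends $C_3$ back to $C_1$ and never places $C_3$ in the $X\otimes Z$ locus.

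No renormalization is needed. The eight vectors have entries in $\Q(\ii)[t]$. The braid differences in the proof of Proposition~\ref{prop:d8-families} vanish using only $w^4=-1$ and the relations on $\alpha,\eta,\delta,\beta$. Hence the same eight families solve the coefficient equation for every primitive eighth root. In particular the conjugate $C_3(t)$ is already an $X\otimes Z$ solution for the fixed $w$: apply Proposition~\ref{prop:d8-families} with $\alpha=\ii$. That is all the paper's one-line Galois argument uses.

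\textbf{Your aside on automorphisms.} It is false, although harmless, since the statement never uses automorphisms. Every unit $u$ of $\Z/8\Z$ satisfies $u^2\equiv1\pmod 8$, so all four automorphisms preserve $\tau$, not just $\pm1$. In particular $\psi=-1$ preserves $\tau$, because $\tau(-a,-b)=\tau(a,b)$.
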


\begin{proof}
For the character \(k\mapsto w^{rk}\), direct substitution gives
\begin{align*}
 C_1(t)&\longmapsto
 C_7(w^7t),\ C_4(w^6t),\ C_6(w^5t)
 &&\text{for }r=1,2,3,\\
 C_2(t)&\longmapsto
 C_8(w^7t),\ C_3(w^6t),\ C_5(w^5t)
 &&\text{for }r=1,2,3.
\end{align*}
These transformations give the two character orbits in the proposition.  The
character with exponent \(r+4\) differs from the one with exponent \(r\)
only by the reparametrization \(t\mapsto-t\).  Hence the identity together
with the three displayed substitutions for each starting family exhausts all
eight characters and does not join the two sets.  Finally,
the automorphism \(w\mapsto w^3\), acting trivially on the formal parameter
\(t\), sends \(\ii=w^2\) to \(-\ii\), and in
particular sends \(C_1(t)\) to \(C_3(t)\).  It therefore joins the two
orbits.
\end{proof}

\subsubsection{Composite products}
\label{subsubsec:Pauli-composite-products}

Corollary~\ref{cor:coefficient-products} identifies several composite
solutions outside the full-support search space.  Using
\[
  \Z/6\Z\cong\Z/2\Z\oplus\Z/3\Z,
\]
let \(R_d^{\mathrm G}\) denote a fixed cyclic Gaussian representative in
dimension \(d\), and let \(\mathbf 1_d\) denote the identity Yang--Baxter
operator on \(\C^d\otimes\C^d\).  The sparse \(d=6\) points are the
represented products
\(R_2^{\mathrm G}\boxtimes\mathbf 1_3\) and
\(\mathbf 1_2\boxtimes R_3^{\mathrm G}\), while the full-support Gaussian
point matches \(R_2^{\mathrm G}\boxtimes R_3^{\mathrm G}\) up to scalar and
Galois conjugation.

\subsubsection{A conjecture for fixed Pauli directions}

\begin{conj}\label{conj:Pauli-rigidity}
Let \(d\geq2\) with \(4\nmid d\), let
\(\alpha\in(\Z/d\Z)^\times\), and put
\[
  P_\alpha=X\otimes Z^\alpha,
  \qquad
  \tau_\alpha(s,t)=w^{\alpha st}
  \qquad(s,t\in\Z/d\Z).
\]
Suppose that
\[
  a=(1,a_1,\ldots,a_{d-1})\in\mathbb T_d^{\mathrm{coef}}
\]
and that
\[
  R_\alpha(a)=\sum_{t\in\Z/d\Z}a_tP_\alpha^t
\]
satisfies the Yang--Baxter equation and is projectively unitary.  Then
\(a\) belongs to one of the two signed Gaussian torsors for
\(\tau_\alpha\): there exists \(\epsilon\in\{+1,-1\}\) such that
\[
  a_{s+t}=a_sa_t\tau_\alpha(s,t)^\epsilon
  \qquad(s,t\in\Z/d\Z).
\]
Equivalently, up to multiplication by a character and precomposition by an
automorphism \(\psi\in\Aut(\Z/d\Z)\) preserving \(\tau_\alpha\), the
coefficient vector is a signed Family I Gaussian vector.
\end{conj}

\begin{remark}\label{rem:Pauli-rigidity-equivalences}
A cyclotomic Galois automorphism sending \(w\) to
\(w^{\alpha^{-1}}\) maps the entire cyclotomic datum for the Pauli direction
\(X\otimes Z^\alpha\) to the normalized direction \(X\otimes Z\).  This is an
arithmetic normalization of the preceding conjecture, not a unitary
equivalence in a fixed complex Hilbert space.  The conditional
Doikou--Smoktunowicz \(T\)-move likewise belongs at the level of represented
Yang--Baxter operators rather than coefficient vectors: it preserves the
braid representation up to conjugacy but need not preserve the chosen
one-generator Pauli direction.  Thus allowing \(T\)-moves gives a broader
operator-level comparison, separate from the fixed-direction coefficient
rigidity asserted in Conjecture~\ref{conj:Pauli-rigidity}.
\end{remark}

\section{Spectral quotients and strict localizations}
\label{sec:spectral-quotients}

The spectral formulas and Fourier models of Section~\ref{sec:trace-invariants}
give polynomial relations for the local braid generators.  We use these
relations in two complementary ways.  First, spectral projections detect
factorizations through the Temperley--Lieb and
Birman--Murakami--Wenzl (BMW) quotients.  Second, a faithful representation
of the ambient tower shows that a tensor-power realization introduces no
additional quotient.

The first two criteria concern quotient recognition from operators with two
or three eigenvalues.  The final criterion concerns faithfulness of an
already identified braid-image tower.

\subsection{Spectral quotient criteria}

\subsubsection{Two eigenvalues and Temperley--Lieb projections}

Let \(R\) be a unitary Yang--Baxter operator with two distinct eigenvalues.
After multiplying \(R\) by a unit scalar, assume that
\[
  (R+I)(R-qI)=0,
  \qquad q\neq-1.
\]
For \(n\geq2\), let \(R_i\), \(1\leq i<n\), be its local copies and set
\[
  p_i=\frac{qI-R_i}{q+1},
\]
the spectral projection for the eigenvalue \(-1\).  The braid and quadratic
relations define a representation of the Hecke algebra with generators
satisfying \((g_i+1)(g_i-q)=0\).

\begin{prop}\label{prop:two-eigenvalue-Hecke-criterion}
For every \(n\geq2\), let
\[
  \rho_n:H_n(q)\longrightarrow\End(V^{\ot n}),
  \qquad
  \rho_n(g_i)=R_i,
\]
be the Hecke representation determined by the local copies of \(R\).  Then
the following are equivalent.
\begin{enumerate}[label=(\roman*)]
\item For every \(n\geq2\), the representation \(\rho_n\) factors through
the Temperley--Lieb quotient \(\mathrm{TL}_n(q)\).
\item On \(V^{\ot3}\),
\[
  p_1p_2p_1=\frac{q}{(1+q)^2}p_1,
  \qquad
  p_2p_1p_2=\frac{q}{(1+q)^2}p_2.
\]
\item For every \(n\geq3\) and every
\(i,j\in\{1,\ldots,n-1\}\) with \(|i-j|=1\),
\begin{equation}\label{eq:TL-spectral-projection-criterion}
  p_ip_jp_i=\frac{q}{(1+q)^2}p_i.
\end{equation}
\end{enumerate}
\end{prop}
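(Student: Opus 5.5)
We prove the cycle (iii)\(\Rightarrow\)(ii)\(\Rightarrow\)(iii)\(\Rightarrow\)(i)\(\Rightarrow\)(iii). First we fix the presentation of \(\mathrm{TL}_n(q)\). It is the quotient of \(H_n(q)\) by the two-sided ideal generated by the relation on \(g_1,g_2\) saying that the \(q\)-antisymmetrizer on three strands vanishes. In terms of \(e_i=(q-g_i)/(q+1)\), this is equivalent to \(e_ie_je_i=\frac{q}{(1+q)^2}e_i\) for \(|i-j|=1\). These \(e_i\) are the idempotents (\(e_i^2=e_i\) follows from the quadratic relation), and by construction \(\rho_n(e_i)=p_i\). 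We would verify this equivalence directly. Expand \(e_1e_2e_1-\frac{q}{(1+q)^2}e_1\) as a polynomial in \(g_1,g_2\) and reduce with \(g_i^2=(q-1)g_i+q\) and the braid relation. The result is \((1+q)^{-3}\) times the six-term alternating sum \(\sum_{w\in S_3}(-q)^{-\ell(w)}g_w\) up to a unit, and the same sum arises from \(e_2e_1e_2\). Hence the single cubic relation defining the quotient is equivalent to either of the two projection identities.

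With this in hand, (iii)\(\Rightarrow\)(ii) is the case \(n=3\). For (ii)\(\Rightarrow\)(iii), we would observe that for \(n\ge3\) and adjacent \(i,j\) the operators \(p_i,p_j\) act on two consecutive three-factor blocks \(V^{\otimes 3}\) tensored with identities. Say \(j=i+1\), so the relevant factors are positions \(i,i+1,i+2\). Then \(p_ip_{i+1}p_i=I^{\otimes(i-1)}\otimes(p_1p_2p_1)\otimes I^{\otimes(n-i-2)}\), and the identity from (ii) tensors up. The case \(j=i-1\) is identical, using the second identity.

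For (iii)\(\Rightarrow\)(i), the identities say that the images of the Temperley--Lieb generating relations hold. By the previous paragraph, the image of the defining ideal of \(\mathrm{TL}_n(q)\) in \(H_n(q)\) is killed by \(\rho_n\), so \(\rho_n\) factors. Conversely, for (i)\(\Rightarrow\)(iii), if \(\rho_n\) factors through \(\mathrm{TL}_n(q)\), the relation \(e_ie_je_i=\frac{q}{(1+q)^2}e_i\) holds in \(\mathrm{TL}_n(q)\), and applying the factored map gives (iii).

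The main obstacle is the algebraic identification in the first step: checking that, modulo the Hecke relations, the projection identity is a nonzero multiple of the cubic antisymmetrizer relation. This requires \(q\neq-1\) to divide by \(1+q\), and care with the normalization \(\frac{q}{(1+q)^2}\). If the authors define \(\mathrm{TL}_n(q)\) directly by the relations \(e_ie_je_i=\frac{q}{(1+q)^2}e_i\) together with \(e_i^2=e_i\) and distant commutation, this step becomes trivial. In that case the only content is that the Hecke-to-\(e_i\) substitution is invertible, since \(g_i=q-(q+1)e_i\), together with the tensoring argument above. I would adopt that presentation to keep the proof short, and mention the antisymmetrizer form only as a remark.
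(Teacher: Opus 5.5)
Your proposal is correct and follows essentially the same route as the paper: pass to $e_i=(q-g_i)/(q+1)$, present $\mathrm{TL}_n(q)$ as the quotient of $H_n(q)$ by $e_ie_je_i=\frac{q}{(1+q)^2}e_i$, use $\rho_n(e_i)=p_i$, and reduce all neighboring identities to the two on $V^{\ot3}$ by tensor locality. The paper simply cites Jones and Lechner for this presentation, whereas your optional expansion makes the link to the antisymmetrizer relation self-contained; it checks out as $(1+q)^3\bigl(e_1e_2e_1-\frac{q}{(1+q)^2}e_1\bigr)=q^3\sum_{w\in S_3}(-q)^{-\ell(w)}g_w$, which is symmetric in $g_1,g_2$.
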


\begin{proof}
Under the change of generators
\[
  e_i=\frac{q-g_i}{q+1},
\]
the Temperley--Lieb quotient of \(H_n(q)\) is obtained by imposing
\[
  e_ie_je_i=\frac{q}{(1+q)^2}e_i
  \qquad(|i-j|=1);
\]
see \cite{JonesHecke}\cite[Lemma~3.5]{LechnerTwoEigenvalues}.  Since
\(\rho_n(e_i)=p_i\), the representation factors through this quotient if
and only if the identities in~\eqref{eq:TL-spectral-projection-criterion}
hold.  By tensor locality, all neighboring identities are translates of
the two identities on \(V^{\ot3}\).  This proves the equivalence of the
three conditions.
\end{proof}

For a unitary two-eigenvalue operator in this normalization, put
\[
  p=\frac{qI-R}{q+1}
\]
and define its spectral density by
\begin{equation}\label{eq:two-eigenvalue-density}
  \eta(R)=\frac{\rank(p)}{(\dim V)^2}.
\end{equation}
This is the parameter used in the two-eigenvalue classification of
\cite[Theorem~3.4]{LechnerTwoEigenvalues}.  It distinguishes Hecke representations with
the same two eigenvalues but different trace quotients.

\subsubsection{Three eigenvalues and a rank-one BMW criterion}

Let \(q,r\in\C^\times\), assume \(q^2\neq1\), and let
\(G\in\End(V\otimes V)\) satisfy the Yang--Baxter equation and
\[
  (G-r^{-1}I)(G-qI)(G+q^{-1}I)=0,
\]
where \(r^{-1},q,-q^{-1}\) are pairwise distinct.  Put
\[
  x=\frac{r-r^{-1}}{q-q^{-1}}+1,
  \qquad
  E=I-\frac{G-G^{-1}}{q-q^{-1}}.
\]
Let \(E_i\) denote the local copies of \(E\).
Suppose \(x\neq0\) and
\[
  E=x\,|\psi\rangle\langle\psi|
\]
for a unit vector \(\psi\in V\otimes V\).

\begin{prop}\label{prop:rank-one-BMW-criterion}
Under the preceding hypotheses, suppose that
\begin{equation}\label{eq:rank-one-BMW-contraction}
  (\langle\psi|_{12}\otimes I)
  G_{23}^{\,\pm1}
  (|\psi\rangle_{12}\otimes I)
  =\frac{r^{\pm1}}{x}I,
\end{equation}
Then, for every \(n\geq2\), the assignments
\[
  g_i\longmapsto G_i,
  \qquad
  e_i\longmapsto E_i
  \qquad(1\leq i<n)
\]
extend to a representation
\[
  C_n(r,q)\longrightarrow\End(V^{\ot n})
\]
of the BMW algebra in the convention
\[
  g_i-g_i^{-1}=(q-q^{-1})(1-e_i),
  \qquad
  e_ig_i=r^{-1}e_i,
  \qquad
  e_i^2=xe_i.
\]
\end{prop}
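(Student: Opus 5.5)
The plan is to verify each defining relation of \(C_n(r,q)\) for the operators \(G_i,E_i\). By tensor locality every relation reduces to a check on \(V^{\ot2}\) or \(V^{\ot3}\). The braid relations among the \(G_i\) and the commutation of distant generators are automatic: the first is the Yang--Baxter equation, the second follows from disjoint tensor support. The skein relation \(g_i-g_i^{-1}=(q-q^{-1})(1-e_i)\) is the definition of \(E\).

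The local relations \(EG=GE=r^{-1}E\) and \(E^2=xE\) come from the cubic minimal polynomial. Since \(G\) is annihilated by a polynomial with the distinct roots \(r^{-1},q,-q^{-1}\), it is diagonalizable. On the \(q\)- and \(-q^{-1}\)-eigenspaces one has \(G-G^{-1}=q-q^{-1}\), so \(E=0\) there. On the \(r^{-1}\)-eigenspace, \(E=1-(r^{-1}-r)/(q-q^{-1})=x\). Hence \(E=x\pi\), where \(\pi\) is the spectral projection for \(r^{-1}\), and both relations follow at once. Comparing with the hypothesis \(E=x|\psi\rangle\langle\psi|\) and using \(x\neq0\) identifies \(\pi=|\psi\rangle\langle\psi|\). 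So \(G\psi=r^{-1}\psi\), and the \(r^{-1}\)-eigenspace is spanned by \(\psi\).

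The remaining BMW relations are the tangle relations on \(V^{\ot3}\):
\[
  e_ig_{i\pm1}^{\pm1}e_i=r^{\pm1}e_i,
  \qquad
  e_ie_{i\pm1}e_i=e_i,
\]
together with whatever mixed relations the chosen presentation needs, such as \(g_ig_{i+1}e_i=e_{i+1}e_i\). The first family follows from the contraction hypothesis \eqref{eq:rank-one-BMW-contraction}:
\[
  E_1G_2^{\pm1}E_1=x^2|\psi\rangle\langle\psi|_{12}\,G_{23}^{\pm1}\,|\psi\rangle\langle\psi|_{12}=x^2\cdot\frac{r^{\pm1}}{x}\,\pi_1=r^{\pm1}E_1.
\]
The mirrored identity \(E_2G_1^{\pm1}E_2=r^{\pm1}E_2\) is not literally given, so I would obtain it from the braid relation rather than by a second contraction. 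The first step is to write \(E_2\) as a conjugate of \(E_1\):
\[
  G_1G_2E_1=E_2G_1G_2,
\]
which holds because \(E\) is a polynomial in \(G\) and \(G_1G_2G_1=G_2G_1G_2\). Next, I would conjugate the identity \(E_1G_2^{\pm1}E_1=r^{\pm1}E_1\) by \(G_1G_2\) and simplify using the braid relation and \(E_iG_i=r^{-1}E_i\).

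The relation \(e_1e_2e_1=e_1\) is then derived from the \(r^{\pm1}\) contractions by the standard BMW manipulation. One expands \(E_2=1-(G_2-G_2^{-1})/(q-q^{-1})\) and computes
\[
  E_1E_2E_1=xE_1-\frac{r-r^{-1}}{q-q^{-1}}E_1=E_1.
\]
This uses \(x-(r-r^{-1})/(q-q^{-1})=1\) and \(E_1^2=xE_1\). The same computation gives \(E_2E_1E_2=E_2\) once the mirrored contraction is available.

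The main obstacle is that the hypothesis is stated only for a contraction on the first two factors. The mirrored relations must therefore come from conjugation by \(G_1G_2\), and one has to check that this conjugation really swaps the two contraction positions. If it does not go through cleanly, I would derive the mirrored contraction directly: use \(G\psi=r^{-1}\psi\) and the Yang--Baxter equation to rewrite \((I\otimes\langle\psi|)G_{12}^{\pm1}(I\otimes|\psi\rangle)\) in terms of \(\psi\) on the first two factors. Finally, the mixed relations such as \(g_1g_2e_1=e_2e_1\) follow from the standard fact that the BMW algebra is generated by the \(g_i\) subject to braid, cubic, and \(e_ig_{i\pm1}^{\pm1}e_i\) relations (Morton--Wassermann). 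I would cite that presentation rather than verify each mixed relation separately.
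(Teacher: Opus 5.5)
Your proposal is correct and is essentially the paper's proof: spectral interpolation gives \(EG=GE=r^{-1}E\) and \(E^2=xE\), the contraction hypothesis gives \(E_1G_2^{\pm1}E_1=r^{\pm1}E_1\), the skein expansion gives \(E_iE_jE_i=E_i\), and the remaining relations come from citing the standard BMW presentation. The only difference is in the mirrored relation. The paper conjugates by \(\Delta=G_1G_2G_1\), which swaps \(G_1\) and \(G_2\) exactly, whereas your conjugation by \(G_1G_2\) sends \(G_2^{\pm1}\) to \(G_2^{-1}G_1^{\pm1}G_2\) and then needs \(E_2G_2^{-1}=rE_2\) and \(G_2E_2=r^{-1}E_2\) to reach \(E_2G_1^{\pm1}E_2=r^{\pm1}E_2\)---this does go through, so your fallback plan is unnecessary.
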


\begin{proof}
The three roots of the polynomial of \(G\) are nonzero and pairwise
distinct, so \(G\) is invertible and diagonalizable.  The polynomial
defining \(E\) vanishes on the \(q\)- and
\(-q^{-1}\)-eigenspaces and has value \(x\) on the
\(r^{-1}\)-eigenspace.  Hence
\[
  E^2=xE,
  \qquad
  EG=GE=r^{-1}E.
\]
The definition of \(E\) also gives the skein relation
\[
  G-G^{-1}=(q-q^{-1})(I-E).
\]
Equation~\eqref{eq:rank-one-BMW-contraction}, after multiplying by the two
factors of \(x\), gives
\(E_1G_2^{\pm1}E_1=r^{\pm1}E_1\).  On three strands, put
\[
  \Delta=G_1G_2G_1=G_2G_1G_2.
\]
Then \(\Delta G_1\Delta^{-1}=G_2\) and
\(\Delta G_2\Delta^{-1}=G_1\).  Since \(E_i\) is a polynomial in \(G_i\),
conjugating the first relation gives
\(E_2G_1^{\pm1}E_2=r^{\pm1}E_2\).  Translation along the tensor factors
gives both neighboring untwisting relations.  If \(|i-j|=1\), these
relations and the skein relation imply
\begin{align*}
  E_iE_jE_i
  &=E_i^2-
    \frac{E_iG_jE_i-E_iG_j^{-1}E_i}{q-q^{-1}}\\
  &=\left(x-\frac{r-r^{-1}}{q-q^{-1}}\right)E_i
   =E_i.
\end{align*}
The Yang--Baxter equation gives the braid relations, while tensor locality
gives commutativity at distance.  Thus the images satisfy the skein,
idempotent, eigenvalue, and neighboring untwisting relations in the standard
presentation of \(C_n(r,q)\); see \cite{BirmanWenzl,LarsenRowell}.  Hence the
displayed assignments extend to the claimed representation.
\end{proof}

\subsection{Faithful tensor realizations}

For the purposes of this paper, a Yang--Baxter operator \(R\) is a
\emph{strict localization} of a compatible tower of braid-image algebras
\((\mathcal D_n)\) if its local copies on tensor powers of one fixed space
induce a faithful representation of \(\mathcal D_n\) for every \(n\); compare
\cite{RW}.

Let \(W\) be a finite-dimensional vector space.  For every \(n\geq2\), let
\(\mathcal D_n\) be a braid-image algebra generated by
\(g_1,\ldots,g_{n-1}\), put
\[
  \mathcal B_n=\langle r_1,\ldots,r_{n-1}\rangle\subseteq\A_n,
\]
and suppose that
\[
  \theta_n:\mathcal D_n\longrightarrow\mathcal B_n,
  \qquad
  \theta_n(g_i)=r_i,
\]
is an algebra isomorphism.  Let
\[
  \Phi_n:\A_n\longrightarrow\End(W^{\ot n})
\]
be compatible representations.  Put \(R=\Phi_2(r_1)\), and assume that
\[
  \Phi_n(r_i)
  =I_W^{\ot(i-1)}\otimes R\otimes I_W^{\ot(n-i-1)}.
\]
The braid relations among the \(r_i\) imply that \(R\) is a Yang--Baxter
operator.

\begin{prop}\label{prop:faithful-tower-localization-criterion}
The operator \(R\) is a strict localization of the tower
\((\mathcal D_n)\) if and only if
\(\Phi_n|_{\mathcal B_n}\) is injective for every \(n\geq2\).  In
particular, this holds whenever every \(\Phi_n\) is faithful on \(\A_n\).
\end{prop}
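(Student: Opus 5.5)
The plan is to compare the representation induced by the local copies of \(R\) with \(\Phi_n\circ\theta_n\), and to read off faithfulness from the fact that \(\theta_n\) is an isomorphism.

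First, I will identify the representation of \(\mathcal D_n\) induced by the local copies of \(R\). It sends \(g_i\mapsto I_W^{\ot(i-1)}\otimes R\otimes I_W^{\ot(n-i-1)}\). By the standing hypothesis this operator equals \(\Phi_n(r_i)=\Phi_n(\theta_n(g_i))\). Both \(\Phi_n|_{\mathcal B_n}\circ\theta_n\) and the induced assignment are algebra homomorphisms out of \(\mathcal D_n\). The algebra \(\mathcal D_n\) is generated by the \(g_i\) (and the unit, which goes to the unit on both sides since \(\Phi_n\) is compatible and unital). The two maps agree on generators, so they coincide. Hence the representation of \(\mathcal D_n\) on \(W^{\ot n}\) determined by \(R\) is exactly \(\Phi_n|_{\mathcal B_n}\circ\theta_n\). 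In particular it is well defined, so no separate check that \(R\) respects the defining relations of \(\mathcal D_n\) is needed.

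Second, I will read off injectivity. Since \(\theta_n:\mathcal D_n\to\mathcal B_n\) is a bijective algebra map, the composite \(\Phi_n|_{\mathcal B_n}\circ\theta_n\) is injective if and only if \(\Phi_n|_{\mathcal B_n}\) is injective. Quantifying over all \(n\geq2\) gives the stated equivalence with the definition of strict localization. The case \(n=1\) is trivial, since \(\mathcal D_1=\C\) maps to scalars.

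Third, for the final sentence: if \(\Phi_n\) is faithful on all of \(\A_n\), then its restriction to the subalgebra \(\mathcal B_n\subseteq\A_n\) is injective, so the criterion applies.

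The only point requiring care is the first step. I must make sure that "faithful representation of \(\mathcal D_n\) induced by the local copies" means precisely the homomorphism \(g_i\mapsto R_i\), and that it agrees with \(\Phi_n\circ\theta_n\) on the whole algebra, not only on the braid group image. This follows from generation of \(\mathcal D_n\) by the \(g_i\) as an algebra, together with unitality and compatibility of \(\Phi_n\) with the tower inclusions. Beyond this, the argument is formal.
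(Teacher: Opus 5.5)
Your proposal is correct and follows the paper's proof: you identify the localized representation with \(\Phi_n|_{\mathcal B_n}\circ\theta_n\), use that \(\theta_n\) is an isomorphism, and restrict a faithful \(\Phi_n\) to \(\mathcal B_n\). Your additional check, that the two homomorphisms agree on the generators \(g_i\) and hence on all of \(\mathcal D_n\), spells out the identification that the paper states in one line.
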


\begin{proof}
The localized representation is
\[
  \rho_n=\Phi_n|_{\mathcal B_n}\circ\theta_n.
\]
Since \(\theta_n\) is an isomorphism, \(\rho_n\) is faithful if and only if
\(\Phi_n|_{\mathcal B_n}\) is injective.  Faithfulness of \(\Phi_n\) on
\(\A_n\) implies faithfulness of its restriction to \(\mathcal B_n\).
\end{proof}

The criteria above will be used in Section~\ref{sec:qutrit} to identify the
Temperley--Lieb quotient associated with \(R_1\) and the rank-one BMW quotient
associated with \(R_3\).  In Section~\ref{sec:quaternionic-localization}, the
faithful-tower criterion gives the strict localization of the quaternionic
Family III specialization.

\section{Two-qutrit Clifford outputs and link invariants}
\label{sec:qutrit}

We report an enumeration of the two-qutrit projective Clifford solutions and
a reduction by the declared computational identifications.  The calculation
singles out two explicit projective representatives, denoted \(M_1\) and
\(M_3\) below.  Their unitary normalizations, denoted \(R_1\) and \(R_3\),
are identified with signed Family I and Family II, respectively.  The
specializations of their invariants to HOMFLYPT and to a BMW/Kauffman
invariant that is a squared Jones specialization are proved directly from
their displayed formulas.

\subsection{The two representatives}

Let \(V=\C^3\), let \(\omega=e^{2\pi\ii/3}\), and let
\(\{e_x:x\in\F_3\}\) be the standard basis of \(V\).  Write
\(e_{xy}=e_x\otimes e_y\) and use the ordered basis
\[
  e_{00},e_{01},e_{02},e_{10},e_{11},e_{12},e_{20},e_{21},e_{22}
\]
of \(V\otimes V\).

\begin{computation}
\label{comp:qutrit-reduction}
The projective two-qutrit Clifford group is
\[
  \F_3^4\rtimes \Sp(4,3),
  \qquad |\Sp(4,3)|=51{,}840.
\]
Our exact enumeration returned \(409\) symplectic solutions, \(4465\)
projective lifts, and \(1750\) strict Yang--Baxter operators.  Among the
strict operators, \(559\) are monomial.

Local one-qutrit Clifford conjugation divides the strict operators into
\(139\) orbits, of which \(34\) contain no monomial representative.  The
graph on these \(34\) orbits generated by the admissible conditional
Doikou--Smoktunowicz \(T\)-moves has \(7\) connected components.  The
subsequent Galois and local \(\GL_3\) tests leave \(5\) candidate outputs.
The declared group-type tests retain two outputs that are neither identified
as monomial nor identified as group type.

We choose projective representatives \(M_1\) and \(M_3\) for these two
outputs.  They are outputs of the stated reduction pipeline, not a
classification under local-unitary equivalence.
\end{computation}

We define the two representatives by formulas that determine their
\(9\times9\) matrices.  Let
\[
  A=\F_3,
  \qquad
  q_A(a)=\omega^{a^2},
  \qquad
  \pair{a}{b}=\omega^{2ab}.
\]
Let \(Xe_j=e_{j+1}\), \(Ze_j=\omega^je_j\) for \(j\in\F_3\), and put
\(U=Z\otimes X\).  The Fourier matrix
\[
  F_0e_x=3^{-1/2}\sum_{y\in\F_3}\omega^{xy}e_y
\]
satisfies \(F_0XF_0^{-1}=Z\) and \(F_0Z^2F_0^{-1}=X\).  Thus \(U\) is
locally Clifford-conjugate to the canonical Family I generator
\(X\otimes Z^2\).
For \(n\geq3\) and \(1\leq i<n-1\), the local copies of \(U\) satisfy
\[
  U_iU_{i+1}=\omega^2U_{i+1}U_i.
\]
The phase \(Q_-(a)=q_A(a)^{-1}=\omega^{2a^2}\) has polarization
\(\pair{a}{b}^{-1}=\omega^{ab}\).  Hence the conjugate
\[
  \widetilde R^{\mathrm{I},-}_{Q_-}
  \coloneqq
  (F_0\otimes F_0)R^{\mathrm{I},-}_{Q_-}(F_0\otimes F_0)^{-1}
  =\frac1{\sqrt3}\bigl(I+\omega^2(U+U^2)\bigr)
\]
is locally equivalent to the Family I operator with \(\epsilon=-1\) from
Proposition~\ref{prop:gaussian-coefficients}.  Define the first
representative by
\begin{equation}\label{eq:M1-signed-family-I-gauge}
  M_1\coloneqq
  \sqrt3\,(Z\otimes Z)\widetilde R^{\mathrm{I},-}_{Q_-}
  (Z\otimes Z)^{-1}.
\end{equation}
Set \(R_1=M_1/\sqrt3\).  Equation~\eqref{eq:M1-signed-family-I-gauge}
then shows that \(R_1\) is locally unitarily equivalent to the signed
Family I operator.

For Family II, take \(Q_3,P_3:\F_3\to\Uone\) given by
\[
  Q_3(t)=\omega^{t^2+t},
  \qquad
  P_3(s)=\omega^{s^2-s}.
\]
Then
\begin{equation}\label{eq:M3-family-II-Gaussian}
  R^{\mathrm{II}}_{Q_3,P_3}(e_x\otimes e_y)
  =
  3^{-1/2}\sum_{t\in\F_3}
  \omega^{t^2+t-(x+y)^2+(x+y)}
  e_{x+t}\otimes e_{y-t}.
\end{equation}
Define the second representative by
\[
  M_3\coloneqq\sqrt3\,R^{\mathrm{II}}_{Q_3,P_3}.
\]
Set \(R_3=M_3/\sqrt3\).  Thus \(R_3\) is precisely the Family II qutrit
operator associated with \(Q_3\) and \(P_3\); by
\eqref{eq:family-II-Gaussian-R}, it preserves the total label \(x+y\).

The matrices \(M_1\) and \(M_3\) are the projective representatives retained
by the computational normalization and are not unitary.  Their unitary normalizations are
\(R_1=M_1/\sqrt3\) and \(R_3=M_3/\sqrt3\).  In particular, multiplication by
\(\sqrt3\) is a projective rescaling, not an equivalence of the form
\eqref{eq:unitary-local-equivalence}.  All statements below concerning
unitary local equivalence refer to \(R_1\) and \(R_3\).  Put
\[
  q_{\mathrm B}=e^{\pi\ii/6}.
\]

Equations~\eqref{eq:M1-signed-family-I-gauge} and
\eqref{eq:M3-family-II-Gaussian}, together with
Corollary~\ref{cor:extended-metric-families-YBE}, show that
\(R_1\) and \(R_3\) are unitary Yang--Baxter operators.  Define the spectral
multisets
\[
  \Sigma_1=\{-\ii\ (3),\ q_{\mathrm B}\ (6)\},
  \qquad
  \Sigma_3=
  \{q_{\mathrm B}^{-1}\ (4),\ \ii\ (4),\ -q_{\mathrm B}\ (1)\},
\]
where the numbers in parentheses denote multiplicities, and define
\begin{align}
  m_1(t)
  &=t^2+\frac{\omega-1}{\sqrt3}t-\omega,
  \label{eq:R1-minpoly}\\
  m_3(t)
  &=t^3+\ii.
  \label{eq:R3-minpoly}
\end{align}

\begin{prop}\label{prop:qutrit-local-data}
For \(j\in\{1,3\}\), one has
\[
  \operatorname{Spec}(R_j)=\Sigma_j,
  \qquad
  m_{R_j}=m_j.
\]
\end{prop}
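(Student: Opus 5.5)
The plan is to avoid computing with the $9\times 9$ matrices directly. Both operators are identified, up to local unitary conjugation, with metric-family operators whose spectra the paper has already described by Gauss sums. Conjugation preserves spectra and minimal polynomials, and both $R_1$ and $R_3$ are unitary, hence diagonalizable. So in each case it is enough to list the eigenvalues with multiplicities: the minimal polynomial is then $\prod(t-\lambda)$ over the distinct eigenvalues.

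\emph{The operator $R_1$.} By \eqref{eq:M1-signed-family-I-gauge}, $R_1$ is conjugate to $R^{\mathrm{I},-}_{Q_-}$ with $Q_-(a)=\omega^{2a^2}$. I would use \eqref{eq:family-I-Gaussian-spectrum-set} with $\epsilon=-1$, which gives the eigenvalue set $\{\gamma(Q_-)Q_-(-c)^{-1}: c\in\F_3\}$. Formula \eqref{eq:odd-prime-Gauss-sum} with $p=3$, $u=2$, $b=0$ gives
\[
  \gamma(Q_-)=\ii\left(\tfrac{2}{3}\right)=-\ii .
\]
The eigenvalues are therefore $-\ii$ at $c=0$ and $-\ii\,\omega^{-2}=-\ii\,\omega=q_{\mathrm B}$ at $c=\pm1$.

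For the multiplicities, I would use the Pauli realization $U\mapsto X\otimes Z$ (or $X\otimes Z^{-1}$). Each character of $U$ occurs with multiplicity $9/3=3$ on $V\otimes V$, by the same counting as in the proof of Corollary~\ref{cor:cyclic-family-infinite-order}. This gives $-\ii$ with multiplicity $3$ and $q_{\mathrm B}$ with multiplicity $6$. It remains to expand $(t+\ii)(t-q_{\mathrm B})$ and compare with $m_1$:
\[
  \ii-q_{\mathrm B}=-\tfrac{\sqrt3}{2}+\tfrac{\ii}{2}=\tfrac{\omega-1}{\sqrt3},
  \qquad
  -\ii q_{\mathrm B}=e^{-\pi\ii/3}=-\omega .
\]

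\emph{The operator $R_3$.} Here I would use the factorization $R^{\mathrm{II}}_{Q_3,P_3}\cong M_{P_3^{-1}}\otimes G_{Q_3}$ in the coordinates $(s,d)=(x+y,x)$. This change of coordinates is a permutation of the basis, so it preserves spectra.

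The diagonal factor $M_{P_3^{-1}}$ has entries $P_3(s)^{-1}=1,1,\omega$ for $s=0,1,2$.

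The circulant factor $G_{Q_3}$ is diagonalized by characters, and by Lemma~\ref{lem:general-shifted-Gauss-sum} its eigenvalues are $\gamma(Q_3)Q_3(u)^{-1}$. Formula \eqref{eq:odd-prime-Gauss-sum} with $u=b=1$, $4^{-1}=1$ gives $\gamma(Q_3)=\ii\omega^{2}$. The values of $Q_3$ are $1,\omega^2,1$, so $G_{Q_3}$ has eigenvalue $\ii\omega^2$ with multiplicity $2$ and eigenvalue $\ii$ with multiplicity $1$.

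Multiplying the two spectra gives:
\begin{itemize}
\item $\ii\omega^2=q_{\mathrm B}^{-1}$ with multiplicity $2\cdot2=4$;
\item $\ii$ with multiplicity $2+2=4$, coming from $\ii\cdot1$ twice and from $\ii\omega^2\cdot\omega$ twice;
\item $\ii\omega=-q_{\mathrm B}$ with multiplicity $1$.
\end{itemize}
This is exactly $\Sigma_3$. All three values cube to $-\ii$ and are distinct, so they are the three roots of $t^3+\ii$, and $m_{R_3}=m_3$.

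The only real obstacle is bookkeeping of signs and conventions: the sign of $\epsilon$ in $\Lambda_{\mathrm I,-}$, the conjugation by $F_0$ and $Z\otimes Z$ in the definition of $M_1$, and whether each Gauss-sum shift produces $Q(u)^{-1}$ or $Q(-u)^{-1}$. Since $Q_-$ is even, the $R_1$ case is insensitive to this last point. For $R_3$ I would confirm the sign by checking the trace: $\Tr R_3$ computed from \eqref{eq:M3-family-II-Gaussian}, using only the $t=0$ terms, should equal $4q_{\mathrm B}^{-1}+4\ii-q_{\mathrm B}$.
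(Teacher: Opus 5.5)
Your proposal is correct and follows essentially the paper's route. For $R_1$ the paper also reads the spectrum off the character values of the ungauged Family~I element, each eigenvalue of $U$ having multiplicity $3$; for $R_3$ it uses the same factorization $M_{P_3^{-1}}\otimes G_{Q_3}$. The only difference is that you evaluate the character sums through the closed Gauss-sum formulas, whereas the paper substitutes cube roots of unity into the explicit polynomials $h(z)$ and $g(z)$.

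Your sign worry is moot. Since $u\mapsto -u$ permutes $A$, the multisets $\{\gamma(Q)Q(u)^{-1}\}$ and $\{\gamma(Q)Q(-u)^{-1}\}$ coincide, so the trace check is a harmless redundancy.
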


\begin{proof}
In the ungauged Family I model, put
\[
  h(z)=\frac{1+\omega^2(z+z^2)}{\sqrt3}.
\]
The operator \(U=Z\otimes X\) has each of \(1,\omega,\omega^2\) as an
eigenvalue of multiplicity \(3\), and
\[
  h(1)=-\ii,
  \qquad
  h(\omega)=h(\omega^2)=q_{\mathrm B}.
\]
Local conjugation in \eqref{eq:M1-signed-family-I-gauge} does not change the
spectrum.  Hence \(R_1\) has eigenvalues \(-\ii\) and \(q_{\mathrm B}\),
with multiplicities \(3\) and \(6\), respectively.  Since
\[
  (t+\ii)(t-q_{\mathrm B})
  =t^2+\frac{\omega-1}{\sqrt3}t-\omega,
\]
the first spectral multiset is \(\Sigma_1\), and the two distinct
eigenvalues show that the minimal polynomial is \(m_1\).

For \(R_3\), use the coordinates \(s=x+y\), \(d=x\), in which
\[
  R_3\cong M_{P_3^{-1}}\otimes G_{Q_3}.
\]
The multiplier \(M_{P_3^{-1}}\) has eigenvalues \(1,1,\omega\).  On the
three Fourier eigenspaces of \(X\), the convolution operator \(G_{Q_3}\)
has eigenvalues obtained from
\[
  g(z)=\frac{1+\omega^2z+z^2}{\sqrt3},
  \qquad
  g(1)=g(\omega)=q_{\mathrm B}^{-1},
  \qquad
  g(\omega^2)=\ii.
\]
Taking the pairwise products gives the spectrum
\[
  q_{\mathrm B}^{-1}\ (4),
  \qquad
  \ii\ (4),
  \qquad
  -q_{\mathrm B}\ (1).
\]
All three eigenvalues have cube \(-\ii\), so \(R_3^3+\ii I=0\); since they
are distinct, the second spectral multiset is \(\Sigma_3\) and the minimal
polynomial is \(m_3\).  Equivalently, for the unnormalized representatives,
\[
  M_1^2+(\omega-1)M_1-3\omega I=0,
  \qquad
  M_3^3+3(1+2\omega)I=0,
\]
where \(1+2\omega=\ii\sqrt3\).
\end{proof}

\begin{cor}\label{cor:qutrit-enhancement-data}
The partial traces satisfy
\begin{equation}\label{eq:qutrit-partial-traces}
  \Tr_2(R_1^{\pm1})=\sqrt3 I,
  \qquad
  \Tr_1(R_3^{\pm1})=\Tr_2(R_3^{\pm1})=q_{\mathrm B}^{\pm1}I.
\end{equation}
Consequently, the scalar enhancement parameters for the ordinary matrix
trace are
\[
  (\alpha_1,\beta_1)=(1,\sqrt3),
  \qquad
  (\alpha_3,\beta_3)=(q_{\mathrm B},1).
\]
\end{cor}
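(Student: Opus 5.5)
The plan is to compute the partial traces directly from the Pauli expansions. The point is that a partial trace of a generalized Pauli tensor kills every non-identity factor. Only the identity tower coefficient survives, which is the same mechanism as in Theorem~\ref{thm:metric-family-enhancement}. The scalar enhancement parameters then follow by reading off \(\Tr_2(R^{\pm1})=\alpha^{\pm1}\beta I\), with the enhancement endomorphism equal to the identity, as in Proposition~\ref{prop:coefficient-trace-enhancement}.

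For \(R_1\), I first pass to the ungauged operator \(R^{\mathrm{I},-}_{Q_-}=3^{-1/2}\sum_a Q_-(a)X_a\otimes Z_a\) from \eqref{eq:opposite-Gaussian-RQ}. The partial trace satisfies
\[
  \Tr_2\bigl((S\otimes S)T(S\otimes S)^{-1}\bigr)=S\,\Tr_2(T)\,S^{-1}.
\]
Hence the conjugations by \(F_0\otimes F_0\) and \(Z\otimes Z\) in \eqref{eq:M1-signed-family-I-gauge} carry a scalar partial trace to the same scalar. Since \(\Tr(Z_a)=3\delta_{a,0}\), only \(a=0\) contributes, and
\[
  \Tr_2(R^{\mathrm{I},-}_{Q_-})=3^{-1/2}\cdot3\,I=\sqrt3\,I.
\]
For the inverse I use \(R^{-1}=R^*\). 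Its Pauli expansion has identity coefficient \(\overline{f(0)}=3^{-1/2}\) and otherwise only nontrivial \(Z\)-labels on the second factor. So \(\Tr_2(R_1^{-1})=\sqrt3\,I\) as well.

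For \(R_3\), I use the expansion \eqref{eq:family-II-Pauli-expansion},
\[
  R_3=\overline{\gamma(P_3)}\,3^{-1}\sum_{t,c}Q_3(t)P_3(c)\,(X_tZ_c)\otimes(X_{-t}Z_c).
\]
Tracing out either factor annihilates every term except \((t,c)=(0,0)\), because \(\Tr(X_tZ_c)=3\delta_{t,0}\delta_{c,0}\). This gives
\[
  \Tr_1(R_3)=\Tr_2(R_3)=\overline{\gamma(P_3)}\,I.
\]
The adjoint \(R_3^*=R_3^{-1}\) has the same Pauli support with conjugated coefficients, so it yields \(\gamma(P_3)\,I\).

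It remains to evaluate \(\gamma(P_3)\). The values are \(P_3(0)=1\), \(P_3(1)=1\), and \(P_3(2)=\omega^2\). Therefore
\[
  \gamma(P_3)=3^{-1/2}(2+\omega^2)=3^{-1/2}\bigl(\tfrac32-\tfrac{\sqrt3}{2}\ii\bigr)=e^{-\pi\ii/6}=q_{\mathrm B}^{-1}.
\]
This gives \(\Tr_{1,2}(R_3^{\pm1})=q_{\mathrm B}^{\pm1}I\).

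Comparing with \(\alpha^{\pm1}\beta\) then gives \((\alpha_1,\beta_1)=(1,\sqrt3)\) and \((\alpha_3,\beta_3)=(q_{\mathrm B},1)\). These agree with the table of Theorem~\ref{thm:metric-family-enhancement}: for Family I, \(\alpha=1\) and \(\beta=\sqrt{|A|}\); for Family II, \(\alpha=\overline{\gamma(P)}\) and \(\beta=1\).

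The only real obstacle is bookkeeping. I need to make sure the local gauge in \eqref{eq:M1-signed-family-I-gauge} is a conjugation by an operator of the form \(S\otimes S\), so that partial traces transform by conjugation. I also need the Gauss sum evaluated with the correct sign of the exponent, since it determines whether \(\alpha_3\) is \(q_{\mathrm B}\) or \(q_{\mathrm B}^{-1}\).
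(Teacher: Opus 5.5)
Your proposal is correct and follows essentially the paper's argument. For \(R_1\) you trace out the Pauli factor so that only the identity coefficient survives, use that conjugation by \(S\otimes S\) preserves scalar partial traces, and handle the inverse through the adjoint, exactly as the paper does; the paper traces the Fourier-conjugated form \(\frac1{\sqrt3}(I+\omega^2(U+U^2))\) rather than \(R^{\mathrm{I},-}_{Q_-}\) itself, which makes no difference. For \(R_3\) you contract the Pauli expansion to get \(\overline{\gamma(P_3)}\,I\), whereas the paper contracts the basis formula \eqref{eq:M3-family-II-Gaussian} and sums \(\omega^{-(x+y)^2+(x+y)}\) directly; both reduce to the same sum \(3^{-1/2}\sum_s P_3(s)^{-1}=q_{\mathrm B}\).
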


\begin{proof}
For the ungauged Family I operator
\[
  \widetilde R^{\mathrm{I},-}_{Q_-}
  =\frac1{\sqrt3}\bigl(I+\omega^2(U+U^2)\bigr),
\]
the identities \(\Tr(X)=\Tr(X^2)=0\) give
\[
  \Tr_2\bigl(\widetilde R^{\mathrm{I},-}_{Q_-}\bigr)=\sqrt3 I.
\]
The adjoint has the same identity coefficient, so its partial trace is also
\(\sqrt3 I\).  The local conjugation in
\eqref{eq:M1-signed-family-I-gauge} preserves these scalar partial traces.

For \(R_3\), formula \eqref{eq:M3-family-II-Gaussian} gives, for every
\(x\in\F_3\),
\begin{align*}
  \Tr_2(R_3)e_x
  &=\sum_{y\in\F_3}(I\otimes\langle e_y|)
    R_3(e_x\otimes e_y)\\
  &=\frac1{\sqrt3}\sum_{y\in\F_3}
    \omega^{-(x+y)^2+(x+y)}e_x\\
  &=\frac{2+\omega}{\sqrt3}e_x
   =q_{\mathrm B}e_x.
\end{align*}
Indeed, the contraction forces the summation variable \(t\) in
\eqref{eq:M3-family-II-Gaussian} to be zero.  The same calculation on the
first tensor factor gives \(\Tr_1(R_3)=q_{\mathrm B}I\).  Taking adjoints
gives the inverse identities.  These are precisely the scalar enhancement
identities for
\((\alpha_1,\beta_1)=(1,\sqrt3)\) and
\((\alpha_3,\beta_3)=(q_{\mathrm B},1)\).
\end{proof}

\subsection{\texorpdfstring{The \(R_1\) invariant}{The R1 invariant}}

Let \(T_1\) be the invariant associated with \(R_1\), the ordinary matrix
trace, and scalar parameters \((\alpha_1,\beta_1)=(1,\sqrt3)\).  Thus
\[
  T_1(\widehat\xi)=3^{-n/2}\Tr(\rho_{R_1,n}(\xi)),
  \qquad \xi\in\B_n,\quad n\geq2.
\]
For every oriented link \(L\), set
\(P_1(L)=3^{-1/2}T_1(L)\).  Indeed,
\[
  T_1(\bigcirc)=\beta_1^{-1}\dim(V)=\sqrt3,
\]
so \(P_1(\bigcirc)=1\).

Let \(P_{\mathrm H}(L;a,z)\) denote the normalized HOMFLYPT polynomial
determined by
\[
  aP_{\mathrm H}(L_+;a,z)-a^{-1}P_{\mathrm H}(L_-;a,z)
  =zP_{\mathrm H}(L_0;a,z),
  \qquad P_{\mathrm H}(\bigcirc;a,z)=1.
\]

\begin{theorem}\label{thm:R1-HOMFLYPT}
For every oriented link \(L\),
\[
  P_1(L)=P_{\mathrm H}(L;\omega,\ii).
\]
\end{theorem}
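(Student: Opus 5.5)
The plan is to show that \(P_1\) satisfies the HOMFLYPT skein relation with \(a=\omega\), \(z=\ii\) and takes the value \(1\) on the unknot. Since an invariant of oriented links is determined by these two properties, this gives the theorem.

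First, \(T_1\) is an oriented link invariant. Corollary~\ref{cor:qutrit-enhancement-data} gives \(\Tr_2(R_1^{\pm1})=\sqrt3\,I\). The ordinary matrix trace on \(V^{\ot n}\) is cyclic, and taking the partial trace over the last factor yields the enhancement identities \eqref{eq:tower-enhancement-positive}--\eqref{eq:tower-enhancement-negative} with \((\alpha_1,\beta_1)=(1,\sqrt3)\). Hence \(T_1\), and so \(P_1=3^{-1/2}T_1\), is an oriented invariant by the Turaev construction recalled before \eqref{eq:tower-oriented-invariant}. The normalization \(P_1(\bigcirc)=1\) is the computation \(T_1(\bigcirc)=\beta_1^{-1}\dim V=\sqrt3\) recorded just before the statement.

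Second, the skein relation comes from the local minimal polynomial. By Proposition~\ref{prop:qutrit-local-data}, \(m_1(R_1)=0\). Multiplying by \(R_1^{-1}\) gives
\[
  R_1-\omega R_1^{-1}=-\tfrac{\omega-1}{\sqrt3}\,I .
\]
Apply Proposition~\ref{prop:local-annihilators} to the Laurent polynomial \(t-\omega t^{-1}+\tfrac{\omega-1}{\sqrt3}\). Because \(\alpha_1=1\), no writhe factors appear, and we get
\[
  T_1(L_+)-\omega T_1(L_-)=-\tfrac{\omega-1}{\sqrt3}\,T_1(L_0)
\]
for every triple \(L_\pm,L_0\) that differs at one crossing. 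Every such triple can be presented as closures of \(x\sigma_i^{\pm1}y\) and \(xy\) by Alexander's theorem applied to a braid presentation containing the crossing.

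Multiply this identity by \(\omega\) and use \(\omega^2=\omega^{-1}\). It becomes
\[
  \omega T_1(L_+)-\omega^{-1}T_1(L_-)=\tfrac{\omega-\omega^2}{\sqrt3}\,T_1(L_0).
\]
Since \(\omega-\omega^2=\ii\sqrt3\), the right-hand coefficient is exactly \(\ii\). The relation is linear, so it passes to \(P_1\). This is the HOMFLYPT skein relation at \(a=\omega\), \(z=\ii\).

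Finally, the skein relation together with the value on the unknot determines an oriented link invariant on all links. The standard unknotting argument inducts on crossing number and the number of crossings to be switched, reducing every diagram to unlinks. Unlink values are forced from the unknot by applying the relation to a kinked unknot. Hence \(P_1=P_{\mathrm H}(\,\cdot\,;\omega,\ii)\).

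The only genuine point to check is the exact coefficient arithmetic that converts \(m_1\) into the normalized HOMFLYPT form, namely that the sign and phase conventions give \(a=\omega\) rather than \(\omega^{-1}\) and \(z=\ii\) rather than \(-\ii\). I would verify this once carefully, and cross-check it on the Hopf link or the trefoil using the torus-link formula \eqref{eq:family-I-torus-two}.
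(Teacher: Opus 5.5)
Your proposal is correct and follows the paper's proof: you convert $m_1(R_1)=0$ into the Laurent relation $R_1-\omega R_1^{-1}=\frac{1-\omega}{\sqrt3}I$, apply Proposition~\ref{prop:local-annihilators} with $\alpha_1=1$ (so no writhe factors appear), and match the resulting skein relation and unknot value with HOMFLYPT at $a=\omega$, $z=\ii$. The only difference is cosmetic: you multiply your relation by $\omega$ and use $\omega-\omega^2=\ii\sqrt3$, whereas the paper divides the HOMFLYPT relation by $a$ and checks $\ii\omega^{-1}=(1-\omega)/\sqrt3$; your sign and phase arithmetic is already right, so the proposed torus-link cross-check is not needed.
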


\begin{proof}
The quadratic relation \eqref{eq:R1-minpoly} is equivalent to
\[
  R_1-\omega R_1^{-1}=\frac{1-\omega}{\sqrt3}I.
\]
Indeed, multiply \eqref{eq:R1-minpoly} on the right by \(R_1^{-1}\) and
rearrange.  Applying Proposition~\ref{prop:local-annihilators} to this
Laurent relation identifies the three local replacements \(L_+\), \(L_-\), and
\(L_0\).  Since the enhancement has \(\alpha=1\), no writhe factor appears,
and hence
\[
  T_1(L_+)-\omega T_1(L_-)
  =
  \frac{1-\omega}{\sqrt3}T_1(L_0).
\]
Multiplying by the global factor \(3^{-1/2}\) does not change the skein
coefficients; therefore the same relation holds for \(P_1\).

Dividing the HOMFLYPT skein relation by \(a\) and substituting
\(a=\omega\), \(z=\ii\) gives
\[
  P_{\mathrm H}(L_+;\omega,\ii)
  -\omega P_{\mathrm H}(L_-;\omega,\ii)
  =
  \ii\omega^{-1}P_{\mathrm H}(L_0;\omega,\ii).
\]
Since \(\ii\omega^{-1}=(1-\omega)/\sqrt3\), the two normalized invariants
have the same skein relation and the same unknot value.  The skein relation
together with the unknot normalization determines the invariant, so
\(P_1(L)=P_{\mathrm H}(L;\omega,\ii)\).
\end{proof}

The Hecke representation defined by \(R_1\) factors further through the
Temperley--Lieb quotient.

\begin{cor}\label{cor:R1-Temperley-Lieb}
Put
\[
  h_{\mathrm{TL}}=e^{-\pi\ii/3},
  \qquad
  \widetilde R_1=-\ii R_1.
\]
Then
\[
  (\widetilde R_1+I)(\widetilde R_1-h_{\mathrm{TL}}I)=0,
  \qquad
  \eta(\widetilde R_1)=\frac13,
\]
and the corresponding Hecke representation factors through the
Temperley--Lieb quotient.  Thus \(R_1\) gives the qutrit Gaussian
Temperley--Lieb parameter appearing in the Rowell--Wang localization
\cite{RW}.
\end{cor}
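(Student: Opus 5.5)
The proof has three parts: the quadratic relation, the spectral density, and the Temperley--Lieb factorization.

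\emph{Quadratic relation.} By Proposition~\ref{prop:qutrit-local-data}, \(R_1\) has eigenvalue \(-\ii\) with multiplicity \(3\) and eigenvalue \(q_{\mathrm B}=e^{\pi\ii/6}\) with multiplicity \(6\). Multiplying by \(-\ii\) gives eigenvalues \(-1\) (multiplicity \(3\)) and \(-\ii e^{\pi\ii/6}=e^{-\pi\ii/3}=h_{\mathrm{TL}}\) (multiplicity \(6\)). Since \(\widetilde R_1\) is unitary, hence diagonalizable, the relation \((\widetilde R_1+I)(\widetilde R_1-h_{\mathrm{TL}}I)=0\) follows. Moreover \(h_{\mathrm{TL}}\neq-1\), so we are in the normalization of Proposition~\ref{prop:two-eigenvalue-Hecke-criterion}.

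\emph{Spectral density.} The projection \(p=(h_{\mathrm{TL}}I-\widetilde R_1)/(h_{\mathrm{TL}}+1)\) is the spectral projection onto the \(-1\)-eigenspace, so \(\rank p=3\) and \(\eta(\widetilde R_1)=3/9=1/3\).

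\emph{Temperley--Lieb factorization.} By Proposition~\ref{prop:two-eigenvalue-Hecke-criterion}, it suffices to verify
\[
  p_1p_2p_1=\frac{h}{(1+h)^2}p_1,
  \qquad
  p_2p_1p_2=\frac{h}{(1+h)^2}p_2
\]
on \(V^{\ot3}\), where \(h=h_{\mathrm{TL}}\). Here \((1+h)^2=3e^{-\pi\ii/3}\), so the constant is \(1/3\). I will compute \(p\) explicitly in the ungauged model. The local conjugation by \(Z\otimes Z\) in \eqref{eq:M1-signed-family-I-gauge} conjugates every local copy by \(Z^{\ot n}\), so the relations transfer and it suffices to treat \(\widetilde R^{\mathrm{I},-}_{Q_-}\). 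In that model the \(-1\)-eigenspace of \(-\ii\widetilde R^{\mathrm{I},-}_{Q_-}\) is the \(U=1\) eigenspace, from \(h(1)=-\ii\) in the proof of Proposition~\ref{prop:qutrit-local-data}. Hence
\[
  p=\tfrac13(I+U+U^2),\qquad U=Z\otimes X,
\]
and \(p_i=\frac13\sum_{a}U_i^a\) in the tower.

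Using \(U_1U_2=\omega^2U_2U_1\), I expand
\[
  p_1p_2p_1=\tfrac1{27}\sum_{a,b,c}U_1^aU_2^bU_1^c
  =\tfrac1{27}\sum_{a,b,c}\omega^{-2bc}U_1^{a+c}U_2^b .
\]
Summing over \(c\) with \(a+c\) fixed gives the factor \(\sum_c\omega^{-2bc}=3\delta_{b,0}\). Only \(b=0\) survives, leaving \(\tfrac19\sum_{s}3\,U_1^s=\tfrac13\cdot 3p_1\cdot\tfrac13\cdot 3/3\); I will recheck this normalization so that it comes out to exactly \(\tfrac13p_1\). The identity for \(p_2p_1p_2\) follows by the same computation, or from the \(\Delta\)-conjugation symmetry.

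This Gauss/character-sum step is the only real content, and the main thing to get right is the normalization constant. Once both identities hold, Proposition~\ref{prop:two-eigenvalue-Hecke-criterion} gives the factorization through \(\mathrm{TL}_n(h_{\mathrm{TL}})\). The closing remark then identifies these parameters with the qutrit Gaussian Temperley--Lieb parameter of \cite{RW}.
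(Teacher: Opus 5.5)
Your proposal is correct and is essentially the paper's argument: pass to the ungauged Family~I model, identify \(p=\tfrac13(I+U+U^2)\) of rank \(3\), use \(U_iU_{i+1}=\omega^2U_{i+1}U_i\) and character orthogonality to get \(p_ip_{i\pm1}p_i=\tfrac13p_i\), and conclude by Proposition~\ref{prop:two-eigenvalue-Hecke-criterion} together with invariance under the local gauge. The only blemish is your garbled normalization line. It should read \(p_1p_2p_1=\tfrac1{27}\cdot3\sum_sU_1^s=\tfrac19\sum_sU_1^s=\tfrac13p_1\); your intermediate \(\tfrac19\sum_s3U_1^s\) would equal \(p_1\). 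With that correction the constant is \(\tfrac13\), as required.
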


\begin{proof}
Before the gauge transformation
\eqref{eq:M1-signed-family-I-gauge}, the spectral projection for the
eigenvalue \(-1\) of \(-\ii\widetilde R^{\mathrm{I},-}_{Q_-}\) is
\[
  p=\frac{I+U+U^2}{3}.
\]
It has rank \(3\) on \(V\otimes V\), and its density is therefore
\(3/9=1/3\).
For neighboring generators, the relation
\(U_iU_{i\pm1}=\omega^{\pm1}U_{i\pm1}U_i\) and character orthogonality give
\[
  p_ip_{i\pm1}p_i=\frac13p_i.
\]
Since
\[
  \frac{h_{\mathrm{TL}}}{(1+h_{\mathrm{TL}})^2}=\frac13,
\]
Proposition~\ref{prop:two-eigenvalue-Hecke-criterion} gives the
Temperley--Lieb factorization.  Local gauge preserves these relations.
\end{proof}

\subsection{\texorpdfstring{The \(R_3\) invariant}{The R3 invariant}}

For Family II, the enhancement scalar is determined by the Gauss sum of
\(P_3\).  For the phases in \eqref{eq:M3-family-II-Gaussian},
\[
  \gamma(Q_3)=\gamma(P_3)=3^{-1/2}(2+\omega^2)=e^{-\pi\ii/6}.
\]
The Family II enhancement of Theorem~\ref{thm:metric-family-enhancement}
therefore gives
\[
  \alpha_3=\overline{\gamma(P_3)}=e^{\pi\ii/6}.
\]
The general spectrum formula also gives the three eigenvalues
\[
  \gamma(Q_3),\qquad \gamma(Q_3)\omega,\qquad \gamma(Q_3)\omega^2,
\]
and therefore \(R_3^3=-\ii I\), equivalently \(R_3^3+\ii I=0\).

Let \(T_3\) be the invariant associated with \(R_3\), the ordinary matrix
trace, and scalar parameters
\((\alpha_3,\beta_3)=(q_{\mathrm B},1)\).  Then
\[
  T_3(\widehat\xi)=q_{\mathrm B}^{-\operatorname{wr}(\xi)}
  \Tr(\rho_{R_3,n}(\xi)),
  \qquad \xi\in\B_n,\quad n\geq2,
  \qquad
  T_3(\bigcirc)=3.
\]
The cubic relation \(R_3^3+\ii I=0\), together with
\(q_{\mathrm B}^3=\ii\), gives the local
periodic relation
\[
  T_3(L_{m+3})+T_3(L_m)=0
\]
for every \(m\in\Z\) and every local two-strand twist family.

Set
\[
  r=q_{\mathrm B}^3=\ii,
  \qquad G=q_{\mathrm B}^2R_3,
  \qquad
  E=I-\frac{G-G^{-1}}{q_{\mathrm B}-q_{\mathrm B}^{-1}}.
\]
Scalar renormalization does not change Turaev's invariant, but it changes
the enhancement parameter from \(q_{\mathrm B}\) to
\(q_{\mathrm B}^3=r\).

For \(n\geq2\), let \(G_i\) and \(E_i\) denote the local copies of \(G\)
and \(E\) on \(V^{\ot n}\).  By definition,
\[
  (q_{\mathrm B}-q_{\mathrm B}^{-1})(I-E_i)
  =G_i-G_i^{-1}.
\]
The corresponding BMW loop parameter is
\[
  x=\frac{r-r^{-1}}{q_{\mathrm B}-q_{\mathrm B}^{-1}}+1=3.
\]

\begin{prop}\label{prop:R3-BMW}
For every \(n\geq2\), the assignments
\[
  g_i\longmapsto G_i,
  \qquad
  e_i\longmapsto E_i
  \qquad(1\leq i<n)
\]
extend to a representation
\[
  C_n(r,q_{\mathrm B})\longrightarrow\End(V^{\ot n}),
  \qquad r=q_{\mathrm B}^3=\ii.
\]
In particular, \(E_i^2=3E_i\).  Moreover, the pullbacks of
\(3^{-n}\Tr\) along these representations form the normalized BMW Markov
trace.
\end{prop}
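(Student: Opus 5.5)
The plan is to verify the hypotheses of Proposition~\ref{prop:rank-one-BMW-criterion} for \(G=q_{\mathrm B}^2R_3\) with \(q=q_{\mathrm B}\) and \(r=\ii\). The trace statement then follows from the scalar enhancement of Corollary~\ref{cor:qutrit-enhancement-data}.

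\emph{Step 1: the spectrum of \(G\).} By Proposition~\ref{prop:qutrit-local-data}, \(R_3\) has eigenvalues \(q_{\mathrm B}^{-1}\), \(\ii\), \(-q_{\mathrm B}\) with multiplicities \(4,4,1\). Multiplying by \(q_{\mathrm B}^2\) gives the eigenvalues \(q_{\mathrm B}\) (multiplicity \(4\)), \(\ii q_{\mathrm B}^2=-q_{\mathrm B}^{-1}\) (multiplicity \(4\)), and \(-q_{\mathrm B}^3=-\ii=r^{-1}\) (multiplicity \(1\)). These values are pairwise distinct, and \(q_{\mathrm B}^2\neq1\). Hence \((G-r^{-1})(G-q_{\mathrm B})(G+q_{\mathrm B}^{-1})=0\). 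Since \(G\) is a scalar multiple of \(R_3\), it satisfies the Yang--Baxter equation. The computation \(x=3\neq0\) is already recorded above.

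\emph{Step 2: the rank-one form of \(E\).} Because \(E\) is a polynomial in \(G\) vanishing on the \(q\)- and \(-q^{-1}\)-eigenspaces and equal to \(x\) on the \(r^{-1}\)-eigenspace, we get \(E=3|\psi\rangle\langle\psi|\), where \(\psi\) spans the one-dimensional \(-q_{\mathrm B}\)-eigenspace of \(R_3\). I would identify \(\psi\) explicitly using the factorization \(R_3\cong M_{P_3^{-1}}\otimes G_{Q_3}\) in the coordinates \(s=x+y\), \(d=x\). The eigenvalue \(-q_{\mathrm B}=\omega\cdot\ii\) arises from \(P_3(s)^{-1}=\omega\), which happens at \(s=2\) since \(P_3(2)=\omega^{2}\). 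It pairs with the Fourier vector of \(X\) on which \(g\) takes the value \(\ii\), namely \(g(\omega^2)\). This gives
\[
  \psi=3^{-1/2}\sum_{x\in\F_3}\omega^{\pm x}\,e_x\otimes e_{2-x},
\]
with the sign fixed by the Fourier convention. Only the fact that \(\psi\) is a maximally entangled vector of the form \((D\otimes I)\sum_x e_x\otimes e_{2-x}/\sqrt3\), with \(D\) diagonal unitary, will matter below.

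\emph{Step 3: the contraction identity \eqref{eq:rank-one-BMW-contraction}.} This is the main obstacle. We need \((\langle\psi|\otimes I)G_{23}^{\pm1}(|\psi\rangle\otimes I)=r^{\pm1}I/3\). Because \(\psi\) is maximally entangled, the standard snake identity turns this contraction into \(\tfrac13\) times a partial trace of \(G\) conjugated by a local unitary (\(D\) composed with the flip \(x\mapsto 2-x\)). It should then reduce to \(\tfrac13 q_{\mathrm B}^{2}\Tr_1(R_3)\), or to a twisted version of it. By Corollary~\ref{cor:qutrit-enhancement-data}, \(\Tr_1(R_3)=q_{\mathrm B}I\), so the result would be \(\tfrac13q_{\mathrm B}^3I=\tfrac{r}{3}I\), and the inverse case follows by taking adjoints. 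The risk is that conjugating by \(D\) and the index reflection does not commute with \(R_3\) as needed. If so, I would fall back on a direct computation with \eqref{eq:M3-family-II-Gaussian}. That formula makes \((\langle\psi|\otimes I)R_{3,23}(|\psi\rangle\otimes I)e_z\) a finite sum over \(x,t\) of quadratic phases in \(\omega\). Label conservation forces diagonality, and the remaining sum is a Gauss sum computable by \eqref{eq:shifted-Gauss-sum}. It should yield \(q_{\mathrm B}^{-1}\cdot\ii/3\) for \(R_3\), which becomes \(r/3\) after the factor \(q_{\mathrm B}^2\).

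\emph{Step 4: the Markov trace.} Proposition~\ref{prop:rank-one-BMW-criterion} then gives the representation of \(C_n(\ii,q_{\mathrm B})\), including \(E_i^2=3E_i\). The functional \(3^{-n}\Tr\) is cyclic and normalized. By Corollary~\ref{cor:qutrit-enhancement-data}, after renormalization \(\Tr_n(G_n^{\pm1})=r^{\pm1}I\). Since \(E_n\) is a polynomial in \(G_n\), it follows that \(\Tr_n(E_n)=I\), so \(3^{-(n+1)}\Tr(yE_n)=x^{-1}\cdot3^{-n}\Tr(y)\). These are exactly the Markov conditions characterizing the normalized BMW trace. Uniqueness of such a trace on the tower, as in \cite{BirmanWenzl}, identifies the pullback with the normalized BMW Markov trace.
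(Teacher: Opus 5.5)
Your proposal is correct and follows essentially the same route as the paper: verify the rank-one criterion of Proposition~\ref{prop:rank-one-BMW-criterion} for \(G=q_{\mathrm B}^2R_3\) using the multiplicity-one \(-q_{\mathrm B}\)-eigenvector \(\psi\propto\sum_x\omega^x e_x\otimes e_{2-x}\), and obtain both the contraction identity and the Markov property from the partial traces in Corollary~\ref{cor:qutrit-enhancement-data}. The only correction is in Step~3: since \(C_\psi^*C_\psi=\tfrac13 I\), the contraction is directly \(\tfrac13\Tr_1(G^{\pm1})\), with no snake identity or twisting, so the fallback is unnecessary (its predicted value for \(R_3\) should read \(q_{\mathrm B}/3\), not \(q_{\mathrm B}^{-1}\ii/3\)).
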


\begin{proof}
The braid relations for the generators
\(G_i=q_{\mathrm B}^2R_{3,i}\) follow from the Yang--Baxter equation for
\(R_3\), since scalar multiplication does not change the braid relation.
The roots of \(t^3+\ii\), the minimal polynomial of \(R_3\), are
\[
  e^{-\pi\ii/6},
  \qquad
  \ii,
  \qquad
  -e^{\pi\ii/6}.
\]
Multiplication by \(q_{\mathrm B}^2=e^{\pi\ii/3}\) gives the eigenvalues
\[
  q_{\mathrm B},\qquad -q_{\mathrm B}^{-1},\qquad r^{-1}.
\]
Thus the BMW cubic relation
\[
  (G_i-r^{-1})(G_i-q_{\mathrm B})(G_i+q_{\mathrm B}^{-1})=0
\]
holds.

Proposition~\ref{prop:qutrit-local-data} shows that the eigenvalue
\(-q_{\mathrm B}\) of \(R_3\) has multiplicity one.  Define
\begin{equation}\label{eq:M3-BMW-vector}
  \psi
  =\frac1{\sqrt3}
  \bigl(\omega^2e_{02}+e_{11}+\omega e_{20}\bigr).
\end{equation}
Direct substitution in \eqref{eq:M3-family-II-Gaussian} gives
\[
  R_3\psi=-q_{\mathrm B}\psi,
  \qquad
  G\psi=-q_{\mathrm B}^3\psi=r^{-1}\psi.
\]
Since the eigenvalue \(-q_{\mathrm B}\) has multiplicity one, the
\(r^{-1}\)-eigenspace of \(G=q_{\mathrm B}^2R_3\) is exactly \(\C\psi\).
Spectral interpolation therefore gives
\begin{equation}\label{eq:M3-BMW-projector}
  E=3|\psi\rangle\langle\psi|.
\end{equation}
Its local copies are the operators \(E_i\) defined above.
The coefficient matrix of \(\psi\) is
\[
  C_\psi
  =\frac1{\sqrt3}
  \begin{pmatrix}
    0&0&\omega^2\\
    0&1&0\\
    \omega&0&0
  \end{pmatrix},
  \qquad
  C_\psi^*C_\psi=C_\psi C_\psi^*=\frac13I.
\]
Thus \(\sqrt3C_\psi\) is unitary, so \(\psi\) is maximally entangled and
\[
  \Tr_1(|\psi\rangle\langle\psi|)
  =\Tr_2(|\psi\rangle\langle\psi|)=\frac13I.
\]

Equation~\eqref{eq:qutrit-partial-traces} implies
\[
  \Tr_1(G^{\pm1})=r^{\pm1}I.
\]
Write \(\psi=\sum_{i,j}c_{ij}e_i\otimes e_j\) and \(C=(c_{ij})\).  Since
\(C^*C=I/3\), the \((k,l)\)-matrix coefficient of the contraction by any
\(T\in\End(V\otimes V)\) is
\[
  \sum_{i,j,j'}\overline{c_{ij}}c_{ij'}T_{jk,j'l}
  =\frac13\sum_jT_{jk,jl}.
\]
Thus contraction by \(\psi\) equals \(\Tr_1(T)/3\).  Applying this to
\(T=G^{\pm1}\) gives
\begin{equation}\label{eq:M3-BMW-contraction}
  (\langle\psi|_{12}\otimes I)
  G_{23}^{\,\pm1}
  (|\psi\rangle_{12}\otimes I)
  =\frac{r^{\pm1}}3I.
\end{equation}
Proposition~\ref{prop:rank-one-BMW-criterion}, with loop parameter
\[
  \frac{r-r^{-1}}{q_{\mathrm B}-q_{\mathrm B}^{-1}}+1=3,
\]
therefore gives all BMW relations, including
\[
  E_i^2=3E_i,
  \qquad
  E_iG_{i\pm1}^{\pm1}E_i=r^{\pm1}E_i.
\]

Finally, for \(n\geq1\), put
\(\operatorname{tr}^{\mathrm{BMW}}_n=3^{-n}\Tr\).
The identities
\(\Tr_2(G^{\pm1})=r^{\pm1}I\) and \(\Tr_2(E)=I\) imply, for every operator
\(x\) on the first \(n\) tensor factors,
\[
  \operatorname{tr}^{\mathrm{BMW}}_{n+1}(xG_n^{\pm1})
  =\frac{r^{\pm1}}3\operatorname{tr}^{\mathrm{BMW}}_n(x),
  \qquad
  \operatorname{tr}^{\mathrm{BMW}}_{n+1}(xE_n)
  =\frac13\operatorname{tr}^{\mathrm{BMW}}_n(x).
\]
Together with cyclicity and
\(\operatorname{tr}^{\mathrm{BMW}}_n(I)=1\), these are the defining BMW
Markov trace identities with loop parameter \(3\), in the convention of
\cite[Proposition~4.3]{LarsenRowell}.
\end{proof}

Let \(C_n^{\mathrm{ss}}(r,q_{\mathrm B})\) denote the quotient of
\(C_n(r,q_{\mathrm B})\) by the radical of its Markov trace.

\begin{cor}\label{cor:R3-BMW-localization}
The representation of Proposition~\ref{prop:R3-BMW} induces a faithful
representation
\[
  C_n^{\mathrm{ss}}(q_{\mathrm B}^3,q_{\mathrm B})
  \longrightarrow \End(V^{\ot n})
  \qquad(n\geq2).
\]
Consequently, \(R_3\) is a strict localization of the semisimple BMW
trace-quotient tower at \(q_{\mathrm B}=e^{\pi\ii/6}\).
\end{cor}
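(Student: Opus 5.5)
The proof reduces faithfulness on the semisimple quotient to a comparison of traces. By Proposition~\ref{prop:R3-BMW}, the assignments $g_i\mapsto G_i$, $e_i\mapsto E_i$ give a representation $\rho_n:C_n(r,q_{\mathrm B})\to\End(V^{\ot n})$, and $\operatorname{tr}^{\mathrm{BMW}}_n\circ\rho_n$ is the normalized BMW Markov trace $\operatorname{tr}_n$. Call this identity $(\ast)$.

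\emph{Step 1: the radical lies in $\ker\rho_n$.} Suppose $y$ is in the trace radical, so $\operatorname{tr}_n(yz)=0$ for every $z\in C_n$. Since $\rho_n$ comes from the $*$-closed algebra generated by the unitary operators $R_{3,i}$, its image $\mathcal B_n=\rho_n(C_n)$ is a finite-dimensional $C^*$-subalgebra of $\End(V^{\ot n})$. There the normalized matrix trace $3^{-n}\Tr$ is positive definite, so $\langle a,b\rangle=\Tr(a^*b)$ is an inner product on $\mathcal B_n$. By $(\ast)$, for every $z\in C_n$,
\[
  \Tr\bigl(\rho_n(y)\rho_n(z)\bigr)=3^n\operatorname{tr}_n(yz)=0 .
\]
Because $\rho_n(y)^*\in\mathcal B_n=\rho_n(C_n)$, we may choose $z$ with $\rho_n(z)=\rho_n(y)^*$. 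This gives $\Tr(\rho_n(y)\rho_n(y)^*)=0$, hence $\rho_n(y)=0$. So $\rho_n$ factors through $C_n^{\mathrm{ss}}(r,q_{\mathrm B})$.

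\emph{Step 2: the induced map is injective.} Suppose $y\in C_n$ has $\rho_n(y)=0$. Then $(\ast)$ gives
\[
  \operatorname{tr}_n(yz)=3^{-n}\Tr\bigl(\rho_n(y)\rho_n(z)\bigr)=0
\]
for all $z$, so $y$ lies in the radical. Hence $\ker\rho_n$ equals the radical exactly, and the induced representation of $C_n^{\mathrm{ss}}$ is faithful.

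\emph{Step 3: strict localization.} The algebras $C_n^{\mathrm{ss}}\cong\mathcal B_n$ form a compatible tower, because the trace restricts compatibly along $C_n\subset C_{n+1}$. By the definition of strict localization, equivalently Proposition~\ref{prop:faithful-tower-localization-criterion} with $\theta_n$ the induced isomorphism, $R_3$ strictly localizes this tower.

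The only delicate point is in Step 1: the radical must be the two-sided radical of the bilinear form $(y,z)\mapsto\operatorname{tr}_n(yz)$. Because the trace is cyclic, the left and right radicals coincide, so this is unambiguous.
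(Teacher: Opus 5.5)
Your argument is correct and is essentially the paper's. In both, the kernel lies in the trace radical by the trace identity, and conversely the vanishing of $\Tr\bigl(\rho_n(y)\rho_n(y)^*\bigr)$ forces radical elements into the kernel. The only difference is how $\rho_n(y)^*$ is written as some $\rho_n(z)$: you use the $*$-closedness of the image (it is generated by the unitaries $G_i^{\pm1}$), while the paper uses the standard BMW involution $g_i^*=g_i^{-1}$, $e_i^*=e_i$ and checks that $\pi_n$ preserves it. Your version therefore never needs that involution on $C_n(r,q_{\mathrm B})$ itself.
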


\begin{proof}
Let \(\pi_n:C_n(r,q_{\mathrm B})\to\End(V^{\ot n})\) be the representation
of Proposition~\ref{prop:R3-BMW}, and let
\(\operatorname{tr}^{\mathrm{BMW}}_n\) be the BMW Markov trace.  That
proposition gives
\[
  \operatorname{tr}^{\mathrm{BMW}}_n(x)
  =3^{-n}\Tr(\pi_n(x)).
\]
If \(x\in\ker\pi_n\), then
\(\operatorname{tr}^{\mathrm{BMW}}_n(xy)=0\) for every \(y\), so \(x\)
lies in the trace radical.  Conversely, the standard involution satisfies
\(g_i^*=g_i^{-1}\) and \(e_i^*=e_i\), and \(\pi_n\) preserves it because
the \(G_i\) are unitary and the \(E_i\) are self-adjoint.  If \(x\) lies in
the trace radical, then
\[
  0=\operatorname{tr}^{\mathrm{BMW}}_n(xx^*)
   =3^{-n}\Tr\bigl(\pi_n(x)\pi_n(x)^*\bigr),
\]
which implies \(\pi_n(x)=0\).  Thus the kernel equals the trace radical,
and the induced representation of the semisimple quotient is faithful.
Since \(G_i=q_{\mathrm B}^2R_{3,i}\), the local copies of \(R_3\) generate
the same unital image algebras, proving strict localization.
\end{proof}

Let
\[
  \widetilde T_3(L)=\frac{1}{3}T_3(L).
\]
Then \(\widetilde T_3(\bigcirc)=1\), and the loop value is \(3\), matching the
BMW trace in Proposition~\ref{prop:R3-BMW}.

For \(L=\widehat\xi\), where \(\xi\in\B_n\) has writhe \(w\), let
\(G_\xi\) and \(g_\xi\) denote its BMW and Temperley--Lieb images,
respectively.  Denote by \(K_L(r,q_{\mathrm B})\) the ambient-isotopy
invariant obtained from the BMW Markov trace in the normalization of
Proposition~\ref{prop:R3-BMW}; thus
\[
  K_L(r,q_{\mathrm B})
  =3^{n-1}r^{-w}\operatorname{tr}^{\mathrm{BMW}}_n(G_\xi),
  \qquad r=q_{\mathrm B}^3.
\]

We use the Jones normalization associated with the normalized
Temperley--Lieb Markov trace:
\[
  V_L(q_{\mathrm B}^{-2})
  =(q_{\mathrm B}+q_{\mathrm B}^{-1})^{n-1}
    (-q_{\mathrm B}^{-1})^w
    \operatorname{tr}^{\mathrm{TL}}_n(g_\xi).
\]
Then \(V_{\bigcirc}=1\), and
\(q_{\mathrm B}^{-2}=e^{-\pi\ii/3}\).

\begin{theorem}\label{thm:R3-symmetric-square}
For every oriented link \(L\),
\[
  \widetilde T_3(L)
  =K_L(r,q_{\mathrm B})
  =V_L(q_{\mathrm B}^{-2})^2.
\]
\end{theorem}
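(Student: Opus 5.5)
The first equality is bookkeeping. Since \(G=q_{\mathrm B}^2R_3\), for \(\xi\in\B_n\) with writhe \(w\) we have \(\Tr\rho_{G,n}(\xi)=q_{\mathrm B}^{2w}\Tr\rho_{R_3,n}(\xi)\). Proposition~\ref{prop:R3-BMW} identifies \(3^{-n}\Tr\) with \(\operatorname{tr}^{\mathrm{BMW}}_n\), so
\[
  K_L=3^{n-1}r^{-w}3^{-n}\Tr(\rho_{G,n}(\xi))
  =\tfrac13 q_{\mathrm B}^{-3w}q_{\mathrm B}^{2w}\Tr(\rho_{R_3,n}(\xi))
  =\tfrac13 T_3(L).
\]
This gives \(\widetilde T_3=K_L\).

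For the second equality I would realize the right-hand side by a Yang--Baxter operator and compare local relations. Let \(R_{\mathrm{TL}}\) be the qutrit-free Jones operator at \(q_{\mathrm B}^{-2}\); the simplest choice is the \(\dim 2\) Temperley--Lieb (\(SU(2)\)-type) \(R\)-matrix, rescaled so its eigenvalues are \(q_{\mathrm B}^{\pm}\)-type. Because Markov-trace invariants multiply under \(\boxtimes\), \(V_L^2\) is the enhanced invariant of \(R_{\mathrm{TL}}\otimes R_{\mathrm{TL}}\) on \(\C^2\otimes\C^2\).

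A cleaner route avoids new matrices. I would use the classical fact that the BMW algebra \(C_n(r,q)\) with \(r=q^3\) (the \(SO(3)\) specialization) contains, via the symmetric-square embedding \(SO(3)\cong SU(2)/\pm1\), the structure making the Kauffman polynomial at \((q^3,q)\) equal to the square of the Jones polynomial at \(q^{-2}\) up to normalization. This is the known identity \(F_L(a=q^3\text{-type})=V_L^2\) for the adjoint (spin-1) representation, since \(V^{\otimes}\otimes V^{\otimes}\) contains it; more precisely, for the spin-\(1/2\) doubled case one uses \(V_{1/2}\otimes V_{1/2}=V_1\oplus V_0\) at the level of the skein theory. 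Concretely, I would check that \(H(L)=V_L(q_{\mathrm B}^{-2})^2\) satisfies the Kauffman/BMW skein axioms with parameters \((r,q_{\mathrm B})\) and loop value \(3\), and that the BMW skein relations determine the invariant. The latter holds because the Kauffman polynomial is determined by its skein relation, the kink relation, and the unknot value. Together with the unknot normalization, this gives \(H=K_L\).

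The main obstacle is verifying the Kauffman skein relation for a square of a Jones invariant, which is not obviously local in the Dubrovnik form. I would handle it through the doubled TL operator \(R_{\mathrm{TL}}\boxtimes R_{\mathrm{TL}}\), shown to satisfy the same cubic with eigenvalues \(q_{\mathrm B},-q_{\mathrm B}^{-1},r^{-1}\) after rescaling. Its eigenvalue-\(r^{-1}\) projector is the tensor square of the rank-one TL projector. The rank-one BMW criterion, Proposition~\ref{prop:rank-one-BMW-criterion}, applies with the untwisting contraction computed from the TL relations, and the Markov trace is the product trace. Both \(\widetilde T_3\) and \(V^2\) are then Markov traces on the same semisimple BMW trace quotient with identical parameters. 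Uniqueness of the BMW Markov trace with given \((r,q,x)\), as in \cite{LarsenRowell}, then gives the equality for all links.
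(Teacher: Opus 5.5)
Your first equality is exactly the paper's computation. For the second, your eventual route has three steps: show that rescaled doubled generators $c\,(g_i\otimes g_i)$ satisfy the BMW relations at $(q_{\mathrm B}^3,q_{\mathrm B})$ with loop value $3$; check that $\operatorname{tr}^{\mathrm{TL}}\otimes\operatorname{tr}^{\mathrm{TL}}$ then satisfies the BMW Markov identities; and invoke uniqueness of the Markov trace on $C_n(r,q_{\mathrm B})$, after pulling both traces back. This is the same mechanism as the paper's. The paper simply cites the Larsen--Rowell symmetric-square isomorphism $G_i\mapsto q_{\mathrm B}(g_i\otimes g_i)$, which already carries the BMW trace to the tensor square of the TL trace. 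Re-deriving it needs only a homomorphism plus Markov-trace uniqueness, not the isomorphism of semisimple quotients, so your version is more self-contained.

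Your side remarks are mislabeled. $r=q^3$ is the $SO(4)$ specialization ($V_{1/2}\boxtimes V_{1/2}$ for $SU(2)\times SU(2)$), not $SO(3)$ or spin $1$, which would give $r=q^2$. The ``check the Kauffman skein relation for $V^2$'' route should be dropped.

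The step that does not go through as written is the appeal to Proposition~\ref{prop:rank-one-BMW-criterion}. Its hypothesis $E=x|\psi\rangle\langle\psi|$ with $\psi$ a unit vector requires the $r^{-1}$-spectral projection to be orthogonal. However, the rank-one TL idempotent in local dimension $2$ cannot be self-adjoint at $d=\sqrt3$. Suppose $e=d|\phi\rangle\langle\phi|$ with $\phi\in\C^2\otimes\C^2$ a unit vector satisfied $e_1e_2e_1=e_1$. Then the coefficient matrix $C$ of $\phi$ would have $\Tr(C^*C)=1$ and $|\det C|=1/d$, which forces $d\geq2$. So for the standard $SU(2)$ matrix the idempotent has the form $|v\rangle\langle w|$ with $v\neq w$, and $e\otimes e$ does not meet the hypothesis.

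You can either generalize the criterion to $E=x|v\rangle\langle w|$ with $\langle w|v\rangle=1$, or, more simply, drop matrices and work in $\mathrm{TL}_n\otimes\mathrm{TL}_n$:
\begin{itemize}
\item Take $g$ with eigenvalue $-1$ on $1-p$ and $q_{\mathrm B}^{-2}$ on $p=e/\sqrt3$, which is the normalization matching the paper's Jones formula. Put $\mathsf G=q_{\mathrm B}(g\otimes g)$ and $\mathsf E=e\otimes e$.
\item Then $\mathsf E=I-(\mathsf G-\mathsf G^{-1})/(q_{\mathrm B}-q_{\mathrm B}^{-1})$ and $\mathsf E^2=3\mathsf E$.
\item The TL identity $e_1g_2^{\pm1}e_1=-q_{\mathrm B}^{\pm1}e_1$ gives $\mathsf E_1\mathsf G_2^{\pm1}\mathsf E_1=r^{\pm1}\mathsf E_1$.
\item The identity $\operatorname{tr}^{\mathrm{TL}}(y\,g_n^{\pm1})=-q_{\mathrm B}^{\pm1}\operatorname{tr}^{\mathrm{TL}}(y)/\sqrt3$ gives $\operatorname{tr}(y\,\mathsf G_n^{\pm1})=\tfrac{r^{\pm1}}{3}\operatorname{tr}(y)$ for the product trace.
\end{itemize}
With this substitution your uniqueness argument yields $K_L=3^{n-1}q_{\mathrm B}^{-2w}\operatorname{tr}^{\mathrm{TL}}_n(g_\xi)^2=V_L(q_{\mathrm B}^{-2})^2$, exactly as in the paper.
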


\begin{proof}
Choose a braid presentation \(L=\widehat\xi\), with \(\xi\in\B_n\) of
writhe \(w\).
Since \(G_\xi=q_{\mathrm B}^{2w}\rho_{R_3,n}(\xi)\), the definition of
\(T_3\) and Proposition~\ref{prop:R3-BMW} give
\[
  \widetilde T_3(L)
  =\frac13q_{\mathrm B}^{-3w}\Tr(G_\xi)
  =3^{n-1}r^{-w}\operatorname{tr}^{\mathrm{BMW}}_n(G_\xi)
  =K_L(r,q_{\mathrm B}).
\]

At \(r=q_{\mathrm B}^3\), the Larsen--Rowell symmetric-square isomorphism
for the semisimple trace quotient
\cite[Lemma~5.4 and Theorem~6.2]{LarsenRowell} sends
\[
  G_i\longmapsto q_{\mathrm B}(g_i\otimes g_i)
\]
and sends the BMW trace to the tensor square of the Temperley--Lieb Markov
trace.  Therefore
\[
  K_L(r,q_{\mathrm B})
  =3^{n-1}q_{\mathrm B}^{-2w}
    \operatorname{tr}^{\mathrm{TL}}_n(g_\xi)^2.
\]
Squaring the Jones normalization above and using
\(3=(q_{\mathrm B}+q_{\mathrm B}^{-1})^2\) gives
\(K_L(r,q_{\mathrm B})=V_L(q_{\mathrm B}^{-2})^2\).
\end{proof}

The qutrit examples realize the Temperley--Lieb and rank-one BMW cases; the
Family II example strictly localizes the semisimple BMW trace quotient.  The
smallest Family III specialization gives a non-Temperley--Lieb Hecke
representation with strict localization.

\section{The quaternionic Family III specialization}
\label{sec:quaternionic-localization}

Take
\[
  A=\Z/2\Z,
  \qquad q_0:A\longrightarrow\Uone,
  \quad q_0(0)=1,
  \quad q_0(1)=-\ii,
  \qquad \pair{a}{b}=(-1)^{ab}\quad(a,b\in A).
\]
This is the smallest Family III example.

This section recasts the quaternionic strict-localization construction of
\cite{RowellQuaternionic,GustafsonKimballRowellZhang} in the uniform Family
III language developed above.  We first give the direct quaternionic tower
presentation and then identify it with the corresponding twisted
group-algebra tower.

\subsection{The quaternionic tower}

Write
\[
  Q_8=\langle u,v,z:u^2=v^2=z,\ z^2=1,\
  [z,u]=[z,v]=1,\ uv=zvu\rangle,
  \qquad
  \overline Q_8=Q_8/\langle z\rangle\cong\F_2^2.
\]
The specialization imposes \(z=-1\) by passing to the quotient algebra
\[
  \mathcal L=\C[Q_8]/\langle z+1\rangle.
\]
Choose the section \(s:\overline Q_8\to Q_8\),
\(s(a,b)=u^av^b\) for \(a,b\in\F_2\).  We use \(u,v\) also for their
images in \(\mathcal L\), and
let \(U_{(a,b)}\) be the image of \(s(a,b)\) in
\(\mathcal L\).  Since \(u^2=v^2=-1\) and
\(vu=-uv\), its basis elements satisfy
\begin{equation}\label{eq:quaternion-local-algebra}
  U_{(a,b)}U_{(c,d)}
  =(-1)^{ac+bc+bd}U_{(a+c,b+d)}
  \qquad(a,b,c,d\in\F_2).
\end{equation}
Thus
\[
  \mathcal L\cong\C_\nu[\overline Q_8],
  \qquad
  \nu((a,b),(c,d))=(-1)^{ac+bc+bd}.
\]
The commutator bicharacter is
\[
  \tau((a,b),(c,d))=(-1)^{bc+ad},
  \qquad a,b,c,d\in\F_2,
\]
which is exactly the Family III adjacent bicharacter.  Consequently the
whole quaternionic tower is
\begin{equation}\label{eq:whole-tower-twisted-group-algebra}
  \A_n(Q_8)
  =\T_n(\overline Q_8;\nu,\tau)
  \cong\C_{\Omega_n^{\nu,\tau}}[\overline Q_8^{\,n-1}]
  \qquad(n\geq1).
\end{equation}
Equivalently, \(\A_n(Q_8)\) is the iterated twisted tensor product of
\(n-1\) copies of \(\mathcal L\), with adjacent twisting determined by
\(\tau\).  If \(u_i,v_i\) denote the images of \(u,v\) in the \(i\)-th
factor, then
\begin{align}
 u_i^2=v_i^2&=-1,& u_iv_i&=-v_iu_i,
 \label{eq:quaternion-same-factor}\\
 u_iu_{i+1}&=u_{i+1}u_i,&
 v_iv_{i+1}&=v_{i+1}v_i,
 \label{eq:quaternion-adjacent-same}\\
 u_iv_{i+1}&=-v_{i+1}u_i,&
 v_iu_{i+1}&=-u_{i+1}v_i,
 \label{eq:quaternion-adjacent-cross}
\end{align}
and generators at distance at least two commute.  The normal monomials
\[
  u_1^{\epsilon_1}v_1^{\delta_1}\cdots
  u_{n-1}^{\epsilon_{n-1}}v_{n-1}^{\delta_{n-1}},
  \qquad \epsilon_i,\delta_i\in\{0,1\},
\]
form a basis.  In particular,
\(\dim\A_n(Q_8)=4^{n-1}\).

By \eqref{eq:whole-tower-twisted-group-algebra}, the quotient by \(z+1\) is
the twisted group algebra determined by the section cocycle \(\nu\).  A
different section changes \(\nu\) by a coboundary and
hence gives the gauge-equivalent tower of
Remark~\ref{rem:cocycle-gauge}.

For \(n\geq2\) and \(1\leq i<n\), let \(u_i=U_i^{(1,0)}\) and
\(v_i=U_i^{(0,1)}\).  The Family III braid
element is
\begin{equation}\label{eq:abstract-r}
  r_{0,i}=\frac12(1+u_i+v_i+u_iv_i)
  =\frac12(1+u_i)(1+v_i).
\end{equation}
Proposition~\ref{prop:family-III-coefficients} gives its unitarity and braid
compatibility; this is the quaternionic braid element used in
\cite{RowellQuaternionic}.  For the unnormalized element
\[
  r_i=2r_{0,i}=1+u_i+v_i+u_iv_i
\]
one has
\begin{equation}\label{eq:quaternion-Hecke-relation}
  r_i^2-2r_i+4=0.
\end{equation}
If \(\zeta=e^{\pi\ii/6}\), then
\begin{equation}\label{eq:quaternion-Hecke-normalization}
  g_i=\frac{\ii}{2}r_i
\end{equation}
satisfies \((g_i-\zeta)(g_i+\zeta^{-1})=0\).

\subsection{The local operator and strict localization}

Let \(K=\C^2\), \(W=K\otimes K\), and let \(X,Z\in\End(K)\) be defined by
\(Xe_j=e_{j+1}\) and \(Ze_j=(-1)^je_j\) for \(j\in\Z/2\Z\).  On
\(W\otimes W\), put
\begin{equation}\label{eq:PQ}
  P_Z=(I\otimes Z)\otimes(Z\otimes I),
  \qquad
  P_X=(X\otimes I)\otimes(X\otimes X),
\end{equation}
and
\begin{equation}\label{eq:UV}
  U=\ii P_Z,
  \qquad V=\ii P_X.
\end{equation}
Since \(q_0(1)^{-1}=\ii\), these are the operators \(Y^{(1,0)}\) and
\(Y^{(0,1)}\) in the general Family III realization.  For \(n\geq2\) and
\(1\leq i<n\), let \(U_i\) and \(V_i\) denote their copies acting on the
\(i\)-th and \((i+1)\)-st factors of \(W^{\ot n}\).

\begin{prop}\label{prop:tower-representation}
For every \(n\geq2\), the assignments \(u_i\mapsto U_i\) and
\(v_i\mapsto V_i\), \(1\leq i<n\), extend to faithful homomorphisms
\[
  \pi_n:\A_n(Q_8)\longrightarrow\End(W^{\ot n}).
\]
Together with the scalar representation at \(n=1\), these homomorphisms are
compatible with the tower inclusions.
\end{prop}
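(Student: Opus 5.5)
The plan is to reduce the statement to the general Family III realization of Subsection~\ref{subsec:tensor-power-representations}, specialized to \(A=\Z/2\Z\) and \(q=q_0\). By \eqref{eq:whole-tower-twisted-group-algebra}, \(\A_n(Q_8)\) is the twisted group-algebra tower \(\T_n(\overline Q_8;\nu,\tau)\). Here \(\nu((a,b),(c,d))=(-1)^{ac+bc+bd}\) is exactly \(\nu_{\mathrm{III}}\) for \(\pair{a}{b}=(-1)^{ab}\), and \(\tau\) is \(\tau_{\mathrm{III}}\). The first step is to confirm that \(U\) and \(V\) in \eqref{eq:UV} coincide with \(Y^{(1,0)}\) and \(Y^{(0,1)}\) of \eqref{eq:family-III-local-Pauli-word}. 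Indeed \(Y^{(1,0)}=q_0(1)^{-1}\mathsf P_1=\ii P_Z\). Also \(Y^{(0,1)}=q_0(1)^{-1}\mathsf Q_{-1}=\ii P_X\), since \(-1=1\) in \(\Z/2\Z\), so \(\mathsf Q_{-1}=\mathsf Q_1=P_X\).

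Next, I will check the defining relations \eqref{eq:quaternion-same-factor}--\eqref{eq:quaternion-adjacent-cross} directly on the images, as a cross-check independent of the general formulas.
\begin{itemize}
\item Squares: \(U^2=-P_Z^2=-I\) and \(V^2=-I\).
\item Same-factor anticommutation: \(P_Z\) and \(P_X\) overlap, with \(Z\) against \(X\), only in the second \(K\)-factor of the first \(W\) and the first \(K\)-factor of the second \(W\). Two anticommuting overlaps would make \(P_Z\) and \(P_X\) commute. However, in the first \(W\) the \(P_Z\) factor is \(I\otimes Z\) and the \(P_X\) factor is \(X\otimes I\), which commute. In the second \(W\) they are \(Z\otimes I\) and \(X\otimes X\), which anticommute. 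So there is exactly one sign and \(UV=-VU\).
\item Adjacent positions: \(U_i\) and \(U_{i+1}\) share the \((i+1)\)-st \(W\)-factor, where the operators are \(Z\otimes I\) and \(I\otimes Z\); these commute. For \(V_i,V_{i+1}\) the shared factors are \(X\otimes X\) and \(X\otimes I\), which commute. For \(U_i,V_{i+1}\) they are \(Z\otimes I\) and \(X\otimes I\), which anticommute. For \(V_i,U_{i+1}\) they are \(X\otimes X\) and \(I\otimes Z\), which anticommute.
\item Distant positions have disjoint supports and commute.
\end{itemize}
Since \(\A_n(Q_8)\) is presented by these relations, with the normal monomial basis, the assignment extends to an algebra homomorphism \(\pi_n\). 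Equivalently, this is the statement that \(\Phi^{\mathrm{III}}_n\) is a representation, which follows from \eqref{eq:family-III-local-multiplication} and \eqref{eq:family-III-adjacent-Pauli-commutation}.

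Faithfulness is the main point, and I will take it from the argument already given for \(\Phi^{\mathrm{III}}_n\). The image of a normal monomial \(u_1^{\epsilon_1}v_1^{\delta_1}\cdots u_{n-1}^{\epsilon_{n-1}}v_{n-1}^{\delta_{n-1}}\) is a scalar multiple of a Pauli word on \(K^{\ot 2n}\). In that word, \(\epsilon_i\) is read as the \(Z\)-exponent on the second \(K\)-factor of the \(i\)-th \(W\)-factor; only \(U_i\) places a \(Z\) there. Likewise, \(\delta_i\) is read as the \(X\)-exponent on the second \(K\)-factor of the \((i+1)\)-st \(W\)-factor; only \(V_i\) places an \(X\) there. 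Distinct monomials therefore map to scalar multiples of distinct Pauli words. These are linearly independent, being orthogonal for the Hilbert--Schmidt inner product, so the \(4^{n-1}\) basis images are independent and \(\pi_n\) is injective.

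Compatibility with the inclusions is immediate. An element of \(\A_n\) viewed in \(\A_{n+1}\) uses only \(u_i,v_i\) with \(i<n\). Its image in \(\End(W^{\ot(n+1)})\) is the image under \(\pi_n\) tensored with \(I_W\), because the local copies are defined by padding with identities. For \(n=1\), the scalar representation \(\C\to\End(W)\) is compatible trivially. The only real obstacle is the label-reading step. There I must verify that no other generator contributes to the designated tensor slots. This holds because \(P_Z\) places \(Z\) only on the \(i\)-th \(W\)'s second factor and the \((i+1)\)-st \(W\)'s first factor, while \(P_X\) places \(X\) only on the \(i\)-th \(W\)'s first factor and on both factors of the \((i+1)\)-st \(W\).
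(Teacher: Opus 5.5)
Your proposal is correct and is essentially the paper's proof. The paper simply observes that \(\pi_n\) is the specialization of the faithful compatible representation \(\Phi_n^{\mathrm{III}}\) to \(A=\Z/2\Z\), \(q=q_0\); your identification \(U=Y^{(1,0)}\), \(V=Y^{(0,1)}\), your relation check, and your slot-by-slot label reading unpack that construction and its faithfulness argument.

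One minor wording slip: \(P_Z\) and \(P_X\) overlap \(Z\)-against-\(X\) only in the first \(K\)-factor of the second \(W\), since \(P_X\) is the identity on the second \(K\)-factor of the first \(W\). Your factor-by-factor check nevertheless gives the correct sign \(UV=-VU\).
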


\begin{proof}
This is the specialization of the faithful compatible representation
\(\Phi_n^{\mathrm{III}}\) constructed in
Subsection~\ref{subsec:tensor-power-representations}.
\end{proof}

Put \(h=\zeta^2=e^{\pi\ii/3}\) and define
\begin{equation}\label{eq:R-pauli}
  R=\frac{\ii\zeta}{2}\bigl(I_{16}+U+V+UV\bigr).
\end{equation}

\begin{theorem}\label{thm:strict-ybo}
The operator \(R\) is unitary and satisfies the Yang--Baxter equation.  It
also satisfies
\[
  (R+I_{16})(R-hI_{16})=0,
\]
and the two eigenvalues \(-1\) and \(h\) both have multiplicity \(8\).
\end{theorem}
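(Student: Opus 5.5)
The strategy is to transport everything from the abstract quaternionic tower through the faithful representation $\pi_n$ of Proposition~\ref{prop:tower-representation}. By construction, $U=Y^{(1,0)}$ and $V=Y^{(0,1)}$ in the Family III realization with $A=\Z/2\Z$, $q=q_0$. The product $UV=Y^{(1,0)}Y^{(0,1)}$ equals $\nu_{\mathrm{III}}((1,0),(0,1))Y^{(1,1)}=Y^{(1,1)}$. Hence
\[
  \tfrac12\bigl(I_{16}+U+V+UV\bigr)=|A|^{-2}\cdot 2\sum_{a,b}Y^{(a,b)}\cdot\tfrac12\cdot 2=\Phi_2^{\mathrm{III}}(r^{\mathrm{III}}),
\]
since $|A|^{-1}=1/2$. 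Equivalently, this is $\pi_2(r_{0,1})$ with $r_{0,1}$ as in \eqref{eq:abstract-r}. So $R=\ii\zeta\,R^{\mathrm{III}}_{(A,q_0)}$. Theorem~\ref{thm:metric-families-YBE} gives that $R^{\mathrm{III}}$ is a unitary Yang--Baxter operator. Multiplying by the unit scalar $\ii\zeta$ preserves both unitarity and the (homogeneous cubic) Yang--Baxter equation. This settles the first claim.

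For the quadratic relation, I verify \eqref{eq:quaternion-Hecke-relation} in the abstract algebra. Put $r=(1+u)(1+v)$ with $u^2=v^2=-1$ and $uv=-vu$. Then $x=u+v+uv$ satisfies $x^2=u^2+v^2+(uv)^2+(uv+vu)+u\,uv+uv\,u+v\,uv+uv\,v$. Here $(uv)^2=-u^2v^2=-1$ and $uv+vu=0$. Also $u\,uv=-v$ and $uv\,u=-u^2v=v$, so these two terms cancel. Similarly $v\,uv=-uv^2=u$ and $uv\,v=-u$, which also cancel. Hence $x^2=-3$, and $r=1+x$ satisfies $(r-1)^2=-3$, i.e.\ $r^2-2r+4=0$. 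Faithfulness of $\pi_2$ transports this to $M=I+U+V+UV$, giving $M^2-2M+4I=0$. The roots of $t^2-2t+4$ are $1\pm\ii\sqrt3=2e^{\pm\pi\ii/3}$. Therefore $R=\tfrac{\ii\zeta}{2}M$ has eigenvalues among $\ii\zeta e^{\pm\pi\ii/3}$. With $\zeta=e^{\pi\ii/6}$ and $\ii=e^{\pi\ii/2}$, these are $e^{\pi\ii}=-1$ and $e^{\pi\ii/3}=h$. So $(R+I)(R-hI)=0$, and $R$ is diagonalizable because it is unitary.

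The main step is the multiplicities. I would use the trace: by Theorem~\ref{thm:metric-family-enhancement}, every nonidentity Pauli word has trace zero. Hence $\Tr(M)=\Tr(I_{16})=16$, provided $U$, $V$, $UV$ are nonidentity Pauli words up to scalar. They are, since their labels $(1,0),(0,1),(1,1)$ are distinct from $0$ and the representation is faithful. Thus $\Tr R=8\ii\zeta=8e^{2\pi\ii/3}$. Writing $m_-+m_h=16$ and $-m_-+h\,m_h=8e^{2\pi\ii/3}$, the imaginary part gives $m_h\sin(\pi/3)=8\sin(2\pi/3)$. So $m_h=8$ and $m_-=8$, and the real part checks: $-8+4=-4=8\cos(2\pi/3)$.
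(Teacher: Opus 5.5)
Your proof is correct and follows the paper's route: you get the Yang--Baxter equation and unitarity by identifying $R=\ii\zeta\,R^{\mathrm{III}}_{(\Z/2\Z,q_0)}$, the quadratic relation from $r^2-2r+4=0$, and the multiplicities from $\Tr R=8\ii\zeta=8(h-1)$ together with the tracelessness of $U$, $V$, $UV$. One small remark: transporting the relation $r^2-2r+4=0$ to $M$ only requires that $\pi_2$ is a homomorphism, so faithfulness is not needed at that step.
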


\begin{proof}
The Yang--Baxter equation follows by applying \(\pi_3\) to the Family III
braid relation.  Unitarity follows either from
Theorem~\ref{thm:metric-families-YBE} or directly from
\[
  R=\ii\zeta
  \left(\frac{I+U}{\sqrt2}\right)
  \left(\frac{I+V}{\sqrt2}\right),
\]
since \(U^2=V^2=-I\) and \(U^*=-U\), \(V^*=-V\).
Equation~\eqref{eq:quaternion-Hecke-relation} gives the quadratic relation
in the theorem after the scalar normalization.  Finally, \(U,V,UV\) are
traceless and \(h-1=\ii\zeta\), which gives
\[
  \Tr(R)=8(h-1).
\]
Together with \(\dim(W\otimes W)=16\), this forces both multiplicities to
be \(8\).
\end{proof}

We recall the notation of \cite[Section~3]{LechnerTwoEigenvalues}.
Let \(S\in\mathrm{U}(D\otimes D)\) be a unitary Yang--Baxter operator with
\[
  \operatorname{Spec}(S)=\{-1,q\},
  \qquad q\neq\pm1,
\]
and put
\[
  p_S=\frac{qI-S}{1+q},
  \qquad
  \eta(S)=\frac{\rank(p_S)}{(\dim D)^2}.
\]
Two such operators are equivalent in Lechner's sense if their associated
representations of every braid group are unitarily equivalent.  The notation
\([q,\eta,d]\) denotes the possibly empty equivalence class with Hecke
parameter \(q\), spectral density \(\eta\), and local dimension \(d\).
This differs from the local unitary equivalence in
\eqref{eq:unitary-local-equivalence}: Lechner's relation compares the entire
tower of braid representations, whereas the latter is defined by a unit
scalar and a single local change of basis.

Following \cite{RowellQuaternionic}, let \(H_n(3,6)\) denote the semisimple
Hecke braid-image algebra on \(n\) strands associated with \(SU(3)_3\).

\begin{theorem}\label{thm:quaternionic-Lechner-localization}
The operator \(R\) lies in the non-Temperley--Lieb class
\[
  [e^{\pi\ii/3},1/2,4]
\].
Its local copies give a strict localization of the tower \(H_n(3,6)\).
\end{theorem}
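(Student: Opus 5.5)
The proof has three parts: read off the class data from Theorem~\ref{thm:strict-ybo}, rule out the Temperley--Lieb quotient, and establish strict localization through Proposition~\ref{prop:faithful-tower-localization-criterion}.

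\emph{Class data.} Theorem~\ref{thm:strict-ybo} gives \((R+I)(R-hI)=0\) with \(h=e^{\pi\ii/3}\neq\pm1\). Both eigenvalues have multiplicity \(8\) on the \(16\)-dimensional space \(W\otimes W\). Hence \(p_R=(hI-R)/(1+h)\) has rank \(8\), and
\[
  \eta(R)=\frac{8}{16}=\frac12,
\]
with local dimension \(\dim W=4\). So \(R\) lies in \([e^{\pi\ii/3},1/2,4]\).

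\emph{Non-Temperley--Lieb.} By Proposition~\ref{prop:two-eigenvalue-Hecke-criterion}, it is enough to show that \(p_1p_2p_1\neq \frac{h}{(1+h)^2}p_1\) on \(W^{\ot 3}\). Here \(h/(1+h)^2=1/3\).

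I would work inside \(\A_3(Q_8)\) using the faithful map \(\pi_3\) of Proposition~\ref{prop:tower-representation}. In the tower one has \(p_1=\frac12(1-\frac{\ii}{2}\cdot\text{stuff})\), an explicit combination of the normal monomials \(1,u_1,v_1,u_1v_1\). I would expand \(p_1p_2p_1\) in the normal basis using relations \eqref{eq:quaternion-same-factor}--\eqref{eq:quaternion-adjacent-cross}.

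A cheaper route is to compare normalized traces. Applying \(\varepsilon_3\), which equals \(4^{-3}\Tr\circ\pi_3\) by Theorem~\ref{thm:metric-family-enhancement}, gives \(\varepsilon_3(p_1p_2p_1)=\varepsilon_3(p_1p_2)\). Using the Markov property of Proposition~\ref{prop:coefficient-trace-enhancement}, \(\varepsilon(p_2\mid\cdot)\) reduces to \(\varepsilon(p)p_1\), with \(\varepsilon(p)=1/2\). This gives \(\varepsilon_3(p_1p_2p_1)=\frac14\), while \(\frac13\varepsilon_3(p_1)=\frac16\). The two values differ, so the Temperley--Lieb identity fails.

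A further consistency check: in Temperley--Lieb representations \(\eta\) is constrained to the form \(1/(\text{something})\) determined by the loop value, and \(1/2\) does not fit.

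\emph{Strict localization.} The task is to identify the Hecke braid-image algebra \(H_n(3,6)\) with the subalgebra \(\mathcal B_n=\langle r_1,\ldots,r_{n-1}\rangle\subseteq\A_n(Q_8)\) via \(g_i\mapsto \frac{\ii}{2}r_i\), in the normalization \eqref{eq:quaternion-Hecke-normalization}, rescaled by \(\zeta\) to match \(R\). I would take this isomorphism \(\theta_n\) from \cite{RowellQuaternionic}, which realizes the \(SU(3)_3\) Hecke quotient inside the quaternionic tower. It could also be reproved by checking that \(\varepsilon_n\) restricted to \(\mathcal B_n\) is the Ocneanu trace at the \(SU(3)_3\) parameters, namely trace parameter \(\eta=1/2\), which matches the \(SU(3)_3\) weight. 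Then \(\mathcal B_n\), being a \(*\)-algebra with a faithful trace, is the trace quotient.

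Given \(\theta_n\), faithfulness of \(\pi_n\) from Proposition~\ref{prop:tower-representation}, combined with Proposition~\ref{prop:faithful-tower-localization-criterion}, gives strict localization immediately.

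The main obstacle is this identification of \(\mathcal B_n\) with \(H_n(3,6)\). I would first try the trace argument: a positive faithful Markov trace on a Hecke image with the correct parameters forces the image to be the semisimple trace quotient. If that is inconclusive, I would fall back on citing \cite{RowellQuaternionic}.
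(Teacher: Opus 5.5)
Your proposal is correct. For the class data and the strict localization it follows the paper's proof. Rank $8$ in dimension $16$ gives $\eta=1/2$. Strictness comes from Rowell's Lemma~3.4, which identifies $H_n(3,6)$ with the unital subalgebra of $\A_n(Q_8)$ generated by the braid elements, together with faithfulness of $\pi_n$ and Proposition~\ref{prop:faithful-tower-localization-criterion}. The paper makes the normalization explicit: for Rowell's $s_i=-\frac1{2h}(1+u_i+v_i+u_iv_i)$ it shows $s_i=\frac{\ii\zeta}{2}r_i=\zeta g_i$, so $\pi_n(s_i)$ is literally the local copy of $R$. Your ``rescaled by $\zeta$'' says the same thing, and a common scalar does not change the generated unital algebra anyway.

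The one genuine difference is the non-Temperley--Lieb step.

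\emph{The two routes.} The paper compares $\eta=1/2$ with the Temperley--Lieb value $h/(1+h)^2=1/3$ and cites Lechner's Lemma~3.5. You instead compute $\varepsilon_3(p_1p_2p_1)=\varepsilon(p)^2=1/4$, which differs from $\frac13\varepsilon_3(p_1)=1/6$. That computation is right. In this setting it is exactly the proof of the cited criterion: a Temperley--Lieb relation together with a Markov trace gives $\eta^2=\frac{h}{(1+h)^2}\eta$, hence $\eta=1/3$. This is the precise version of your vague ``consistency check'' ($1/2$ is certainly of the form $1/(\text{something})$; what fails is $\eta=h/(1+h)^2$). Your route trades the external citation for a short trace computation that uses the Markov property of this particular tower.

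\emph{Points to tighten.} Proposition~\ref{prop:coefficient-trace-enhancement} is stated only for $r_{f,n}^{\pm1}$. You should note that its identity-coefficient argument gives $\varepsilon_{n+1}(xy_n)=\varepsilon(y)\varepsilon_n(x)$ for every $y$ in the last local copy, in particular for $p_n=(h-R_n)/(1+h)$. Also state that $\varepsilon(p)=\frac{h-(h-1)/2}{1+h}=\frac12$, using $\ii\zeta=h-1$. Your placeholder expansion of $p_1$ is off; the correct form is $p=\frac12\bigl(1-\frac{\ii}{\sqrt3}(u+v+uv)\bigr)$, though you never use it.

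\emph{Your fallback.} The trace-based identification is also sound. $\varepsilon_n$ is a faithful positive Markov trace, and $\varepsilon(p)=1/2=1/[3]_s$ with $s=e^{\pi\ii/6}$ is the $SU(3)_3$ value. Ocneanu uniqueness plus the kernel-equals-radical argument of Corollary~\ref{cor:R3-BMW-localization} would then identify the braid-generated subalgebra with $H_n(3,6)$ without Rowell's lemma.
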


\begin{proof}
The \(-1\)-eigenspace has rank \(8\) in local dimension \(4^2=16\), and
the spectral density is therefore \(1/2\).  For \(h=e^{\pi\ii/3}\), the
Temperley--Lieb value is
\[
  \frac{h}{(1+h)^2}=\frac13;
\]
hence the two-eigenvalue criterion of
\cite[Lemma~3.5]{LechnerTwoEigenvalues} rules
out the Temperley--Lieb quotient and identifies the stated class.

Rowell uses the Hecke parameter \(q=h=e^{2\pi\ii/6}\) and the braid
generators
\[
  s_i=-\frac1{2h}(1+u_i+v_i+u_iv_i)
  \qquad(1\leq i<n);
\]
see \cite[(3.3)]{RowellQuaternionic}.  Since \(h=\zeta^2\) and
\(-h^{-1}=\ii\zeta\), our normalizations satisfy
\begin{equation}\label{eq:Rowell-generator-comparison}
  s_i=\frac{\ii\zeta}{2}r_i=\zeta g_i.
\end{equation}
Thus the image of \(s_i\) under the faithful tower representation
\(\pi_n\) is exactly the local copy of the operator \(R\) in
\eqref{eq:R-pauli}.  Rowell's Lemma~3.4
\cite[Lemma~3.4]{RowellQuaternionic} identifies \(H_n(3,6)\) with the unital
subalgebra of \(\A_n(Q_8)\) generated by the \(s_i\).  Since multiplication
of every generator by the same nonzero scalar does not change the generated
unital algebra, this is also the subalgebra generated by the \(g_i\), or
equivalently by the \(r_i\).  Proposition~\ref{prop:tower-representation}
represents the entire tower faithfully and compatibly.  Restricting to this
braid-generated subalgebra and applying
Proposition~\ref{prop:faithful-tower-localization-criterion} proves
strictness.
\end{proof}

\begin{cor}\label{cor:Lechner-minimal-dimension}
The class \([e^{\pi\ii/3},1/2,4]\) is nonempty, and \(4\) is the smallest
local dimension in Lechner's exceptional family
\([e^{\pi\ii/3},1/2,2m]\) for which the class is nonempty.
\end{cor}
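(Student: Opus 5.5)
Nonemptiness follows immediately from Theorem~\ref{thm:quaternionic-Lechner-localization}: the operator \(R\) of \eqref{eq:R-pauli} acts on \(W=\C^2\otimes\C^2\), has spectrum \(\{-1,h\}\) with \(h=e^{\pi\ii/3}\) and density \(1/2\), and is not of Temperley--Lieb type. So \(R\) is a representative of \([e^{\pi\ii/3},1/2,4]\). The remaining task is minimality: I will show that no member of the family \([e^{\pi\ii/3},1/2,2m]\) can have local dimension \(2\).

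The plan is to use the integrality constraint built into the density. In local dimension \(d\) the spectral projection \(p_S\) has rank \(\eta d^2\). For the exceptional family the local dimension is even by definition, \(d=2m\), so the only candidate smaller than \(4\) is \(d=2\), that is, \(m=1\). There \(\eta=1/2\) forces \(\rank p_S=2\) on \(\C^2\otimes\C^2\). To rule this out I would quote the two-eigenvalue classification of \cite[Theorem~3.4]{LechnerTwoEigenvalues}, which is exactly where the exceptional family is stated with \(m\geq2\).

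If I want an argument independent of that citation, I would use Hietarinta's classification of \(4\times4\) solutions \cite{Hietarinta92}. The unitary two-eigenvalue solutions on \(\C^2\otimes\C^2\) with eigenvalues \(-1\) and \(e^{\pi\ii/3}\) are, up to equivalence, either of diagonal or permutation type or of Temperley--Lieb type. I would then check two things. First, a rank-two \(p_S\) that factors through Temperley--Lieb must satisfy \(p_1p_2p_1=\tfrac13 p_1\) by Proposition~\ref{prop:two-eigenvalue-Hecke-criterion}, and such solutions lie in the Temperley--Lieb classes, not the exceptional one. Second, the monomial solutions with density \(1/2\) produce Hecke representations whose Markov trace parameters do not match the \(SU(3)_3\) trace that characterizes the exceptional class.

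The main obstacle is this dimension-\(2\) exclusion. The safest route is to invoke Lechner's statement directly, since that is where the constraint \(m\geq2\) originates. The direct check through Hietarinta's list is the fallback if the citation is not taken at face value.
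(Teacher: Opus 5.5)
Your proposal matches the paper's proof: nonemptiness comes from Theorem~\ref{thm:quaternionic-Lechner-localization}, and because the exceptional family has even local dimension, minimality reduces to showing the dimension-$2$ class is empty, which the paper also takes directly from Lechner's classification (it cites \cite[Proposition~1.2 and Theorem~3.4]{LechnerTwoEigenvalues}). The Hietarinta-based fallback is unnecessary, and as sketched it would not stand on its own, since the claim that density-$1/2$ monomial solutions carry the wrong Markov trace data is asserted rather than derived.
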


\begin{proof}
Theorem~\ref{thm:quaternionic-Lechner-localization} gives nonemptiness in
local dimension \(4\).  Lechner's classification restricts the exceptional
family to even local dimension, while the dimension-\(2\) class is empty
\cite[Proposition~1.2 and Theorem~3.4]{LechnerTwoEigenvalues}.
\end{proof}

\subsection{The three low-dimensional quotient patterns}

The qutrit and quaternionic specializations are summarized by
\[
\begin{array}{@{}cccl@{}}
\toprule
\textup{source} & \textup{local dimension} &
\textup{spectrum multiplicities} & \textup{quotient}\\
\midrule
\mathrm{Family\ I}^{-},\ A=\F_3
&3&3,6&\textup{Temperley--Lieb/Hecke}\\
\mathrm{Family\ II},\ A=\F_3
&3&4,4,1&\textup{BMW at }r=\ii,\ q_{\mathrm B}=e^{\pi\ii/6}\\
\mathrm{Family\ III},\ A=\Z/2\Z
&4&8,8&\textup{Hecke, non-Temperley--Lieb}\\
\bottomrule
\end{array}
\]
The three rows correspond to a Temperley--Lieb projection, a rank-one BMW
projection, and the exceptional density-\(1/2\) Hecke class.

\section{Two-ququint Clifford orbits}
\label{sec:ququint}

We compare the two-ququint Clifford search with the metric families of
Section~\ref{sec:gaussian}.  Let
\[
  \xi_{60}=e^{2\pi\ii/60},
  \qquad \zeta_5=\xi_{60}^{12}=e^{2\pi\ii/5}.
\]
The projective two-ququint Clifford group is
\[
  \F_5^4\rtimes\Sp(4,5),
  \qquad |\Sp(4,5)|=9{,}360{,}000.
\]
The strict matrix Yang--Baxter tests are exact over \(\Q(\xi_{60})\).  The
group-type tests and the comparisons up to local gauge are Gr\"obner
computations over \(\F_{1201}\), where a primitive \(60\)-th root is fixed.
They do not prove the corresponding statements over \(\C\).

The enumeration proceeds through symplectic solutions, their projective
Clifford lifts, the strict matrix Yang--Baxter solutions among those lifts,
and finally reduction under local one-ququint Clifford conjugation.  In the
spectral table below, \(e\in\Z/60\Z\), and \(e^m\) denotes the eigenvalue
\(\xi_{60}^e\) with multiplicity \(m\).

\begin{computation}\label{comp:ququint-Clifford-enumeration}
The exact enumeration returned \(3001\) symplectic solutions, \(97{,}201\)
projective Clifford lifts, and \(6027\) strict Yang--Baxter matrices over
\(\Q(\xi_{60})\).  Reduction under local one-ququint Clifford conjugation
gave \(438\) orbits, of which \(423\) contain a monomial representative and
\(15\) do not.  The \(15\) non-monomial local Clifford orbits have the
following projective spectral types:
\[
\begin{array}{@{}cclc@{}}
\toprule
\textup{type}&\textup{number of orbits}&\textup{spectrum multiplicities}&
\textup{projective order}\\
\midrule
S_1&2&0^{10},12^{10},36^5&5\\
S_2&6&0^9,12^4,24^4,36^4,48^4&5\\
S_3&4&0^5,10^3,20^5,30^3,40^5,50^4&6\\
S_4&3&0^3,\ (5k)^2\ (1\leq k\leq11)&12\\
\bottomrule
\end{array}
\]
The strict Yang--Baxter tests, orbit counts, and spectra are exact in
characteristic zero.
\end{computation}

\begin{computation}\label{comp:ququint-finite-field-group-type}
After reduction to \(\F_{1201}\), the group-type tests identify all four
\(S_3\) orbits and all three \(S_4\) orbits as group type.  The non-monomial
output not identified as group type therefore consists of the two \(S_1\)
orbits and the six \(S_2\) orbits.  This is a finite-field statement and does
not establish the corresponding group-type identifications over \(\C\).
\end{computation}

\subsection{Gaussian and product representatives}

On the basis \(\{e_j:j\in\F_5\}\) of \(\C^5\), define
\(Xe_j=e_{j+1}\) and \(Ze_j=\zeta_5^je_j\).  For \(a,b\in\F_5\) and
\(D\in\End(\C^5\otimes\C^5)\) satisfying \(D^5=I\), set
\[
  G_{a,b}(D)=\sum_{j\in\F_5}\zeta_5^{aj^2+bj}D^j.
\]
For \(a,b,c,b_2\in\F_5\), the cyclic and product ansatzes are
\begin{align*}
  R^{\mathrm{Gauss}}_{a,b}
  &=5^{-1/2}G_{a,b}(X\otimes Z),\\
  R^{\mathrm{prod}}_{a,b,c,b_2}
  &=5^{-1}G_{a,b}(X\otimes X^{-1})G_{c,b_2}(Z\otimes Z).
\end{align*}
Define
\[
  S_a(z)=\sum_{x\in\F_5}\zeta_5^{2ax^2-zx}
  \qquad(a,z\in\F_5).
\]

\begin{prop}\label{prop:ququint-cyclic-gaussian}
For \(a,b\in\F_5\), the cyclic ansatz is a unitary Yang--Baxter operator
exactly when
\[
  a=2\ \text{or}\ 3,
  \qquad b\in\F_5.
\]
These are the two signed Family I branches.
\end{prop}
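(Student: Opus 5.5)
The plan is to reduce the matrix statement to the coefficient calculus of Section~\ref{sec:twisted-towers} and then settle the coefficient equation by a single Gauss-sum comparison.

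\emph{Reduction to coefficients.} Put $G=\F_5$ and $\tau(s,t)=\zeta_5^{st}$. As in Subsection~\ref{subsec:tensor-power-representations}, the map $U_i\mapsto X_iZ_{i+1}$ is a compatible $*$-representation of $\T_n(G,\tau)$; the relation $ZX=\zeta_5XZ$ gives the adjacent commutation scalar $\tau(1,1)=\zeta_5$. The Pauli-label argument in the proof of Theorem~\ref{thm:metric-family-enhancement} (Family I case) shows that this representation is faithful: the $X$-label on the $i$-th tensor factor recovers the $i$-th tower exponent. Under it, $R^{\mathrm{Gauss}}_{a,b}$ is the image of $r_f$ with
\[
  f(j)=5^{-1/2}\zeta_5^{aj^2+bj}.
\]
By faithfulness, the Yang--Baxter equation and unitarity of the matrix are equivalent to those of the tower elements. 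By Corollary~\ref{cor:coefficient-unitary-braid-criterion}, these amount to the coefficient equation \eqref{eq:coefficient-ybe} together with $C_f(u)=\delta_{u,0}$.

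\emph{The degenerate case $a=0$.} Here $f$ is a character times $5^{-1/2}$. Then
\[
  C_f(u)=5^{-1}\sum_g\zeta_5^{bu}=\zeta_5^{bu}\neq0 \qquad(u\neq0),
\]
so unitarity fails.

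\emph{The case $a\neq0$.} Set $Q(j)=\zeta_5^{aj^2+bj}$. It satisfies $Q(s+t)=Q(s)Q(t)\zeta_5^{2ast}$, and $2a$ is a unit. Character orthogonality then gives $C_f(u)=\delta_{u,0}$ exactly as in the proof of Proposition~\ref{prop:gaussian-coefficients}, so unitarity always holds. For the coefficient equation, expand $Q(s-c)Q(c)=Q(s)\zeta_5^{2ac^2-2asc}$ and cancel the common factor $Q(s)Q(t)$. The two sides of \eqref{eq:coefficient-ybe} become
\[
  \sum_{c\in\F_5}\zeta_5^{2ac^2-(2as+t)c}
  \qquad\text{and}\qquad
  \sum_{c\in\F_5}\zeta_5^{2ac^2-(2at+s)c}.
\]
Completing the square, a sum $\sum_c\zeta_5^{2ac^2-Lc}$ equals a fixed nonzero Gauss sum times $\zeta_5^{-L^2(8a)^{-1}}$, as in \eqref{eq:odd-prime-Gauss-sum}. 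Hence the equation holds for all $s,t$ if and only if
\[
  (2as+t)^2=(2at+s)^2 \quad\text{in }\F_5 \text{ for all } s,t,
\]
that is, $(4a^2-1)(s^2-t^2)=0$.

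\emph{Conclusion.} This forces $4a^2=1$, i.e.\ $2a=\pm1$. The case $2a\equiv1$ gives $a=3$, which is the $\epsilon=+1$ branch. The case $2a\equiv-1$ gives $a=2$, which is the $\epsilon=-1$ branch. For $a\in\{1,4\}$ one has $4a^2=4\neq1$, and the choice $s=1$, $t=0$ exhibits a failure. For sufficiency one may alternatively cite Proposition~\ref{prop:gaussian-coefficients} directly. Take the metric group $(\F_5,q)$ with $q(s)=\zeta_5^{3s^2}$, whose polarization is $\zeta_5^{st}=\tau$. Then $Q\in\mathcal Q^{\epsilon}(q)$ with $\epsilon=+1$ for $a=3$ and $\epsilon=-1$ for $a=2$, for every $b$; the linear term is absorbed by a character.

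The only delicate point is making the Gauss-sum exponent comparison precise, including the nonvanishing of the Gauss sum, which needs $2a\neq0$. That is why $a=0$ is handled separately.
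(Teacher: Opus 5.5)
Your proposal is correct and follows essentially the same route as the paper. Both arguments reduce to the cyclic coefficient equation with $\tau(x,y)=\zeta_5^{xy}$, use the unitarity sums to rule out $a=0$, complete the square to reach $(4a^2-1)(s^2-t^2)=0$, and cite Proposition~\ref{prop:gaussian-coefficients} for the converse. Your explicit appeal to faithfulness of the Pauli realization (the $X$-label argument), which makes the matrix conditions equivalent to the tower conditions, fills in a step the paper leaves implicit.
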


\begin{proof}
Put \(D=X\otimes Z\), and let \(D_i\) denote its local copies.  Since
\(ZX=\zeta_5XZ\),
\[
  D_i^xD_{i+1}^y
  =\zeta_5^{xy}D_{i+1}^yD_i^x
  \qquad(x,y\in\F_5).
\]
Thus the relevant cyclic tower has
\(\tau(x,y)=\zeta_5^{xy}\).  The normalization \(5^{-1/2}\) is a common
nonzero scalar, so the coefficient equation for
\(f(x)=\zeta_5^{ax^2+bx}\) is
\[
  f(t)\sum_{x\in\F_5}f(s-x)f(x)\zeta_5^{-xt}
  =f(s)\sum_{x\in\F_5}f(x)f(t-x)\zeta_5^{-sx}.
\]
The summands on the two sides are, respectively,
\begin{align*}
  f(t)f(s-x)f(x)\zeta_5^{-xt}
  &=\zeta_5^{a(s^2+t^2)+b(s+t)}
    \zeta_5^{2ax^2-(2as+t)x},\\
  f(s)f(x)f(t-x)\zeta_5^{-sx}
  &=\zeta_5^{a(s^2+t^2)+b(s+t)}
    \zeta_5^{2ax^2-(2at+s)x}.
\end{align*}
After canceling the common factor, the coefficient equation is therefore
\[
  S_a(2as+t)=S_a(2at+s)
  \qquad(s,t\in\F_5).
\]
The coefficient sum determining unitarity at the shift \(u\) is
\[
  5^{-1}\sum_{x\in\F_5}\overline{f(x)}f(x+u)
  =5^{-1}\zeta_5^{au^2+bu}\sum_{x\in\F_5}\zeta_5^{2aux}
  \qquad(u\in\F_5).
\]
Thus unitarity forces \(a\neq0\), and for such \(a\) this sum is
\(\delta_{u,0}\).  Completing the square gives
\[
  S_a(z)=S_a(0)\zeta_5^{-z^2/(8a)},
  \qquad S_a(0)\neq0.
\]
The coefficient equation is therefore equivalent to
\[
  (4a^2-1)(s^2-t^2)=0
  \qquad(s,t\in\F_5).
\]
Hence \(4a^2=1\), which gives \(2a=\pm1\) and therefore \(a=2\) or \(3\).
Conversely, these two equations say precisely that the polarization is the
positive or negative polarization of the fixed Pauli realization.  Therefore
Proposition~\ref{prop:gaussian-coefficients} gives the Yang--Baxter
equation, while the preceding coefficient calculation gives unitarity.
\end{proof}

\begin{prop}\label{prop:ququint-product}
For \(a,b,c,b_2\in\F_5\), the product ansatz is a unitary Yang--Baxter
operator exactly when
\[
  ac=-1\pmod5.
\]
In that case it is a unit scalar multiple of a Family \(\mathrm{II}_E\)
operator; \(b\) and \(b_2\) are character parameters.
\end{prop}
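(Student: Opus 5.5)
The plan is to recognize the product ansatz as a coefficient element in the Family II tower for \(A=\F_5\). Take the pairing \(\pair{x}{y}=\zeta_5^{xy}\), so that \(Z_x=Z^x\) in the notation of Subsection~\ref{subsec:tensor-power-representations}. A suitable metric is \(q(x)=\zeta_5^{3x^2}\), since \(2\cdot3\equiv1\). The operators \(X\otimes X^{-1}\) and \(Z\otimes Z\) commute, because the two Pauli phases cancel. Expanding the product therefore gives
\[
  R^{\mathrm{prod}}_{a,b,c,b_2}
  =5^{-1}\sum_{t,s\in\F_5}\zeta_5^{at^2+bt}\zeta_5^{cs^2+b_2s}\,
  (X_tZ_s)\otimes(X_{-t}Z_s),
\]
which is exactly the shape of \eqref{eq:family-II-E-Pauli-expansion}. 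It is the image under \(\Phi_2^{\mathrm{II}}\) of \(r_f\) with \(f(t,s)=5^{-1}F_a(t)F_c(s)\), where \(F_a(t)=\zeta_5^{at^2+bt}\) and \(F_c(s)=\zeta_5^{cs^2+b_2s}\). The proof of Theorem~\ref{thm:metric-family-enhancement} shows that the Family II Pauli realization separates tower labels and is faithful. Hence the matrix Yang--Baxter equation and unitarity are equivalent to the coefficient equation of Lemma~\ref{lem:coefficient-ybe} for \((G_{\mathrm{II}},\tau_{\mathrm{II}})\) and to the unitarity criterion of Lemma~\ref{lem:coefficient-unitarity}.

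For sufficiency, assume \(ac\equiv-1\). The phase \(F_a\) has polarization \(\zeta_5^{2axy}=\pair{Ex}{y}\) with \(E=2a\in\F_5^\times\), and \(F_c\) has polarization \(\pair{(2c)x}{y}\). Since \(4ac\equiv-4\equiv1\), we have \(2c=E^{-1}\). Multiplication by \(E\) is automatically self-adjoint for \(\pair{\cdot}{\cdot}\). So \((E,Q_E,P_E)=(2a,F_a,F_c)\) is a datum as in Proposition~\ref{prop:family-II-coordinate-change}, and \(R^{\mathrm{prod}}=\gamma(P_E)\,R^{\mathrm{II},E}_{Q_E,P_E}\). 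Since \(|\gamma(P_E)|=1\) by Lemma~\ref{lem:general-shifted-Gauss-sum}, this is a unit scalar multiple of a Family \(\mathrm{II}_E\) operator, hence a unitary Yang--Baxter operator by Corollary~\ref{cor:extended-metric-families-YBE}. The linear terms \(b,b_2\) only multiply the quadratic phases by characters, so they are character parameters.

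For necessity, I would follow the proof of Proposition~\ref{prop:ququint-cyclic-gaussian}. Because \(f\) is a product, the unitarity sum \(C_f(u,v)\) factors into \(\sum_x\overline{F_a(x)}F_a(x+u)\) times \(\sum_y\overline{F_c(y)}F_c(y+v)\). These vanish for all nonzero \((u,v)\) only when \(a\neq0\) and \(c\neq0\). With \(a,c\in\F_5^\times\), the coefficient equation separates as in the proof of Proposition~\ref{prop:family-II-reindexing}. Using \(\tau_{\mathrm{II}}((t,s),(t',s'))=\zeta_5^{st'+s't}\), each side becomes a common prefactor times a product of two one-variable Gauss sums, of the forms \(\sum_x\zeta_5^{2ax^2-(2au+w)x}\) and \(\sum_y\zeta_5^{2cy^2-(2cv+r)y}\), with the arguments permuted on the right-hand side.

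The main obstacle is the final bookkeeping. Completing the square, with each nonzero sum equal to its value at \(0\) times \(\zeta_5^{-z^2/(8a)}\) (respectively \(\zeta_5^{-z^2/(8c)}\)), reduces the equation to a quadratic identity in \(u,v,w,r\). I expect that identity to hold for all arguments exactly when \(4ac=1\), that is \(ac\equiv-1\), in the same way that \(4a^2=1\) arose in Proposition~\ref{prop:ququint-cyclic-gaussian}. I would verify this by specializing variables, for instance \(v=r=0\), and comparing coefficients of the resulting quadratic forms over \(\F_5\).
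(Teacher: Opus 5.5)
Your proposal is correct and follows essentially the paper's route: the separated unitarity sums force $a,c\neq0$, the coefficient equation splits as $S_a(2au+w)S_c(2cv+r)=S_a(2ar+v)S_c(2cw+u)$, completing the square turns this into $(4a-c^{-1})(u^2-r^2)+(4c-a^{-1})(v^2-w^2)=0$ on $\F_5$ (so your specialization $v=r=w=0$ does give $4ac=1$), and the identification uses $E=2a$ with $E^{-1}=2c$. The differences are minor: you justify the equivalence between the matrix equations and the coefficient equations through faithfulness of the Family II Pauli realization, which the paper uses only implicitly, and you obtain sufficiency from the Family $\mathrm{II}_E$ identification and Corollary~\ref{cor:extended-metric-families-YBE} rather than from the converse of the completed-square identity.
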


\begin{proof}
The coefficient sums determining unitarity separate into two one-variable
quadratic sums; hence unitarity is equivalent to \(a,c\neq0\).  Assume this
and use the sums \(S_a\) defined before
Proposition~\ref{prop:ququint-cyclic-gaussian}.

Set
\[
  D=X\otimes X^{-1},
  \qquad H=Z\otimes Z.
\]
The operators \(D\) and \(H\) commute.  If
\(U_i^{(x,y)}=D_i^xH_i^y\), then the middle tensor factor gives
\[
  U_i^{(x,y)}U_{i+1}^{(r,\ell)}
  =\zeta_5^{yr+x\ell}
   U_{i+1}^{(r,\ell)}U_i^{(x,y)}.
\]
Hence the adjacent bicharacter is
\[
  \tau((x,y),(r,\ell))=\zeta_5^{yr+x\ell}.
\]
Ignoring the common normalization \(5^{-1}\), the product coefficient
function is
\[
  F(x,y)=\zeta_5^{ax^2+bx+cy^2+b_2y}.
\]
Put \(s=(u,v)\), \(t=(r,\ell)\), and use \((x,y)\) as the summation
variable in \eqref{eq:coefficient-ybe}.  The \(\tau^{-1}\)-factors on the
left and right are
\[
  \zeta_5^{-(yr+x\ell)},
  \qquad
  \zeta_5^{-(vx+uy)},
\]
respectively.  With
\[
  K=a(u^2+r^2)+b(u+r)+c(v^2+\ell^2)+b_2(v+\ell),
\]
the summands in the two coefficient sums become
\begin{align*}
  F(r,\ell)F(u-x,v-y)F(x,y)\zeta_5^{-(yr+x\ell)}
  &=\zeta_5^K
    \zeta_5^{2ax^2-(2au+\ell)x}
    \zeta_5^{2cy^2-(2cv+r)y},\\
  F(u,v)F(x,y)F(r-x,\ell-y)\zeta_5^{-(vx+uy)}
  &=\zeta_5^K
    \zeta_5^{2ax^2-(2ar+v)x}
    \zeta_5^{2cy^2-(2c\ell+u)y}.
\end{align*}
Thus \(b,b_2\) occur only in the common factor \(\zeta_5^K\), and the
coefficient equation separates as
\[
  S_a(2au+\ell)S_c(2cv+r)
  =S_a(2ar+v)S_c(2c\ell+u),
  \qquad u,v,r,\ell\in\F_5.
\]
Completing both squares shows that this identity is equivalent to
\[
  (4a-c^{-1})(u^2-r^2)
  +(4c-a^{-1})(v^2-\ell^2)=0
\]
for all \(u,v,r,\ell\in\F_5\).  Varying the first pair of variables and then
the second shows that both coefficients vanish; either equality gives
\(4ac=1\), or \(ac=-1\) in \(\F_5\).  Conversely, this condition makes the
coefficient identity hold, while the separated coefficient sums give
unitarity.

For the Family II identification, set
\[
  E:\F_5\longrightarrow\F_5,
  \qquad E(x)=2ax,
  \qquad Q_E(x)=\zeta_5^{ax^2+bx},
  \qquad P_E(x)=\zeta_5^{cx^2+b_2x}
  \quad(x\in\F_5).
\]
Then \(E^{-1}(x)=2cx\), and comparison with the normalized formula of
Proposition~\ref{prop:family-II-coordinate-change} gives
\[
  R^{\mathrm{prod}}_{a,b,c,b_2}
  =\gamma(P_E)R^{\mathrm{II},E}_{Q_E,P_E}.
\]
The scalar has modulus one.
\end{proof}

\begin{computation}\label{comp:ququint-finite-field-coverage}
After reduction to \(\F_{1201}\),
the comparison matches the two \(S_1\) local Clifford orbits with the cyclic
Gaussian representatives of Proposition~\ref{prop:ququint-cyclic-gaussian}
and the six \(S_2\) orbits with the product representatives of
Proposition~\ref{prop:ququint-product}, for which \(ac=-1\).  Consequently,
every local Clifford orbit left unidentified as monomial or group type by
Computation~\ref{comp:ququint-finite-field-group-type} is matched by a
signed Family I or Family \(\mathrm{II}_E\) operator.  These comparisons hold
over \(\F_{1201}\); they do not establish local-gauge equivalence over
\(\C\).
\end{computation}

\begin{remark}
As additional finite-field checks, we performed \(25\) tests for local
\(\GL_5\)-gauge equivalence between representatives with the same
projective spectrum and \(100\) tests for identifications by conditional
Doikou--Smoktunowicz \(T\)-moves.  None produced a further identification.
These are finite, nonexhaustive tests over \(\F_{1201}\), and their negative
outcome does not give a characteristic-zero obstruction.
\end{remark}

\section{Conclusion and outlook}
\label{sec:conclusion}

We have developed a common tower framework for several classes of unitary
Yang--Baxter operators.  The coefficient form of the braid relation isolates
the finite algebraic data behind the constructions, while the external
product, direct sum, and admissible internal orthogonal sum describe three
ways of combining solutions.  For finite metric groups, this
framework produces the three uniform families, both polarization signs,
the Family \(\mathrm{II}_E\) extension, compatible tensor-power realizations,
factorization formulas,
finite braid images, and exact polynomial-time evaluation of their trace
link invariants.

The cyclic constructions exhibit a complementary phenomenon.  For the fixed
dimension-\(8\) Pauli realization they form generically non-Gaussian one-parameter
families containing signed-Gaussian specializations.  Away from roots of
unity their braid images can be infinite, but the images remain virtually
abelian.  Thus finiteness and virtual abelianness separate naturally in the
examples, whereas the metric families satisfy both properties.

The low-dimensional calculations provide evidence within explicitly stated
Pauli and Clifford search spaces.  The qutrit representatives lead to a
Temperley--Lieb quotient with a HOMFLYPT specialization and to a BMW quotient
whose Kauffman invariant is a squared Jones specialization; the latter also
gives a strict localization of the semisimple BMW trace quotient.  The
quaternionic Family III
operator supplies a strict tensor-power realization of the
smallest-dimensional case in Lechner's exceptional two-eigenvalue family,
and the ququint calculation matches its remaining finite-field output with
signed Family I and Family \(\mathrm{II}_E\) representatives.  These
computations support, but do not replace, the structural arguments.

The three conjectures stated in the Introduction remain the main organizing
questions.  Natural next steps are to obtain
characteristic-zero versions of the ququint gauge comparisons, to understand
the admissible internal orthogonal-sum construction intrinsically rather than
one Pauli realization at a time, and to extend the searches beyond the present Pauli and
Clifford settings.
These problems separate the classification question from the computational
evidence and provide concrete tests for the proposed framework.

\section*{Data and code availability}

The scripts and data supporting the computational statements are available
from the authors upon reasonable request.  A versioned permanent archive for
the final public release is intended to include the software versions,
cyclotomic and finite-field conventions, enumeration scripts, orbit
representatives, gauge data, and machine-readable output certificates.

\section*{Acknowledgments}
C.G. was partially supported by Grant INV-2025-213-3452 from the School of Science of Universidad de los Andes.
E.C.R. was partially supported by U.S. NSF grant DMS-2205962 and a Royal
Society Wolfson Visiting Fellowship.  We thank Paul Martin for useful
conversations.

\bibliographystyle{amsplain}
\bibliography{unified_ybe_references}

@article{ApplebyBengtssonBrierleyGrasslGrossLarsson,
  author  = {Appleby, D. M. and Bengtsson, Ingemar and Brierley, Stephen and
             Grassl, Markus and Gross, David and Larsson, Jan-{\AA}ke},
  title   = {The monomial representations of the {Clifford} group},
  journal = {Quantum Inf. Comput.},
  volume  = {12},
  number  = {5--6},
  pages   = {404--431},
  year    = {2012},
  doi     = {10.26421/QIC12.5-6-3}
}

@article{BirmanWenzl,
  author  = {Birman, Joan S. and Wenzl, Hans},
  title   = {Braids, link polynomials and a new algebra},
  journal = {Trans. Amer. Math. Soc.},
  volume  = {313},
  number  = {1},
  pages   = {249--273},
  year    = {1989},
  doi     = {10.1090/S0002-9947-1989-0992598-X}
}

@inproceedings{CaiChenLiptonLu,
  author    = {Cai, Jin-Yi and Chen, Xi and Lipton, Richard J. and Lu, Pinyan},
  title     = {On tractable exponential sums},
  booktitle = {Frontiers in Algorithmics},
  series    = {Lecture Notes in Computer Science},
  volume    = {6213},
  pages     = {148--159},
  publisher = {Springer},
  year      = {2010},
  doi       = {10.1007/978-3-642-14553-7_16}
}

@article{DS,
  author  = {Doikou, Anastasia and Smoktunowicz, Agata},
  title   = {Set-theoretic {Yang--Baxter} \& reflection equations and quantum group symmetries},
  journal = {Lett. Math. Phys.},
  volume  = {111},
  pages   = {105},
  year    = {2021},
  doi     = {10.1007/s11005-021-01437-7}
}

@article{Durfee,
  author  = {Durfee, Alan H.},
  title   = {Bilinear and quadratic forms on torsion modules},
  journal = {Adv. Math.},
  volume  = {25},
  number  = {2},
  pages   = {133--164},
  year    = {1977},
  doi     = {10.1016/0001-8708(77)90002-0}
}

@article{GJ,
  author  = {Goldschmidt, David M. and Jones, Vaughan F. R.},
  title   = {{Metaplectic} link invariants},
  journal = {Geom. Dedicata},
  volume  = {31},
  number  = {2},
  pages   = {165--191},
  year    = {1989},
  doi     = {10.1007/BF00147477}
}

@unpublished{GalindoClifford,
  author        = {Galindo, C{\'e}sar},
  title         = {Splitting of {Clifford} groups associated to finite abelian groups},
  year          = {2026},
  eprint        = {2603.24743},
  archivePrefix = {arXiv},
  primaryClass  = {math.GR},
  note          = {Accepted for publication; arXiv:2603.24743}
}

@article{GustafsonKimballRowellZhang,
  author  = {Gustafson, Paul and Kimball, Andrew and Rowell, Eric C. and Zhang, Qing},
  title   = {Braid group representations from twisted tensor products of algebras},
  journal = {Peking Math. J.},
  volume  = {3},
  number  = {2},
  pages   = {103--130},
  year    = {2020},
  doi     = {10.1007/s42543-020-00023-5}
}

@article{Hietarinta92,
  author  = {Hietarinta, Jarmo},
  title   = {All solutions to the constant quantum {Yang--Baxter} equation
             in two dimensions},
  journal = {Phys. Lett. A},
  volume  = {165},
  number  = {3},
  pages   = {245--251},
  year    = {1992},
  doi     = {10.1016/0375-9601(92)90044-M}
}

@article{JonesCMP,
  author  = {Jones, Vaughan F. R.},
  title   = {On a certain value of the {Kauffman} polynomial},
  journal = {Comm. Math. Phys.},
  volume  = {125},
  number  = {3},
  pages   = {459--467},
  year    = {1989},
  doi     = {10.1007/BF01218412}
}

@article{JonesHecke,
  author  = {Jones, Vaughan F. R.},
  title   = {{Hecke} algebra representations of braid groups and link polynomials},
  journal = {Ann. of Math. (2)},
  volume  = {126},
  number  = {2},
  pages   = {335--388},
  year    = {1987}
}

@book{KasselTuraev,
  author    = {Kassel, Christian and Turaev, Vladimir},
  title     = {Braid Groups},
  series    = {Graduate Texts in Mathematics},
  volume    = {247},
  publisher = {Springer},
  address   = {New York},
  year      = {2008}
}

@article{LarsenRowell,
  author  = {Larsen, Michael J. and Rowell, Eric C.},
  title   = {An algebra-level version of a link-polynomial identity of {Lickorish}},
  journal = {Math. Proc. Cambridge Philos. Soc.},
  volume  = {144},
  number  = {3},
  pages   = {623--638},
  year    = {2008},
  doi     = {10.1017/S0305004107000424}
}

@article{LechnerPennigWood,
  author  = {Lechner, Gandalf and Pennig, Ulrich and Wood, Simon},
  title   = {{Yang--Baxter} representations of the infinite symmetric group},
  journal = {Adv. Math.},
  volume  = {355},
  pages   = {106769},
  year    = {2019},
  doi     = {10.1016/j.aim.2019.106769}
}

@misc{LechnerTwoEigenvalues,
  author        = {Lechner, Gandalf},
  title         = {The classification problem for unitary {R}-matrices
                   with two eigenvalues},
  year          = {2026},
  eprint        = {2603.20158},
  archivePrefix = {arXiv},
  primaryClass  = {math.QA},
  note          = {Preprint, arXiv:2603.20158}
}

@unpublished{MartinRowellTorzewska,
  author        = {Martin, Paul P. and Rowell, Eric C. and Torzewska, Fiona},
  title         = {A categorical perspective on braid representations},
  year          = {2026},
  eprint        = {2506.07950},
  archivePrefix = {arXiv},
  primaryClass  = {math.QA},
  note          = {Preprint, arXiv:2506.07950}
}

@unpublished{MartinRowell,
  author        = {Martin, Paul P. and Rowell, Eric C.},
  title         = {Classification of spin-chain braid representations},
  year          = {2026},
  note          = {To appear in Comm. Math. Phys.},
  eprint        = {2112.04533},
  archivePrefix = {arXiv},
  primaryClass  = {math.QA}
}

@article{MirandaMorrison,
  author  = {Miranda, Rick and Morrison, David R.},
  title   = {The number of embeddings of integral quadratic forms. {I}},
  journal = {Proc. Japan Acad. Ser. A Math. Sci.},
  volume  = {61},
  number  = {10},
  pages   = {317--320},
  year    = {1985},
  doi     = {10.3792/pjaa.61.317}
}

@article{Nikulin1980,
  author  = {Nikulin, V. V.},
  title   = {Integral symmetric bilinear forms and some of their applications},
  journal = {Math. USSR-Izv.},
  volume  = {14},
  number  = {1},
  pages   = {103--167},
  year    = {1980},
  doi     = {10.1070/IM1980v014n01ABEH001060}
}

@article{RowellWenzl,
  author  = {Rowell, Eric C. and Wenzl, Hans},
  title   = {{$SO(N)_2$} braid group representations are {Gaussian}},
  journal = {Quantum Topol.},
  volume  = {8},
  number  = {1},
  pages   = {1--33},
  year    = {2017},
  doi     = {10.4171/QT/85}
}

@article{RowellWangBAMS,
  author        = {Rowell, Eric C. and Wang, Zhenghan},
  title         = {Mathematics of topological quantum computing},
  journal       = {Bull. Amer. Math. Soc. (N.S.)},
  volume        = {55},
  number        = {2},
  pages         = {183--238},
  year          = {2018},
  doi           = {10.1090/bull/1605},
  eprint        = {1705.06206},
  archivePrefix = {arXiv},
  primaryClass  = {math.QA}
}

@article{RW,
  author  = {Rowell, Eric C. and Wang, Zhenghan},
  title   = {Localization of unitary braid group representations},
  journal = {Comm. Math. Phys.},
  volume  = {311},
  number  = {3},
  pages   = {595--615},
  year    = {2012},
  doi     = {10.1007/s00220-011-1386-7}
}

@misc{TaylorGaussSums,
  author        = {Taylor, Laurence R.},
  title         = {{Gauss} sums in algebra and topology},
  year          = {2022},
  eprint        = {2208.06319},
  archivePrefix = {arXiv},
  primaryClass  = {math.AT},
  note          = {Preprint, arXiv:2208.06319}
}

@article{Turaev1988,
  author  = {Turaev, Vladimir G.},
  title   = {The {Yang--Baxter} equation and invariants of links},
  journal = {Invent. Math.},
  volume  = {92},
  number  = {3},
  pages   = {527--553},
  year    = {1988},
  doi     = {10.1007/BF01393746}
}

@article{Wall,
  author  = {Wall, C. T. C.},
  title   = {Quadratic forms on finite groups, and related topics},
  journal = {Topology},
  volume  = {2},
  pages   = {281--298},
  year    = {1963},
  doi     = {10.1016/0040-9383(63)90012-0}
}

@article{RowellQuaternionic,
  author  = {Rowell, Eric C.},
  title   = {A quaternionic braid representation
             (after {Goldschmidt} and {Jones})},
  journal = {Quantum Topology},
  volume  = {2},
  number  = {2},
  year    = {2011},
  pages   = {173--182},
  doi     = {10.4171/QT/18},
  eprint  = {1006.4808},
  archivePrefix = {arXiv},
  primaryClass  = {math.QA}
}

@article{EtingofSchedlerSoloviev,
  author        = {Etingof, Pavel and Schedler, Travis and
                   Soloviev, Alexandre},
  title         = {Set-theoretical solutions to the quantum
                   {Yang--Baxter} equation},
  journal       = {Duke Math. J.},
  volume        = {100},
  number        = {2},
  pages         = {169--209},
  year          = {1999},
  doi           = {10.1215/S0012-7094-99-10007-X},
  eprint        = {math/9801047},
  archivePrefix = {arXiv}
}

@article{CarterElhamdadiSaito,
  author        = {Carter, J. Scott and Elhamdadi, Mohamed and
                   Saito, Masahico},
  title         = {Homology theory for the set-theoretic {Yang--Baxter}
                   equation and knot invariants from generalizations of
                   quandles},
  journal       = {Fund. Math.},
  volume        = {184},
  pages         = {31--54},
  year          = {2004},
  doi           = {10.4064/fm184-0-3},
  eprint        = {math/0206255},
  archivePrefix = {arXiv}
}

@article{GalindoRowellBVS,
  author        = {Galindo, C{\'e}sar and Rowell, Eric C.},
  title         = {Braid representations from unitary braided vector spaces},
  journal       = {J. Math. Phys.},
  volume        = {55},
  number        = {6},
  pages         = {061702},
  year          = {2014},
  doi           = {10.1063/1.4880196},
  eprint        = {1312.5557},
  archivePrefix = {arXiv},
  primaryClass  = {math.QA}
}

\end{document}